\newtheorem{theorem}{Theorem}
\newtheorem{proposition}{Proposition}
\newtheorem{corollary}{Corollary}
\newtheorem{lemma}{Lemma}
\newtheorem{definition}{Definition}
\newtheorem{remark}{Remark}
\newcommand{\E}{{\mathbb E }}
\newcommand{\tr}{{\text{Tr} }}
\renewcommand{\Re}{{\text{Re} }}
\renewcommand{\Im}{{\text{Im} }}
\renewcommand{\i}{{\text{i} }}
\begin{document}

 \begin{minipage}{0.85\textwidth}
\end{minipage}
\begin{center}
	\large\bf
Spherical Sherrington-Kirkpatrick   model for deformed Wigner matrix with fast decaying edges
\end{center}

\begin{center} 
	\begin{minipage}{0.3\textwidth}
	\begin{center}
		Ji Oon Lee\\
		\footnotesize 
		{Korea Advanced Institute of Science and Technology}\\
		{\it jioon.lee@kaist.edu}
	\end{center}
\end{minipage} 
		\begin{minipage}{0.3\textwidth}
			\begin{center}
				Yiting Li  \\
				\footnotesize 
				{Korea Advanced Institute of Science and Technology}\\
				{\it yitingli@kaist.ac.kr}
			\end{center}
		\end{minipage} 
\end{center}

\begin{abstract}
We consider the $2$-spin spherical Sherrington--Kirkpatrick model whose disorder is given by a deformed Wigner matrix of the form $W+\lambda V$, where $W$ is a Wigner matrix and $V$ is a random diagonal matrix with i.i.d. entries. Assuming that the density function of the entries of $V$ decays faster than a certain rate near the edges of its spectrum, we prove the sharp phase transition of the limiting free energy and its fluctuation. In the high temperature regime, the fluctuation of $F_N$ converges in distribution to a Gaussian distribution, whereas it converges to a Weibull distribution in the low temperature regime. We also prove several results for deformed Wigner matrices, including a local law for the resolvent entries,  a central limit theorem of the linear spectral statistics, and a theorem on the rigidity of eigenvalues.
\end{abstract}

\tableofcontents

\section{Introduction}

The Sherrington--Kirkpatrick (SK) model and its variants have been intensively studied in statistical physics and probability theory to understand the behavior of spin glass. Its spherical variant, known as the spherical Sherrington--Kirkpatrick (SSK) model, is defined through the mean-field Hamiltonian of the form
\begin{align}\label{eq84}
-\langle J\sigma,\sigma\rangle,
\end{align}
where the disorder $J$ is an $N \times N$ matrix and the spin $\sigma=(\sigma_1,\ldots,\sigma_N) \in S_{N-1}=\{(\sigma_1,\ldots,\sigma_N)\in{\mathbb R }^N : \sum \sigma_i^2=N\}$. The SSK model is widely used in various fields of study including high-dimensional statistics and learning theory.

One of the key features of the SSK model (and the SK model) is the sharp phase transition of the free energy, defined as
\begin{align}\label{eq87}
F_N=F_N(\beta)=\frac{1}{N}\log\Bigg[\int_{S_{N-1}}\exp\Big(\beta\langle\sigma,J\sigma\rangle\Big)d\omega_N(\sigma)\Bigg],
\end{align}
where $\beta$ is the inverse temperature and $\omega_N$ is the normalized uniform measure on $S_{N-1}$. When the disorder $J$ is a real Wigner matrix, it was proved by Crisanti and Sommers \cite{Crisanti+Sommers}, and Talagrand \cite{Talagrand} that as $N\to\infty$ the free energy $F_N$ converges to 
\begin{align}
F_N\to F_W(\beta):=\begin{cases}
  \beta^2\quad&\text{if }0<\beta\le 1/2\\
 2\beta-\frac{1}{2}\log(2\beta)-\frac{3}{4}\quad&\text{if } \beta > 1/2
\end{cases}.
\end{align}
The fluctuation of the free energy is also markedly different in the high temperature case ($\beta < 1/2$) and the low temperature case ($\beta > 1/2$). 
Baik and the first author \cite{Baik+Lee} studied the fluctuation $F_N-F_W(\beta)$ and proved that 
\begin{align}\label{eq85}
\begin{cases}
	N(F_N-F_W(\beta))\rightarrow \text{ a normal distribution}\quad&\text{if }0<\beta<1/2\\
2^{2/3}(\beta-\frac{1}{2})N^{2/3}(F_N-F_W(\beta))\rightarrow \text{ the Tracy--Widom distribution }\quad&\text{if }\beta>1/2
\end{cases}
\end{align}
where the convergence is in distribution.

Heuristically, the fluctuation of the free energy in the high temperature regime is affected by all eigenvalues of $J$ through its linear spectral statistics (LSS), defined by
\[
\sum_{i=1}^N f(\lambda_i),
\]
where $\lambda_1 \geq \lambda_2 \geq \dots \geq \lambda_N$ are the eigenvalues of $J$. On the other hand, in the low temperature regime, the fluctuation of $F_N$ is dominated by that of the largest $\lambda_1$. Since the fluctuations of the LSS and the largest eigenvalue are given by a Gaussian and the Tracy--Widom, respectively, one obtains the phase transition as in \eqref{eq85}. Similar argument also holds for other disorders such as the sample covariance matrix and the orthogonal invariant ensemble \cite{Baik+Lee}.

One natural question about the free energy of the SSK model is whether the heuristic argument above is universal, i.e., the picture of the all eigenvalues versus the largest eigenvalue is valid even when the disorder is not one of the classical random matrix models (Wigner matrix, sample covariance matrix, and invariant ensemble). To test the universality, we consider the case where the disorder is of the form
\begin{align}\label{eq83}
J = W+\lambda V,
\end{align} 
where $W$ is a Wigner matrix and $V$ is a random diagonal matrix. Such a matrix is called a deformed Wigner matrix, and with certain choices of the parameters, it is known that several key assumptions in \cite{Baik+Lee} are not satisfied, most notably the square-root decay at the edge of the spectrum and the Tracy--Widom limit of the largest eigenvalue. 

\subsection{Main contribution} 

Under the assumption that the decay of the spectrum of $J = W+\lambda V$ is convex, we prove that there exists a critical inverse temperature $\beta_c$ such that
\begin{itemize}
\item if $\beta < \beta_c$, the fluctuation of $F_N$ converges in distribution to a Gaussian distribution, and
\item if $\beta > \beta_c$, the fluctuation of $F_N$ converges in distribution to a Weibull distribution
\end{itemize}
with precise formulas for both limiting distributions, where the limiting Weibull distribution is originated from the corresponding (Weibull) distribution of the largest eigenvalue of $J$. This in particular suggests that the dichotomy between the fluctuation given by the LSS and that by the largest eigenvalue holds not only for the classical random matrix models but for more general models. We also prove the limiting free energy $F(\beta)$ for both regimes.

It should be noted that the order of the fluctuation in the low temperature regime is $N^{-1/(b+1)}$ for some $b>1$ but that in the high temperature is $N^{-1/2}$, and hence the fluctuation is larger in the low temperature regime than in the high temperature regime. This was also true for the SSK model with the Wigner disorder in \eqref{eq85}, though the exact orders of the fluctuations ($N^{-2/3}$ in the low temperature regime and $N^{-1}$ in the high temperature regime) do not coincide with those for our model.

The main technical difficulty in the proof of the main result is the lack of several results for $J$, which are crucial in the analysis of the free energy in \cite{Baik+Lee}. In this paper, we prove the following for $J$:
\begin{itemize}
	\item a local law for resolvent entries,
	\item a central limit theorem of linear statistics,
	\item the rigidity of eigenvalues.
\end{itemize}
These results are not only important for the understanding of the free energy but also significant per se in random matrix theory.

\subsection{Related works}

\subsubsection{Spherical Sherrington--Kirkpatrick model} \label{sec:background_of_ssk} 

The SK model was introduced by Sherrington--Kirkpatrick \cite{Sherrington+Kirkpatrick} as a mean-field version of the Edwards--Anderson model \cite{Edwards+Anderson}, which is an Ising-type model of spin glass. The limiting free energy was first predicted by Parisi \cite{Parisi}, which is now known as the Parisi formula, and later proved by Guerra \cite{Guerra} and Talagrand \cite{Talagrand_SK}.

The spherical Sherrington–Kirkpatrick (SSK) model was introduced by Kosterlitz, Thouless, and Jones \cite{Kosterlitz+Thouless+Jones_____}, where the limiting free energy was explicitly computed without a rigorous proof. A formula analogous to the Parisi formula was obtained by Crisanti and Sommers \cite{Crisanti+Sommers} and later proved by Talagrand \cite{Talagrand}. For more recent results on the free energy and its fluctuation for the SSK model, we refer to \cite{Baik+Lee2_____,Baik+Lee3_____,Baik+Lee+Wu_____,Landon+Sosoe_____,Landon+Sosoe2_____,Nguyen+Sosoe_____}.

\subsubsection{Deformed Wigner matrix}

Deformed Wigner matrix of the form \eqref{eq83} was first introduced by Pastur \cite{Pastur}, where it was proved that the empirical distribution of \eqref{eq83} converges to a deterministic probability distribution $\mu_{fc}$ as $N\to\infty$. The $\mu_{fc}$ is known as the free convolution of $\mu$ and the semicircle distribution, and assuming that the empirical distribution of $V$ is bounded and exhibits concave decay at the edge of its spectrum, it is known that $\mu_{fc}$ exhibits square-root decay at the corresponding edge. In this case, several key results for the Wigner matrix, including the local law for the resolvent \cite{Lee+Schnelli2,Lee+Schnelli+Stetler+Yau}, the delocalization of the eigenvectors and the rigidity of the eigenvalues \cite{Lee+Schnelli2}, the bulk universality \cite{Lee+Schnelli+Stetler+Yau}, the edge universality \cite{Lee+Schnelli+Stetler+Yau,Lee+Schnelli_edge_universality}, and the normality of the LSS \cite{Ji+Lee}, hold with natural modification.

On the other hand, much less is known for the case where $\mu_{fc}$ does not exhibit the square-root decay at the edge. It was proved by the first author and Schnelli \cite{Lee+Schnelli2,Lee+Schnelli} that $\mu_{fc}$ decays at the same rates as the empirical distribution of $V$ if it is convex and $\lambda$ in \eqref{eq83} is larger than a certain critical value $\lambda_+$. (See Lemma \ref{lemma:properties_of_mu_fc} for more detail.) In this case, it is also known that the eigenvectors associated to the extreme eigenvalues are partially localized \cite{Lee+Schnelli}.

\subsection{Relation to a signal detection problem}
	
	The SSK model is closely related to the problem of detecting the presence of the rank-one signal in a noisy data matrix. Suppose that the data matrix $M$ is of the form
	\[
	M = \sqrt{\lambda} xx^T + H,
	\]
	where the signal $x \in {\mathbb R }^N$ and the noise $H$ is an $N \times N$ real symmetric random matrix. When the signal-to-noise (SNR) $\lambda$ is not large, in order to detect the signal, it is common to analyze the largest eigenvalue and its associated eigenvalue, which is the principal component analysis (PCA). In the simplest case where $H$ is a Wigner matrix and $\|x\|=1$, the following transition for the largest eigenvalue $\lambda_1$ of $M$ is known; if $\lambda > 1$, $\lambda_1$ is strictly larger than $2$ and separates from the bulk of the spectrum, whereas if $\lambda < 1$, $\lambda_1$ converges to $2$, the edge of the spectrum, and cannot be distinguished from the null model ($\lambda=0$).
	
	If the SNR $\lambda$ is below the threshold and the noise is Gaussian, it is known that no tests can reliably detect the presence of the signal. For this case, it is natural to consider the hypothesis testing between the null hypothesis $\lambda = 0$ and the alternative $\lambda = \omega$ for some positive constant $\omega$, which is also known as the weak detection. By the Neyman--Pearson lemma, the likelihood ratio (LR) test is optimal in the sense that it minimizes the sum of the Type-I error and the Type II-error. For the $(i, j)$-entry of the data matrix with $i\neq j$, the ratio of the densities under the null and the alternative is
	\[
	\frac{\exp \big( N (M_{ij} - \sqrt{\lambda} x_i x_j)^2 \big)}{\exp \big( N M_{ij}^2 \big)}.
	\]
	Assuming that the signal is chosen uniformly from the unit sphere $S^N$ and the noise is GOE, the likelihood ratio is given by
	\begin{equation} \begin{split} \label{eq:LR}
			\frac{d {\mathbb P}_1}{d {\mathbb P}_0} &:= \int_{S^N} \prod_{i<j} \frac{\exp \big( N (M_{ij} - \sqrt{\lambda} x_i x_j)^2 \big)}{\exp \big( N M_{ij}^2 \big)} \prod_k \frac{\exp \big( N (M_{ij} - \sqrt{\lambda} x_i x_j)^2 /2\big)}{\exp \big( N M_{ij}^2 /2\big)} d\omega^N(\sigma) \\
			&= \int_{S^N} \prod_{i \neq j} \exp \left( -N \sqrt{\lambda} M_{ij} x_i x_j + \frac{N}{2} \lambda x_i^2 x_j^2 \right) d\omega^N(\sigma),
	\end{split} \end{equation}
	where $d\omega^N$ is the uniform measure on $S^N$. Note that the logarithm of the LR in \eqref{eq:LR} coincides with the free energy of the SSK model after shifting and rescaling. In the LR test, if the test statistic $\frac{d {\mathbb P}_1}{d {\mathbb P}_0} < 1$ the null hypothesis is accepted, while it is rejected if $\frac{d {\mathbb P}_1}{d {\mathbb P}_0} > 1$. Since the fluctuation of the LR is equal to the fluctuation of the free energy of the SSK model, it is possible to prove the optimal error for the weak detection.
	
	If the rank-$1$ signal $xx^T$ is perturbed by $U=(U_{ij})$, the ratio of the densities is changed to
	\[ \begin{split}
		&\frac{\exp \big( N (M_{ij} - \sqrt{\lambda} U_{ij} - \sqrt{\lambda} x_i x_j)^2 \big)}{\exp \big( N M_{ij}^2 \big)} \\
		&= \exp \left( -2 \sqrt{\lambda} N (M_{ij} - \sqrt{\lambda} U_{ij}) x_i x_j - 2\sqrt{\lambda} N M_{ij} U_{ij} + \lambda N x_i^2 x_j^2 \right).
	\end{split} \]
	Thus, for given $U$ the LR in \eqref{eq:LR} becomes
	\begin{equation} \label{eq:LR_deformed}
		\prod_{i \neq j} \exp \left( -\sqrt{\lambda} N M_{ij} U_{ij} \right) \int_{S^N} \prod_{i \neq j} \exp \left( -N \sqrt{\lambda} (M_{ij} - \sqrt{\lambda} U_{ij} ) x_i x_j + \frac{N}{2} \lambda x_i^2 x_j^2 \right) d\omega^N(\sigma)
	\end{equation}
	for which it is required to consider the free energy of the SSK model with deformed Gaussian interaction. Note that while $U$ is not assumed to be diagonal, we may diagonalize $U$ in the integrand in \eqref{eq:LR_deformed} for the analysis since GOE is orthogonally invariant.

\subsection{Organization of the paper}

The rest of the paper is organized as follows: In Section \ref{sec:model_and_main_theorem}, we precisely define the model and introduce our main results. In Section \ref{sec:Preliminaries}, we list several important results needed in the proof of main results. In Sections \ref{sec:local_law}, \ref{sec:CLT}, and \ref{sec:rigidity}, we prove our results on deformed Wigner matrices - local law for the resolvent entries, CLT for the linear spectral statistics, and the rigidity of the eigenvalues, respectively. In Sections \ref{sec:low_temperature} and \ref{sec:high_temperature}, we prove the main theorems for the low temperature case and the high temperature case, respectively. Some technical details on the results for the steepest descent curve and the proofs of some auxiliary lemmas are collected in Appendices.

\section{Model and main results}\label{sec:model_and_main_theorem}

\subsection{Definition of the model}

Recall that the disorder $J = W + \lambda V$. Here, $W$ is an $N \times N$ real Wigner matrix for which we use the following definition:
\begin{definition} \label{def:Wigner}
An $N \times N$ matrix $W=(W_{ij})_{N\times N}$ is a Wigner matrix if
\begin{itemize}
	\item $\{W_{ij}\vert i\le j\}$ are independent real-valued random variables.
	\item $W_{ij}=W_{ji}$.
	\item $\E[W_{ij}]=0$, $\E[W_{ij}^2]=\frac{1+\delta_{ij}}{N}$.
	\item There exist $\theta>1$ and $\theta'>0$ such that
\begin{align}
{\mathbb P}(\sqrt N\vert W_{ij}\vert >x)\le\theta'\exp(-x^{1/\theta})\quad\forall x\ge0, N\ge1\text{ and } i,j\in\{1,\ldots,N\}
\end{align}
\end{itemize}
\end{definition}
We remark that the subexponential decay condition guarantees the the existence of all (normalized) moments and an overwhelming-probability bound as follows:
\begin{enumerate}
	\item for any $p\in\{1,2,\ldots\}$,
\begin{align}\label{eqn1} 
	\sup\limits_{i,j,N}\E[\vert \sqrt NW_{ij}\vert ^p]<\infty;
\end{align}

\item if $\epsilon'>0$ and $D'>0$,   then for large enough $N$ we have 
\begin{align}\label{eqn2}
{\mathbb P}\Big(\vert W_{ij}\vert \le N^{\epsilon'-\frac{1}{2}},\forall i,j\in\{1,\ldots,N\}\Big)>1-N^{-D'}.
\end{align}
\end{enumerate}

We assume that $V$ is a random diagonal matrix whose entries are i.i.d. with centered Jacobi distribution $\mu$ for which we use the following definitions:
\begin{definition} \label{def:Jacobi}
A probability measure $\mu$ is a Jacobi measure on $[-1,1]$ if its density function is given by
$$\frac{d\mu}{dx}=\frac{d(x)}{Z}(1+x)^a(1-x)^b\mathds{1}_{[-1,1]}(x)$$
where
\begin{itemize}
	\item $a>-1$ and $b>-1$
	\item $d(x)\in C^1([-1,1])$ and $d(x)>0$ on $[-1,1]$.
	\item $Z$ is the normalization constant: $Z=\int_{-1}^1d(x)(1+x)^a(1-x)^bdx$
	\item $\mu$ is centered: $\int_{-1}^1xd\mu(x)=0$
\end{itemize}
\end{definition}

We also assume that $V$ is independent of $W$. For a given constant $\lambda>0$, if we denote by $\lambda\mu$ the law of $\lambda v$ where $v$ is a random variable with law $\mu$, then
the empirical measure of $W+\lambda V$ converges to $\mu_{fc}$ as $N\to\infty$, which is given by
$$\mu_{fc}:=\mu_{sc}\boxplus(\lambda\mu)$$
where $\boxplus$ denotes the additive free convolution and $\mu_{sc}$ denotes the semicircle distribution. It is known that $\mu_{fc}$ has a density function, which we will call $\rho_{fc}$; see Remark 2.5 in \cite{Lee+Schnelli} for the detail. In the following lemma, we collect the results on $\mu_{fc}$,
\begin{lemma}\label{lemma:properties_of_mu_fc}
	Set
	$$\lambda_\pm=\Big(\int_{-1}^1\frac{d\mu(x)}{(1\mp x)^2}\Big)^{1/2},\quad\tau_\pm=\int_{-1}^1\frac{d\mu(x)}{1\mp x}.$$
	There exists $L_-<0<L_+$ such that $\text{supp}(\mu_{fc})=[L_-,L_+]$. If $b>1$ and $\lambda>\lambda_+$, then
	\begin{enumerate}
		\item $L_+=\lambda+\frac{\tau_+}{\lambda}$,
		\item $L_++\int \frac{\rho_{fc}(x)dx}{x-L_+}=\lambda$, and
		\item there exists $C_0\ge1$ such that
		\begin{align}\label{eq12}
			\frac{x^b}{C_0}\le \rho_{fc}(L_+-x)\le C_0x^b\text{ for }x\in[0,L_+]
		\end{align}
	\end{enumerate}
	If $a>1$ and $\lambda>\lambda_-$, the statements above hold for $L_-$, with $L_+$ and $\tau_+$ replaced by $L_-$ and $\tau_-$ respectively. In particular,
		\begin{align}\label{eq13}
			\frac{x^a}{C_0}\le \rho_{fc}(L_-+x)\le C_0x^a\text{ for }x\in[0,\vert L_-\vert ]
		\end{align}
\end{lemma}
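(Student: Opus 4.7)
The plan is to exploit the subordination relation for the Stieltjes transform
$m_{fc}(z)=m_{\lambda\mu}(F(z))$, where $F(z):=z+m_{fc}(z)$ and
$m_{\lambda\mu}(\zeta)=\int d\mu(y)/(\lambda y-\zeta)$; this is valid because
the free convolution with $\mu_{sc}$ is characterized by
$m=m_{\lambda\mu}(z+m)$. Its real-analytic inverse
$\Phi(\zeta):=\zeta-m_{\lambda\mu}(\zeta)$, defined on
$\R\setminus\text{supp}(\lambda\mu)$, satisfies $\Phi(F(z))=z$, so the right
edge $L_+$ of $\mu_{fc}$ is characterized as the minimum of $\Phi$ on
$\zeta\geq\lambda$. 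The existence of the interval support $[L_-,L_+]$ (the
opening statement) follows from this inversion together with the standard
fact that $\rho_{fc}$ is H\"older continuous and strictly positive on the
interior of its support.

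For (1) and (2), I plan to show that the edge \emph{sticks} at $\lambda$, i.e.
$F(L_+)=\lambda$. Since $b>1$, both $\tau_+$ and $\lambda_+^2$ are finite, so
$m_{\lambda\mu}$ extends $C^1$ up to $\zeta=\lambda$ with
$m_{\lambda\mu}(\lambda)=-\tau_+/\lambda$ and
$m_{\lambda\mu}'(\lambda)=\lambda_+^2/\lambda^2$; hence
$\Phi'(\lambda^+)=1-\lambda_+^2/\lambda^2$, which is strictly positive
precisely under the hypothesis $\lambda>\lambda_+$. Consequently $\Phi$ is
strictly increasing on $[\lambda,\infty)$ with minimum at $\zeta=\lambda$. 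This
gives $L_+=\Phi(\lambda)=\lambda+\tau_+/\lambda$, which is (1), and
$m_{fc}(L_+)=m_{\lambda\mu}(\lambda)=-\tau_+/\lambda$, which rewritten reads
$L_++\int\rho_{fc}(x)dx/(x-L_+)=\lambda$, which is (2). The corresponding
statement for $L_-$ under $a>1$ and $\lambda>\lambda_-$ follows by the mirror
argument.

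For the density estimate (3), I would first derive the local expansion
\[
m_{\lambda\mu}(\zeta)=g(\zeta)+c_b(\zeta-\lambda)^b
\]
valid in a complex neighborhood of $\lambda$, where $g$ is $C^1$ (at least) and
$c_b\ne0$; the branch of $(\zeta-\lambda)^b$ is chosen positive on
$(\lambda,\infty)$, and a $\log(\zeta-\lambda)$ factor replaces the pure power
when $b\in\mathbb{N}$ (handled identically below). This comes from
Sokhotski--Plemelj applied to the model integral produced by the substitution
$y=1-u/\lambda$ near the right edge of $\mu$, using $d\in C^1$ with $d(1)>0$.
Substituting into $F(z)=z+m_{\lambda\mu}(F(z))$ and writing $h(z):=F(z)-\lambda$,
the constant terms cancel (thanks to $g(\lambda)=-\tau_+/\lambda$ and $L_+=\lambda+\tau_+/\lambda$), giving the fixed-point equation
\[
h(z)\bigl(1-\lambda_+^2/\lambda^2\bigr)=(z-L_+)+O(h^2)+c_b h^b.
\]
An iteration argument then yields $h(z)=F'(L_+)(z-L_+)+O(|z-L_+|^b)$ with
$F'(L_+)=\lambda^2/(\lambda^2-\lambda_+^2)>0$ (finite precisely because
$\lambda>\lambda_+$). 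Plugging back into $m_{fc}=g(F)+c_b h^b$ and extracting
the imaginary part at $z=L_+-x+i0$, the $g$-piece contributes zero while
$h^b$ picks up the branch-cut phase $e^{i\pi b}$ (since $h\to -F'(L_+)x<0$
approached from the upper half-plane), producing
\[
\pi\rho_{fc}(L_+-x)=C_+\,x^b(1+o(1)),\qquad C_+>0,
\]
which yields the two-sided bound in (3).

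The main obstacle I anticipate is the rigorous execution of step (3):
propagating the non-analytic branch of $h^b$ through the composition $g\circ F$,
verifying that the iteration for $h$ converges in a sector reaching the real
axis from above, and confirming that the leading imaginary-part coefficient
$C_+$ is strictly positive (one must check $c_b\sin(\pi b)>0$ by matching it
against the positive density of $\lambda\mu$). All three points rely on the
single hypothesis $\lambda>\lambda_+$, which simultaneously makes
$\Phi'(\lambda)>0$ (edge-sticking), keeps $F'(L_+)$ finite and positive, and
produces a non-degenerate coefficient of $x^b$; this is exactly what causes the
edge of $\mu_{fc}$ to inherit the convex decay of $\mu$ rather than exhibit the
generic square-root edge.
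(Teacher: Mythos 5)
The paper does not prove this lemma itself---it defers entirely to Lemma 2.3 and Remark 2.6 of Lee--Schnelli---and your proposal reconstructs precisely the argument of that reference: the subordination relation $m_{fc}=m_{\lambda\mu}(z+m_{fc})$, edge-sticking of $F=z+m_{fc}$ at $\zeta=\lambda$ forced by $\Phi'(\lambda^+)=1-\lambda_+^2/\lambda^2>0$ under $\lambda>\lambda_+$, and transfer of the $(\lambda-s)^b$ edge behavior of $\lambda\mu$ through the fixed-point equation for $h=F-\lambda$. The outline is sound (your $F'(L_+)=\lambda^2/(\lambda^2-\lambda_+^2)$ even reproduces the paper's constant $C_\mu$), the only implicit step being the passage from the near-edge asymptotic $\rho_{fc}(L_+-x)\asymp x^b$ as $x\to0$ to the two-sided bound on all of $[0,L_+]$, which follows from the interior positivity and continuity of $\rho_{fc}$ that you invoke at the outset.
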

For the proof of Lemma \ref{lemma:properties_of_mu_fc}, see Lemma 2.3 and Remark 2.6 \cite{Lee+Schnelli}

\subsection{Main results}

Recall that the free energy of the SSK model at inverse temperature $\beta>0$ is defined by
$$F_N=F_N(\beta)=\frac{1}{N}\log\Bigg[\int_{S_{N-1}}\exp\Big(\beta\langle\sigma,(W+\lambda V)\sigma\rangle\Big)d\omega_N(\sigma)\Bigg]$$
where $S_{N-1}=\{(x_1,\ldots,x_N):\sum_{i=1}^N x_i^2=N\}$ and $\omega_N$ is the (normalized) uniform measure on $S_{N-1}$. We will prove that the constant
\begin{align}\label{critical temperature}
\beta_c=\frac{1}{2}\int \frac{\rho_{fc}(t)}{L_+-t}dt.
\end{align}
is the critical inverse temperature of the SSK model, i.e., we study the fluctuation of $F_N$ in two cases: $0<\beta<\beta_c$ (high temperature regime) and $\beta>\beta_c$ (low temperature regime). We remark that $\beta_c$ is well defined when $b>1$ and $\lambda>\lambda_+$ (see Equation \eqref{eq12}).

Our first main result is the following theorem for the free energy in the low temperature regime:
\begin{theorem}[Main theorem: low temperature]\label{thm:low_temperature}
Suppose  $\beta>\beta_c$, $\lambda>\max(\lambda_-,\lambda_+)$, $b>11$ and $1<a<\frac{b^2-6b-7}{4}$. Then the fluctuation of $F_N$ converges in distribution to a Weibull distribution. More precisely, 
$$\lim\limits_{N\to\infty}{\mathbb P}\Big(N^{\frac{1}{b+1}}\Big[\frac{F_N+\frac{1}{2}\log(2e\beta)+\frac{1}{2}\int\log(L_+-t)d\mu_{fc}(t)-\beta L_+}{\beta-\beta_c}\Big]\le s\Big)=\exp\big(-\frac{C_\mu(-s)^{b+1}}{b+1}\big)\quad\forall s\le0$$
where $C_\mu=\Big(\frac{\lambda}{\lambda^2-\lambda_+^2}\Big)^{b+1}\cdot d(1)\cdot2^a\cdot Z^{-1}$.
\end{theorem}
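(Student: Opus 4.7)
The proof adapts the steepest-descent strategy of Baik--Lee to the polynomial edge of $\mu_{fc}$. First, I would pass to the eigenbasis of $J = W + \lambda V$ with eigenvalues $\lambda_1 \geq \cdots \geq \lambda_N$ and use the Hubbard--Stratonovich representation of the spherical constraint to obtain
\begin{equation*}
Z_N = K_N \cdot \frac{1}{2\pi i}\int_{\gamma - i\infty}^{\gamma + i\infty} \exp\!\left(\tfrac{N}{2}\, G(z)\right) dz,\qquad G(z) := 2\beta z - \frac{1}{N}\sum_{i=1}^N \log(z - \lambda_i),
\end{equation*}
for any $\gamma > \lambda_1$, where a Stirling expansion of the prefactor $K_N$ eventually yields the additive constant $-\tfrac12\log(2e\beta)$. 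The saddle $\hat\gamma$ solves $2\beta = \frac{1}{N}\sum(\hat\gamma - \lambda_i)^{-1}$; since $\beta > \beta_c = \tfrac12\int(L_+ - t)^{-1}\,d\mu_{fc}(t)$ and the limiting integrand is strictly decreasing on $(L_+,\infty)$ with maximum $2\beta_c$ attained at $L_+$, the saddle must collide with the spectrum. Isolating the $i=1$ contribution gives $\hat\gamma = \lambda_1 + \delta$ with $\delta = \tfrac{1}{2N(\beta - \beta_c)}(1+o(1))$.

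Next, Laplace's method along the vertical contour through $\hat\gamma$ yields $F_N = \tfrac12 G(\hat\gamma) - \tfrac12\log(2e\beta) + O(N^{-1}\log N)$ once all prefactors are collected. Expanding
\begin{equation*}
\tfrac12 G(\hat\gamma) = \beta\lambda_1 + \beta\delta - \tfrac{1}{2N}\log\delta - \tfrac{1}{2N}\sum_{i\geq 2}\log(\lambda_1 - \lambda_i),
\end{equation*}
the first two $\delta$-corrections are $O(N^{-1}\log N)$. I would split $\log(\lambda_1 - \lambda_i) = \log(L_+ - \lambda_i) + \log\bigl(1 + \tfrac{\lambda_1 - L_+}{L_+ - \lambda_i}\bigr)$: the first piece is a linear spectral statistic, yielding $\int\log(L_+ - t)\,d\mu_{fc}(t) + O_{\mathbb{P}}(N^{-1})$ by the CLT of Section \ref{sec:CLT}, while the edge rigidity from Section \ref{sec:rigidity}, namely $L_+ - \lambda_i \gtrsim (i/N)^{1/(b+1)}$, justifies a first-order expansion of the second piece and produces $(\lambda_1 - L_+)\cdot 2\beta_c + O_{\mathbb{P}}(N^{-1})$. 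Combining,
\begin{equation*}
F_N = \beta L_+ + (\beta - \beta_c)(\lambda_1 - L_+) - \tfrac12\!\int\log(L_+ - t)\,d\mu_{fc}(t) - \tfrac12\log(2e\beta) + o_{\mathbb{P}}(N^{-1/(b+1)}),
\end{equation*}
where $b > 1$ ensures that the $N^{-1}$ errors are strictly smaller than the target scale.

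What remains is to prove $N^{1/(b+1)}(L_+ - \lambda_1) \Rightarrow$ Weibull with the prescribed $C_\mu$. In the regime $\lambda > \max(\lambda_-, \lambda_+)$, results of Lee--Schnelli give partial localization of the top eigenvector on the index where $V_{ii}$ attains its maximum, and a rank-one perturbation calculation yields $\lambda_1 - L_+ = \tfrac{\lambda^2 - \lambda_+^2}{\lambda}(V_{\max} - 1) + O_{\mathbb{P}}(N^{-1/2})$. Since the Jacobi density near $u = 0$ behaves as $\tfrac{d(1)}{Z}\,2^a u^b (1+o(1))$, one obtains $N\,\mathbb{P}(V_1 > 1 - v N^{-1/(b+1)}) \to \tfrac{d(1)\,2^a}{Z(b+1)} v^{b+1}$, so $V_{\max}$ has a classical Weibull extreme-value limit; rescaling by $(\lambda^2 - \lambda_+^2)/\lambda$ produces exactly $C_\mu$. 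The main obstacle is controlling the saddle-point error rigorously at the polynomial edge: the contour must be deformed to pass within $O(1/N)$ of $\lambda_1$ while staying away from the rest of the random spectrum, and the local law, edge rigidity, and LSS CLT must combine to give a uniform $o(N^{-1/(b+1)})$ bound on all subleading terms. The stringent hypotheses $b > 11$ and $a < (b^2 - 6b - 7)/4$ enter precisely here, ensuring that the cumulative error from the LSS bias, the Laplace correction, and the contribution of the lower edge (where $\rho_{fc}$ decays with exponent $a$) are all strictly smaller than $N^{-1/(b+1)}$.
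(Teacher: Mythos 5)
Your overall architecture is the same as the paper's: the contour-integral representation of the partition function, a saddle point $\gamma$ trapped in $(\lambda_1+\tfrac{1}{3\beta N},\lambda_1+N^{-1+\tau_0})$, expansion of $\tfrac1N\sum\log(\gamma-\lambda_i)$ around $\int\log(L_+-t)d\mu_{fc}(t)+2\beta_c(\gamma-L_+)$, and reduction to the Weibull fluctuation of $\lambda_1$ quoted from Lee--Schnelli (Theorem \ref{thm:fluctuation_of_lambda_1}). However, there is a genuine gap in your treatment of the linear spectral statistic. The function $t\mapsto\log(L_+-t)$ is not analytic on any neighborhood of $[L_-,L_+]$ (it is singular at $L_+$), so Theorem \ref{thm:CLT} simply does not apply to it; and even for admissible $f$ the CLT yields $\tfrac1N\sum f(\lambda_i)-\int f\,d\mu_{fc}=O_{{\mathbb P}}(N^{-1/2})$, not $O_{{\mathbb P}}(N^{-1})$. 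The paper instead truncates the $N^{\tau_3}$ extreme indices at each edge, bounds their contribution by $N^{\tau_3-1}\log N$ directly, and controls the bulk via the rigidity estimates of Theorem \ref{thm:rigidity_concise_version} (Lemma \ref{lemma:analogue_of_lemma_6.2}); the hypotheses $b>11$ and $1<a<\frac{b^2-6b-7}{4}$ enter precisely through the existence of admissible exponents making the resulting error $\Phi_N=o(N^{-1/(b+1)})$. The same truncation is indispensable for your ``first-order expansion of the second piece'': for $i$ of order $1$ the ratio $(\lambda_1-L_+)/(L_+-\lambda_i)$ is of order one, the expansion of the logarithm fails, and rigidity is not available for those indices.

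The second gap is the Laplace step, where you name the obstacle (the saddle sits within $O(N^{-1+\tau_0})$ of the branch point $\lambda_1$) but assert the conclusion $F_N=\tfrac12 R(\gamma)-\tfrac12\log(2e\beta)+O(N^{-1}\log N)$ without an argument. A Gaussian approximation is not available here because $R''(\gamma)$ can be as large as a positive power of $N$ and the higher derivatives are not negligible relative to it. The paper's resolution is to forgo asymptotics entirely and prove only the two-sided polynomial bound $N^{-10}\le K\le W_0$ for the normalized contour integral (Lemma \ref{lemma:analogue_of_Lemma_6.3}), which suffices since its contribution to $F_N$ is $\tfrac1N\log K=O(\log N/N)=o(N^{-1/(b+1)})$; the lower bound on $K$ requires deforming to the steepest-descent curve and the monotonicity and regularity properties established in Lemma \ref{lemma:steepest_descent_curve} and Lemma \ref{lemma:regularity_of_S}. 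Without either a quantitative Laplace expansion or such a two-sided bound, your error term $O(N^{-1}\log N)$ is unjustified. Your concluding rank-one-perturbation derivation of the Weibull law is consistent with the cited Theorem \ref{thm:fluctuation_of_lambda_1} but is not needed once that result is invoked.
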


Our second main result is for the high temperature regime.
\begin{theorem}[Main theorem: high temperature] \label{thm:high_temperature}
	Suppose  $0<\beta<\beta_c$, $\lambda>\max(\lambda_-,\lambda_+)$, $a>1$ and $b>37/3$. Suppose $\hat\gamma$ is the unique point on $(L_+,+\infty)$ such that  $\int\frac{1}{\hat\gamma-t}d\mu_{fc}(t)=2\beta$. Then 
	$$2\sqrt N\Big(F_N+\frac{1}{2}\log(2\beta e)-\beta\hat\gamma+\frac{1}{2}\int\log(\hat\gamma-t)d\mu_{fc}(t)\Big)$$
converges in distribution to a centered Gaussian distribution whose variance is
$$
\frac{1}{4\pi^2}\Big(\oint_{\mathcal C} (1+m_{fc}'(\xi))m_{fc}(\xi)\log(\hat\gamma-\xi)d\xi\Big)^2-\frac{1}{4\pi^2}\int_{-1}^1\Big(\oint_{\mathcal C}\dfrac{(1+m_{fc}'(\xi))\log(\hat\gamma-\xi)}{(\lambda t-\xi-m_{fc}(\xi))}d\xi\Big)^2 d\mu(t)$$
where $m_{fc}(\cdot)$ is the Stieltjes transform of $\mu_{fc}$ and $\mathcal C$ is a counterclockwise path which encloses $[L_-,L_+]$ but does not enclose $\hat\gamma$. Here we take the analytic branch of $\log(\cdot)$ on ${\mathbb C }\backslash(-\infty,0]$ such that $\Im\log(\cdot)\in(-\pi,\pi)$.
\end{theorem}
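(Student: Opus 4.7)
The plan is to combine the standard contour-integral representation of the spherical partition function with the steepest descent method, reducing the fluctuation of $F_N$ to a centered linear spectral statistic (LSS) and then invoking the CLT of Section~\ref{sec:CLT}. The starting point is to diagonalize $J=W+\lambda V$, use the orthogonal invariance of $\omega_N$ and the inverse-Laplace representation of $\delta(|\sigma|^2-N)$, and write
\begin{equation*}
e^{N F_N}=C_N\cdot\frac{1}{2\pi\i}\int_\Gamma \exp\!\Big(\tfrac{N}{2}G_N(z)\Big)\,dz,\qquad G_N(z):=2\beta z-\frac{1}{N}\sum_{i=1}^N\log(z-\lambda_i),
\end{equation*}
where $\Gamma$ is a vertical line to the right of $\lambda_1$ and $C_N$ is an explicit prefactor involving $\Gamma(N/2)$ and powers of $N\beta$.

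Next I would locate the saddle. The limiting equation $\int(z-t)^{-1}d\mu_{fc}(t)=2\beta$ has the unique solution $\hat\gamma>L_+$ (well defined because $\beta<\beta_c$), and the random saddle $\gamma_N$ solving $G'_N(\gamma_N)=0$ sits within $O(N^{-1/2})$ of $\hat\gamma$ by the local law and eigenvalue rigidity established in Sections~\ref{sec:local_law} and~\ref{sec:rigidity}. Since $\hat\gamma-L_+$ is a positive constant, $\Gamma$ can be deformed to pass through $\gamma_N$ along the steepest-descent direction while staying at distance at least $c>0$ from the spectrum. A Gaussian approximation in the transverse direction, combined with Stirling applied to $C_N$, then yields
\begin{equation*}
F_N=\tfrac12 G_N(\hat\gamma)+\tfrac12(\gamma_N-\hat\gamma)G'_N(\hat\gamma)-\tfrac12\log(2\beta e)+O(N^{-1}).
\end{equation*}
The middle term is $O(N^{-1})$, because $G'_N(\hat\gamma)=2\beta-m_N(\hat\gamma)=-\bigl(m_N(\hat\gamma)-m_{fc}(\hat\gamma)\bigr)=O(N^{-1/2})$ by the local law, and thus it is absorbed into the error.

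Subtracting the deterministic centering, this reduces the theorem to
\begin{equation*}
2\sqrt N\!\left(F_N-\beta\hat\gamma+\tfrac12\log(2\beta e)+\tfrac12\!\int\!\log(\hat\gamma-t)\,d\mu_{fc}(t)\right)=-\frac{1}{\sqrt N}\sum_{i=1}^N\!\Big[\log(\hat\gamma-\lambda_i)-\!\int\!\log(\hat\gamma-t)\,d\mu_{fc}(t)\Big]+o(1).
\end{equation*}
Since $f(x):=\log(\hat\gamma-x)$ is analytic in a neighborhood of $[L_-,L_+]$ (the crucial input being $\hat\gamma>L_+$), the right-hand side is $-N^{-1/2}$ times a centered LSS with analytic test function. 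Applying the CLT of Section~\ref{sec:CLT} with $f$ and rewriting the resulting variance as contour integrals on a curve $\mathcal C$ enclosing $[L_-,L_+]$ but not $\hat\gamma$ (possible precisely because $\hat\gamma$ is separated from the spectrum) produces exactly the variance formula in the statement.

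The main obstacle is the steepest-descent analysis: one must justify the quadratic approximation uniformly over the random spectrum, bound the tail contributions along $\Gamma$, and control every error term at scale $o(N^{-1/2})$ so as not to spoil the Gaussian limit. This relies on the local law and rigidity from Sections~\ref{sec:local_law}--\ref{sec:rigidity} to place $\gamma_N$ at the correct scale and to show $m_N(\hat\gamma)-m_{fc}(\hat\gamma)=O(N^{-1/2})$, together with the uniform separation $\hat\gamma-L_+\ge c>0$ provided by $\beta<\beta_c$, which keeps the contour in a region where $G''_N$ is uniformly bounded below. A smaller but necessary subtlety is the Stirling asymptotics of $C_N$ (involving $\Gamma(N/2)$ and $(N\beta)^{-(N/2-1)}$), which is precisely what produces the additive constant $-\tfrac12\log(2\beta e)$ in the centering.
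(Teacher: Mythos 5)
Your proposal is correct and follows essentially the same route as the paper: the contour representation of Lemma \ref{lemma:integral_formula}, a Laplace/steepest-descent approximation at the random critical point $\gamma$ lying within $O(N^{-1/2+})$ of $\hat\gamma$, and reduction to the linear spectral statistic of $f(x)=\log(\hat\gamma-x)$ handled by Theorem \ref{thm:CLT}. One small remark: the paper deliberately controls $\vert\gamma-\hat\gamma\vert$ using \emph{only} the local law (transferred to the real point $\hat\gamma$ by a Cauchy integral of $m_N-m_{fc}$ over a rectangle around the spectrum, since the local law is stated off the real axis), explicitly because the rigidity estimates of Section \ref{sec:rigidity} are too weak here — but your mechanism $R'(\hat\gamma)=-(m_N(\hat\gamma)-m_{fc}(\hat\gamma))=O(N^{-1/2+})$ is exactly the paper's Lemma \ref{lemma:distance_between_gamma_and_hat_gamma}.
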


From Theorems \ref{thm:low_temperature} and \ref{thm:high_temperature}, we immediately obtain the following corollary on the limiting free energy:
\begin{corollary}\label{coro:limit_of_free_energy}
Suppose $\lambda>\max(\lambda_-,\lambda_+)$, $b>37/3$ and $1<a<\frac{b^2-6b-7}{4}$.  As $N\to\infty$ we have
\begin{align*}
F_N\to F(\beta)=\begin{cases}
-\frac{1}{2}\log(2e\beta)-\frac{1}{2}\int\log(L_+-t)d\mu_{fc}(t)+\beta L_+\quad&\text{if }\beta>\beta_c\\
-\frac{1}{2}\log(2e\beta)-\frac{1}{2}\int\log(\hat\gamma-t)d\mu_{fc}(t)+\beta \hat\gamma\quad&\text{if }0<\beta<\beta_c
\end{cases}
\end{align*}
in distribution.
\end{corollary}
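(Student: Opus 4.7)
The plan is to read Corollary \ref{coro:limit_of_free_energy} off directly from Theorems \ref{thm:low_temperature} and \ref{thm:high_temperature}, using the elementary fact that if a rescaled fluctuation converges in distribution to a non-degenerate limit, then the underlying quantity automatically converges in probability (and hence in distribution) to its centering constant.

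In the low temperature case $\beta > \beta_c$, Theorem \ref{thm:low_temperature} asserts that
$$
N^{1/(b+1)}\cdot\frac{F_N - G_{\text{low}}(\beta)}{\beta - \beta_c}
$$
converges in distribution to a reflected Weibull random variable supported on $(-\infty,0]$, where $G_{\text{low}}(\beta) := -\tfrac{1}{2}\log(2e\beta) - \tfrac{1}{2}\int\log(L_+-t)\,d\mu_{fc}(t) + \beta L_+$. In particular this sequence is tight, so multiplying by the deterministic null factor $(\beta-\beta_c)N^{-1/(b+1)}\to 0$ and invoking Slutsky's theorem yields $F_N - G_{\text{low}}(\beta)\to 0$ in distribution. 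Since the limit is deterministic, this upgrades to convergence in probability and matches the first branch of $F(\beta)$.

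The high-temperature case $\beta < \beta_c$ is entirely analogous: Theorem \ref{thm:high_temperature} provides a tight centered-Gaussian fluctuation at scale $N^{-1/2}$ around $G_{\text{high}}(\beta) := -\tfrac{1}{2}\log(2e\beta) - \tfrac{1}{2}\int\log(\hat\gamma - t)\,d\mu_{fc}(t) + \beta\hat\gamma$, so the same Slutsky argument gives $F_N - G_{\text{high}}(\beta)\to 0$ in probability, matching the second branch of $F(\beta)$.

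There is no substantive obstacle at this stage — Corollary \ref{coro:limit_of_free_energy} is a formal consequence of the two main theorems, and indeed the centering constants produced by Theorems \ref{thm:low_temperature} and \ref{thm:high_temperature} are designed precisely to be the candidate limiting free energies. All of the genuine technical content (the local law, the rigidity of eigenvalues, the CLT for linear spectral statistics, and the saddle-point analysis of the spherical integral that together identify those centering constants) is absorbed into the proofs of Theorems \ref{thm:low_temperature} and \ref{thm:high_temperature} themselves; the corollary then follows in a few lines.
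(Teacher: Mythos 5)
Your argument is correct and is exactly what the paper intends: the paper states that Corollary \ref{coro:limit_of_free_energy} is obtained immediately from Theorems \ref{thm:low_temperature} and \ref{thm:high_temperature}, and the tightness-plus-Slutsky step you spell out is the standard (and only) content of that deduction.
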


From the definitions of $\beta_c$ and $\hat\gamma$, we see that $\lim\limits_{\beta\to\beta_c-}\hat\gamma= L_+$ and that $\lim\limits_{\beta\to\beta_c-}F(\beta)=\lim\limits_{\beta\to\beta_c+}F(\beta)$.

\subsection{Outline of the proof}

In this paper, we study the fluctuation of $F_N$ by following the idea introduced in \cite{Baik+Lee}. In the {\bf low} temperature case (i.e., $\beta>\beta_c$), we will show that the leading term of $F_N$ is a linear function of $\lambda_1$. Since the fluctuation of $\lambda_1$ has size $O(N^{-\frac{1}{1+b}})$ and converges to a Weibull distribution, so does the fluctuation of $F_N$, as in Theorem \ref{thm:low_temperature}. In the {\bf high} temperature case (i.e., $0<\beta<\beta_c$), the leading term of $F_N$ is a linear function of the quantity
\begin{align}\label{eq86} 
	\frac{1}{N}\sum_{i=1}^N f(\lambda_i).
\end{align}
for some $N$-independent deterministic function $f$. Thus, by the central limit theorem (see Theorem \ref{thm:CLT}), the fluctuation of $F_N$ has size $O(N^{-1/2})$ and converges to a Gaussian distribution, as in Theorem \ref{thm:high_temperature}.

For the actual proof, in addition to the known results, we need the local law for resolvent entries, the central limit theorem for linear statistics, and the rigidity of eigenvalues. While we prove these results in the current paper, some of them are not strong enough to directly follow the analysis in \cite{Baik+Lee}. To overcome the difficulty, we introduce several changes in the detail of the proof. Most notably, (1) for the low temperature case, instead of proving a lemma analogous to Lemma 6.4 of \cite{Baik+Lee} that is required to control the integral of an exponential function along the curve of the steepest descent in Lemma \ref{lemma:analogue_of_Lemma_6.3}, we prove a refined result for the curve in Lemma \ref{lemma:steepest_descent_curve}, and (2) for the high temperature case, instead of controlling the difference $\vert \gamma-\hat\gamma\vert $ by applying the rigidity of eigenvalues, we use the local law to control it as in Lemma \ref{lemma:distance_between_gamma_and_hat_gamma}.

In what follows, we list our new results on the deformed Wigner matrices:

\begin{definition}\label{definition:D_delta(M)}
\begin{itemize}
	\item 
For any $M>0$ and $\delta>0$, define
	$$D_\delta(M)=\big\{x+\i y\big\vert \vert x\vert \le M, N^{-\delta}<\vert y\vert \le3\big\}$$
\item For any $z\in{\mathbb C }\backslash{\mathbb R }$, define $$m_N(z)=\frac{1}{N}\sum_{i=1}^N\frac{1}{\lambda_i-z},\quad G(z)=\frac{1}{W+\lambda V-z}$$
where $\lambda_1\ge\cdots\ge\lambda_N$ are eigenvalues of $W+\lambda V$.
\end{itemize}
\end{definition} 

\begin{theorem}[local law for resolvent entries] \label{thm:local_law_for_resolvent_entries} Suppose $M>0$ and $0<\delta\le\frac{1}{4}$. For any $\epsilon'>0$ and $D'>0$, we have for large enough $N$ that
	$${\mathbb P}\Big(\max_{i,j}\vert G_{ij}-\delta_{ij}\cdot\frac{1}{\lambda v_i-z-m_N(z)}\vert \le N^{\epsilon'-\frac{1}{2}} \vert \Im z\vert ^{-3},\forall z\in D_\delta(M)\Big)>1-N^{-D'}$$
\end{theorem}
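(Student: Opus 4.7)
The plan is to derive the estimate by a single Schur complement expansion with respect to the $i$-th row and column of $J=W+\lambda V$, which is particularly clean because $V$ is diagonal. The Schur identity reads
\begin{equation*}
\frac{1}{G_{ii}(z)}=\lambda v_i-z-W_{ii}-\sum_{j,k\neq i}W_{ij}\,G^{(i)}_{jk}(z)\,W_{ik},
\end{equation*}
where $G^{(i)}$ is the resolvent of the $(N-1)\times(N-1)$ principal minor obtained by deleting the $i$-th row and column. After conditioning on $V$ and on $W^{(i)}$, the row $(W_{ij})_{j\neq i}$ becomes an independent family of centered subexponential variables with variance $1/N$, and the analysis reduces to large-deviation estimates for quadratic and linear forms.

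First I would prove the Hanson--Wright-type bound
\begin{equation*}
\Bigl|\sum_{j,k\neq i}W_{ij}G^{(i)}_{jk}W_{ik}-\frac{1}{N}\,\tr G^{(i)}(z)\Bigr|\le N^{\epsilon'-1/2}|\Im z|^{-1}
\end{equation*}
with probability at least $1-N^{-D''}$ for arbitrary $D''>0$, by computing sufficiently high moments of the left-hand side and invoking Markov's inequality together with \eqref{eqn1}. The critical point is that the naive Hilbert--Schmidt estimate is improved by one factor of $|\Im z|^{-1}$ via the Ward identity $\sum_{j,k}|G^{(i)}_{jk}|^2=|\Im z|^{-1}\tr\Im G^{(i)}\le N|\Im z|^{-2}$. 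Combining this with the diagonal control $|W_{ii}|\le N^{\epsilon'-1/2}$ from \eqref{eqn2} and the trace-interlacing bound $|\tr G^{(i)}-\tr G|\le|\Im z|^{-1}$ yields
\begin{equation*}
\frac{1}{G_{ii}(z)}=\lambda v_i-z-m_N(z)+E_i,\qquad|E_i|\le N^{\epsilon'-1/2}|\Im z|^{-1},
\end{equation*}
with overwhelming probability. Inverting and bounding $|G_{ii}|$ and $|\lambda v_i-z-m_N(z)|^{-1}$ by $|\Im z|^{-1}$ (the latter using that $\Im m_N(z)$ and $\Im(-z)$ carry the same sign) gives
\begin{equation*}
\Bigl|G_{ii}-\frac{1}{\lambda v_i-z-m_N(z)}\Bigr|=\Bigl|\frac{E_i\,G_{ii}}{\lambda v_i-z-m_N(z)}\Bigr|\le N^{\epsilon'-1/2}|\Im z|^{-3},
\end{equation*}
which is exactly the diagonal part of the claim and explains the cubic factor.

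For the off-diagonal case I would use $G_{ij}=-G_{ii}\sum_{k\neq i}W_{ik}G^{(i)}_{kj}$ for $i\neq j$. A Bernstein-type concentration for the centered linear form, combined with the Ward bound $\sum_{k}|G^{(i)}_{kj}|^2\le|\Im z|^{-1}\Im G^{(i)}_{jj}\le|\Im z|^{-2}$, produces $|\sum_{k\neq i}W_{ik}G^{(i)}_{kj}|\le N^{\epsilon'-1/2}|\Im z|^{-1}$ and hence $|G_{ij}|\le N^{\epsilon'-1/2}|\Im z|^{-2}$, well inside the stated tolerance. Finally, to upgrade from pointwise-in-$z$ to uniform control over $D_\delta(M)$, I would place an $N^{-C}$-grid in the region, exploit the Lipschitz bound $\|\partial_z G(z)\|_{\max}\le|\Im z|^{-2}$ to interpolate, and union-bound over $i,j$ and grid points, choosing the moment order (hence $D''$) large enough to absorb the polynomial losses.

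The main obstacle I anticipate is the Hanson--Wright step in the subexponential (non-Gaussian) setting: one must produce a tail estimate that (i) retains the Ward-improved factor rather than the raw Hilbert--Schmidt size, and (ii) holds with overwhelming probability uniformly over the $O(N^2)$ pairs $(i,j)$ and the grid in $z$. While this is standard for Wigner-type models, the bookkeeping of moment orders calibrated to $D'$ and $\epsilon'$, together with careful use of the Ward identity inside high-moment expansions of the quadratic form, is where the technical weight of the proof concentrates.
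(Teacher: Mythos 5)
Your proposal follows essentially the same route as the paper: a Schur-complement expansion of $1/G_{ii}$, high-moment (Marcinkiewicz--Zygmund type) large-deviation bounds for the linear and quadratic forms in the row $(W_{ik})_k$ sharpened by the Ward identity, inversion to obtain the diagonal estimate with the $|\Im z|^{-3}$ loss, and a lattice argument for uniformity over $D_\delta(M)$, which is exactly the content of Lemma \ref{lemma:stochastic_domination}, Corollary \ref{coro:bound_for_1/G_ii} and the proof of Theorem \ref{thm:local_law_for_resolvent_entries}. The only cosmetic deviations are that you use the one-row identity $G_{ij}=-G_{ii}\sum_{k\neq i}W_{ik}G^{(i)}_{kj}$ for the off-diagonal entries where the paper uses the two-row expansion \eqref{eqn4}, and two harmless slips (the Schur complement has $+W_{ii}$, and $\Im m_N(z)$ has the same sign as $\Im z$, not $\Im(-z)$); neither affects the argument.
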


We remark that the local law for the trace of the resolvent was proved by the first author and Schnelli. See \cite{Lee+Schnelli} and also Section \ref{sec:local_law_for_trace} of the current paper.

\begin{theorem}[CLT for linear statistics]\label{thm:CLT}
	Let $f(x)$ be a function which is analytic on a neighborhood of $[L_-,L_+]$. Suppose $a>1$, $b>37/3$ and $\lambda>\max(\lambda_+,\lambda_-)$. Then 
$$\frac{1}{\sqrt N}\Big(\sum_if(\lambda_i)-N\int f(t)\rho_{fc}(t)dt\Big)$$
	 converges in distribution to a centered Gaussian distribution whose variance is
	$$\frac{1}{4\pi^2}\Big(\oint_{\mathcal C} f(\xi)(1+m_{fc}'(\xi))m_{fc}(\xi)d\xi\Big)^2-\frac{1}{4\pi^2}\int_{-1}^1\Big(\oint_{\mathcal C}\dfrac{f(\xi)(1+m_{fc}'(\xi))}{\lambda t-\xi-m_{fc}(\xi)}d\xi\Big)^2 d\mu(t)$$
	where $\mathcal C$ is a counterclockwise path enclosing $[L_-,L_+]$ such that $f$ is analytic on a neighborhood of the region bounded by $\mathcal C$. 
\end{theorem}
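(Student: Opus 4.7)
The plan is to reduce the linear statistic to a contour integral of the Stieltjes transform, derive a linearized equation for $N(m_N(z) - m_{fc}(z))$ via the Schur complement formula, and recognize the resulting $\sqrt N$-scale fluctuation as a classical sum of i.i.d.\ functions of the diagonal entries $v_i$. Since $f$ is analytic on a neighbourhood of $[L_-, L_+]$, choose $\mathcal C$ inside that neighbourhood at distance $\Theta(1)$ from the spectrum, so that Theorem \ref{thm:local_law_for_resolvent_entries} applies uniformly on $\mathcal C$ with the $|\Im\xi|^{-3}$ factor bounded. By Cauchy,
$$\sum_i f(\lambda_i) - N\int f\,d\mu_{fc} = -\frac{1}{2\pi \i}\oint_{\mathcal C} f(\xi)\,N\bigl(m_N(\xi) - m_{fc}(\xi)\bigr)\,d\xi,$$
so the task reduces to identifying the $\sqrt N$-scale fluctuation of $N(m_N(\xi) - m_{fc}(\xi))$.

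The Schur complement formula gives $G_{ii}(z)^{-1} = \lambda v_i - z - m_N(z) + \mathcal E_i(z)$, where $\mathcal E_i(z)$ collects a centred quadratic form in the $i$-th row of $W$ together with a rank-one correction from removing the $i$-th row and column; it is of size $O_\prec(N^{-1/2})$ by Theorem \ref{thm:local_law_for_resolvent_entries}. Summing over $i$, using $\sum_i G_{ii} = N m_N$, and expanding the reciprocal in $\mathcal E_i$ yields
$$N m_N(z) = \sum_i \frac{1}{\lambda v_i - z - m_N(z)} + O_\prec(1).$$
Subtracting the self-consistent equation $m_{fc}(z) = \int d\mu(v)/(\lambda v - z - m_{fc}(z))$ and linearizing in $m_N - m_{fc}$ gives
$$\bigl(1 - \Phi(z)\bigr)\, N\bigl(m_N(z) - m_{fc}(z)\bigr) = \mathcal Z_N^V(z) + O_\prec(1),\qquad \Phi(z) := \int \frac{d\mu(v)}{(\lambda v - z - m_{fc}(z))^2},$$
where $\mathcal Z_N^V(z)$ is the centred i.i.d.\ sum
$$\mathcal Z_N^V(z) := \sum_i \Bigl[\frac{1}{\lambda v_i - z - m_{fc}(z)} - \int \frac{d\mu(v)}{\lambda v - z - m_{fc}(z)}\Bigr].$$
Differentiating the self-consistent equation yields $m_{fc}'(z) = \Phi(z)(1+m_{fc}'(z))$, hence $(1-\Phi(z))^{-1} = 1 + m_{fc}'(z)$, which is precisely the factor appearing in the stated variance. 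The Wigner noise from the quadratic-form part of $\mathcal E_i$ produces only an $O_\prec(1)$ remainder (this is the usual Wigner-CLT scale) and is therefore negligible against the $\sqrt N$-scale $V$-fluctuation.

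Plugging the identity $N(m_N(\xi) - m_{fc}(\xi)) = (1+m_{fc}'(\xi))\mathcal Z_N^V(\xi) + O_\prec(1)$ into the contour representation and exchanging sum and integral,
$$-\frac{1}{2\pi \i}\oint_{\mathcal C} f(\xi)\, N(m_N - m_{fc})(\xi)\,d\xi = -\frac{1}{2\pi \i}\sum_i \bigl[F(v_i) - \E F(v)\bigr] + O_\prec(1),$$
where $F(v) := \oint_{\mathcal C} \tfrac{f(\xi)(1+m_{fc}'(\xi))}{\lambda v - \xi - m_{fc}(\xi)}\,d\xi$. The classical Lindeberg CLT for $\sum_i [F(v_i) - \E F(v)]/\sqrt N$ delivers a centred Gaussian limit with variance $(2\pi)^{-2}\,\mathrm{Var}(F(v))$, and since Fubini together with the self-consistent equation gives $\int F(v)\,d\mu(v) = \oint f(\xi)(1+m_{fc}'(\xi)) m_{fc}(\xi)\,d\xi$, the identity $\mathrm{Var}(F(v)) = \int F(v)^2 d\mu(v) - (\int F(v) d\mu(v))^2$ expands to exactly the two-term formula stated in Theorem \ref{thm:CLT}.

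The main obstacle is the uniform bookkeeping of the Schur-expansion error. Although Theorem \ref{thm:local_law_for_resolvent_entries} gives $O_\prec(N^{-1/2})$ pointwise control on $\mathcal E_i$, the linearization generates second-order terms involving $\mathcal E_i^2$ and cross terms with $(m_N - m_{fc})$; these must contribute $o_\prec(\sqrt N)$ after contour integration, not merely $O_\prec(\sqrt N)$. The regularity assumption $b > 37/3$ enters here through the strength of the local law near the spectral edge and the rigidity from Section \ref{sec:rigidity}, which together ensure that $\mathcal C$ can be placed at macroscopic distance from the largest eigenvalue with overwhelming probability, allowing all expansions to be controlled uniformly in $\xi \in \mathcal C$.
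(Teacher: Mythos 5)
Your skeleton coincides with the paper's: reduce to a contour integral of $N(m_N-m_{fc})$, identify the leading fluctuation as the i.i.d.\ sum $\sum_i(g_i-\E g_i)$ weighted by $(1+m_{fc}')=(1-\Phi)^{-1}$, apply the classical CLT, and expand the variance — your final variance computation matches \eqref{eq93}. The paper routes through the intermediate quantity $\hat m_{fc}$ (the free convolution with the empirical measure of $V$), writing $m_N-m_{fc}=(m_N-\hat m_{fc})+(\hat m_{fc}-m_{fc})$ and extracting the i.i.d.\ sum from the second piece (Lemma \ref{lemma:hat_m_fc-m_fc}); your direct linearization of $m_N$ around $m_{fc}$ is an equivalent bookkeeping. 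The genuine gap is the step you defer: the claim that the Wigner noise contributes only $O_\prec(1)$. Theorem \ref{thm:local_law_for_resolvent_entries} controls each Schur-complement error $\mathcal E_i$ individually at order $N^{-1/2+\epsilon}$, so summing over $i$ in your linearized equation yields
$$Nm_N(z)-\sum_i\frac{1}{\lambda v_i-z-m_N(z)}=O_\prec(N^{1/2}),$$
which is exactly the order of the main term $\mathcal Z_N^V(z)$, not $O_\prec(1)$. Upgrading this to $o(\sqrt N)$ requires exploiting cancellation in $\sum_i g_i(z)^2\mathcal E_i(z)$: one must show the centred quadratic forms $\sum_{k,l}^{(i)}W_{ik}G^{(i)}_{kl}W_{li}-\frac1N\tr G^{(i)}$ decorrelate across $i$, and compute the deterministic bias to order $O(1)$. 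That is precisely the content of Lemmas \ref{lemma:convergence_of_P_1} and \ref{lemma:convergence_of_P_2} — a characteristic-function/cumulant-expansion argument for the fluctuation $\tr G-\E_N\tr G$, and the explicit second-order expansion of Lemma \ref{lemma:computation} for the bias $\E_N\tr G-N\hat m_{fc}$ — and it occupies most of Section \ref{sec:CLT}. Invoking ``the usual Wigner-CLT scale'' does not substitute for it; as written, the error term is of the same order as the signal.

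Two further corrections. First, a closed contour enclosing $[L_-,L_+]$ necessarily crosses the real axis, so $\vert\Im\xi\vert^{-3}$ cannot be bounded on all of $\mathcal C$ and the local law does not apply uniformly there; the paper splits off the arcs with $\vert\Im\xi\vert<N^{-\varpi}$ (the term $P_0$ in \eqref{eq91}) and bounds them using the deterministic estimate $\vert G_{ii}\vert\le 2/d$ on the high-probability event that all eigenvalues lie well inside $\mathcal C$. You need this (or an equivalent device) as well. Second, your closing remark misattributes the role of $b>37/3$: rigidity of eigenvalues is not used in the proof of Theorem \ref{thm:CLT}; the condition enters through the existence of admissible exponents $\varpi,\varsigma,\varsigma'$ in Definition \ref{definition:constants}, which make the fluctuation-averaging error terms (e.g.\ the right-hand side of \eqref{eqn61}) negative powers of $N$.
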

\begin{definition}\label{definition:classic_position}
	Define the deterministic number $\gamma_x=\gamma_x(N)$ and $\hat\gamma_y=\hat\gamma_y(N)$ by
	$$\mu_{fc}([\gamma_x,+\infty])=\frac{x-\frac{1}{2}}{N}\quad\forall x\in[1,N]$$
	$$\mu_{fc}([\hat\gamma_y,+\infty])=\frac{y}{N}\quad\forall y\in(0,N)$$
	with the convention that $\hat\gamma_N=L_-$ and $\hat\gamma_0=L_+$. Here $x$ and $y$ are not necessarily  integers.
\end{definition}
\begin{theorem}[Rigidity of eigenvalues]\label{thm:rigidity_concise_version}
Suppose $a>1$, $b>3$ and $\lambda>\max(\lambda_-,\lambda_+)$. Suppose $\epsilon\in(\frac{1}{b+1},\frac{1}{4})$. There exists an event $E_N(\epsilon)$ such that
\begin{align}\label{eq115}
{\mathbb P}(E_N(\epsilon))\ge1-\kappa_0(\log N)^{1+2b}N^{-\epsilon}
\end{align}
when $N$ is large enough. Moreover, if $E_N(\epsilon)$ holds, then:
\begin{enumerate}
	\item for any $\zeta\in(0,\frac{\frac{1}{4}-\epsilon}{b+1})$ we have
\begin{align}\label{eq103}
\vert \lambda_i-\gamma_i\vert \le N^{-\frac{1}{4}+\epsilon+\zeta b}\quad\text{when  }N\text{ is large enough and }i\in\mathbb Z\cap[\kappa'N^{1-\zeta(b+1)},\frac{N}{2}] 
\end{align}
	\item for any $\zeta'\in(0,\frac{\frac{1}{4}-\epsilon}{a+1})$ we have
\begin{align}\label{eq104}
\vert \lambda_i-\gamma_i\vert \le N^{-\frac{1}{4}+\epsilon+\zeta' a}\quad\text{when  }N\text{ is large enough and }i\in\mathbb Z\cap[\frac{N}{2},N-\kappa'N^{1-\zeta'(a+1)}].
\end{align}
\end{enumerate}
Here $\kappa_0>0$ and $\kappa'>0$ are constants independent of $\zeta$ and $\zeta'$.
\end{theorem}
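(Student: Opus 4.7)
My plan is to derive Theorem \ref{thm:rigidity_concise_version} from the local law (Theorem \ref{thm:local_law_for_resolvent_entries}) in two steps: first, convert pointwise control of $m_N(z) - m_{fc}(z)$ into a bound on the eigenvalue counting function $\mathcal N(E) := \#\{i : \lambda_i \ge E\}$; and second, invert this counting bound into a pointwise rigidity estimate using the edge density behavior from Lemma \ref{lemma:properties_of_mu_fc}. For the first step, I would take the trace of the resolvent estimate, combine it with the sharper trace local law of \cite{Lee+Schnelli} available at larger $|\Im z|$, and then apply the Helffer--Sj\"{o}strand formula to a smooth mollification of $\mathbf 1_{[E,+\infty)}$, with mollification scale $\eta_\ast$ and contour height $\eta$ optimized so that the $|\Im z|^{-3}$ weight does not dominate. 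This should yield a counting bound of the form
\begin{equation*}
\big| \mathcal N([E, +\infty)) - N\mu_{fc}([E, +\infty)) \big| \le N^{3/4 + \epsilon}
\end{equation*}
uniformly in $E \in [L_-, L_+]$, on an event $E_N(\epsilon)$ whose failure probability is at most $\kappa_0 (\log N)^{1+2b} N^{-\epsilon}$, absorbing a union bound over a polynomial net in $E$ and logarithmic concentration factors for the extreme order statistics of $V$.

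For the inversion step, on $E_N(\epsilon)$ Lemma \ref{lemma:properties_of_mu_fc} gives $L_+ - \gamma_i \asymp (i/N)^{1/(b+1)}$ and $\rho_{fc}(\gamma_i) \asymp (i/N)^{b/(b+1)}$ for $i \le N/2$. Combining $\mathcal N([\lambda_i, +\infty)) = i$ with the counting bound above, and using monotonicity of $\mu_{fc}([\cdot, +\infty))$ together with the mean value theorem, I would obtain
\begin{equation*}
|\lambda_i - \gamma_i| \le \frac{N^{3/4+\epsilon}}{N \rho_{fc}(\gamma_i)} \asymp N^{-1/4+\epsilon}(N/i)^{b/(b+1)}.
\end{equation*}
The restriction $\zeta < (1/4 - \epsilon)/(b+1)$ in the hypothesis is exactly what ensures the right-hand side is much smaller than $L_+ - \gamma_i$, so that $\rho_{fc}$ remains comparable to $\rho_{fc}(\gamma_i)$ throughout $[\lambda_i, \gamma_i]$ and the inversion step is valid. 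For $i \ge \kappa' N^{1-\zeta(b+1)}$ the right-hand side is at most $N^{-1/4+\epsilon+\zeta b}$, giving \eqref{eq103}; $\kappa'$ is chosen large enough so the edge density estimates from Lemma \ref{lemma:properties_of_mu_fc} hold throughout the relevant range. Estimate \eqref{eq104} near $L_-$ follows by the symmetric argument with $a$ in place of $b$.

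The main obstacle is obtaining the counting bound with the correct exponent $3/4$. The weight $|\Im z|^{-3}$ in Theorem \ref{thm:local_law_for_resolvent_entries} is comparatively weak, and a Helffer--Sj\"{o}strand estimate based solely on it would give a strictly worse counting bound whose resulting rigidity would fall short of \eqref{eq103}. The remedy is to decompose the contour integration into a bulk region, on which the sharper trace estimate from \cite{Lee+Schnelli} applies, and a narrow strip near the real axis where the $|\Im z|^{-3}$ factor appears but the contour length is small enough to compensate. A secondary subtlety is ensuring uniformity in $E$, which is handled by union-bounding over a polynomial net and is the source of the logarithmic factor in \eqref{eq115}.
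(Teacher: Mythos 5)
Your overall strategy is the same as the paper's: the paper also runs Helffer--Sj\"ostrand against the trace local law of \cite{Lee+Schnelli} (the events $\tilde\Omega(\epsilon)$ and $\Omega_*(\epsilon)$, collected into $A_N(\epsilon)$ in Definition \ref{definition:A_N}) to obtain $\vert\mu_N(J)-\mu_{fc}(J)\vert\lesssim \eta_0=N^{-\frac14+\epsilon}$ for every interval $J$ (Lemmas \ref{lemma:difference_of_integral_for_f} and \ref{lemma:mu_N_and_mu_fc_are_close}), which is exactly your counting bound at scale $N^{3/4+\epsilon}$, and then inverts through the density behavior $\rho_{fc}(L_+-x)\asymp x^b$ of Lemma \ref{lemma:properties_of_mu_fc}. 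The entrywise law of Theorem \ref{thm:local_law_for_resolvent_entries} is in fact never used here. Two of your side remarks are off but harmless: no net or union bound over $E$ is needed, since the counting bound is deterministic on $A_N(\epsilon)$ and the factor $(\log N)^{1+2b}N^{-\epsilon}$ in \eqref{eq115} is entirely the probability of the $V$-measurable event $\Omega_0$ of Proposition \ref{proposition:extreme_eigenvalue} (order statistics of $V$); and the apparent circularity in ``so that $\rho_{fc}$ remains comparable on $[\lambda_i,\gamma_i]$'' is resolved in the paper by arguing at the level of the counting function first -- showing $g(\lambda_i)\gtrsim N^{-\zeta(b+1)}$ and hence $L_+-\lambda_i\gtrsim N^{-\zeta}$ before dividing by the density (Case 2 of the proof).

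The one genuine gap is your treatment of the strip $\vert\Im z\vert<\eta_1$. ``The contour length is small enough to compensate'' is not a valid justification: before rigidity is established there is no a priori control on the local eigenvalue density, and $\Im m_N(x+\i y)$ can be as large as $\#\{j:\vert\lambda_j-x\vert\le y\}/(Ny)$, so the contribution of the strip to the Helffer--Sj\"ostrand integral is not small merely because the strip is thin; in the worst case it is of order $\eta_1/\eta_0$, which is far too large. The paper's Lemma \ref{lemma:difference_of_integral_for_f} handles this term ($K_3$ there) by the monotonicity trick: $y\mapsto y\,\Im m_N(x+\i y)$ is nonnegative and increasing, so the integral over $0<y<\eta_1$ is bounded by $\eta_1^2\,\Im m_N(x+\i\eta_1)$, and at height $\eta_1=N^{-\frac{1}{1+b}-\epsilon}$ the event $\Omega_*(\epsilon)$ gives $\Im m_N\le N^{2\epsilon-\frac12}$, yielding $\vert K_3\vert\lesssim\eta_1^2\eta_0\ll\eta_0$. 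You need this step (or an equivalent a priori bound on $\Im m_N$ down to the cutoff scale), and it is also what forces the inner cutoff to sit at $\eta_1$, inside the domain $\mathcal D_\epsilon'$ where the trace local law is available, rather than at an arbitrarily optimized height.
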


\subsection{Remarks}

As discussed in Introduction, we expect the existence of the dichotomy between the fluctuation given by the LSS in the high temperature regime and the fluctuation dominated by the largest eigenvalue in the low temperature regime, regardless of the choice of various parameters in the deformed Wigner matrix. The main technical issue is the non-optimality of the local law; if the local law can be improved, the rigidity result will also be improved and it will be possible to relax the condition on $a$ and $b$. It is even expected that the fluctuation of $F_N$ would converge to a Gaussian distribution when $\lambda<\lambda_+$, since the fluctuation of $\lambda_1$ converges to a Gaussian distribution in this case. However, we do not attempt to prove the claim in the current paper.


\section{Preliminaries}\label{sec:Preliminaries}

\begin{definition}
Suppose $\omega$ is a measure on ${\mathbb R }$. Define its Stieltjes transform by
\begin{align}\label{StieltjesTransform}
\int \frac{d\omega(t)}{t-z},\quad\forall z\in {\mathbb C }\backslash\text{supp}(\omega).
\end{align}
\end{definition}

\subsection{Fluctuation of the largest eigenvalue}
Recall that $\lambda_1\ge\lambda_2\ge\cdots\ge\lambda_N$ are eigenvalues of $W+\lambda V$. The following theorem can be found in \cite{Lee+Schnelli}.
\begin{theorem}\label{thm:fluctuation_of_lambda_1}
If $b>1$ and $\lambda>\lambda_+$, then
$$\lim_{N\to\infty}{\mathbb P}\Big(N^{\frac{1}{1+b}}(L_+-\lambda_1)\le s\Big)=1-\exp\Big(-\frac{C_\mu s^{1+b}}{1+b}\Big),\quad\forall s\ge0$$
where $C_\mu=\Big(\frac{\lambda}{\lambda^2-\lambda_+^2}\Big)^{b+1}\cdot d(1)\cdot2^a\cdot Z^{-1}$ as defined in Theorem \ref{thm:low_temperature}.

If $a>1$ and $\lambda>\lambda_-$, then
$$\lim_{N\to\infty}{\mathbb P}\Big(N^{\frac{1}{1+a}}(\lambda_N-L_-)\le s\Big)=1-\exp\Big(-\frac{C_\mu' s^{1+a}}{1+a}\Big),\quad\forall s\ge0$$
where $C_\mu'=\Big(\frac{\lambda}{\lambda^2-\lambda_-^2}\Big)^{a+1}\cdot d(-1)\cdot2^b\cdot Z^{-1}$.
\end{theorem}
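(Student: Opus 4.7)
The plan is to reduce the statement to classical extreme value theory for the i.i.d.\ sample $v_1,\dots,v_N$ by showing that, to leading order, $\lambda_1$ is a deterministic smooth function of the largest order statistic $v_{(1)}=\max_i v_i$, with a negligible Wigner contribution at the relevant scale $N^{-1/(b+1)}$.

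First I would establish the edge extreme value statistics of the diagonal. Since the density of $\mu$ behaves like $\frac{d(1)\,2^a}{Z}(1-x)^b$ as $x\uparrow 1$, the elementary identity $\mathbb P(v_{(1)}\le c)=F(c)^N$ gives
\begin{align*}
\lim_{N\to\infty}\mathbb P\!\left(N^{1/(b+1)}(1-v_{(1)})\le u\right)=1-\exp\!\left(-\frac{d(1)\,2^a}{Z(b+1)}\,u^{b+1}\right),\qquad u\ge 0,
\end{align*}
with the analogous statement at the lower edge for $v_{(N)}=\min_i v_i$ using the density asymptotic $\sim\frac{d(-1)\,2^b}{Z}(1+x)^a$ near $x=-1$.

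Next I would relate $\lambda_1$ to $v_{(1)}$ through the resolvent. By the Schur complement formula $G_{ii}(z)=(\lambda v_i-z-Z_i(z))^{-1}$, and by Theorem \ref{thm:local_law_for_resolvent_entries} combined with standard large-deviation estimates for quadratic forms in $W$, one has $Z_i(z)=m_{fc}(z)+o(1)$ uniformly in $i$ for $z$ slightly above $L_+$. The largest real pole of $G$ near $L_+$ is therefore, up to lower-order error, the largest solution of $\lambda v_i=z+m_{fc}(z)$, attained at the index $i^\star$ with $v_{i^\star}=v_{(1)}$. Linearizing $z\mapsto z+m_{fc}(z)$ at $L_+$ using $L_++m_{fc}(L_+)=\lambda$ from Lemma \ref{lemma:properties_of_mu_fc}, and differentiating the subordination equation $m_{fc}(z)=\int(\lambda t-z-m_{fc}(z))^{-1}d\mu(t)$ at $z=L_+$ (where $\lambda t-L_+-m_{fc}(L_+)=\lambda(t-1)$, so that the relevant integral equals $\lambda_+^2/\lambda^2$), one obtains $1+m_{fc}'(L_+)=\lambda^2/(\lambda^2-\lambda_+^2)$ and hence
\begin{align*}
L_+-\lambda_1=\frac{\lambda^2-\lambda_+^2}{\lambda}\,(1-v_{(1)})+o_{\mathbb P}\!\left(N^{-1/(b+1)}\right).
\end{align*}
Substituting into the EVT limit, the factor $\lambda/(\lambda^2-\lambda_+^2)$ is raised to the $(b+1)$-th power under rescaling and produces exactly the stated constant $C_\mu$.

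The main obstacle is controlling the $o_{\mathbb P}(N^{-1/(b+1)})$ error uniformly at the edge, which is finer than the generic local-law scale and is delicate because $\rho_{fc}$ vanishes only like $(L_+-t)^b$ rather than as a square root. This likely requires a refined perturbative analysis of the approximate top eigenvector, which is expected to be partially localized at the index $i^\star$ realizing $v_{(1)}$; then $\lambda_1$ is governed by a one-dimensional effective eigenvalue problem at $i^\star$, and one must show that the Wigner contribution at this single site is negligible on the scale $N^{-1/(b+1)}$. A secondary point is ruling out interference from $v_{(2)}$, which also lies within $O(N^{-1/(b+1)})$ of $1$; this is handled by the Poisson joint limit of the upper order statistics, which ensures $v_{(1)}-v_{(2)}$ is of the same order, so the two candidate poles of $G$ remain well resolved on the relevant scale.
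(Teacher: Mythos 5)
The paper does not prove this theorem at all: it is imported verbatim from Lee--Schnelli \cite{Lee+Schnelli} (the statement for $\lambda_1$ is their main extremal-eigenvalue result, and the remark following the theorem handles $\lambda_N$ by the symmetry $W+\lambda V\mapsto -W+\lambda(-V)$). So there is no in-paper proof to compare against, and the relevant question is whether your sketch would stand on its own.

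Your outline correctly identifies the mechanism and reproduces the strategy of the cited work: the Weibull law comes from classical extreme value theory for $v_{(1)}=\max_i v_i$ under the Jacobi density, and $\lambda_1$ tracks the largest solution of $\lambda v_i=z+m_{fc}(z)$. Your computation of the linearization constant is right: from \eqref{eqn48} evaluated at $z=L_+$, using $L_++m_{fc}(L_+)=\lambda$ so that $\lambda t-L_+-m_{fc}(L_+)=\lambda(t-1)$ and $\int(\lambda(t-1))^{-2}d\mu(t)=\lambda_+^2/\lambda^2$, one gets $1+m_{fc}'(L_+)=\lambda^2/(\lambda^2-\lambda_+^2)$, and substituting $u=\tfrac{\lambda}{\lambda^2-\lambda_+^2}s$ into the EVT limit yields exactly $C_\mu$. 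However, the step you flag as "the main obstacle" --- proving
\begin{equation*}
L_+-\lambda_1=\frac{\lambda^2-\lambda_+^2}{\lambda}\bigl(1-v_{(1)}\bigr)+o_{\mathbb P}\bigl(N^{-1/(b+1)}\bigr)
\end{equation*}
--- is not a technicality to be deferred; it is the entire content of the theorem, and your proposal does not supply an argument for it. The difficulty is exactly where you locate it: the self-consistent equation degenerates near $L_+$ because $\rho_{fc}$ vanishes like $(L_+-t)^b$, so the generic local law (accuracy $N^{2\epsilon-1/2}$ for $\vert\Im z\vert\geq N^{-1/2-\epsilon}$) does not by itself resolve the pole structure of $G$ at the scale $N^{-1/(b+1)}$; one must work on the restricted domain $\mathcal D_\epsilon'$ that excludes resonances with the top $O(1)$ many $\tilde v_i$'s, condition on the event $\Omega_0(\epsilon,c_1,c_2)$ controlling the gaps $\vert\tilde v_j-\tilde v_k\vert$ and the sums $\frac1N\sum_{i\neq k}\vert\lambda\tilde v_i-z-m_{fc}(z)\vert^{-2}<c_1<1$, and then run a perturbative analysis of the rank-one effective problem at $i^\star$. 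This is precisely the apparatus of Definition \ref{definition:D_epsilon_andD'_epsilon} and Proposition \ref{proposition:extreme_eigenvalue}, which the present paper imports from \cite{Lee+Schnelli} for other purposes. Your closing remarks about partial localization of the top eigenvector and the Poisson joint limit of $(v_{(1)},v_{(2)})$ show you see what is needed, but as written the proposal is a correct heuristic with the decisive estimate asserted rather than proved.
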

\begin{remark}
For the second conclusion of Theorem \ref{thm:fluctuation_of_lambda_1}, see the sentence above section 2.4.1 in \cite{Lee+Schnelli}. It can also be proved by replacing $W+\lambda V$ by $-W+\lambda (-V)$.
\end{remark}

The next lemma is a direct corollary of (3.22) in \cite{Lee+Schnelli}.
\begin{lemma}\label{lemma:all_eigenvalues_in_2+lambda+r}
For any constant $r>0$ we have that
$$\lim_{N\to\infty}{\mathbb P}\Big(\max\limits_{1\le k\le N}\vert \lambda_k\vert \le 2+\lambda+r\Big)=1.$$
Therefore, 
\begin{align}\label{eqn22}
	[L_-,L_+]\subset[-2-\lambda,2+\lambda].
\end{align}
\end{lemma}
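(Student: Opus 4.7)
The plan is to reduce the bound on eigenvalues to a bound on operator norms via the triangle inequality $\|W + \lambda V\|_{\text{op}} \leq \|W\|_{\text{op}} + \lambda \|V\|_{\text{op}}$, and then control the two pieces separately.

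For the deformation part, note that since the Jacobi measure $\mu$ is supported on $[-1,1]$, each diagonal entry $v_i$ of $V$ satisfies $|v_i| \leq 1$ deterministically, so $\|V\|_{\text{op}} = \max_i |v_i| \leq 1$ without any probabilistic statement. This immediately gives $\|\lambda V\|_{\text{op}} \leq \lambda$.

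For the Wigner part, I would invoke the classical result that $\|W\|_{\text{op}} \to 2$ in probability (indeed almost surely) as $N \to \infty$, which holds under the subexponential tail condition in Definition \ref{def:Wigner}. Concretely, for every $r > 0$, one has ${\mathbb P}(\|W\|_{\text{op}} \leq 2 + r) \to 1$; this can be derived from the Bai--Yin theorem, from moment-method bounds on $\tr W^{2k}$ with $k$ growing logarithmically in $N$, or simply cited from \cite{Lee+Schnelli} where this is essentially equation (3.22). Combining the two bounds,
\[
\max_{1 \leq k \leq N} |\lambda_k| = \|W + \lambda V\|_{\text{op}} \leq \|W\|_{\text{op}} + \lambda \leq 2 + \lambda + r
\]
with probability tending to $1$, which is the first assertion.

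For the support statement, I would argue as follows: Pastur's theorem gives that the empirical spectral distribution of $W + \lambda V$ converges weakly to $\mu_{fc}$. Fix any $r > 0$ and any bounded continuous test function $f$ supported outside $[-2-\lambda-r, 2+\lambda+r]$; then $\tfrac{1}{N}\sum_i f(\lambda_i) = 0$ on the event established above, so taking expectations and letting $N \to \infty$ yields $\int f\, d\mu_{fc} = 0$. Hence $\mu_{fc}$ is supported in $[-2-\lambda-r, 2+\lambda+r]$, and letting $r \downarrow 0$ gives $[L_-, L_+] \subset [-2-\lambda, 2+\lambda]$. The only substantive ingredient is the norm bound on $W$, which is a standard tool and for which the cited \cite{Lee+Schnelli} estimate is directly applicable; no genuine obstacle is expected.
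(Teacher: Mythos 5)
Your proposal is correct. Note, however, that the paper does not actually prove this lemma: it simply declares it to be "a direct corollary of (3.22) in \cite{Lee+Schnelli}", so there is no detailed argument in the paper to compare against. What you supply is the standard self-contained proof that presumably underlies that citation: the deterministic bound $\|V\|_{\mathrm{op}}\le 1$ from the support of the Jacobi measure, the norm convergence $\|W\|_{\mathrm{op}}\to 2$ (which under the uniform subexponential tails follows from the moment method with $k\sim\log N$, or from Bai--Yin), and the triangle inequality. Your deduction of \eqref{eqn22} from the first assertion via weak convergence of the empirical spectral distribution to $\mu_{fc}$ is also sound: on the high-probability event the linear statistic of any bounded continuous $f$ supported outside $[-2-\lambda-r,2+\lambda+r]$ vanishes, and the off-event contribution is controlled by $\|f\|_\infty$ times a vanishing probability, so $\int f\,d\mu_{fc}=0$ and one lets $r\downarrow 0$. (An alternative, purely free-probabilistic route to \eqref{eqn22} is to observe that $\mathrm{supp}(\mu_{sc}\boxplus\lambda\mu)$ is contained in the Minkowski sum of the two supports, but your argument is equally valid and stays closer to the random-matrix statement.) The only caveat worth flagging is that the entries of $W$ here are independent but not assumed identically distributed, so one should invoke the moment-method version of the norm bound (valid under the uniform moment bounds \eqref{eqn1}) rather than the i.i.d. form of Bai--Yin; this is a presentational point, not a gap.
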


\subsection{Local law for the trace of the resolvent}\label{sec:local_law_for_trace}
In this subsection we introduce the local law for the trace of the resolvent obtained in \cite{Lee+Schnelli}.
\begin{definition}\label{definition:Stieltjes_transforms}
Suppose $\mu_N$ is the empirical measure of $W+\lambda V$: $\mu_N=\frac{1}{N}\sum_{i=1}^N\delta_{\lambda_i}$.
	Let
\begin{itemize}
	\item $m_{fc}(z)$  be the Stieltjes transform of $\mu_{fc}$: $m_{fc}(z)=\int \frac{\rho_{fc}(t)}{t-z}dt$ (as mentioned in Theorem \ref{thm:high_temperature});
	\item $\hat m_{fc}(z)$  be the Stieltjes transform of $(\frac{1}{N}\sum_{i=1}^N\delta_{\lambda v_i})\boxplus\mu_{sc}$.
\end{itemize}
\end{definition}

\begin{definition}\label{definition_g_and_g_i}
	For $z\in{\mathbb C }\backslash{\mathbb R }$, let
	$$g_i(z)=\frac{1}{\lambda v_i-z-m_{fc}(z)},\quad\hat g_i(z)=\frac{1}{\lambda v_i-z-\hat m_{fc}(z)}.$$
\end{definition}
\begin{lemma}\label{lemma:self_consistent_equation}
For $z\in{\mathbb C }\backslash{\mathbb R }$,
$$m_N(z)=\frac{1}{N}\tr G(z),\quad\hat m_{fc}(z)=\frac{1}{N}\sum_{i=1}^N\hat g_i(z),\quad m_{fc}(z)=\E[g_i(z)]=\int\frac{1}{\lambda t-z-m_{fc}(z)}d\mu(t)$$
\begin{align}\label{eqn21}
(1+\hat m_{fc}'(z))\Big(1-\frac{1}{N}\sum \hat g_i^2(z)\Big)=1
\end{align}
\begin{align}\label{eqn48}
	(1+m_{fc}'(z))\Big(1-\int\frac{d\mu(t)}{(\lambda t-z-m_{fc}(z))^2}\Big)=1.
\end{align}
\begin{align}\label{eq102}
\vert g_i(z)\vert \le\frac{1}{\vert \Im z\vert }\quad\text{and}\quad \vert \hat g_i(z)\vert \le\frac{1}{\vert \Im z\vert }
\end{align}
\end{lemma}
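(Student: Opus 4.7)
The plan is to establish the six assertions of the lemma in sequence, all of which reduce to standard Stieltjes transform identities combined with the subordination characterization of additive free convolution. First I would observe that $m_N(z)=\frac{1}{N}\tr G(z)$ is immediate from spectral decomposition: if $W+\lambda V = \sum_i \lambda_i u_i u_i^\top$, then the resolvent has eigenvalues $1/(\lambda_i-z)$, whose normalized trace is precisely $m_N(z)$.

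For the two self-consistent equations, I would invoke the subordination formulation of additive free convolution with a semicircle. For any compactly supported probability measure $\nu$ on $\R$, the Stieltjes transform $m$ of $\nu \boxplus \mu_{sc}$ is characterized as the unique solution (in the upper half plane for $\Im z > 0$) of $m(z) = \int d\nu(t)/(t-z-m(z))$. Applying this to $\nu = \frac{1}{N}\sum_i \delta_{\lambda v_i}$ yields $\hat m_{fc}(z) = \frac{1}{N}\sum_i \hat g_i(z)$, and applying it to $\nu = \lambda\mu$ (the law of each $\lambda v_i$) yields $m_{fc}(z) = \int d\mu(t)/(\lambda t - z - m_{fc}(z)) = \E[g_1(z)]$.

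The derivative identities come from differentiating these self-consistent equations in $z$, which is permissible since $m_{fc}$ and $\hat m_{fc}$ are analytic on $\C\setminus\R$. Starting from $m_{fc}(z) = \int d\mu(t)/(\lambda t - z - m_{fc}(z))$, differentiation gives $m_{fc}'(z) = (1+m_{fc}'(z))\int d\mu(t)/(\lambda t-z-m_{fc}(z))^2$, which rearranges to \eqref{eqn48}. The analogous computation, with the deterministic measure $\mu$ replaced by the empirical measure $\frac{1}{N}\sum_i \delta_{v_i}$, yields \eqref{eqn21}. Finally, the bounds \eqref{eq102} follow from the Herglotz property: any Stieltjes transform of a positive measure satisfies $\Im z \cdot \Im m(z) \ge 0$. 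Thus for $\Im z > 0$, $\Im(z+m_{fc}(z)) \ge \Im z$, so $\vert \lambda v_i - z - m_{fc}(z)\vert \ge \Im z$ and hence $\vert g_i(z)\vert \le 1/\vert \Im z\vert $; the case $\Im z < 0$ and the treatment of $\hat g_i$ are identical.

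No step poses genuine difficulty; if anything, the only subtlety is adhering to the correct sign convention in the subordination equation so that the self-consistent identities appear in precisely the claimed form. Since the result is standard preliminary material, the role of this lemma is simply to record these identities together for repeated use in the sections that follow.
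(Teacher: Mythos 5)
Your proposal is correct and follows essentially the same route as the paper: the trace identity is immediate from the spectral decomposition, the two self-consistent equations are the standard subordination characterization of the free convolution with the semicircle law (which the paper simply cites from an earlier reference), the identities \eqref{eqn21} and \eqref{eqn48} follow by differentiating them, and \eqref{eq102} follows from the Herglotz sign property of $\Im m_{fc}$ and $\Im\hat m_{fc}$. No gaps.
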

\begin{proof}
The first conclusion is trivial. The second and third  conclusions are direct corollaries of (2.3) of \cite{Lee+Schnelli2}. The fourth conclusion can be proved by taking derivatives on both sides of the second conclusion:
\begin{align}\label{eq94}
\hat m_{fc}'(z)=\frac{1}{N}\sum_{i=1}^N\Big(\frac{1}{\lambda v_i-z-\hat m_{fc}(z)}\Big)'=\frac{1}{N}\sum\frac{1+\hat m_{fc}'(z)}{(\lambda v_i-z-\hat m_{fc}(z))^2}.
\end{align}
The fifth conclusion can be proved by similarly taking derivatives on both sides of the third conclusion. The last conclusion is because both $\Im m_{fc}(z)$ and $\Im \hat m_{fc}(z)$ have the same sign as $\Im z$.
\end{proof}

\begin{definition}\label{definition:D_epsilon_andD'_epsilon}
Suppose  $\epsilon\in (0,\frac{11b-9}{2b+2})$. Let 
 $\tilde v_i$ be the $i$-th largest one of $\{v_1,\ldots,v_N\}$. We define the regions $\mathcal D_\epsilon$, $\mathcal D_\epsilon'$ and the events $\tilde\Omega(\epsilon)$, $\Omega_*(\epsilon)$ and $\Omega_0(\epsilon,c_1,c_2)$ by the following.
\begin{itemize}
	\item 
$\mathcal D_\epsilon=\{x+\i y\vert -3-\lambda\le x\le3+\lambda, N^{-\frac{1}{2}-\epsilon}\le y\le N^{-\frac{1}{1+b}+\epsilon}\}$
\item $\mathcal D_\epsilon'=\{z\in\mathcal D_\epsilon\vert \vert \lambda \tilde v_i-z-m_{fc}(z)\vert >\frac{1}{2}N^{-\frac{1}{1+b}-\epsilon},\forall i\in[20,N]\}$
	\item $\tilde\Omega(\epsilon)=\{\vert m_N(z)-\hat m_{fc}(z)\vert \le N^{2\epsilon-\frac{1}{2}}\text{ for all }z\in\mathcal D_\epsilon'\}$ 
\item $\Omega_*(\epsilon)=\{\Im m_N(z)\le N^{2\epsilon-\frac{1}{2}},\forall z\in\mathcal D_\epsilon'\}.$ 
\item $\Omega_0(\epsilon,c_1,c_2)$  is the event on which the following conditions are satisfied for any $k\in\{1,\ldots,19\}$. 
\begin{itemize} 
	\item If $j\in\{1,\ldots,N\}\backslash\{k\}$ then $N^{-\epsilon-\frac{1}{1+b}}<\vert \tilde v_j-\tilde v_k\vert <(\log N) N^{-\frac{1}{1+b}}$.
	Moreover
	$$N^{-\epsilon-\frac{1}{1+b}}<\vert 1-\tilde v_1\vert <(\log N) N^{-\frac{1}{1+b}}.$$
	\item If $z\in \mathcal D_\epsilon$ and 
	$\vert \Re(z+m_{fc}(z)-\lambda \tilde v_k)\vert =\min\limits_{1\le i\le N}\vert \Re(z+m_{fc}(z)-\lambda \tilde v_i)\vert $
	then
	$$\frac{1}{N}\sum_{i\in\{1,\ldots,N\}\backslash\{k\}}\frac{1}{\vert \lambda \tilde v_i-z-m_{fc}(z)\vert ^2}<c_1.$$
	\item If $z\in\mathcal D_\epsilon$ then
	$\Big\vert \frac{1}{N}\sum\limits_{i=1}^N\frac{1}{\lambda\tilde v_i-z-m_{fc}(z)}-\int\frac{d\mu(t)}{\lambda t-z-m_{fc}(z)}\Big\vert \le c_2N^{\frac{3\epsilon}{2}-\frac{1}{2}}$.
\end{itemize}
Here $c_1\in(0,1)$ and $c_2>0$ are constants. 
\end{itemize}
\end{definition}

\begin{remark}
	\begin{itemize}
		\item 
		Notice that $\mathcal D_\epsilon'$ is random but is independent of $W$. 
		\item 
		We defined $\Omega_0(\epsilon,c_1,c_2)$ in the same way as Definition 3.5 in \cite{Lee+Schnelli}. The condition $\epsilon\in (0,\frac{11b-9}{2b+2})$ comes from (3.20) of \cite{Lee+Schnelli}.  Definition 3.5 in \cite{Lee+Schnelli} involves a constant $n_0$ and we let $n_0=20$ in the current paper.
		\item \cite{Lee+Schnelli} requires  the entries of the diagonal matrix to be ordered along the diagonal, so in order to used results in \cite{Lee+Schnelli}, we  use $\tilde v_i$ instead of $v_i$ in the definitions of $\mathcal D_\epsilon'$ and $\Omega_0(\epsilon,c_1,c_2)$. 
	\end{itemize} 
\end{remark}

\begin{proposition}\label{proposition:extreme_eigenvalue}Suppose $b>1$, $\lambda>\lambda_+$ and $\epsilon\in (0,\frac{11b-9}{2b+2})$.
There exist  constants $c_1\in(0,1)$, $c_2>0$, $\nu_0>0$, $\nu_1>0$ and $N_0>0$ such that:
\begin{enumerate}
	\item ${\mathbb P}(\Omega_0(\epsilon,c_1,c_2))\ge1-\nu_0(\log N)^{1+2b}N^{-\epsilon}$ for all $N$;
	\item ${\mathbb P}(\Omega_0(\epsilon,c_1,c_2)\backslash\tilde \Omega(\epsilon))\le \exp(-\nu_1(\log N)^{10\log\log N})$ if $N$ is large enough;
	\item $\Omega_0(\epsilon,c_1,c_2)$ is measurable with respect to the sigma algebra generated by the entries of $V$.
\end{enumerate}
\end{proposition}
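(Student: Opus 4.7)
The plan is to address the three claims separately. Claim (3) is immediate because each of the defining conditions of $\Omega_0(\epsilon,c_1,c_2)$ involves only the ordered diagonal entries $\tilde v_i$ together with deterministic functions of $z$ (through $m_{fc}$ and $\mu$). For claim (1) I would prove each of its three defining conditions separately and apply a union bound. The first bullet follows from the Jacobi edge asymptotic $\frac{d\mu}{dx}\asymp d(1)(1-x)^b$ near $x=1$, which gives $\mu([1-t,1])\asymp t^{b+1}$; after rescaling by $N^{1/(b+1)}$ the quantities $N^{1/(b+1)}(1-\tilde v_k)$ for $k\le 19$ converge jointly to the first few points of a Poisson process on $[0,\infty)$ with intensity $\propto x^b\,dx$, and a union bound yields the two-sided spacing estimate. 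For the second bullet I would split the sum at indices $i$ near the minimizing $\tilde v_k$ (finitely many, each controlled by the spacing lower bound from the first bullet) and those far from it, where $|\lambda\tilde v_i-z-m_{fc}(z)|$ is bounded below by a positive constant so that the remaining sum approximates $\int d\mu(t)/|\lambda t-z-m_{fc}(z)|^2$, which by \eqref{eqn48} and the strict positivity of $m_{fc}'$ outside the bulk is strictly less than $1$; this selects a suitable $c_1\in(0,1)$. For the third bullet I would apply Hoeffding's inequality to the i.i.d.\ sum $\tfrac1N\sum_i (\lambda\tilde v_i-z-m_{fc}(z))^{-1}$, whose summands are bounded by $|\Im z|^{-1}\le N^{1/2+\epsilon}$ and whose mean is $\int d\mu(t)/(\lambda t-z-m_{fc}(z))$; a union bound over a polynomial net in $\mathcal D_\epsilon$ combined with Lipschitz extension in $z$ yields the bound $c_2N^{3\epsilon/2-1/2}$. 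Summing the three failure probabilities gives claim (1), with the extra factor $(\log N)^{1+2b}$ arising from the discretization of the uniform-in-$z$ statements.

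Claim (2) is the resolvent local law conditional on $V\in\Omega_0$; since $\Omega_0$ is set up in Definition \ref{definition:D_epsilon_andD'_epsilon} in exactly the same way as Definition~3.5 of \cite{Lee+Schnelli} (with their $n_0$ specialized to $20$), it coincides with the corresponding proposition of \cite{Lee+Schnelli} once claim (1) has been established. The underlying argument uses the Schur complement formula $G_{ii}^{-1}=\lambda v_i-z-W_{ii}-\sum_{j,k\ne i}W_{ij}G^{(i)}_{jk}W_{ik}$ together with large-deviation estimates for quadratic forms in the independent Wigner entries $W_{ij}$ (conditional on $V$, which is where claim (3) is crucial), showing that the quadratic form concentrates around $m_N(z)$. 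Substituting and averaging over $i$ yields an approximate self-consistent equation for $m_N$ whose stability on $\mathcal D_\epsilon'$---in particular, nondegeneracy of the denominators $\lambda\tilde v_i-z-m_{fc}(z)$ guaranteed by the restriction defining $\mathcal D_\epsilon'$---forces $|m_N-\hat m_{fc}|\le N^{2\epsilon-1/2}$ via a standard bootstrap in $\Im z$ initialized at $|\Im z|=3$ by a priori bounds.

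The main obstacle will be producing the super-polynomial failure probability $\exp(-\nu_1(\log N)^{10\log\log N})$ in claim (2). This forces one to iterate high-moment estimates for the quadratic forms in $W$ at growing powers of $\log N$, which is precisely how the subexponential tail of $\sqrt N W_{ij}$ assumed in Definition \ref{def:Wigner} is upgraded into a super-polynomially small tail for the resolvent error. This iterated moment bookkeeping is the technical heart of the argument in \cite{Lee+Schnelli} and would be invoked here verbatim; the only adaptation needed for the present paper is the fixed choice $n_0=20$ and the resulting explicit constants $c_1,c_2$.
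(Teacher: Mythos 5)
Your proposal matches the paper's proof, which is simply a citation: conclusions (1) and (2) are taken directly from (3.30) and Proposition 5.1 of \cite{Lee+Schnelli}, and (3) is observed to follow from the definition of $\Omega_0(\epsilon,c_1,c_2)$, exactly as you say. Your additional sketch of how \cite{Lee+Schnelli} establishes these facts is consistent with that source; the only imprecision is that plain Hoeffding with the bound $|\Im z|^{-1}\le N^{\frac{1}{2}+\epsilon}$ on the summands would only give a deviation of order $N^{\epsilon}$ for the normalized sum, so the third bullet of $\Omega_0$ really needs a Bernstein-type (variance-based) estimate to reach $c_2N^{\frac{3\epsilon}{2}-\frac{1}{2}}$ --- but since the paper defers entirely to the cited reference, this does not affect the correctness of your argument.
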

\begin{proof}
The first two conclusions of Proposition \ref{proposition:extreme_eigenvalue} are proved in \cite{Lee+Schnelli}. See  (3.30) and Proposition 5.1 there. The last conclusion is from the definition of $\Omega_0(\epsilon,c_1,c_2)$. 
\end{proof}

\begin{definition}\label{definition:Omega_V}
	Let $\Omega_V(\epsilon)$ be the  $\Omega_0(\epsilon,c_1,c_2)$ with $c_1=c_1(\epsilon)$ and $c_2=c_2(\epsilon)$ properly chosen such that the conclusions of Proposition \ref{proposition:extreme_eigenvalue} hold. 
\end{definition}
The next lemmas are Lemma 5.5 and Lemma 3.7 in \cite{Lee+Schnelli}.  
\begin{lemma}\label{lemma:Omega_*}
Suppose $b>1$, $\lambda>\lambda_+$ and $\epsilon\in (0,\frac{11b-9}{2b+2})$. There exists a constant $\nu_2>0$   such that if $N$ is large enough then
$${\mathbb P}(\Omega_V(\epsilon)\backslash\Omega_*(\epsilon))\le\exp(-\nu_2(\log N)^{10\log\log N}).$$
\end{lemma}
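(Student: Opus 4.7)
The plan is to reduce the bound on $\Im m_N$ to a purely deterministic estimate on $\Im\hat m_{fc}$, by exploiting the closeness of $m_N$ to $\hat m_{fc}$ already provided by Proposition \ref{proposition:extreme_eigenvalue}. Part (2) of that proposition gives ${\mathbb P}(\Omega_V(\epsilon)\setminus\tilde\Omega(\epsilon))\le\exp(-\nu_1(\log N)^{10\log\log N})$, so it suffices to show (after a harmless adjustment of the constants $c_1,c_2$ defining $\Omega_V$) that $\Omega_V(\epsilon)\cap\tilde\Omega(\epsilon)\subset\Omega_*(\epsilon)$. On $\tilde\Omega(\epsilon)$ one has $\vert m_N(z)-\hat m_{fc}(z)\vert \le N^{2\epsilon-1/2}$ for every $z\in\mathcal D_\epsilon'$, and hence $\Im m_N(z)\le\Im\hat m_{fc}(z)+N^{2\epsilon-1/2}$; so the task reduces to bounding $\Im\hat m_{fc}(z)$ by $O(N^{2\epsilon-1/2})$ on $\mathcal D_\epsilon'$, deterministically on the $V$-measurable event $\Omega_V(\epsilon)$.

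To bound $\Im\hat m_{fc}$, I would first compare it to the deterministic $\Im m_{fc}$. The third clause of the definition of $\Omega_0(\epsilon,c_1,c_2)$ controls $\frac1N\sum_i g_i(z)$ by $m_{fc}(z)$ with error at most $c_2N^{3\epsilon/2-1/2}$. Combined with the self-consistent equation $\hat m_{fc}=\frac1N\sum\hat g_i$ and the stability of this fixed-point equation---provided by (\ref{eqn48}) and the second clause of $\Omega_0$, which together bound $1-\int d\mu(t)/(\lambda t-z-m_{fc}(z))^2$ away from zero on $\mathcal D_\epsilon'$---a standard perturbation argument yields $\vert \hat m_{fc}(z)-m_{fc}(z)\vert =O(N^{3\epsilon/2-1/2})$ on $\mathcal D_\epsilon'$ on $\Omega_V(\epsilon)$. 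Consequently $\Im\hat m_{fc}=\Im m_{fc}+O(N^{3\epsilon/2-1/2})$, and it is enough to prove the deterministic edge bound $\Im m_{fc}(z)=O(N^{2\epsilon-1/2})$ for $z\in\mathcal D_\epsilon'$.

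For that edge estimate, write $\Im m_{fc}(z)=(\Im z+\Im m_{fc}(z))\int d\mu(t)/\vert \lambda t-z-m_{fc}(z)\vert ^2$. The constraint defining $\mathcal D_\epsilon'$ keeps $z+m_{fc}(z)$ at distance at least $\frac12 N^{-1/(b+1)-\epsilon}$ from every $\lambda\tilde v_i$ with $i\ge 20$, while the first clause of $\Omega_V(\epsilon)$ gives separation at scale $N^{-1/(b+1)}$ for the top $19$ atoms and for $\lambda\tilde v_1$ versus $\lambda$. Splitting the integral into the excluded neighborhoods and their complement, and using the Jacobi power-law behavior $d\mu/dx\sim(1-x)^b$ near $x=1$ and $\sim(1+x)^a$ near $x=-1$ (Definition \ref{def:Jacobi}) together with the matching edge profile $\rho_{fc}(L_+-x)\sim x^b$, $\rho_{fc}(L_-+x)\sim x^a$ from Lemma \ref{lemma:properties_of_mu_fc}, one obtains the desired $O(N^{2\epsilon-1/2})$.

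The main obstacle is this last deterministic step: one has to show that the excess $\Im m_{fc}$ is genuinely of order $N^{2\epsilon-1/2}$, rather than the naive $O(\Im z)=O(N^{-1/(b+1)+\epsilon})$ that would follow from the crude fixed-point bound. This requires carefully tracking how the Jacobi power-law behavior at the edges propagates through the self-consistent relation under the condition $\lambda>\lambda_+$, and it is precisely the content of Lemma 3.7 of \cite{Lee+Schnelli}, which can be quoted directly to finish the proof.
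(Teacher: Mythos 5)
The paper does not actually prove this lemma: it is imported verbatim as Lemma 5.5 of \cite{Lee+Schnelli} (see the sentence ``The next lemmas are Lemma 5.5 and Lemma 3.7 in \cite{Lee+Schnelli}'' preceding the statement), so any self-contained derivation is necessarily a different route from the paper's. Your outer reduction is sound as far as it goes: by Proposition \ref{proposition:extreme_eigenvalue}(2) it would suffice to prove the deterministic inclusion $\Omega_V(\epsilon)\cap\tilde\Omega(\epsilon)\subset\Omega_*(\epsilon)$, and on $\tilde\Omega(\epsilon)$ one does have $\Im m_N\le\Im\hat m_{fc}+N^{2\epsilon-\frac12}$ on $\mathcal D_\epsilon'$.

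The gap is the deterministic estimate everything then hinges on, namely $\Im\hat m_{fc}(z)=O(N^{2\epsilon-\frac12})$ (equivalently, after your comparison step, $\Im m_{fc}(z)=O(N^{2\epsilon-\frac12})$) uniformly on $\mathcal D_\epsilon'$. First, this is \emph{not} the content of Lemma 3.7 of \cite{Lee+Schnelli}: that lemma is reproduced in this paper as Lemma \ref{lemma:hatm_fc_close_tom_fc} and bounds only the difference $\vert\hat m_{fc}-m_{fc}\vert$; it says nothing about the size of $\Im m_{fc}$ itself, so your closing citation does not finish the proof. Second, the bound you need is simply false on the domain as defined. For every $z=E+\i\eta$ with $\vert E\vert\le 3+\lambda$ one has
\[
\Im m_{fc}(E+\i\eta)=\int\frac{\eta\,\rho_{fc}(t)}{(t-E)^2+\eta^2}\,dt\;\ge\;c\,\eta
\]
for an absolute constant $c>0$ (the denominator is bounded above on the compact support of $\mu_{fc}$), and the analogous self-consistent computation gives $\Im\hat m_{fc}\gtrsim\eta$ up to logarithmic factors. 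Since $\eta$ ranges up to $N^{-\frac{1}{1+b}+\epsilon}$ in $\mathcal D_\epsilon$, and the resonance condition defining $\mathcal D_\epsilon'$ only excludes points where $z+m_{fc}(z)$ is close to some $\lambda\tilde v_i$ (it removes neither the top of the $\eta$-range nor points over the bulk, where $\Im m_{fc}\asymp\rho_{fc}(E)$ is of order one), you are forced to confront $\Im m_{fc}\gtrsim N^{-\frac{1}{1+b}+\epsilon}\gg N^{2\epsilon-\frac12}$ whenever $\epsilon<\frac12-\frac{1}{1+b}$ --- which covers every regime in which this lemma is invoked in the paper ($\epsilon<\frac14$ in the rigidity argument, $\varsigma<\frac18$ in the CLT). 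So the inclusion $\Omega_V\cap\tilde\Omega\subset\Omega_*$ cannot be established by the chain $\Im m_N\le\Im\hat m_{fc}+N^{2\epsilon-1/2}\le\Im m_{fc}+O(N^{3\epsilon/2-1/2})+N^{2\epsilon-1/2}$ that you propose; the control of $\Im m_N$ asserted in Lemma 5.5 of \cite{Lee+Schnelli} is not a corollary of the local law $\tilde\Omega$ plus a pointwise bound on $\Im\hat m_{fc}$, and a self-contained proof would have to reproduce the argument of that lemma rather than the reduction you sketch.
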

\begin{lemma}\label{lemma:hatm_fc_close_tom_fc}
 Suppose $b>1$, $\lambda>\lambda_+$ and $\epsilon\in (0,\frac{11b-9}{2b+2})$. If $\Omega_V(\epsilon)$ holds and $z\in\mathcal D_\epsilon'$, then 
$$\vert \hat m_{fc}(z)-m_{fc}(z)\vert \le N^{2\epsilon-\frac{1}{2}}.$$
\end{lemma}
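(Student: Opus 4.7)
The plan is to derive a stable affine identity for $\Delta(z) := \hat m_{fc}(z) - m_{fc}(z)$ and to invert it on $\Omega_V(\epsilon)$. Subtracting the two self-consistent equations $\hat m_{fc}(z) = \frac{1}{N}\sum_i \hat g_i(z)$ and $m_{fc}(z) = \int (\lambda t - z - m_{fc}(z))^{-1} d\mu(t)$ from Lemma \ref{lemma:self_consistent_equation}, using the telescoping identity $\hat g_i(z) - g_i(z) = g_i(z)\hat g_i(z)\Delta(z)$, and noting that the sum over $i$ is invariant under relabeling $v_i \leftrightarrow \tilde v_i$, I expect to arrive at
$$\Delta(z)(1 - S(z)) = E(z), \qquad S(z) := \frac{1}{N}\sum_i g_i(z)\hat g_i(z),\quad E(z) := \frac{1}{N}\sum_i g_i(z) - m_{fc}(z).$$

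The forcing term $E(z)$ is immediately controlled by the third bullet in Definition \ref{definition:D_epsilon_andD'_epsilon}: it gives $|E(z)| \le c_2 N^{3\epsilon/2 - 1/2}$ for every $z \in \mathcal D_\epsilon$, and in particular on $\mathcal D_\epsilon' \subset \mathcal D_\epsilon$. What remains is the uniform lower bound $|1 - S(z)| \ge c > 0$ on $\mathcal D_\epsilon'$, and this is the main obstacle. I would split the sum in $S(z)$ into the at most $19$ indices $k$ for which $\tilde v_k$ lies close to $1$ and the remaining bulk. For the bulk, the second bullet of $\Omega_0(\epsilon, c_1, c_2)$ supplies $\frac{1}{N}\sum_{i\neq k} |g_i(z)|^2 \le c_1 < 1$, while the expansion $\hat g_i/g_i = 1 + g_i \Delta + O((g_i\Delta)^2)$ converts this into a bound on the corresponding part of $S$ once $|g_i \Delta|$ is shown to be small. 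For that smallness I would combine the exclusion $|\lambda \tilde v_i - z - m_{fc}(z)| > \tfrac{1}{2} N^{-1/(1+b) - \epsilon}$ defining $\mathcal D_\epsilon'$ with the crude a priori bound $|\Delta(z)| \le 2|\Im z|^{-1}$ coming from \eqref{eq102}. The $O(1)$ exceptional indices contribute only $O(N^{-1}|\Im z|^{-2})$ to $S$, which is also negligible on $\mathcal D_\epsilon'$.

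Inverting the identity would then give $|\Delta(z)| \le |E(z)|/|1 - S(z)| \le C c_2 N^{3\epsilon/2 - 1/2} \le N^{2\epsilon - 1/2}$ for $N$ large, which is the claim. The hard part is genuinely the stability factor near $L_+$: the continuum analogue $1 - \int (\lambda t - z - m_{fc})^{-2} d\mu = (1 + m'_{fc}(z))^{-1}$ from \eqref{eqn48} degenerates at the edge because $m'_{fc}(z)$ diverges there, so a direct comparison of $S$ to its continuum counterpart is useless. The point of the geometric condition defining $\mathcal D_\epsilon'$, together with the second bullet of $\Omega_0$, is precisely to quarantine the handful of indices responsible for this degeneration while keeping the remaining bulk uniformly away from resonance, and this is the piece I expect to require the most care.
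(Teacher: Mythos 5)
A preliminary remark: the paper does not prove this lemma at all — it is imported verbatim as Lemma 3.7 of \cite{Lee+Schnelli} — so your attempt must stand on its own. Your algebraic skeleton is correct: $\hat g_i-g_i=\Delta\, g_i\hat g_i$ gives $\Delta(1-S)=E$ with $S=\frac1N\sum_i g_i\hat g_i$, and the third bullet of $\Omega_0(\epsilon,c_1,c_2)$ (after relabeling $v_i\leftrightarrow\tilde v_i$, which is legitimate) gives $|E(z)|\le c_2N^{3\epsilon/2-1/2}$. The gap is in the stability factor, exactly where you flag the difficulty, and both devices you propose to close it fail.

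First, to justify $\hat g_i/g_i=1+g_i\Delta+O((g_i\Delta)^2)$ you need $|g_i\Delta|\ll1$, and you propose to obtain this from the exclusion $|g_i|\le 2N^{1/(1+b)+\epsilon}$ (which anyway holds only for the relabeled indices $i\ge20$) together with the a priori bound $|\Delta|\le2|\Im z|^{-1}$ from \eqref{eq102}. On $\mathcal D_\epsilon'$ one has $|\Im z|$ as small as $N^{-1/2-\epsilon}$, so this yields only $|g_i\Delta|\le 4N^{\frac{1}{1+b}+\frac12+2\epsilon}$, which is enormous rather than small; the argument is circular, since smallness of $\Delta$ is what the lemma asserts. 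Breaking the circularity requires an ingredient absent from your sketch: a continuity (bootstrap) argument in $\eta=\Im z$, establishing the estimate at $\eta\asymp1$ where the trivial bounds do suffice and propagating it downward using the Lipschitz continuity of $\hat m_{fc}-m_{fc}$ together with the dichotomy produced by the quadratic form of the self-consistent equation, $\Delta\bigl(1-\frac1N\sum_i g_i^2\bigr)=E+\Delta^2\cdot\frac1N\sum_i g_i^2\hat g_i$. Second, your claim that the at most $19$ exceptional indices contribute $O(N^{-1}|\Im z|^{-2})$ to $S$ and that this is negligible is false on the relevant domain: $N^{-1}|\Im z|^{-2}$ reaches $N^{2\epsilon}$ at the bottom of $\mathcal D_\epsilon'$. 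Taming these resonant terms is precisely the role of the first bullet of $\Omega_0$ (the separation of $\tilde v_1,\dots,\tilde v_{19}$ from one another and of $\tilde v_1$ from $1$), which must be used quantitatively and not merely to count the exceptional set. As it stands, the proposal establishes the forcing estimate but not the lower bound on $|1-S|$, so the lemma is not proved.
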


\subsection{Integral representation of the partition function of the SSK model}
The following lemma comes from   Lemma 1.3 and (5.25) of \cite{Baik+Lee}.
\begin{lemma}\label{lemma:integral_formula}
Suppose $M$ is an $N\times N$ real symmetric matrix with eigenvalues $\lambda_1(M)\ge\cdots\ge\lambda_N(M)$. Suppose $\beta>0$. Then
$$\int_{S_{N-1}}e^{\beta\langle\sigma,M\sigma\rangle}d\omega_N(\sigma)=C_N\int_{a_0-\i \infty}^{a_0+\i\infty}e^{\frac{N}{2}R_M(z)}dz$$ 
where
\begin{itemize}
	\item $a_0$ is an arbitrary constant satisfying $a_0>\lambda_1(M)$;
	\item the integration contour is the vertical line from $a_0-\i\infty$ to $a_0+\i\infty$;
	\item $R_M(z)=2\beta z-\frac{1}{N}\sum_i\log(z-\lambda_i(M))$ where we take the analytic branch of the $\log$ function such that $\Im\log(z-\lambda_i(M))\in(-\pi,\pi)$ for all $z$ on the integration contour;
	\item $C_N=\frac{\Gamma(N/2)}{2\pi\i(N\beta)^{\frac{N}{2}-1}}$ where $\Gamma(z)$ denotes the Gamma function. Moreover,
	$$C_N=\frac{\sqrt N \beta}{\i\sqrt\pi(2\beta e)^{N/2}}(1+O(N^{-1})).$$
\end{itemize}
\end{lemma}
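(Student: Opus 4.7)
The plan is to derive the integral representation via the classical Hubbard--Stratonovich (inverse Laplace) trick, reducing the surface integral on $S_{N-1}$ to a Gaussian integral on $\R^N$ that can be evaluated explicitly.

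First I would diagonalize the symmetric matrix $M = O^{T}\Lambda O$, where $\Lambda=\operatorname{diag}(\lambda_{1}(M),\ldots,\lambda_{N}(M))$. Because $d\omega_{N}$ is invariant under orthogonal transformations, a change of variables $\sigma\mapsto O\sigma$ yields $\int_{S_{N-1}}e^{\beta\langle\sigma,M\sigma\rangle}d\omega_{N}=\int_{S_{N-1}}\exp\bigl(\beta\sum_{i}\lambda_{i}(M)\sigma_{i}^{2}\bigr)d\omega_{N}(\sigma)$, so we may assume $M$ is diagonal. Next, converting the uniform sphere measure to Lebesgue measure via polar coordinates ($\sigma=\sqrt{s}\,\omega$) gives the identity
\[
\int_{\R^{N}}f(y)\,\delta(|y|^{2}-N)\,dy=\frac{N^{N/2-1}}{2}\cdot|S^{N-1}(1)|\cdot\int_{S_{N-1}}f(\sigma)\,d\omega_{N}(\sigma),
\]
where $|S^{N-1}(1)|=2\pi^{N/2}/\Gamma(N/2)$ is the area of the unit sphere. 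Hence it remains to evaluate the left-hand side with $f(y)=\exp(\beta\sum_{i}\lambda_{i}y_{i}^{2})$.

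The Laplace-transform-inversion step is the heart of the computation. For $\operatorname{Re}z>\beta\lambda_{1}(M)$, I would first compute the absolutely convergent Gaussian integral
\[
\int_{\R^{N}}\exp\!\Bigl(\beta\sum_{i}\lambda_{i}y_{i}^{2}-z|y|^{2}\Bigr)dy=\pi^{N/2}\prod_{i=1}^{N}(z-\beta\lambda_{i})^{-1/2},
\]
and recognize this as the Laplace transform (in $s$) of the function $s\mapsto\int_{\R^{N}}f(y)\delta(|y|^{2}-s)dy$. Inverting the Laplace transform with $s=N$, and then rescaling $z=\beta w$, produces
\[
\int_{\R^{N}}f(y)\,\delta(|y|^{2}-N)\,dy=\frac{\pi^{N/2}}{2\pi\i\,\beta^{N/2-1}}\int_{a_{0}-\i\infty}^{a_{0}+\i\infty}e^{N\beta w}\prod_{i}(w-\lambda_{i}(M))^{-1/2}\,dw,
\]
for any $a_{0}>\lambda_{1}(M)$. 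The product $\prod_{i}(w-\lambda_{i})^{-1/2}=\exp(-\tfrac{1}{2}\sum_{i}\log(w-\lambda_{i}))$ assembled with $e^{N\beta w}$ is exactly $\exp(\tfrac{N}{2}R_{M}(w))$, provided we take the same principal branch $\Im\log\in(-\pi,\pi)$ on the contour, which is consistent with both the Gaussian square-root (justified since $w-\lambda_{i}$ lies in the right half-plane for $w$ near the contour with $a_{0}>\lambda_{1}$) and with the statement. Combining the two displays and simplifying with $|S^{N-1}(1)|=2\pi^{N/2}/\Gamma(N/2)$ gives precisely
\[
\int_{S_{N-1}}e^{\beta\langle\sigma,M\sigma\rangle}d\omega_{N}(\sigma)=\frac{\Gamma(N/2)}{2\pi\i\,(N\beta)^{N/2-1}}\int_{a_{0}-\i\infty}^{a_{0}+\i\infty}e^{\frac{N}{2}R_{M}(z)}dz,
\]
identifying the constant $C_{N}$. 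Finally, Stirling's formula $\Gamma(N/2)=2\sqrt{\pi/N}\,(N/(2e))^{N/2}(1+O(N^{-1}))$ turns the ratio into $\frac{\sqrt{N}\beta}{\i\sqrt{\pi}(2\beta e)^{N/2}}(1+O(N^{-1}))$ after canceling $(N\beta)^{N/2}$ against $(N/(2e))^{N/2}\cdot(2\beta e)^{-N/2}\cdot N\beta$.

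The main technical subtlety, rather than an obstacle, is justifying the Fubini exchange in the Laplace-inversion step and ensuring that the branch of the logarithm indicated in the statement really matches the Gaussian square roots appearing on the shifted vertical contour $\{a_{0}+\i t:t\in\R\}$. Both issues are handled by first working for $\operatorname{Re}z>\beta\lambda_{1}(M)$ where all integrals converge absolutely (so Fubini applies, and $w-\lambda_{i}(M)$ stays in the open right half-plane so the principal branch is unambiguous), and then noting that the final formula is analytic in the position $a_{0}$ of the contour as long as $a_{0}>\lambda_{1}(M)$, which is exactly the hypothesis of the lemma.
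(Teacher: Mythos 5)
Your derivation is correct: the reduction to a diagonal $M$, the polar-coordinate identity relating $d\omega_N$ on the radius-$\sqrt N$ sphere to $\delta(|y|^2-N)\,dy$, the Gaussian Laplace transform and its inversion, the rescaling $z=\beta w$, and the Stirling asymptotics for $\Gamma(N/2)$ all check out and reproduce $C_N$ exactly. The paper itself does not prove this lemma but cites Lemma 1.3 and (5.25) of \cite{Baik+Lee}, whose argument is precisely this Laplace-inversion computation, so your proposal follows essentially the same route as the source; your handling of the branch of the logarithm on the vertical contour (where $w-\lambda_i(M)$ stays in the right half-plane) is the right way to match the square roots to $R_M$.
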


\subsection{Helffer-Sj\"ostrand formula}
The next  lemma can be found in Section 11.2 of \cite{Erdos+Yau}.
\begin{lemma}\label{lemma:Helffer-Sjostrand formula}
Suppose $\chi:{\mathbb R }\to[0,1]$ is $C^\infty$ such that $\chi(x)=1$ when $x\in[-1,1]$ and $\chi(x)=0$ when $x\not\in[-2,2]$. If $f\in C_c^2({\mathbb R })$, then
$$f(t)=\frac{1}{2\pi}\int_{{\mathbb R }^2}\frac{\i yf''(x)\chi(y)+\i\chi'(y)(f(x)+\i yf'(x))}{t-x-\i y}dxdy,\quad\forall t\in{\mathbb R }.$$
\end{lemma}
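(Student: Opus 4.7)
The plan is to reduce the claim to the Cauchy-Pompeiu formula via the standard almost-analytic extension trick. Define
$$\tilde f(x+\i y) := \chi(y)\bigl(f(x) + \i y f'(x)\bigr).$$
Since $f\in C_c^2(\mathbb R)$ and $\chi\in C_c^\infty(\mathbb R)$, the function $\tilde f$ is $C^2$ and compactly supported on $\mathbb C\cong\mathbb R^2$. Moreover $\tilde f$ restricts to $f$ on the real axis, i.e.\ $\tilde f(t)=\chi(0)(f(t)+0)=f(t)$ for every $t\in\mathbb R$. The first step is to compute the Wirtinger derivative $\bar\partial \tilde f=\tfrac12(\partial_x+\i\partial_y)\tilde f$. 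A direct application of the product rule gives
$$\partial_x \tilde f=\chi(y)\bigl(f'(x)+\i y f''(x)\bigr),\qquad \partial_y \tilde f=\chi'(y)\bigl(f(x)+\i y f'(x)\bigr)+\i\chi(y)f'(x).$$
Adding $\partial_x\tilde f+\i\partial_y\tilde f$, the two $\chi(y)f'(x)$ contributions cancel (this is precisely the reason for including the $\i y f'(x)$ correction in $\tilde f$), leaving
$$\bar\partial\tilde f(x+\i y)=\frac12\Bigl[\i y\chi(y)f''(x)+\i\chi'(y)\bigl(f(x)+\i y f'(x)\bigr)\Bigr],$$
which is exactly one half of the numerator of the integrand in the lemma.

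Next I would invoke the Cauchy-Pompeiu formula: for every compactly supported $g\in C^1(\mathbb C,\mathbb C)$ and every $w\in\mathbb C$,
$$g(w)=-\frac{1}{\pi}\int_{\mathbb R^2}\frac{\bar\partial g(x+\i y)}{(x+\i y)-w}\,dx\,dy.$$
This is a standard consequence of Green's theorem applied to $g(z)/(z-w)$ on a large disk containing the support of $g$, together with the observation that the $1/(z-w)$ singularity is integrable in two dimensions. Applying the formula with $g=\tilde f$ and $w=t\in\mathbb R$, using $\tilde f(t)=f(t)$, and rewriting $(x+\i y)-t=-(t-x-\i y)$ to flip the overall sign, I would obtain
$$f(t)=\frac{1}{\pi}\int_{\mathbb R^2}\frac{\bar\partial\tilde f(x+\i y)}{t-x-\i y}\,dx\,dy=\frac{1}{2\pi}\int_{\mathbb R^2}\frac{\i y\chi(y)f''(x)+\i\chi'(y)(f(x)+\i yf'(x))}{t-x-\i y}\,dx\,dy,$$
which is precisely the asserted identity.

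There is no substantial obstacle here, as this is a textbook identity reproduced from Section 11.2 of \cite{Erdos+Yau}. The only point worth verifying is the absolute integrability of the integrand near the real axis: the first piece of the numerator carries a factor $y$ that cancels the $1/(t-x-\i y)$ singularity at $y=0$, and the second piece is supported in $\{|y|\ge 1\}$ by the hypothesis on $\chi$, so Fubini applies and the manipulations above are justified. The compact support of $f$ and $\chi$ make the integration region effectively bounded, completing the argument.
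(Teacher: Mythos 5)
Your proof is correct and is precisely the standard argument: the paper itself gives no proof of this lemma, merely citing Section 11.2 of \cite{Erdos+Yau}, and the proof there is exactly your almost-analytic extension $\tilde f(x+\i y)=\chi(y)(f(x)+\i yf'(x))$ combined with the Cauchy--Pompeiu (Green's theorem) formula; your computation of $\bar\partial\tilde f$ and the sign bookkeeping in $1/(t-x-\i y)$ both check out. The only nitpick is that $\tilde f$ is $C^1$ rather than $C^2$ when $f\in C^2_c$ (its $x$-derivative involves $f''$, which is only continuous), but $C^1$ with compact support is all that Cauchy--Pompeiu requires, so this does not affect the argument.
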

 
\subsection{Cumulant expansion}
The next lemma is Lemma 3.2 of \cite{Lee+Schnelli3}.
\begin{lemma}\label{lemma:cumulant_expansion}
Suppose $l\in\{1,2,\ldots\}$ and $F\in C^{l+1}({\mathbb R },{\mathbb C })$. Let $Y$ be a real-valued centered random variable with finite moments up to the order $l+2$. Let $\mathcal G$ be a sigma algebra independent of $Y$. Then
\begin{align*}
\E[YF(Y)\vert \mathcal G]=\sum_{r=1}^l\frac{\kappa^{(r+1)}(Y)}{r!}\E[F^{(r)}(Y)\vert \mathcal G]+\mathcal E(Y)
\end{align*}
where $\kappa^{(r+1)}(Y)$ denotes the $(r+1)$-cumulant of $Y$. The error term $\mathcal E(Y)$ satisfies:
\begin{align}\label{eqn56}
\vert \mathcal E(Y)\vert \le C_l\E[\vert Y\vert ^{l+2}]\sup_{\vert t\vert \le Q}\vert F^{(l+1)}(t)\vert +C_l\E[\vert Y\vert ^{l+2}\mathds1_{\vert Y\vert >Q}]\sup_{t\in{\mathbb R }}\vert F^{(l+1)}(t)\vert 
\end{align}
where $Q>0$ is an arbitrary cutoff and $C_l$ satisfies $C_l\le \frac{(p_0\cdot l)^l}{l!}$ for some absolute constant $p_0>0$.
\end{lemma}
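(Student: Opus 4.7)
The plan is to prove the identity first exactly for polynomial $F$ via the moment--cumulant recurrence, and then extend to general $F\in C^{l+1}$ by Taylor expansion with the remainder controlled through the cutoff $Q$.

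First I reduce to the unconditional case. Because $\mathcal G$ is independent of $Y$ and $F$ is a deterministic function on $\R$, the conditional expectations $\E[YF(Y)\vert\mathcal G]$ and $\E[F^{(r)}(Y)\vert\mathcal G]$ coincide with their unconditional counterparts, so it suffices to show
\[
\E[YF(Y)]=\sum_{r=1}^{l}\frac{\kappa^{(r+1)}(Y)}{r!}\,\E[F^{(r)}(Y)]+\mathcal E(Y)
\]
with the stated bound on $\mathcal E(Y)$. (In the random-matrix applications of this lemma, $F$ typically depends implicitly on $\mathcal G$-measurable parameters; fixing a realization of those parameters reduces to the deterministic version above.)

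Next I treat the polynomial case. Write $m_k=\E[Y^k]$, $\kappa_k=\kappa^{(k)}(Y)$, $\chi(t)=\sum_{k\ge0}m_k t^k/k!$, and $K(t)=\log\chi(t)$, all well-defined as formal power series up to order $l+2$ since $Y$ has moments up to order $l+2$. The relation $\chi'(t)=K'(t)\chi(t)$, compared coefficient by coefficient, produces the recurrence
\[
m_{n+1}=\sum_{r=1}^{n}\binom{n}{r}\,\kappa_{r+1}\,m_{n-r}
\]
(the $r=0$ term drops because $\kappa_1=\E Y=0$). For a polynomial $F(y)=\sum_{k=0}^{n}a_k y^k/k!$ of degree at most $n$, substitution of this recurrence into $\E[YF(Y)]=\sum_k a_k m_{k+1}/k!$ and interchange of the order of summation reproduce $\sum_{r=1}^{n}\kappa_{r+1}\E[F^{(r)}(Y)]/r!$ exactly. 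Consequently the lemma holds with $\mathcal E\equiv 0$ whenever $l\ge\deg F$.

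For general $F\in C^{l+1}$, let $P$ be its Taylor polynomial of degree $l$ at the origin. Subtracting the polynomial identity applied to $P$ from the target formula yields
\[
\mathcal E(Y)=\E[Y(F-P)(Y)]-\sum_{r=1}^{l}\frac{\kappa^{(r+1)}(Y)}{r!}\,\E[(F^{(r)}-P^{(r)})(Y)].
\]
The integral form of Taylor's remainder gives, for $0\le r\le l$,
\[
\vert(F^{(r)}-P^{(r)})(Y)\vert\le\frac{\vert Y\vert^{l+1-r}}{(l+1-r)!}\sup_{\vert t\vert\le\vert Y\vert}\vert F^{(l+1)}(t)\vert,
\]
together with the analogous estimate $\vert Y(F-P)(Y)\vert\le\vert Y\vert^{l+2}/(l+1)!\cdot\sup_{\vert t\vert\le\vert Y\vert}\vert F^{(l+1)}(t)\vert$. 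Splitting each expectation along $\{\vert Y\vert\le Q\}$ versus $\{\vert Y\vert>Q\}$ allows me to replace the $Y$-dependent sup by $\sup_{\vert t\vert\le Q}\vert F^{(l+1)}\vert$ on the first event and by $\sup_t\vert F^{(l+1)}\vert$ on the second. Combining this with the Jensen interpolation $\E[\vert Y\vert^{a}]\E[\vert Y\vert^{b}]\le\E[\vert Y\vert^{a+b}]$ (applied with $a=l+1-r$ and $b=r+1$) and the Bell-number cumulant bound $\vert\kappa_{r+1}\vert\le B_{r+1}\E[\vert Y\vert^{r+1}]$ obtained from the set-partition formula for cumulants, each summand collapses into a constant multiple of $\E[\vert Y\vert^{l+2}]$ or of $\E[\vert Y\vert^{l+2}\mathds{1}_{\vert Y\vert>Q}]$. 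Summing over $r$ and tracking the prefactors $B_{r+1}/(r!(l+1-r)!)$ through a Stirling estimate produces the two terms of \eqref{eqn56} with $C_l\le(p_0\cdot l)^l/l!$.

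The only genuine difficulty is the combinatorial bookkeeping in the last step: one must ensure that the Bell-number cumulant estimates, multiplied by Taylor denominators and summed over $r\in\{1,\ldots,l\}$, fit inside the stated factorial form $C_l\le (p_0 l)^l/l!$. The polynomial identity itself is formal and the Taylor-plus-cutoff extension is otherwise a standard real-analysis argument.
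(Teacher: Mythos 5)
The paper offers no proof of this lemma at all: it is quoted verbatim as Lemma 3.2 of \cite{Lee+Schnelli3}, and the proof there (going back to Khorunzhy--Khoruzhenko--Pastur and Lytova--Pastur) is exactly the route you take — the exact moment--cumulant identity for polynomials via $\chi'=K'\chi$, followed by an order-$l$ Taylor expansion whose remainder is split at the cutoff $Q$. Your argument is correct in substance, and the combinatorics does close: the summand prefactors are of size $O\big(c^{\,r}(r+1)!/(r!\,(l+1-r)!)\big)$, whose sum over $r$ is $O((c')^l)$ and hence fits inside $(p_0 l)^l/l!\sim (p_0e)^l/\sqrt{2\pi l}$. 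Two points deserve an extra line. First, for the $r\ge1$ terms restricted to $\{\vert Y\vert>Q\}$ you need the interpolation inequality \emph{with the indicator inserted}, namely $\E[\vert Y\vert^{r+1}]\,\E[\vert Y\vert^{l+1-r}\mathds1_{\vert Y\vert>Q}]\le\E[\vert Y\vert^{l+2}\mathds1_{\vert Y\vert>Q}]$; the H\"older/Lyapunov form you quote only gives the version without the indicator, and a crude replacement like $\vert Y\vert^{l+1-r}\le \vert Y\vert^{l+2}/Q^{r+1}$ would reintroduce a forbidden $Q$-dependence in $C_l$. The correct form does hold, by Chebyshev's association inequality, since $x\mapsto x^{r+1}$ and $x\mapsto x^{l+1-r}\mathds1_{x>Q}$ are both nondecreasing on $[0,\infty)$. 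Second, the constant in $\vert\kappa_{n}\vert\le c_n\E[\vert Y\vert^{n}]$ coming from the set-partition formula is $\sum_{\pi}(\vert\pi\vert-1)!=\sum_k S(n,k)(k-1)!$, an ordered-Bell-type number strictly larger than the Bell number $B_n$; it is still $O(n!\,c^{n})$, so the final estimate is unaffected. (Also note that when $\sup_{t\in{\mathbb R }}\vert F^{(l+1)}(t)\vert=\infty$ the bound \eqref{eqn56} is vacuous, and otherwise $F$ grows at most polynomially of degree $l+1$, so every expectation in the identity is finite.)
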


\subsection{Marcinkiewicz-Zygmund inequality}
The next lemma is copied from Lemma D.1, Lemma D.2 and Lemma D.3 of \cite{Benaych-Georges+Knowles}. It is a version of the Marcinkiewicz-Zygmund inequality.
\begin{lemma}\label{lemma:Marcinkiewicz-Zygmund}
	Let $X_1,\ldots,X_N,Y_1,\ldots,Y_N$ be independent centered random variables such that for each $p\in\{1,2,\ldots\}$ there exists a constant $\mu_p>0$ satisfying
	$$\E[\vert X_i\vert ^p]^{1/p}\le\mu_p,\quad\E[\vert Y_i\vert ^p]^{1/p}\le\mu_p\quad(1\le i\le N).$$
	Then for   deterministic families $(a_{ij})$ and $(b_i)$ we have
	$$\E[\vert \sum_i b_iX_i\vert ^p]^{1/p}\le C_p\cdot\mu_p\cdot(\sum_i\vert b_i\vert ^2)^{1/2},\quad\forall p\ge1$$
	$$\E[\vert \sum_{ij}a_{ij}X_iY_j\vert ^p]^{1/p}\le C_p\cdot\mu_p^2\cdot(\sum_{ij}\vert a_{ij}\vert ^2)^{1/2},\quad\forall p\ge1$$
	$$\E[\vert \sum_{i\ne j}a_{ij}X_iX_j\vert ^p]^{1/p}\le C_p\cdot\mu_p^2\cdot(\sum_{i\ne j}\vert a_{ij}\vert ^2)^{1/2},\quad\forall p\ge1$$
	where $C_p$ is a constant depending only on $p$.
\end{lemma}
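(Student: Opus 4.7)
The plan is to prove the three inequalities in order, each obtained from the previous by a standard probabilistic reduction: symmetrization together with Khintchine--Kahane for the linear form; Fubini conditioning for the bilinear form; and the de la Pe\~na--Montgomery-Smith decoupling inequality for the quadratic form. The sole analytic input is Khintchine--Kahane, which states that for independent Rademacher signs $\epsilon_i$ and scalars $c_i$, $\E[\vert \sum_i\epsilon_ic_i\vert ^p]\le K_p(\sum_ic_i^2)^{p/2}$ with $K_p$ depending only on $p$.

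For the linear bound I would first symmetrize: letting $(X_i')$ be an independent copy of $(X_i)$ and $(\epsilon_i)$ be Rademacher signs independent of everything else, Jensen's inequality together with the centering of $X_i'$ gives
$$\E\big[\big\vert \sum_i b_i X_i\big\vert ^p\big]\le \E\big[\big\vert \sum_i b_i (X_i-X_i')\big\vert ^p\big]=\E\big[\big\vert \sum_i\epsilon_i b_i(X_i-X_i')\big\vert ^p\big].$$
Conditioning on $(X_i,X_i')$ and applying Khintchine--Kahane, the right-hand side is bounded by $K_p\,\E[(\sum_ib_i^2(X_i-X_i')^2)^{p/2}]$. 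For $p\ge 2$, Minkowski's inequality in $L^{p/2}$ yields
$$\E\Big[\Big(\sum_i b_i^2(X_i-X_i')^2\Big)^{p/2}\Big]^{2/p}\le \sum_i b_i^2\,\E[\vert X_i-X_i'\vert ^p]^{2/p}\le 4\mu_p^2\sum_i b_i^2,$$
and the case $1\le p<2$ follows from $\|\cdot\|_{L^p}\le\|\cdot\|_{L^2}$ applied to the $p=2$ estimate.

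For the bilinear bound, I would condition on $\sigma(Y_1,\ldots,Y_N)$ and apply the linear bound to $\sum_i \tilde b_i X_i$ with random coefficients $\tilde b_i:=\sum_j a_{ij}Y_j$, obtaining
$$\E\big[\big\vert \sum_{ij}a_{ij}X_iY_j\big\vert ^p\mid Y\big]^{1/p}\le C_p\mu_p\Big(\sum_i\tilde b_i^2\Big)^{1/2}.$$
Taking $L^p$ norms on both sides and invoking Minkowski in $L^{p/2}$ once more gives $\E[(\sum_i\tilde b_i^2)^{p/2}]^{1/p}\le (\sum_i\E[\vert \tilde b_i\vert ^p]^{2/p})^{1/2}$; applying the linear bound to each $\tilde b_i=\sum_ja_{ij}Y_j$ produces $\E[\vert \tilde b_i\vert ^p]^{1/p}\le C_p\mu_p(\sum_ja_{ij}^2)^{1/2}$, and combining the two factors yields the claimed bound with constant $C_p^2\mu_p^2$.

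For the quadratic form the diagonal-exclusion hypothesis $i\ne j$ is essential, and I would invoke the de la Pe\~na--Montgomery-Smith decoupling inequality: for an independent copy $(X_j')$ of $(X_j)$ there is a universal constant $D_p$ (depending only on $p$) such that
$$\E\big[\big\vert \sum_{i\ne j}a_{ij}X_iX_j\big\vert ^p\big]\le D_p\,\E\big[\big\vert \sum_{i\ne j}a_{ij}X_iX_j'\big\vert ^p\big].$$
The right-hand side is a bilinear form in two independent families, so the preceding step applies with $Y_j:=X_j'$ and coefficient matrix $a_{ij}\mathds{1}_{i\ne j}$, producing the desired estimate. I expect this decoupling step to be the main obstacle: it is a nontrivial inequality that genuinely requires independence (not merely uncorrelatedness) of the $X_i$'s and the exclusion of the diagonal entries, both of which are satisfied here, whereas the linear and bilinear cases are routine applications of symmetrization and Minkowski's inequality.
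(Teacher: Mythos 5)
Your argument is correct, but note that the paper does not prove this lemma at all: it is imported verbatim as Lemmas D.1--D.3 of the cited lecture notes of Benaych-Georges and Knowles, so there is no in-paper proof to compare against. Your three-step route --- symmetrization plus conditional Khintchine--Kahane and Minkowski in $L^{p/2}$ for the linear form, Fubini conditioning to bootstrap to the decoupled bilinear form, and the de la Pe\~na--Montgomery-Smith decoupling inequality to pass from $\sum_{i\ne j}a_{ij}X_iX_j$ to $\sum_{i\ne j}a_{ij}X_iX_j'$ --- is the standard way to establish these bounds and each step is sound; in particular the decoupling theorem does apply to the tetrahedral (diagonal-free) sum exactly as you invoke it, and the independent copy $X'$ inherits the moment hypotheses. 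Two small caveats. First, your treatment of $1\le p<2$ via $\|\cdot\|_{L^p}\le\|\cdot\|_{L^2}$ produces the constant $\mu_2$ rather than $\mu_p$ on the right-hand side; since the hypothesis does not force $\mu_p$ to dominate $\mu_2$, strictly speaking this only proves the stated inequality for $p\ge2$ (which is all that is ever used, and is how such lemmas are usually read). Second, the two nontrivial ingredients you rely on, Khintchine--Kahane and the decoupling inequality, are themselves cited rather than proved, so your write-up is a reduction to standard black boxes rather than a from-scratch argument --- perfectly acceptable here, and no worse than the paper's own bare citation.
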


\subsection{Some  results for symmetric matrices}

Suppose $\mathtt M$  is an $N\times N$ real symmetric matrix. Suppose $T$ is a subset of $\{1,\ldots,N\}$.

\begin{definition}
	\begin{itemize}
		\item We use $\mathtt M^{(T)}$ to denote the $(N-\vert T\vert )\times(N-\vert T\vert )$ matrix:
		$$(\mathtt M_{ij})_{i,j\in\{1,\ldots,N\}\backslash T}$$
		\item 
		For $z\in{\mathbb C }\backslash{\mathbb R }$ let
		$$\mathtt R(z)=(\mathtt M-z)^{-1},\quad \mathtt R^{(T)}(z)=(\mathtt M^{(T)}-z)^{-1}$$ 
		\item 
		We also set
		$$\sum_i^{(T)}=\sum_{i:i\not\in T},\quad\sum_{i,j}^{(T)}=\sum_{i:i\not\in T}\sum_{j:j\not\in T}$$
	\end{itemize}
\end{definition}
\begin{remark}
	\begin{enumerate}
		\item When $T=\{i\}$, we use $(i)$ instead of $(\{i\})$ in the above definitions. Similarly, we write $(ij)$ instead of $(\{i,j\})$. We use $(Ti)$ to denote $(T\cup\{i\})$.
		\item In $\mathtt M^{(T)}$ and $\mathtt R^{(T)}$ we use the original values of matrix indices. For example, the indices for the rows and columns of $\mathtt M^{(2)}$ are $1,3,4,\ldots,N$.
		\item It is easy to see that $\mathtt R(z)$ is a symmetric matrix.
	\end{enumerate}
\end{remark}

\begin{lemma}\label{lemma:well_known_facts_for_resolvent}
	Suppose $\Im z_1\ne0$ and $\Im z_2\ne0$. Then
	\begin{itemize}
		\item $\frac{d}{dz}\mathtt R(z_1)=\mathtt R^2(z_1)$
		\item  $\sum_{j}\vert \mathtt R_{ij}(z_1)\vert ^2=\dfrac{\Im \mathtt R_{ii}(z_1)}{\Im z_1}, \quad\forall i$
		\item 
		$\big\vert \big(\mathtt R^{k_1}(z_1)(\mathtt R')^{k_2}(z_2)\big)_{ij}\big\vert \le\frac{1}{\vert \Im z_1\vert ^{k_1}\vert \Im z_2\vert ^{2k_2}},\quad$ for any $i,j\in\{1,\ldots,N\}$,  $k_1,k_2\in\{0,1,2,\ldots\}$. 
	\end{itemize} 
\end{lemma}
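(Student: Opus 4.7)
The plan is to prove the three identities independently, each by a short standard manipulation of the resolvent calculus; no clever idea is needed, so the organization is simply by which tool fits each bullet. I expect the whole proof to fit in well under a page.

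For the first bullet, I would differentiate the defining relation $(\mathtt M - z_1)\mathtt R(z_1) = I$ with respect to $z_1$. This yields $-\mathtt R(z_1) + (\mathtt M - z_1)\mathtt R'(z_1) = 0$, and multiplying on the left by $\mathtt R(z_1) = (\mathtt M - z_1)^{-1}$ gives $\mathtt R'(z_1) = \mathtt R(z_1)^2$.

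For the second bullet, I would apply the resolvent identity $\mathtt R(z) - \mathtt R(w) = (z - w)\,\mathtt R(z)\mathtt R(w)$ with $w = \overline{z_1}$. Since $\mathtt M$ is real symmetric, $\mathtt R(\overline{z_1}) = \overline{\mathtt R(z_1)}$, so
\[
\mathtt R(z_1)\,\overline{\mathtt R(z_1)} = \frac{\mathtt R(z_1) - \overline{\mathtt R(z_1)}}{z_1 - \overline{z_1}} = \frac{\Im \mathtt R(z_1)}{\Im z_1}.
\]
Reading off the $(i,i)$-entry on the left and using the symmetry $\mathtt R_{ji}(z_1) = \mathtt R_{ij}(z_1)$, the left-hand side becomes $\sum_j \mathtt R_{ij}(z_1)\overline{\mathtt R_{ij}(z_1)} = \sum_j |\mathtt R_{ij}(z_1)|^2$, matching the right-hand side.

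For the third bullet, the key observation is that by the first bullet $(\mathtt R'(z_2))^{k_2} = \mathtt R(z_2)^{2k_2}$. Both $\mathtt R(z_1)$ and $\mathtt R(z_2)$ are analytic functions of the same real symmetric matrix $\mathtt M$, so they commute and are simultaneously diagonalized in any orthonormal eigenbasis $\{u_k\}$ of $\mathtt M$ (with eigenvalues $\lambda_k$). This gives the spectral representation
\[
\bigl(\mathtt R(z_1)^{k_1}\mathtt R(z_2)^{2k_2}\bigr)_{ij} = \sum_{k=1}^N \frac{u_k(i)\,u_k(j)}{(\lambda_k - z_1)^{k_1}(\lambda_k - z_2)^{2k_2}}.
\]
Since $\lambda_k \in \mathbb R$, we have $|\lambda_k - z_\ell| \ge |\Im z_\ell|$, and Cauchy--Schwarz together with orthonormality yields $\sum_k |u_k(i)||u_k(j)| \le (\sum_k u_k(i)^2)^{1/2}(\sum_k u_k(j)^2)^{1/2} = 1$, from which the claimed bound follows.

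If there is any obstacle at all, it is only bookkeeping: one must remember that the symmetry step in the second bullet uses that $\mathtt R$ is (complex) symmetric rather than Hermitian, and that the third bullet relies on the commutativity of $\mathtt R(z_1)$ and $\mathtt R(z_2)$ as functions of $\mathtt M$. Neither point is substantive.
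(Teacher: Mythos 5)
Your proof is correct and follows essentially the same route as the paper: the first bullet by differentiating the defining relation, the second is the Ward identity (which the paper simply cites from the reference), and the third via the spectral decomposition of $\mathtt M$, with the only cosmetic difference being that you bound $\sum_r \vert O_{ir}O_{jr}\vert$ by Cauchy--Schwarz where the paper uses $\vert O_{ir}O_{jr}\vert \le \tfrac{1}{2}(O_{ir}^2+O_{jr}^2)$.
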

\begin{proof}
	The first can be proved by directly taking the derivative. The second conclusion is the Ward identity, see (3.6) of \cite{Benaych-Georges+Knowles}. For the last conclusion, suppose 
	$$\mathtt M=O\text{diag}(\lambda_1(\mathtt M),\ldots,\lambda_N(\mathtt M))O^T$$
	where $\lambda_1(\mathtt M),\ldots,\lambda_N(\mathtt M)$ are eigenvalues of $\mathtt M$ and $O$ is an orthogonal matrix. Then
\begin{multline*}
\mathtt R(z_1)=O\text{diag}((\lambda_1(\mathtt M)-z_1)^{-1},\ldots,(\lambda_N(\mathtt M)-z_1)^{-1})O^T\quad\text{and}\\\mathtt R'(z_2)=O\text{diag}((\lambda_1(\mathtt M)-z_2)^{-2},\ldots,(\lambda_N(\mathtt M)-z_2)^{-2})O^T\quad\text{(by the first conclusion of this lemma)}.
\end{multline*}
So
\begin{multline*}
\big\vert \big(\mathtt R^{k_1}(z_1)(\mathtt R')^{k_2}(z_2)\big)_{ij}\big\vert =\Big\vert \sum_{r=1}^N\frac{1}{(\lambda_r(\mathtt M)-z_1)^{k_1}(\lambda_r(\mathtt M)-z_2)^{2k_2}}O_{ir}O_{jr}\Big\vert \\
\le\frac{1}{\vert \Im z_1\vert ^{k_1}\vert \Im z_2\vert ^{2k_2}}\sum_{r=1}^N\vert O_{ir}O_{jr}\vert \le\frac{1}{\vert \Im z_1\vert ^{k_1}\vert \Im z_2\vert ^{2k_2}}\sum_{r=1}^N\frac{O_{ir}^2+O_{jr}^2}{2}= \frac{1}{\vert \Im z_1\vert ^{k_1}\vert \Im z_2\vert ^{2k_2}}.
\end{multline*}
\end{proof}

\begin{lemma}[Resolvent identities]\label{lemma:resolvent+identities}
	\begin{enumerate}
		\item If $i,j, k\not\in T$ and $i,j\ne k$, then 
		\begin{align}\label{eqn3}
			\mathtt R^{(T)}_{ij}=\mathtt R_{ij}^{(Tk)}+\dfrac{\mathtt R^{(T)}_{ik}\mathtt R^{(T)}_{jk}}{\mathtt R^{(T)}_{kk}}.
		\end{align}
		\item if $i\ne j$ then 
		\begin{align}\label{eqn4}
			\mathtt R_{ij}=-\mathtt R_{ii}\mathtt R_{jj}^{(i)}\Big(\mathtt M_{ij}-\sum_{k,l}^{(ij)}\mathtt M_{ik}\mathtt R_{kl}^{(ij)}\mathtt M_{lj}\Big)
		\end{align}
		\item \begin{align}\label{eqn15}
			\mathtt R_{ii}^{-1}=\mathtt M_{ii}-z-\sum_{k,l}^{(i)}\mathtt M_{ik}\mathtt R_{kl}^{(i)}M_{li}
		\end{align}
		\item \begin{align}\label{eqn27}
			\frac{\partial\mathtt R_{kl}}{\partial\mathtt M_{ij}}=\frac{-1}{1+\delta_{ij}}(\mathtt R_{ki}\mathtt R_{lj}+\mathtt R_{kj}\mathtt R_{li})
		\end{align}
	\end{enumerate}
\end{lemma}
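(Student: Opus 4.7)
The four identities are entirely deterministic and standard in random matrix theory, so the plan is to derive each via block-inversion (Schur complement) arguments together with a direct calculation for the derivative identity. Throughout, I will work with the symmetric matrix $A := \mathtt M^{(T)}-z$ (or $\mathtt M-z$) and exploit the block decomposition separating out the indices that are being removed.

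\textbf{Identity (1): Schur complement when one row/column is removed.} I would write $A = \mathtt M^{(T)}-z$ in block form with the $k$-th row/column isolated: $A_{kk}$ in the $(k,k)$ position, a column vector $a = (A_{lk})_{l\ne k, l\notin T}$, and the remaining block $A' = \mathtt M^{(Tk)}-z$. The standard block-inverse formula gives, for $i,j\ne k$ and $i,j\notin T$,
\[
(A^{-1})_{ij} = (A')^{-1}_{ij} + S^{-1}\,[(A')^{-1}a]_i\,[(A')^{-1}a]_j, \qquad (A^{-1})_{ik} = -S^{-1}\,[(A')^{-1}a]_i,
\]
where $S = A_{kk} - a^T(A')^{-1}a$ is the Schur complement and $(A^{-1})_{kk}=S^{-1}$. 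Substituting gives $S^{-1}[(A')^{-1}a]_i[(A')^{-1}a]_j = \mathtt R^{(T)}_{ik}\mathtt R^{(T)}_{jk}/\mathtt R^{(T)}_{kk}$, which is exactly \eqref{eqn3}.

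\textbf{Identities (2) and (3): Schur complement with a $2\times 2$ block and with a $1\times 1$ block.} For \eqref{eqn15}, I would apply the same block decomposition to $A=\mathtt M-z$ with the single index $i$ isolated; the Schur complement $S = (\mathtt M_{ii}-z) - \sum_{k,l}^{(i)} \mathtt M_{ik}\mathtt R^{(i)}_{kl}\mathtt M_{li}$ satisfies $\mathtt R_{ii}=S^{-1}$, giving \eqref{eqn15}. For \eqref{eqn4}, I would isolate the pair $\{i,j\}$ and consider the reduced $2\times 2$ block $\tilde A = A_{\{i,j\}} - B^T(\mathtt M^{(ij)}-z)^{-1}B$, whose entries are
\[
\tilde A_{ii} = \mathtt M_{ii}-z - \!\!\sum_{k,l}^{(ij)}\mathtt M_{ik}\mathtt R^{(ij)}_{kl}\mathtt M_{li},\quad \tilde A_{ij}=\mathtt M_{ij}-\!\!\sum_{k,l}^{(ij)}\mathtt M_{ik}\mathtt R^{(ij)}_{kl}\mathtt M_{lj},\quad \tilde A_{jj}=\mathtt M_{jj}-z-\!\!\sum_{k,l}^{(ij)}\mathtt M_{jk}\mathtt R^{(ij)}_{kl}\mathtt M_{lj}.
\]
Inverting the $2\times 2$ block yields $\mathtt R_{ij} = -\tilde A_{ij}/\det(\tilde A)$ and $\mathtt R_{ii}=\tilde A_{jj}/\det(\tilde A)$, so $\mathtt R_{ij} = -\mathtt R_{ii}\,\tilde A_{ij}/\tilde A_{jj}$. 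Applying \eqref{eqn15} to the matrix $\mathtt M^{(i)}$ at index $j$ identifies $\tilde A_{jj}$ with $(\mathtt R^{(i)}_{jj})^{-1}$, giving \eqref{eqn4}.

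\textbf{Identity (4): Derivative.} Differentiating $\mathtt R\cdot(\mathtt M-z)=I$ gives $\partial \mathtt R/\partial \mathtt M_{ij} = -\mathtt R\,(\partial \mathtt M/\partial \mathtt M_{ij})\,\mathtt R$. The only subtle point is that, because $\mathtt M$ is symmetric, $\mathtt M_{ij}$ and $\mathtt M_{ji}$ refer to the same independent variable when $i\ne j$, so
\[
\frac{\partial \mathtt M_{pq}}{\partial \mathtt M_{ij}} = \frac{1}{1+\delta_{ij}}(\delta_{pi}\delta_{qj}+\delta_{pj}\delta_{qi}).
\]
Contracting with $\mathtt R_{kp}\mathtt R_{ql}$ and using $\mathtt R_{lj}=\mathtt R_{jl}$ produces \eqref{eqn27}. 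None of the four steps is an obstacle — each is a short algebraic verification, and the only point to watch is the factor $(1+\delta_{ij})^{-1}$ that comes from the symmetry constraint on the independent entries of $\mathtt M$.
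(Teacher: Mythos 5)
Your proof is correct. The paper itself gives no argument here --- it simply cites (3.4), (5.9), and (5.1) of the Benaych-Georges--Knowles lecture notes for the first three identities and says the fourth follows "by definition" --- and your Schur-complement derivations (including the reduction of $\tilde A_{jj}$ to $(\mathtt R^{(i)}_{jj})^{-1}$ via the third identity, and the factor $(1+\delta_{ij})^{-1}$ from the symmetry constraint) are exactly the standard arguments underlying those cited formulas.
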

\begin{proof} 
 The first conclusion can be found in (3.4) of \cite{Benaych-Georges+Knowles}. 	The second conclusion is (5.9) of \cite{Benaych-Georges+Knowles}. The third conclusion is (5.1) of \cite{Benaych-Georges+Knowles}. The fourth conclusion can be proved by definition.
\end{proof}

\begin{lemma}\label{lemma:Lemma5.2_from_lecture_notes}
	If $\mathtt M$ is an $N\times N$ real symmetric random matrix such that $\{\mathtt M_{ij}\vert i\le j\}$ are independent and $\E[\mathtt M_{ij}^2]=\frac{1}{N}$ for $i\ne j$, then
	$$\frac{1}{\mathtt R_{ii}(z)}=-z-\frac{1}{N}\tr\mathtt  R(z)+\mathtt M_{ii}+\frac{1}{N}\sum_k\frac{(\mathtt R_{ki}(z))^2}{\mathtt R_{ii}(z)}-\sum_{k,l}^{(i)}\mathtt M_{ik}\mathtt R_{kl}^{(i)}\mathtt M_{li}+\frac{1}{N}\sum_k^{(i)}\mathtt R_{kk}^{(i)}.$$
\end{lemma}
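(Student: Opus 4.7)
The identity to prove is purely algebraic in the resolvent entries, despite the probabilistic hypotheses stated in the lemma (those hypotheses are presumably there to fix notation for the context in which the lemma is applied). The strategy is to start from the resolvent identity \eqref{eqn15}, namely
\[
\mathtt R_{ii}^{-1}=\mathtt M_{ii}-z-\sum_{k,l}^{(i)}\mathtt M_{ik}\mathtt R_{kl}^{(i)}\mathtt M_{li},
\]
and show that the remaining terms on the right-hand side of the target identity cancel, i.e.
\[
-\frac{1}{N}\tr \mathtt R(z)+\frac{1}{N}\sum_{k=1}^N\frac{\mathtt R_{ki}^2}{\mathtt R_{ii}}+\frac{1}{N}\sum_k^{(i)}\mathtt R_{kk}^{(i)}=0.
\]

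The key step is a diagonal-entry expansion of the full resolvent in terms of the minor resolvent, obtained by applying identity \eqref{eqn3} with $T=\varnothing$ and the roles $i,j\mapsto k,k$, $k\mapsto i$ (valid for $k\ne i$):
\[
\mathtt R_{kk}=\mathtt R_{kk}^{(i)}+\frac{\mathtt R_{ki}^2}{\mathtt R_{ii}},\qquad k\ne i.
\]
Summing this over $k\ne i$ and adding the $k=i$ contribution $\mathtt R_{ii}=\mathtt R_{ii}^2/\mathtt R_{ii}$ (which is precisely the missing $k=i$ term of $\sum_k \mathtt R_{ki}^2/\mathtt R_{ii}$) yields
\[
\tr \mathtt R(z)=\sum_{k=1}^N\frac{\mathtt R_{ki}^2}{\mathtt R_{ii}}+\sum_k^{(i)}\mathtt R_{kk}^{(i)}.
\]
Dividing by $N$ gives the required cancellation, and combining with \eqref{eqn15} produces the claimed formula.

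\textbf{Main obstacle.} There is no serious obstacle — the entire argument reduces to a single application of the Schur-complement-type identity \eqref{eqn3} plus bookkeeping to absorb the $k=i$ diagonal term. The only subtlety worth flagging is matching indices correctly when invoking \eqref{eqn3}: one must track that the identity there requires $i,j\ne k$, which is satisfied here because we sum only over $k\ne i$, and the absorbed $k=i$ term appears algebraically as $\mathtt R_{ii}$ on both sides of the bookkeeping step.
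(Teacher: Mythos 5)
Your derivation is correct. The identity is indeed purely algebraic: subtracting \eqref{eqn15} from the claimed formula reduces everything to $\tr\mathtt R=\sum_k\mathtt R_{ki}^2/\mathtt R_{ii}+\sum_k^{(i)}\mathtt R_{kk}^{(i)}$, which is exactly the sum over $k\ne i$ of the identity $\mathtt R_{kk}=\mathtt R_{kk}^{(i)}+\mathtt R_{ki}^2/\mathtt R_{ii}$ from \eqref{eqn3} together with the $k=i$ diagonal term, as you say. The paper does not carry out this computation; it simply cites Lemma 5.2 of \cite{Benaych-Georges+Knowles} and supplements it with the observation that $\sum_{k,l}^{(i)}\E[\mathtt M_{ik}\mathtt R_{kl}^{(i)}\mathtt M_{li}\vert\mathtt M^{(i)}]=\frac{1}{N}\sum_k^{(i)}\mathtt R_{kk}^{(i)}$, which is needed to translate the fluctuation term $Z_i$ of the cited lemma into the two explicit terms appearing here. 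Your self-contained argument buys a little more than the citation: it makes clear that the stated identity holds deterministically for every real symmetric $\mathtt M$, so the independence and variance hypotheses in the lemma are only relevant for how the term $\frac{1}{N}\sum_k^{(i)}\mathtt R_{kk}^{(i)}$ is later interpreted (as a conditional expectation), not for the validity of the formula itself.
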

\begin{proof}
	This lemma is from Lemma 5.2 of \cite{Benaych-Georges+Knowles}
	and the fact that $\sum_{k,l}^{(i)}\E\Big[\mathtt M_{ik}\mathtt R_{kl}^{(i)}\mathtt M_{li}\Big\vert \mathtt M^{(i)}\Big]=\frac{1}{N}\sum_k^{(i)}\mathtt R_{kk}^{(i)}$ (since $\mathtt M_{ik}\mathtt M_{li}$ is independent of the sigma algebra generated by the entries of  $\mathtt M^{(i)}$).
\end{proof}

\section{Local law for resolvent entries: proof of Theorem \ref{thm:local_law_for_resolvent_entries}}\label{sec:local_law}

In this section we follow the idea introduced in \cite{Benaych-Georges+Knowles} to prove Theorem \ref{thm:local_law_for_resolvent_entries}. Recall that $G(z)$ is defined in Definition \ref{definition:D_delta(M)}.

\begin{lemma}\label{lemma:stochastic_domination}
Suppose $i\ne j$. For any $\epsilon'>0$, $D'>0$, there exists $N_0=N_0(\epsilon',D')>0$ such that if $N>N_0$ and $z\in{\mathbb C }\backslash{\mathbb R }$ then
$${\mathbb P}\Big(\Big\vert \sum_{k,l}^{(ij)}W_{ik}G_{kl}^{(ij)}W_{lj}\Big\vert \le N^{\epsilon'}\sqrt{\frac{1}{N^2}\sum_{k,l}^{(ij)}\vert G_{kl}^{(ij)}\vert ^2}\Big)>1-N^{-D'}$$
$${\mathbb P}\Big(\Big\vert \sum_{k,l}^{(i)}W_{ik}G_{kl}^{(i)}W_{li}\Big\vert \le N^{\epsilon'}\sqrt{\frac{1}{N^2}\sum_{k\ne l}^{(i)}\vert G_{kl}^{(i)}\vert ^2}+N^{\epsilon'-1}\sum_k^{(i)}\vert G_{kk}^{(i)}\vert \Big)>1-N^{-D'}$$
$${\mathbb P}\Big(\Big\vert \sum_{k,l}^{(ij)}W_{jk} G_{kl}^{(ij)}W_{lj} \Big\vert \le N^{\epsilon'}\sqrt{\frac{1}{N^2}\sum_{k\ne l}^{(ij)}\vert G_{kl}^{(ij)}\vert ^2}+N^{\epsilon'-1}\sum_k^{(ij)}\vert G_{kk}^{(ij)}\vert \Big)>1-N^{-D'}$$
$${\mathbb P}\Big(\Big\vert \sum_{k}^{(i)}(W_{ik}^2-\frac{1}{N})G_{kk}^{(i)} \Big\vert \le N^{\epsilon'}\sqrt{\frac{1}{N^2}\sum_{k}^{(i)}\vert G_{kk}^{(i)}\vert ^2}\Big)>1-N^{-D'}$$
where each of $G_{kl}^{(ij)}$ and $G_{kl}^{(i)}$ takes value at $z$.
\end{lemma}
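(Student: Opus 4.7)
The four bounds are all large deviation estimates for quadratic (or bilinear) forms $\sum_{k,l} W_{ik} A_{kl} W_{jl}$ in the Wigner entries, where $A = G^{(ij)}$ (resp.\ $G^{(i)}$) is independent of the rows $W_{i\cdot}$ and $W_{j\cdot}$. The plan is to condition on the minor, apply the Marcinkiewicz--Zygmund inequality (Lemma \ref{lemma:Marcinkiewicz-Zygmund}) to obtain $L^p$ bounds for every $p$, and then convert to the stated overwhelming-probability statement by Markov's inequality with $p$ chosen as a function of $\epsilon'$ and $D'$ (i.e.\ $p \gtrsim D'/\epsilon'$). The moment assumptions needed in Lemma \ref{lemma:Marcinkiewicz-Zygmund} are supplied by \eqref{eqn1}: the rescaled entries $\sqrt{N}\,W_{ik}$ have $\E[|\sqrt{N}\,W_{ik}|^p]^{1/p} \le \mu_p$ for an $(i,k,N)$-independent constant $\mu_p$.

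For the first bound, condition on $G^{(ij)}$ and set $X_k = \sqrt{N}\,W_{ik}$, $Y_l = \sqrt{N}\,W_{lj}$ (independent centered families after conditioning), $a_{kl} = N^{-1}\,G^{(ij)}_{kl}$. The second inequality of Lemma \ref{lemma:Marcinkiewicz-Zygmund} gives
\[
\E\!\left[\Big|\sum_{k,l}^{(ij)} W_{ik} G^{(ij)}_{kl} W_{lj}\Big|^p\,\Big|\,G^{(ij)}\right]^{1/p} \le C_p \mu_p^2 \sqrt{\tfrac{1}{N^2}\sum_{k,l}^{(ij)} |G^{(ij)}_{kl}|^2},
\]
and a Markov bound with $p$ large yields the claim. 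For the second bound, split the sum as
\[
\sum_{k,l}^{(i)} W_{ik} G^{(i)}_{kl} W_{li} = \sum_{k\ne l}^{(i)} W_{ik} G^{(i)}_{kl} W_{li} + \sum_k^{(i)} \big(W_{ik}^2 - \tfrac{1}{N}\big) G^{(i)}_{kk} + \tfrac{1}{N}\sum_k^{(i)} G^{(i)}_{kk}.
\]
The off-diagonal piece is controlled by the third inequality of Lemma \ref{lemma:Marcinkiewicz-Zygmund} (applied to the single family $X_k=\sqrt{N}\,W_{ik}$). The centered diagonal piece is controlled by the first inequality of Lemma \ref{lemma:Marcinkiewicz-Zygmund} applied with $Y_k := W_{ik}^2 - 1/N$; since $\E[|Y_k|^p]^{1/p}\le 2(\E[W_{ik}^{2p}]^{1/p} + N^{-1}) \le \tilde\mu_p/N$, we obtain a bound of order $N^{\epsilon'}\sqrt{N^{-2}\sum_k^{(i)} |G^{(i)}_{kk}|^2}$, which is dominated by $N^{\epsilon'-1}\sum_k^{(i)} |G^{(i)}_{kk}|$ via Cauchy--Schwarz. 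The deterministic tail $N^{-1}\sum_k^{(i)} G^{(i)}_{kk}$ is trivially bounded by $N^{-1}\sum_k^{(i)} |G^{(i)}_{kk}|$, which is absorbed into the same term. The third bound is obtained by the same decomposition applied to the single family $\{W_{jk}\}_{k \neq i,j}$, which is independent of $G^{(ij)}$; and the fourth bound is the direct application of the first MZ inequality with $Y_k = W_{ik}^2 - 1/N$ and $b_k = G^{(i)}_{kk}$ already described.

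There is essentially no conceptual obstacle here; the main points requiring care are (i) verifying the independence structure after conditioning, which follows because the minor $G^{(i)}$ (resp.\ $G^{(ij)}$) depends on $V$ and on $\{W_{kl}\}_{k,l\ne i}$ (resp.\ $\{W_{kl}\}_{k,l\notin\{i,j\}}$) only, and (ii) tracking that the moment constants $\mu_p,\tilde\mu_p,C_p$ are uniform in $i,j,N,z$. Once $p = p(\epsilon',D')$ is fixed so that $C_p \mu_p^2 < N^{\epsilon'/2}$ and $N^{-p\epsilon'/2} \le N^{-D'}$ for large $N$, Markov's inequality applied to each of the three pieces and a union bound give the four statements for all sufficiently large $N$, uniformly in $z\in{\mathbb C }\setminus{\mathbb R }$ (the dependence on $z$ enters only through the data $G^{(i)}(z)$ inside the square root, as in the statement).
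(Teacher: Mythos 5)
Your proof is correct and follows essentially the same route as the paper: condition on the minor, apply the Marcinkiewicz--Zygmund inequality to get uniform-in-$z$ $L^{p}$ bounds, and convert to overwhelming probability via Markov with $p\gtrsim D'/\epsilon'$. The only (immaterial) difference is in the diagonal part of the second and third bounds, where you center $W_{ik}^2-1/N$ and reuse the MZ estimate plus $\ell^2\le\ell^1$, while the paper instead invokes the entrywise high-probability bound $|W_{ik}|\le N^{\epsilon'-1/2}$ from \eqref{eqn2}; both yield the stated term $N^{\epsilon'-1}\sum_k^{(i)}|G_{kk}^{(i)}|$ after the harmless rescaling of $\epsilon'$.
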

\begin{proof}
\begin{enumerate}
	\item For the first conclusion, suppose $\mathcal G_1$ is the sigma algebra generated by entries of $(W+\lambda V)^{(ij)}$. Then $W_{ik}$ and $W_{lj}$ are independent of $\mathcal G_1$. Let
$$B_{kl}=\frac{G_{kl}^{(ij)}}{\sqrt{\sum_{k,l}^{(ij)}\vert G_{kl}^{(ij)}\vert ^2}}.$$

For any natural number $p$ and any sample point $\omega$ in the probability space, we have
\begin{multline*}
\E\Big[\Big\vert \sum_{k,l}^{(ij)}W_{ik}B_{kl}W_{lj}\Big\vert ^{2p}\Big\vert \mathcal G_1\Big](\omega)
=\E\Big[\Big(\sum_{k,l}^{(ij)}W_{ik}B_{kl}W_{lj}\Big)^p\Big(\sum_{k,l}^{(ij)}W_{ik}\bar B_{kl}W_{lj}\Big)^p\Big\vert \mathcal G_1\Big](\omega)\\
=\E\Big[\Big(\sum_{k,l}^{(ij)}W_{ik}B_{kl}(\omega)W_{lj}\Big)^p\Big(\sum_{k,l}^{(ij)}W_{ik}\bar B_{kl}(\omega)W_{lj}\Big)^p\Big]=\E\Big[\Big\vert \sum_{k,l}^{(ij)}W_{ik}B_{kl}(\omega)W_{lj}\Big\vert ^{2p}\Big]\\
\le C\Big(\sum_{k,l}^{(ij)}\Big\vert \frac{B_{kl}(\omega)}{N}\Big\vert ^2\Big)^p=CN^{-2p}
\end{multline*}
where $C>0$ depends only on $p$. We used  \eqref{eqn1} and the second conclusion of Lemma \ref{lemma:Marcinkiewicz-Zygmund} in the inequality.
So,
\begin{multline*}
{\mathbb P}\Big(\Big\vert \sum_{k,l}^{(ij)}W_{ik}G_{kl}^{(ij)}W_{lj}\Big\vert >N^{\epsilon'}\sqrt{\frac{1}{N^2}\sum_{k,l}^{(ij)}\vert G_{kl}^{(ij)}\vert ^2}\Big)={\mathbb P}\Big(\Big\vert N\sum_{k,l}^{(ij)}W_{ik}B_{kl}W_{lj}\Big\vert >N^{\epsilon'}\Big)
\\\le N^{-2p\epsilon'}\E\Big[\Big\vert N\sum_{k,l}^{(ij)}W_{ik}B_{kl}W_{lj}\Big\vert ^{2p}\Big]\le CN^{-2p\epsilon'}.
\end{multline*}
Choosing $p$ large enough such that $2p\epsilon'>D'$ we complete the proof of the first conclusion.

\item For the second conclusion, we use the same argument as above except that the $\mathcal G_1$ is replaced by the sigma algebra generated by entries of $(W+\lambda V)^{(i)}$, the summation is replaced by $\sum_{k\ne l}^{(i)}$ and $B_{kl}$ is replaced by 
$$\frac{G_{kl}^{(i)}}{\sqrt{\sum_{k\ne l}^{(i)}\vert G_{kl}^{(i)}\vert ^2}}.$$
Then using the third conclusion of Lemma \ref{lemma:Marcinkiewicz-Zygmund}  we have for $N>N_0=N_0(\epsilon',D')$:
\begin{align}\label{eqn7}
	{\mathbb P}\Big(\Big\vert \sum_{k\ne l}^{(i)}W_{ik}G_{kl}^{(i)}W_{li}\Big\vert >N^{\epsilon'}\sqrt{\frac{1}{N^2}\sum_{k\ne l}^{(i)}\vert G_{kl}^{(i)}\vert ^2}\Big)\le CN^{-2p\epsilon'}\le N^{-D'}.
\end{align}
This together with \eqref{eqn2} and the fact that
$\sum_{k,l}^{(i)}W_{ik}G_{kl}^{(i)}W_{li}=\sum_{k\ne l}^{(i)}W_{ik}G_{kl}^{(i)}W_{li}+\sum_k^{(i)}W_{ik}^2G_{kk}^{(i)}$
complete the proof of the second conclusion.

\item The third conclusion can be proved in the same way as the second conclusion.
\item For the last conclusion, let $X_k=NW_{ik}^2-1$ and $Q_k=G_{kk}^{(i)}/\sqrt{\sum_{k}^{(i)}\vert G_{kk}^{(i)}\vert ^2}$. Then, similarly as above, with $\mathcal G_2$ be the sigma algebra generated by entries of $(W+\lambda V)^{(i)}$, for any natural number $p$ and any sample point $\omega$, 
$$\E[\vert \sum_{k}^{(i)}X_kQ_k\vert ^{2p}\vert \mathcal G_2](\omega)=\E[\vert \sum_{k}^{(i)}X_kQ_k(\omega)\vert ^{2p}]\le C$$
where $C$ depends only on $p$. We used  \eqref{eqn1} and the first conclusion of Lemma \ref{lemma:Marcinkiewicz-Zygmund} in the last inequality. So for any $\epsilon'>0$, $D'>0$, if $N>N_0=N_0(\epsilon',D')$, then
\begin{align*}
{\mathbb P}\Big(\vert \sum_{k}^{(i)}X_kG_{kk}^{(i)}\vert >N^{\epsilon'}\sqrt{\sum_{k}^{(i)}\vert G_{kk}^{(i)}\vert ^2}\Big)\le N^{-2p\epsilon'}\E[\vert \sum_{k}^{(i)}X_kQ_k\vert ^{2p}]\le CN^{-2p\epsilon'}
\end{align*}
Choosing $p$ large enough such that $2p\epsilon'>D'$, we complete the proof.
\end{enumerate}
\end{proof}

\begin{corollary}\label{coro:bound_for_1/G_ii}
For any $M>0$, $\epsilon'>0$, $D'>0$, if   $\vert \Re z\vert \le M$ and $0<\vert \Im z\vert \le3$ then	
$${\mathbb P}\Big(\Big\vert \frac{1}{G_{ii}}\Big\vert \le \frac{3N^{\epsilon'}}{\vert \Im z\vert },\forall i\in[1,N]\Big)>1-N^{-D'}$$
$${\mathbb P}\Big(\Big\vert \frac{1}{G_{jj}^{(i)}}\Big\vert \le \frac{3N^{\epsilon'}}{\vert \Im z\vert },\forall i\ne j\Big)>1-N^{-D'}$$
for large enough $N$.
\end{corollary}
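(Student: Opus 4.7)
The plan is to reduce both bounds to a single Schur complement identity. For the first bound, apply \eqref{eqn15} to $\mathtt M = W + \lambda V$. Since $\lambda V$ is diagonal, this yields
\begin{align*}
\frac{1}{G_{ii}(z)} = W_{ii} + \lambda v_i - z - \sum_{k,l}^{(i)} W_{ik}\, G_{kl}^{(i)}(z)\, W_{li}.
\end{align*}
On the event of \eqref{eqn2} (with $\epsilon'/2$ in place of $\epsilon'$), the deterministic piece $W_{ii} + \lambda v_i - z$ is bounded by $N^{\epsilon'/2-1/2} + \lambda + M + 3$, using $\vert v_i\vert\le 1$, $\vert\Re z\vert\le M$, and $\vert\Im z\vert \le 3$. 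The quadratic form is controlled by the second conclusion of Lemma \ref{lemma:stochastic_domination} (applied with $\epsilon'/2$ in place of $\epsilon'$ and $D'+1$ in place of $D'$): with probability at least $1-N^{-D'-1}$,
\begin{align*}
\Big\vert \sum_{k,l}^{(i)} W_{ik}\, G_{kl}^{(i)}\, W_{li}\Big\vert \le N^{\epsilon'/2}\sqrt{\tfrac{1}{N^2}\sum_{k\ne l}^{(i)}\vert G_{kl}^{(i)}\vert^{2}} \,+\, N^{\epsilon'/2-1}\sum_k^{(i)}\vert G_{kk}^{(i)}\vert.
\end{align*}

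Next, the Ward identity (second conclusion of Lemma \ref{lemma:well_known_facts_for_resolvent}) applied to the symmetric submatrix $(W+\lambda V)^{(i)}$ gives $\sum_{k,l}^{(i)}\vert G_{kl}^{(i)}\vert^{2} \le N/\vert\Im z\vert^{2}$, and the trivial bound $\vert G_{kk}^{(i)}\vert \le 1/\vert\Im z\vert$ gives $\sum_k^{(i)}\vert G_{kk}^{(i)}\vert \le N/\vert\Im z\vert$, so the quadratic form is at most $2N^{\epsilon'/2}/\vert\Im z\vert$. Adding everything, on the intersection of the above two events,
\begin{align*}
\Big\vert \frac{1}{G_{ii}}\Big\vert \le (M+3+\lambda+N^{\epsilon'/2-1/2}) + \frac{2N^{\epsilon'/2}}{\vert\Im z\vert}.
\end{align*}
Since $\vert\Im z\vert\le 3$, the first parenthetical is at most $\tfrac{3(M+3+\lambda+1)}{\vert\Im z\vert}$, which is $\le N^{\epsilon'}/\vert\Im z\vert$ for $N$ large enough in $M,\lambda,\epsilon'$; together with $2N^{\epsilon'/2}\le 2N^{\epsilon'}$ this gives $\vert 1/G_{ii}\vert \le 3N^{\epsilon'}/\vert\Im z\vert$. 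A union bound over $i$ (with $D'+1$ upgraded to $D'+2$ in the probabilistic inputs so that the union cost is absorbed) produces the first claim with probability $\ge 1-N^{-D'}$.

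The second bound is obtained by running the identical argument on the principal submatrix $(W+\lambda V)^{(i)}$: Schur's formula \eqref{eqn15} applied to this submatrix gives
\begin{align*}
\frac{1}{G_{jj}^{(i)}(z)} = W_{jj} + \lambda v_j - z - \sum_{k,l}^{(ij)} W_{jk}\, G_{kl}^{(ij)}\, W_{lj},
\end{align*}
which falls under the third conclusion of Lemma \ref{lemma:stochastic_domination}, and the Ward identity transfers to $G^{(ij)}$ without change since principal minors of symmetric matrices are symmetric. The same combination of bounds yields $\vert 1/G_{jj}^{(i)}\vert \le 3N^{\epsilon'}/\vert\Im z\vert$ for every fixed $i\ne j$, and a union bound over the $O(N^{2})$ ordered pairs $(i,j)$ with $i\ne j$ (upgrading $D'$ accordingly in the inputs) closes the proof. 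No step is a genuine obstacle; the only care needed is the standard bookkeeping in the union bound and the observation that $\vert\Im z\vert\le 3$ lets one absorb the $O(1)$ additive constants into the target $3N^{\epsilon'}/\vert\Im z\vert$ bound once $N$ is large.
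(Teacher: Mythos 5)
Your proposal is correct and follows essentially the same route as the paper: the Schur complement identity \eqref{eqn15}, the overwhelming-probability bound \eqref{eqn2} for $W_{ii}$, the second and third conclusions of Lemma \ref{lemma:stochastic_domination} for the quadratic forms, and the deterministic bounds $\vert G_{kl}^{(i)}\vert\le\vert\Im z\vert^{-1}$ (the paper uses this trivial entrywise bound where you invoke the Ward identity, but either suffices), followed by a union bound. No substantive difference.
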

\begin{proof}
By Lemma \ref{lemma:resolvent+identities},
\begin{align*}
\frac{1}{G_{ii}}=\lambda v_i+W_{ii}-z-\sum_{k,l}^{(i)}W_{ik}G_{kl}^{(i)}W_{li},\quad
	\frac{1}{G_{jj}^{(i)}}=\lambda v_j+W_{jj}-z-\sum_{k,l}^{(ij)}W_{jk}G_{kl}^{(ij)}W_{lj}.
\end{align*}
This together with \eqref{eqn2}, Lemma \ref{lemma:stochastic_domination} and the facts that
$$\vert \lambda v_i\vert \le\lambda,\quad \vert G_{kl}^{(i)}\vert \le\frac{1}{\vert \Im z\vert },\quad \vert G_{kl}^{(ij)}\vert \le\frac{1}{\vert \Im z\vert },\quad  \vert z\vert \le M+3\quad  (\text{provided }\vert \Re z\vert \le M, \vert \Im z\vert \in(0,3])$$
yield the conclusions. 
\end{proof}

\begin{proof}[Proof of Theorem \ref{thm:local_law_for_resolvent_entries}]
	
Suppose   $i\ne j$ and $z\in{\mathbb C }\backslash{\mathbb R }$. By Lemma \ref{lemma:well_known_facts_for_resolvent} and \eqref{eqn3}
\begin{multline}\label{eqn8}
\vert \sum_k^{(ij)} G_{kk}^{(ij)}\vert =\vert \sum_k^{(ij)}\Big( G_{kk}-\frac{G_{ki}G_{ki}}{G_{ii}}-\frac{G_{kj}^{(i)}G_{kj}^{(i)}}{G_{jj}^{(i)}}\Big)\vert \\
\le \frac{N}{\vert \Im z\vert }+\frac{\vert (G^2)_{ii}-(G_{ii})^2-(G_{ij})^2\vert }{\vert G_{ii}\vert }+\frac{\vert (G^{(i)})^2_{jj}-(G_{jj}^{(i)})^2\vert }{\vert G_{jj}^{(i)}\vert }\le \frac{N}{\vert \Im z\vert }+\frac{3}{\vert \Im z\vert ^2\vert G_{ii}\vert }+\frac{2}{\vert \Im z\vert ^2\vert G_{jj}^{(i)}\vert }
\end{multline}
and
\begin{align*}
\Big(\frac{1}{N^2}\sum_{k,l}^{(ij)}\vert G_{kl}^{(ij)}\vert ^2\Big)^{1/2}=\Big(\sum_{k}^{(ij)}\frac{\Im G_{kk}^{(ij)}}{N^2\Im z}\Big)^{1/2}
\le \Big(\frac{1}{N\vert \Im z\vert ^2}+\frac{3}{N^2\vert \Im z\vert ^3\vert G_{ii}\vert }+\frac{2}{N^2\vert \Im z\vert ^3\vert G_{jj}^{(i)}\vert }\Big)^{1/2}
\end{align*}
By Corollary \ref{coro:bound_for_1/G_ii}, for any $\epsilon'>0$, $D'>0$, if $N$ is large enough and  $i\ne j$, then
\begin{align}\label{eqn5}
{\mathbb P}\Big(\Big(\frac{1}{N^2}\sum_{k,l}^{(ij)}\vert G_{kl}^{(ij)}\vert ^2\Big)^{1/2}\le \sqrt{\frac{1}{N\vert \Im z\vert ^2}+\frac{15N^{\epsilon'}}{N^2\vert \Im z\vert ^4}}\Big)>1-N^{-D'}\quad\text{provided }\vert \Re z\vert \le M, \vert \Im z\vert \in(0,3]
\end{align}

Now using \eqref{eqn4}, \eqref{eqn2}, the first conclusion of Lemma \ref{lemma:stochastic_domination} and \eqref{eqn5}, we have that for any $\epsilon'>0$ and $D'>0$, if $N$ is large enough then
$${\mathbb P}\Big(\max_{i\ne j}\vert G_{ij}\vert \le \frac{N^{\epsilon'}}{\sqrt N\vert \Im z\vert ^3}+\frac{N^{\epsilon'}}{ N\vert \Im z\vert ^4}\Big)>1-N^{-D'}\quad\text{provided }\vert \Re z\vert \le M \text{ and } \vert \Im z \vert \in(0,3].$$
If $\delta\le\frac{1}{4}$, then $\frac{N^{\epsilon'}}{\sqrt N\vert \Im z\vert ^3}\ge\frac{N^{\epsilon'}}{ N\vert \Im z\vert ^4}$ for all $z\in D_\delta(M)$. This together with a classic ``lattice" argument  proved that for any $\epsilon'>0$ and $D'>0$, if $N$ is large enough then 
\begin{align}\label{eq88}
{\mathbb P}\Big(\max_{i\ne j}\vert G_{ij}\vert \le \frac{N^{\epsilon'}}{\sqrt N\vert \Im z\vert ^3}, \forall z\in D_\delta(M)\Big)>1-N^{-D'}.
\end{align}

 By Lemma \ref{lemma:Lemma5.2_from_lecture_notes},
\begin{multline}\label{eqn11}
\vert G_{ii}-\frac{1}{\lambda v_i-z-m_N(z)}\vert =\frac{\vert G_{ii}\vert }{\vert \lambda v_i-z-m_N(z)\vert }\vert \frac{1}{G_{ii}}-(\lambda v_i-z-m_N(z))\vert \\
=\frac{\vert G_{ii}\vert }{\vert \lambda v_i-z-m_N(z)\vert }\Big\vert W_{ii}+\frac{1}{N}\sum_k\frac{ (G_{ki}(z))^2 }{ G_{ii}(z)}-\sum_{k,l}^{(i)} W_{ik} G_{kl}^{(i)} W_{li}+\frac{1}{N}\sum_k^{(i)}G_{kk}^{(i)}\Big\vert \\
\le \vert \Im z\vert ^{-2}\Big(\vert W_{ii}\vert +\frac{\vert (G^2)_{ii}\vert }{N\vert G_{ii}\vert }+\vert \sum_{k\ne l}^{(i)}W_{ik}G_{kl}^{(i)}W_{li}\vert +\vert \sum_k^{(i)}(W_{ik}^2-\frac{1}{N})G_{kk}^{(i)}\vert \Big)
\end{multline}
where we used Lemma    \ref{lemma:well_known_facts_for_resolvent} and the fact that $\Im m_N$ has the same sign as $\Im z$ in the last step.

By Lemma \ref{lemma:well_known_facts_for_resolvent} and \eqref{eqn3}, we have 
\begin{align}\label{eqn9}
	\sqrt{\frac{1}{N^2}\sum_{k}^{(i)}\vert G_{kk}^{(i)}\vert ^2}\le\frac{1}{\vert \Im z\vert \sqrt N}
\end{align}
and
\begin{align}\label{eqn10}
	\sqrt{\frac{1}{N^2}\sum_{k\ne l}^{(i)}\vert G_{kl}^{(i)}\vert ^2}\le \sqrt{\frac{1}{N^2}\sum_{k, l}^{(i)}\vert G_{kl}^{(i)}\vert ^2}=\Big(\sum_k^{(i)}\frac{\Im G_{kk}^{(i)}}{N^2\Im z}\Big)^{1/2}\le\frac{1}{\vert \Im z\vert \sqrt N}
\end{align}

If $\epsilon'>0$, $D'>0$,  $\vert \Re z\vert \le M$ and $\vert \Im z\vert \in(0,3]$, then by \eqref{eqn11}, \eqref{eqn2},  Corollary \ref{coro:bound_for_1/G_ii}, \eqref{eqn7} and the last conclusion of Lemma \ref{lemma:stochastic_domination} we have for large enough $N$:
$${\mathbb P}\Big(\vert G_{ii}-\frac{1}{\lambda v_i-z-m_N(z)}\vert \le \frac{N^{\epsilon'}}{\vert \Im z\vert ^2}\Big[\frac{1}{\sqrt N}+\frac{3N^{\epsilon'}}{N\vert \Im z\vert ^3}+\sqrt{\frac{1}{N^2}\sum_{k\ne l}^{(i)}\vert G_{kl}^{(i)}\vert ^2}+\sqrt{\frac{1}{N^2}\sum_{k}^{(i)}\vert G_{kk}^{(i)}\vert ^2}\Big]\Big)>1-N^{-D'}$$
thus by \eqref{eqn9} and \eqref{eqn10}, 
$${\mathbb P}\Big(\vert G_{ii}-\frac{1}{\lambda v_i-z-m_N(z)}\vert \le \frac{3N^{2\epsilon'}}{N\vert \Im z\vert ^5}+\frac{5N^{\epsilon'}}{\sqrt N\vert \Im z\vert ^3}\Big)>1-N^{-D'}$$
If $\delta\le\frac{1}{4}$, then $\frac{N^{\epsilon'}}{\sqrt N\vert \Im z\vert ^3}\ge\frac{N^{\epsilon'}}{ N\vert \Im z\vert ^5}$ for all $z\in D_\delta(M)$. This together with a classic ``lattice" argument yields that for any $\epsilon'>0$ and $D'>0$, if $N$ is large enough then 
$${\mathbb P}\Big( \vert G_{ii}-\frac{1}{\lambda v_i-z-m_N(z)}\vert \le \frac{N^{\epsilon'}}{\sqrt N\vert \Im z\vert ^3}, \text{ for any } z\in D_\delta(M)\text{ and }1\le i\le N\Big)>1-N^{-D'}.$$
This together with \eqref{eq88} completes the proof.
\end{proof}

\section{Central limit theorem for linear statistics: proof of Theorem \ref{thm:CLT}}\label{sec:CLT}  

Suppose $f$ is a fixed function satisfying the condition in Theorem \ref{thm:CLT}. In this section we  prove Theorem \ref{thm:CLT}, i.e., the fact that   the linear statistics
\begin{align}\label{eqn42}
\frac{1}{\sqrt N}\Big(\sum_if(\lambda_i)-N\int f(t)\rho_{fc}(t)dt\Big)
\end{align}
converges in distribution to a Gaussian variable. We use the method introduced in \cite{Ji+Lee}, but we prove Lemma \ref{lemma:convergence_of_P_1} in a different way. The method we prove Lemma \ref{lemma:convergence_of_P_1} is similar as that in \cite{Li+Schnelli+Xu}.

Throughout this section, we assume that the conditions of Theorem \ref{thm:CLT} are  satisfied.
\begin{definition}\label{definition:constants}
\begin{itemize}
	\item Suppose $d\in(0,\frac{1}{2})$ is a constant which is small enough such that $f$ is analytic on a neighborhood of the rectangular region whose vertices are $L_++d\pm d\i$ and $L_--d\pm d\i$. 
	\item 
	Use $\Gamma$ to denote the boundary of the above rectangular region with counterclockwise orientation.
	\item Let $$\varpi\in\Big(\frac{2}{3(b+1)},\min\Big(\frac{1}{14},\frac{1}{10}-\frac{2}{5(b+1)},\frac{1}{8}-\frac{1}{b+1},\frac{1}{9}-\frac{2}{3(b+1)}\Big)\Big),$$
	$$\varsigma\in[\frac{1}{b+1},\frac{1}{8}-\varpi)\quad\quad\text{and}\quad\varsigma'\in\Big(0,\min\Big(\frac{1}{3}-4\varpi,\frac{1}{2}-7\varpi,\frac{1}{2}-5\varpi-2\varsigma\Big)\Big).$$
	\item Let $\Gamma_+=\{z\in\Gamma\vert \vert \Im z\vert \ge N^{-\varpi}\}$. The orientation of $\Gamma_+$ is induced from $\Gamma$.
\end{itemize}
\end{definition}
\begin{remark}
Since $b>37/3$, it is easy to check that the constants $\varpi$, $\varsigma$, $\varsigma'$   exist.
\end{remark}

\begin{definition}
\begin{itemize}
	\item Let $\sigma(V)$ be the sigma algebra generated by $V$:
$$\sigma(V)=\sigma(v_1,\ldots,v_N).$$
   \item Use $\E_N[\cdot]$ to denote the conditional expectation $\E[\cdot\vert \sigma(V)]$.
\end{itemize}
\end{definition}

\begin{lemma}\label{lemma:convergence_of_P_1}As $N\to\infty$,
$$\frac{1}{\sqrt N}\int_{\Gamma_+}f(\xi)\Big[\tr G(\xi)-\E_N[\tr G(\xi)]\Big]d\xi\to 0\quad\text{in distribution}.$$
\end{lemma}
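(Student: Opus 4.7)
My plan is to establish $L^2$-convergence to $0$, which implies the claimed convergence in distribution. Writing $X_N$ for the quantity in the statement and observing that $\tr G(\xi)-\E_N[\tr G(\xi)]$ has vanishing conditional mean given $\sigma(V)$, we have $\E[|X_N|^2]=N^{-1}\E[\mathrm{Var}_N(\int_{\Gamma_+}f(\xi)\tr G(\xi)\,d\xi)]$. Applying Cauchy--Schwarz on the arc-length integral (and using that $f$ is bounded on $\Gamma_+$) reduces the problem to a uniform bound on $\mathrm{Var}_N(\tr G(\xi))$:
$$\E[|X_N|^2]\;\le\;\frac{C}{N}\int_{\Gamma_+}\E\big[\mathrm{Var}_N(\tr G(\xi))\big]\,|d\xi|.$$

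The key step is to prove the conditional variance bound
$$\mathrm{Var}_N(\tr G(\xi))\;\le\;\frac{C}{|\Im\xi|^{4}}\qquad\text{for all }\xi\in\Gamma_+.$$
Conditional on $V$, the only randomness is in the Wigner matrix $W$, and by Lemma \ref{lemma:resolvent+identities} we have $\partial\tr G/\partial W_{ij}=-(1+\delta_{ij})(G^2)_{ij}$. I would use the cumulant expansion of Lemma \ref{lemma:cumulant_expansion} (applied to $\E_N[W_{ij}(G^2)_{ij}]$-type sums arising in the second-moment computation) to produce a leading term analogous to the Gaussian Poincar\'e bound, which is bounded by
$$\frac{C}{N}\sum_{i,j}|(G^2)_{ij}|^{2}\;=\;\frac{C}{N}\sum_{a=1}^{N}|\lambda_a-\xi|^{-4}\;\le\;\frac{C}{|\Im\xi|^{4}},$$
where I used the spectral identity for $\sum_{i,j}|(G^2)_{ij}|^2$ after diagonalizing $W+\lambda V$. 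Higher-order cumulant contributions are controlled using the moment bounds \eqref{eqn1}, the overwhelming-probability estimate \eqref{eqn2} on $|W_{ij}|$, and the entrywise bounds on $G$ coming from the local law (Theorem \ref{thm:local_law_for_resolvent_entries}), which applies on $\{|\Im\xi|\ge N^{-\varpi}\}$ since $\varpi<1/4$ by Definition \ref{definition:constants}.

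Putting the pieces together, the horizontal portions of $\Gamma_+$ (where $|\Im\xi|=d$) contribute $O(1/N)$, while the two vertical portions (where $|\Im\xi|$ ranges over $[N^{-\varpi},d]$) contribute
$$\frac{C}{N}\int_{N^{-\varpi}}^{d}\eta^{-4}\,d\eta\;\lesssim\;\frac{N^{3\varpi}}{N}\;=\;N^{3\varpi-1}.$$
Since $\varpi<1/14<1/3$ by Definition \ref{definition:constants}, we obtain $\E[|X_N|^2]=O(N^{3\varpi-1})\to 0$, as desired. The main technical obstacle is the rigorous bookkeeping of the error terms produced when truncating the cumulant expansion at a finite order, especially near the real axis where $|\Im\xi|$ approaches $N^{-\varpi}$ and higher derivatives of $G$ blow up; this is essentially the computation carried out in \cite{Li+Schnelli+Xu}, to be adapted here to the deformed Wigner setting with $V$ treated as conditioning data so that it may be absorbed into the deterministic part of $W+\lambda V$.
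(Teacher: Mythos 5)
Your proposal is sound and reaches the conclusion by a genuinely different route from the paper. The paper does not bound a second moment at all: it fixes $t$, shows $\E[e^{\i tX_N}]\to 1$ by differentiating in $t$ (see \eqref{eqn40}), and controls the resulting covariance-type quantity $\E_N[e^{\i tX_N}(\tr G(\xi)-\E_N\tr G(\xi))]$ pointwise in $\xi$ through a self-consistent equation obtained from the cumulant expansion (Lemma \ref{lemma:some_computation}), which forces it to use the stability estimate $|1-\frac1N\sum_i g_i^2(\xi)|\ge\frac13|\Im\xi|^2$ (Lemma \ref{lemma:11111}), the entrywise local law through the event $B_N$, and the insertion of $\mathds1_{\Omega_V(\varsigma)}$ into $X_N$; the payoff is a bound of order $N^{-1/2+o(1)}$ on $|\E[e^{\i tX_N}]-1|$ (see \eqref{eqn61}) using machinery that is reused for Lemma \ref{lemma:convergence_of_P_2} and the CLT itself. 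Your $L^2$ route is more economical for this particular lemma: since $\Gamma_+$ has length $O(1)$ and $|\Im\xi|\ge N^{-\varpi}$ with $\varpi<1/14$, any bound $\E_N[|\tr G(\xi)-\E_N\tr G(\xi)|^2]\lesssim|\Im\xi|^{-6}$, say, already gives $\E[|X_N|^2]=O(N^{5\varpi-1})\to0$, with plenty of room to spare. Your arithmetic ($\sum_{i,j}|(G^2)_{ij}|^2=\tr(G^2\bar G^2)\le N|\Im\xi|^{-4}$, hence the claimed $|\Im\xi|^{-4}$ bound and the final $N^{3\varpi-1}$) is correct. The one loose joint is the mechanism for the variance bound: the cumulant expansion of Lemma \ref{lemma:cumulant_expansion} does not by itself yield a Poincar\'e inequality, and if you pursue it literally for the second moment you end up rederiving the same self-consistent equation with the same stability factor that the paper uses, losing the advantage of your approach. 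The clean way to realize your plan is to replace that step by an Efron--Stein or martingale-difference argument: resampling a single entry $W_{ij}$ changes $\tr G$ by $-\tfrac{2}{1+\delta_{ij}}(W_{ij}-W_{ij}')\int_0^1(G_t^2)_{ij}\,dt$, and summing the squares over $i\le j$ (comparing $G_t$ to $G$ via the deterministic bound $|(G^2)_{ij}|\le|\Im\xi|^{-2}$ and the subexponential tails \eqref{eqn2}) gives $\mathrm{Var}_N(\tr G)\lesssim|\Im\xi|^{-4}+N^{o(1)}|\Im\xi|^{-6}$ with no local law, no stability estimate, and no good-event decomposition needed. With that substitution your argument is complete and is arguably simpler than the paper's for this lemma, though it does not produce the refined covariance control that the paper's characteristic-function computation supplies for the rest of Section \ref{sec:CLT}.
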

We prove Lemma \ref{lemma:convergence_of_P_1} in Section \ref{sec:proof_of_convergence_of_P_1}.

\begin{lemma}\label{lemma:convergence_of_P_2}As $N\to\infty$,
	$$\frac{1}{\sqrt N}\int_{\Gamma_+}f(\xi)\Big[N \hat m_{fc}(\xi)-\E_N[\tr G(\xi)]\Big]d\xi\to 0\quad\text{in distribution}.$$
\end{lemma}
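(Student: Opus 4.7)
The plan is to reduce the convergence in distribution to a deterministic pointwise bound
$$\sup_{\xi \in \Gamma_+}\bigl|\E_N[\tr G(\xi)] - N\hat m_{fc}(\xi)\bigr| = O(N^{K\varpi})$$
holding on the $\sigma(V)$-event $\Omega_V(\epsilon)$ of Definition \ref{definition:Omega_V}, for some constant $K$ independent of $N$. Since $\Gamma_+$ has length $O(1)$, $f$ is bounded on $\Gamma$, and ${\mathbb P}(\Omega_V(\epsilon)) = 1 - o(1)$, such a bound, multiplied by $f$ and divided by $\sqrt N$, is $O(N^{K\varpi - 1/2}) = o(1)$ whenever $K\varpi < 1/2$, which is compatible with the choice $\varpi < 1/14$ in Definition \ref{definition:constants} provided $K$ stays bounded.

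To establish the pointwise bound I would start from Lemma \ref{lemma:Lemma5.2_from_lecture_notes} applied to $\mathtt M = W + \lambda V$ and take $\E_N$ of both sides. Because $W$ is independent of $V$ and each off-diagonal $W_{ik}$ is independent of $G^{(i)}$, one has $\E_N[W_{ii}] = 0$ and
$$\E_N\Bigl[\sum_{k,l}^{(i)} W_{ik}G^{(i)}_{kl}W_{li}\Bigr] = \frac{1}{N}\sum_k^{(i)}\E_N[G^{(i)}_{kk}] = \E_N[m_N(\xi)] + O\bigl(|\Im \xi|^{-3}/N\bigr),$$
the last equality using \eqref{eqn3} and Theorem \ref{thm:local_law_for_resolvent_entries}. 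Combined with similar control on the remaining terms this yields $\E_N[1/G_{ii}(\xi)] = \lambda v_i - \xi - \E_N[m_N(\xi)] + O(N^{-1}|\Im \xi|^{-K_1})$. Converting from $\E_N[1/G_{ii}]$ to $1/\E_N[G_{ii}]$ costs an additional error of order $\mathrm{Var}_N[G_{ii}]/|\E_N[G_{ii}]|^2 = O(N^{-1}|\Im \xi|^{-K_2})$, controlled by the Marcinkiewicz--Zygmund inequality (Lemma \ref{lemma:Marcinkiewicz-Zygmund}) together with Theorem \ref{thm:local_law_for_resolvent_entries}. Averaging the resulting identity $\E_N[G_{ii}(\xi)] = (\lambda v_i - \xi - \E_N[m_N(\xi)])^{-1} + O(N^{-1}|\Im \xi|^{-K_3})$ over $i$ produces an approximate self-consistent equation for $\E_N[m_N(\xi)]$ which coincides with the defining equation for $\hat m_{fc}(\xi)$ of Lemma \ref{lemma:self_consistent_equation} up to the same error. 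Stability of this fixed-point equation, encoded in the bound $\frac{1}{N}\sum |\hat g_i|^2 < 1$ from \eqref{eqn21}, then propagates the error into $|\E_N[m_N(\xi)] - \hat m_{fc}(\xi)| = O(N^{-1}|\Im\xi|^{-K})$, which after multiplication by $N$ and the substitution $|\Im\xi|^{-1}\le N^\varpi$ gives the desired bound.

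The main obstacle will be the careful bookkeeping of the $|\Im\xi|^{-1}$ factors accumulated through the cumulant expansion (Lemma \ref{lemma:cumulant_expansion}) applied to terms such as $\E_N[\sum^{(i)}_k(W_{ik}^2 - \tfrac{1}{N})G^{(i)}_{kk}]$, and through the quantitative stability analysis near the spectral edges where $|\hat g_i|$ can be as large as $|\Im\xi|^{-1}$ when $\lambda v_i$ is close to $\Re \xi + \Re m_{fc}(\xi)$, so that the accumulated exponent remains a fixed constant compatible with $K\varpi < 1/2$. A secondary subtlety is that $\tilde\Omega(\epsilon)$ is only high-probability under ${\mathbb P}$, not conditionally on $V$; applying Markov's inequality to Lemma \ref{lemma:Omega_*} restricts $\Omega_V(\epsilon)$ to a further $\sigma(V)$-subevent on which ${\mathbb P}(\tilde\Omega(\epsilon)^c \mid V)$ is superpolynomially small, so that the contribution of $\tilde\Omega(\epsilon)^c$ to $\E_N$ is absorbed by the trivial bound $\|G\| \le |\Im\xi|^{-1}$.
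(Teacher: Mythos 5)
Your overall strategy is the same as the paper's: reduce to a deterministic bound of order $o(\sqrt N)$ for $\sup_{\xi\in\Gamma_+}\vert \E_N[\tr G(\xi)]-N\hat m_{fc}(\xi)\vert$ on a high-probability $\sigma(V)$-event, obtain it by taking $\E_N$ of the Schur-complement identity, compare the resulting approximate self-consistent equation with the defining equation for $\hat m_{fc}$, and invert the stability factor $1-\frac1N\sum\hat g_i^2=(1+\hat m_{fc}')^{-1}$ from \eqref{eqn21}. This is precisely what Lemma \ref{lemma:computation} and the subsequent estimates do (the paper expands $G_{ii}=-1/(\xi+Q_i)$ to third order around $\hat g_i$ instead of comparing $\E_N[1/G_{ii}]$ with $1/\E_N[G_{ii}]$, but that is a cosmetic difference), and your treatment of the non-$\sigma(V)$-measurability of $\tilde\Omega$ is also sound.

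There is, however, one quantitative step that fails as stated: the claim that the conversion from $\E_N[1/G_{ii}]$ to $1/\E_N[G_{ii}]$ costs only $\mathrm{Var}_N[G_{ii}]/\vert\E_N[G_{ii}]\vert^2=O(N^{-1}\vert\Im\xi\vert^{-K_2})$. Neither Lemma \ref{lemma:Marcinkiewicz-Zygmund} nor Theorem \ref{thm:local_law_for_resolvent_entries} gives $\mathrm{Var}_N[G_{ii}]=O(N^{-1})$ times a power of $\vert\Im\xi\vert^{-1}$: the local law centers $G_{ii}$ at the \emph{random} quantity $(\lambda v_i-\xi-m_N)^{-1}$, and the $V$-conditional fluctuation of $m_N$ (equivalently of $\frac1N\tr G^{(i)}$, which enters through $\E_N[Q_i\,\vert\,G^{(i)}]$) is only controlled via $\tilde\Omega(\varsigma)$ at the scale $N^{2\varsigma-\frac12}$, not $N^{-\frac12}$. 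Consequently the per-site error is of order $N^{4\varsigma-1}$ (plus $N^{2\varsigma'-1}$ from the entrywise local law) times powers of $\vert\Im\xi\vert^{-1}$, and after summation over $i$ one is left with terms of order $N^{4\varsigma}\vert\Im\xi\vert^{-C}$ — this is exactly the term $\sum_i\hat g_i^3\,\E_N[(\hat m_{fc}-\frac1N\tr G^{(i)})^2]$ isolated in \eqref{eqn68}. The argument survives, but only because the parameters in Definition \ref{definition:constants} are tuned so that these $N^{4\varsigma}$ and $N^{3\varsigma'}$ factors (not merely $N^{K\varpi}$ as in your announced bound) are still $o(\sqrt N)$ after multiplying by the accumulated powers $\vert\Im\xi\vert^{-1}\le N^{\varpi}$; you would need to redo the bookkeeping with these corrected orders rather than with $O(N^{-1})$ per site.
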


We prove Lemma \ref{lemma:convergence_of_P_2} in Section \ref{sec:proof_of_convergence_of_P_2}.

\subsection{Some auxiliary lemmas}
Recall that $g_i$ and $\hat g_i$ are defined in Definition \ref{definition_g_and_g_i}.
\begin{lemma}\label{lemma:lambdav_i-z-m_fc_large_on_Gamma}
	
	There is a constant $C_d>0$ depending on $d$ such that $$\min_i\vert \lambda v_i-z-m_{fc}(z)\vert \ge C_d\quad\forall z\in\Gamma$$
	Moreover, $g_i(z)$ is analytic on ${\mathbb C }\backslash[L_-,L_+]$ for all $i\in\{1,\ldots,N\}$.
\end{lemma}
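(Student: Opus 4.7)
The plan is to derive both claims of the lemma from a single fact: the denominator $\lambda v - z - m_{fc}(z)$ is nowhere vanishing on $\mathbb{C} \setminus [L_-, L_+]$ for every $v \in [-1,1]$. Granting this, the first claim follows because $(v, z) \mapsto \lambda v - z - m_{fc}(z)$ is continuous and nonzero on the compact product $[-1,1] \times \Gamma$, so its modulus attains a positive minimum $C_d$ there; since each $v_i \in \text{supp}(\mu) = [-1,1]$, the asserted bound is immediate. The ``moreover'' claim then reduces to noting that $g_i$ is the reciprocal of an analytic, nowhere-vanishing function on $\mathbb{C} \setminus [L_-, L_+]$, since $m_{fc}$ is analytic there as the Stieltjes transform of $\mu_{fc}$.

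To prove the non-vanishing I would split on whether $z$ is real. If $\Im z \ne 0$, the Stieltjes transform identity $\Im m_{fc}(z) = \Im z \cdot \int \frac{\rho_{fc}(t)}{|t-z|^2}\, dt$ gives
$$\Im(z + m_{fc}(z)) \;=\; \Im z \cdot \left(1 + \int \frac{\rho_{fc}(t)}{|t-z|^2}\, dt\right),$$
which shares its sign with $\Im z$. Hence $\lambda v - z - m_{fc}(z)$ has nonzero imaginary part, so in particular is nonzero; on the two horizontal sides of $\Gamma$ this even gives the stronger bound $|\lambda v - z - m_{fc}(z)| \ge |\Im z| = d$. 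For real $z \in \mathbb{R} \setminus [L_-, L_+]$ (relevant at the corners $L_\pm \pm d$ of $\Gamma$, and by continuity on small pieces of the two vertical sides) I would work with $F(z) := z + m_{fc}(z)$, which is real-analytic on $\mathbb{R} \setminus [L_-, L_+]$ with derivative $F'(z) = 1 + \int \frac{\rho_{fc}(t)}{(t-z)^2}\, dt > 1$. Lemma \ref{lemma:properties_of_mu_fc}(2) gives $F(L_+) = \lambda$, and its analog at the lower edge gives $F(L_-) = -\lambda$; strict monotonicity of $F$ on each real component of $\mathbb{R} \setminus [L_-, L_+]$ then forces $F(z) > \lambda$ for $z > L_+$ and $F(z) < -\lambda$ for $z < L_-$. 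Therefore $F(z) \notin [-\lambda, \lambda] \supseteq \{\lambda v : v \in [-1,1]\}$, so the denominator is nonzero in the real case as well.

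The delicate step is this last real-$z$ case: the imaginary-part trick dies there, and one must push $F(z)$ strictly past the endpoints $\pm \lambda$ by combining Lemma \ref{lemma:properties_of_mu_fc} with the monotonicity of $F$. Once the denominator is known to be nonvanishing on all of $\mathbb{C} \setminus [L_-, L_+]$, the uniform positive lower bound $C_d$ on $\Gamma$ is obtained as the minimum of a continuous positive function over a compact set, and the analyticity of each $g_i$ on $\mathbb{C} \setminus [L_-, L_+]$ is automatic.
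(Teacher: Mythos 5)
Your proposal is correct and follows essentially the same route as the paper: the sign of $\Im(z+m_{fc}(z))$ handles non-real $z$, and for real $z$ outside $[L_-,L_+]$ both arguments rest on the strict monotonicity of $x\mapsto x+m_{fc}(x)$ together with the edge identities $L_\pm+m_{fc}(L_\pm)=\pm\lambda$ from Lemma \ref{lemma:properties_of_mu_fc} (where you correctly read the lower-edge analog with the sign $-\lambda$). The only difference is cosmetic: you obtain the uniform constant $C_d$ by compactness of $[-1,1]\times\Gamma$, whereas the paper does an explicit two-case estimate taking $C_d=\min(C/2,y_0)$; your global non-vanishing statement also yields the analyticity of $g_i$ somewhat more directly.
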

\begin{proof}See Appendix \ref{appendix}.
\end{proof}
\begin{definition}
Define $M_d=\max(\vert L_++d+2\vert ,\vert L_--d-2\vert )$ and
\begin{align}\label{good_event}
	B_N=\tilde\Omega(\varsigma)\cap\{\max_{i, j}\vert G_{ij}(z)-\frac{\delta_{ij}}{\lambda v_i-z-m_N(z)}\vert \le N^{\varsigma'-\frac{1}{2}} \vert \Im z\vert ^{-3},\forall z\in D_\varpi(M_d)\}
\end{align}
where $\tilde\Omega(\varsigma)$ is defined in Definition  \ref{definition:Omega_V} and $D_\varpi(M_d)$ is defined by Definition \ref{definition:D_delta(M)}. The parameters $\varpi$, $\varsigma$ and $\varsigma'$ are defined in Definition \ref{definition:constants}.
\end{definition}
\begin{lemma}\label{lemma:properties_of_good_event}
	\begin{enumerate}
		\item 
		there exists $N_0>0$ such that if $N>N_0$ then $\Gamma_+\subset\mathcal D_\varsigma'$
		\item for any $D'>0$ we have that if $N$ is large enough then
		\begin{align}\label{eqn43}
			{\mathbb P}(\Omega_V(\varsigma)\backslash B_N)<N^{-D'}.
		\end{align}
		\item if $N$ is large enough and $B_N\cap\Omega_V(\varsigma)$ holds, then the following holds for each $\xi\in\Gamma_+$:
		$$	\vert G_{ii}(\xi)-\hat g_i(\xi)\vert \le  N^{\varsigma'-\frac{1}{2}}\cdot\vert \Im \xi \vert ^{-3}+N^{2\varsigma-\frac{1}{2}}\cdot\vert \Im \xi \vert ^{-2},\quad \vert G_{ii}(\xi)\vert 
		\ge W'\vert \Im \xi\vert $$
		$$	
		\vert \hat m_{fc}(\xi)-\frac{1}{N}\tr G^{(i)}(\xi)\vert \le N^{2\varsigma-\frac{1}{2}} +\frac{3}{W'N\vert \Im\xi\vert ^3}$$
		$$\vert G_{ii}(\xi)-g_i(\xi)\vert \le  N^{\varsigma'-\frac{1}{2}}\cdot\vert \Im \xi \vert ^{-3}+2N^{2\varsigma-\frac{1}{2}}\cdot\vert \Im \xi \vert ^{-2}$$ 
	\end{enumerate}
where $W'$ is a constant in $(0,1)$.
	
\end{lemma}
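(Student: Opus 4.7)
Part (1) is a geometric check. On $\Gamma_+$ the real part lies in $[L_--d,L_++d]\subset[-3-\lambda,3+\lambda]$ (using Lemma~\ref{lemma:all_eigenvalues_in_2+lambda+r} and $d<1/2$), and $N^{-\varpi}\le|\Im z|\le d$ sits inside $[N^{-1/2-\varsigma},N^{-1/(b+1)+\varsigma}]$ for large $N$, the upper end using $\varsigma\ge1/(b+1)$; so $\Gamma_+\subset\mathcal D_\varsigma$. The extra separation condition defining $\mathcal D_\varsigma'$ follows from Lemma~\ref{lemma:lambdav_i-z-m_fc_large_on_Gamma}: $|\lambda\tilde v_i-z-m_{fc}(z)|\ge C_d$ eventually dominates $\tfrac12 N^{-1/(b+1)-\varsigma}\to0$. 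Part (2) is then a union bound: Proposition~\ref{proposition:extreme_eigenvalue} controls $\mathbb{P}(\Omega_V(\varsigma)\setminus\tilde\Omega(\varsigma))$ subexponentially, while Theorem~\ref{thm:local_law_for_resolvent_entries} (applicable because $\varpi\le1/4$) bounds the resolvent-entry event by $N^{-D''}$ for any $D''$.

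For part (3), fix $\xi\in\Gamma_+$ and work on $B_N\cap\Omega_V(\varsigma)$. For the first estimate I would split
\[
G_{ii}(\xi)-\hat g_i(\xi)=\Bigl(G_{ii}(\xi)-\tfrac{1}{\lambda v_i-\xi-m_N(\xi)}\Bigr)+\Bigl(\tfrac{1}{\lambda v_i-\xi-m_N(\xi)}-\tfrac{1}{\lambda v_i-\xi-\hat m_{fc}(\xi)}\Bigr).
\]
The first piece is $\le N^{\varsigma'-1/2}|\Im\xi|^{-3}$ directly from the definition of $B_N$. For the second, both $\Im m_N$ and $\Im\hat m_{fc}$ share the sign of $\Im\xi$, so each denominator has modulus $\ge|\Im\xi|$; combined with $|m_N-\hat m_{fc}|\le N^{2\varsigma-1/2}$ on $\tilde\Omega(\varsigma)$ (using $\Gamma_+\subset\mathcal D_\varsigma'$ from part (1)) this yields the $N^{2\varsigma-1/2}|\Im\xi|^{-2}$ contribution.

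For the lower bound, since $|v_i|\le1$, $|\xi|$ is bounded on $\Gamma$, and $|\hat m_{fc}(\xi)|\le|\Im\xi|^{-1}$ by \eqref{eq102}, I have $|\hat g_i(\xi)|^{-1}\le C+|\Im\xi|^{-1}\le(C+1)|\Im\xi|^{-1}$ (using $|\Im\xi|\le d<1$), whence $|\hat g_i(\xi)|\ge W|\Im\xi|$ for some fixed $W>0$. Subtracting the first estimate and dividing by $|\Im\xi|$ leaves error exponents $\varsigma'-1/2+4\varpi$ and $2\varsigma-1/2+3\varpi$, both negative by the thresholds in Definition~\ref{definition:constants} (in particular $\varsigma'<1/2-7\varpi$ and $\varsigma<1/8-\varpi$), so $|G_{ii}(\xi)|\ge W'|\Im\xi|$ with $W':=W/2$ for $N$ large. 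The trace bound then follows from the Schur identity \eqref{eqn3}: summing $G_{jj}=G_{jj}^{(i)}+G_{ij}^2/G_{ii}$ over $j\ne i$ and using symmetry gives $\tr G(\xi)-\tr G^{(i)}(\xi)=(G^2)_{ii}(\xi)/G_{ii}(\xi)$; the Ward identity (Lemma~\ref{lemma:well_known_facts_for_resolvent}) bounds this by $1/(W'|\Im\xi|^3)$, and combining with $|m_N-\hat m_{fc}|\le N^{2\varsigma-1/2}$ produces the stated bound on $|\hat m_{fc}-N^{-1}\tr G^{(i)}|$ (the factor $3$ absorbs constants). Finally $|G_{ii}-g_i|\le|G_{ii}-\hat g_i|+|\hat g_i-g_i|$, and the last term is $\le|\hat m_{fc}-m_{fc}||\hat g_i||g_i|\le N^{2\varsigma-1/2}|\Im\xi|^{-2}$ by \eqref{eq102} and Lemma~\ref{lemma:hatm_fc_close_tom_fc}.

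The main obstacle is orchestrating the exponent constraints in Definition~\ref{definition:constants} so that every error term in $G_{ii}-\hat g_i$ is $o(|\Im\xi|)$ on $\Gamma_+$; once the lower bound $|G_{ii}|\ge W'|\Im\xi|$ is secured, the remaining estimates are routine consequences of the resolvent identities and the \textit{a priori} bounds already in place.
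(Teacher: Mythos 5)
Your proposal is correct and follows essentially the same route as the paper's proof: part (1) by the same geometric/exponent check with Lemma \ref{lemma:lambdav_i-z-m_fc_large_on_Gamma}, part (2) by combining Proposition \ref{proposition:extreme_eigenvalue} with Theorem \ref{thm:local_law_for_resolvent_entries}, and part (3) by the same splittings through $m_N$ and $\hat m_{fc}$, the lower bound $\vert G_{ii}\vert\ge\vert\hat g_i\vert-\vert G_{ii}-\hat g_i\vert$, and the identity $\tr G-\tr G^{(i)}=(G^2)_{ii}/G_{ii}$ from \eqref{eqn3}. The only cosmetic deviation is that you bound $\vert G_{ii}-g_i\vert$ via $\vert G_{ii}-\hat g_i\vert+\vert\hat g_i-g_i\vert$ while the paper goes directly through $\vert m_N-m_{fc}\vert$; both give the identical bound.
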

\begin{proof} See Appendix \ref{appendix}.
\end{proof}
\subsection{Proof of Theorem \ref{thm:CLT}}

\begin{proof}[Proof of Theorem \ref{thm:CLT}]
Let $\varpi'$ be a positive constant in $[\frac{1}{b+1}-\varpi,\varpi/2)$ and 
$$R_N=\Big\{\lambda_i\in[L_--\frac{d}{10},L_++\frac{d}{10}],\forall i\Big\}\cap\Omega_V(\varpi')\cap\tilde\Omega(\varpi').$$
See Section \ref{sec:local_law_for_trace} for the notations. By Definition \ref{definition:D_epsilon_andD'_epsilon}, Definition  \ref{definition:Omega_V}, \eqref{eqn22}, Lemma \ref{lemma:lambdav_i-z-m_fc_large_on_Gamma} and the fact that $d<1/2$, we know that if $N$ is large enough then
\begin{align}\label{eqn44}
\Gamma\cap\{z\vert N^{-\frac{1}{2}-\varpi'}\le \Im z\le N^{-\varpi}\}\subset\mathcal D_{\varpi'}'.
\end{align}
By Theorem \ref{thm:fluctuation_of_lambda_1} and Proposition \ref{proposition:extreme_eigenvalue}, 
\begin{align}\label{eqn45}
{\mathbb P}(R_N)\to1\quad\text{as }N\to\infty.
\end{align}
On $R_N$ we have
$$f(\lambda_i)=\frac{1}{2\pi\i}\oint_\Gamma\frac{f(\xi)}{\xi-\lambda_i}d\xi\quad\text{and}\quad f(t)=\frac{1}{2\pi\i}\oint_\Gamma\frac{f(\xi)}{\xi-t}d\xi\quad\forall t\in[L_-,L_+]$$
and then
\begin{multline}\label{eq91}
\frac{1}{\sqrt N}[\sum f(\lambda_i)-N\int f(t)\rho_{fc}(t)dt]=\frac{1}{\sqrt N}\frac{1}{2\pi\i}\Big[\oint_\Gamma f(\xi)\sum\frac{1}{\xi-\lambda_i}d\xi-N\int\oint_\Gamma\frac{f(\xi)}{\xi-t}d\xi\cdot\rho_{fc}(t)dt\Big]\\
=\frac{1}{\sqrt N2\pi\i}\oint_\Gamma f(\xi)(Nm_{fc}(\xi)-\tr G(\xi))d\xi=\\
\frac{1}{\sqrt N2\pi\i}\int_{\Gamma\backslash\Gamma_+} f(\xi)(Nm_{fc}(\xi)-\tr G(\xi))d\xi
+\frac{1}{\sqrt N2\pi\i}\int_{\Gamma_+} f(\xi)(\E_N[\tr G(\xi)]-\tr G(\xi))d\xi\\
+\frac{1}{\sqrt N2\pi\i}\int_{\Gamma_+} f(\xi)(N\hat m_{fc}(\xi)-\E_N[\tr G(\xi)])d\xi+\frac{1}{\sqrt N2\pi\i}\int_{\Gamma_+} f(\xi)(Nm_{fc}(\xi)-N\hat m_{fc}(\xi))d\xi\\
:=P_0+P_1+P_2+P_3.
\end{multline}
\begin{lemma}\label{lemma:hat_m_fc-m_fc}
	If $\xi\in{\mathbb C }\backslash{\mathbb R }$ then
	\begin{multline*}
		\sqrt N(\hat m_{fc}-m_{fc})=\\
		(1+m_{fc}')\Big(\frac{1}{\sqrt N}\sum\Big( g_i(\xi)-\E[g_i(\xi)]\Big)+\frac{(\hat m_{fc}-m_{fc})^2}{\sqrt N}\sum \hat g_ig_i^2
		+\frac{1 }{\sqrt N}(\hat m_{fc}-m_{fc})\sum (g_i^2-\E [g_i^2])\Big).
	\end{multline*}
\end{lemma}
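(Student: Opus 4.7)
The plan is to prove this identity by pure algebraic manipulation, starting from the definitions of $g_i$, $\hat g_i$, and $\hat m_{fc}$, and then invoking the self-consistent equation \eqref{eqn48} at the end to produce the $(1+m_{fc}')$ factor.

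First I would decompose
\[
\hat m_{fc}-m_{fc}=\frac{1}{N}\sum_{i}(\hat g_i-g_i)+\frac{1}{N}\sum_{i}\bigl(g_i-\E[g_i]\bigr),
\]
using $m_{fc}=\E[g_i]$ from Lemma \ref{lemma:self_consistent_equation}. The key observation is the one-line identity
\[
\hat g_i-g_i \;=\; \hat g_i\,g_i\,(\hat m_{fc}-m_{fc}),
\]
which follows by computing a common denominator in the definitions of $\hat g_i$ and $g_i$. Substituting into the previous display yields
\[
(\hat m_{fc}-m_{fc})\Bigl(1-\tfrac{1}{N}\sum_i \hat g_i g_i\Bigr)=\tfrac{1}{N}\sum_i\bigl(g_i-\E[g_i]\bigr).
\]

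Next I would Taylor-expand $\hat g_i g_i$ once more: applying the same identity gives $\hat g_i g_i=g_i^2+\hat g_i g_i^2(\hat m_{fc}-m_{fc})$, so
\[
1-\tfrac{1}{N}\sum_i\hat g_i g_i=1-\tfrac{1}{N}\sum_i g_i^2-(\hat m_{fc}-m_{fc})\tfrac{1}{N}\sum_i\hat g_i g_i^2.
\]
Now I would invoke \eqref{eqn48} from Lemma \ref{lemma:self_consistent_equation}, which together with $\E[g_i^2]=\int(\lambda t-z-m_{fc})^{-2}\,d\mu(t)$ says $(1+m_{fc}')(1-\E[g_i^2])=1$. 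Splitting
\[
1-\tfrac{1}{N}\sum_i g_i^2=\bigl(1-\E[g_i^2]\bigr)-\tfrac{1}{N}\sum_i\bigl(g_i^2-\E[g_i^2]\bigr)=\tfrac{1}{1+m_{fc}'}-\tfrac{1}{N}\sum_i\bigl(g_i^2-\E[g_i^2]\bigr)
\]
and plugging into the factored equation above gives
\[
(\hat m_{fc}-m_{fc})\cdot\tfrac{1}{1+m_{fc}'}=\tfrac{1}{N}\sum_i(g_i-\E[g_i])+(\hat m_{fc}-m_{fc})^2\tfrac{1}{N}\sum_i\hat g_ig_i^2+(\hat m_{fc}-m_{fc})\tfrac{1}{N}\sum_i(g_i^2-\E[g_i^2]).
\]
Multiplying through by $(1+m_{fc}')\sqrt{N}$ produces exactly the identity in the lemma statement.

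Since each step is elementary, there is no substantial obstacle; the only thing to be careful about is that \eqref{eqn48} is applicable for every $z\in\mathbb{C}\setminus\mathbb{R}$, so the identity holds pointwise in $\xi$ as claimed. The argument does not require any probabilistic input beyond $m_{fc}=\E[g_i]$ and uses no concentration estimates or bounds on $\hat m_{fc}-m_{fc}$; those are needed later to extract a CLT from this identity but not for the identity itself.
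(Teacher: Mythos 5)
Your proof is correct and follows essentially the same route as the paper's: both rest on the identities $\hat m_{fc}=\frac{1}{N}\sum_i\hat g_i$, $m_{fc}=\E[g_i]$, the telescoping relation $\hat g_i-g_i=\hat g_ig_i(\hat m_{fc}-m_{fc})$ applied twice, and the self-consistent equation \eqref{eqn48} to produce the $(1+m_{fc}')$ factor. The only cosmetic difference is that you factor out $\bigl(1-\frac{1}{N}\sum_i\hat g_ig_i\bigr)$ before invoking \eqref{eqn48}, whereas the paper expands everything on the right-hand side and moves the $\E[g_i^2]$ term across at the end; the two manipulations are equivalent.
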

\begin{proof}See Appendix \ref{appendix}.
\end{proof}
Using Lemma \ref{lemma:hat_m_fc-m_fc} for $P_3$  (i.e., the last term  in \eqref{eq91}), we have that if $R_N$ holds then
\begin{align}\label{eq92}
\frac{1}{\sqrt N}[\sum f(\lambda_i)-N\int f(t)\rho_{fc}(t)dt]=P_0+P_1+P_2+P_{31}+P_{32}+P_{33}
\end{align}
where $P_0$, $P_1$, $P_2$ are defined in \eqref{eq91} and
$$P_{31}:=\frac{-1}{2\pi\i\sqrt N}\sum_i\int_{\Gamma_+}f(\xi)(1+m_{fc}')(g_i-\E g_i)d\xi,$$
$$P_{32}:=\frac{-1}{2\pi\i\sqrt N}\int_{\Gamma_+}f(\xi)(1+m_{fc}')(\hat m_{fc}-m_{fc})\sum_i(g_i^2-\E [g_i^2])d\xi,$$
$$P_{33}:=\frac{-1}{2\pi\i\sqrt N}\int_{\Gamma_+}f(\xi)(1+m_{fc}')(\hat m_{fc}-m_{fc})^2\sum_i \hat g_ig_i^2d\xi.$$

{\bf $\bullet$ Asymptotic behavior of $P_0$.}
When $R_N$ holds, we have:
\begin{multline}\label{eqn46}
\vert P_0\vert \le \frac{1}{\sqrt N2\pi}\int_{\Gamma\cap\{\vert \Im z\vert \le N^{-(1+\varpi')/2}\}} \vert f(\xi)\vert \vert Nm_{fc}(\xi)-\tr G(\xi)\vert d\xi\\
+\frac{1}{\sqrt N2\pi}\int_{\Gamma\cap\{ N^{-(1+\varpi')/2}\le\vert \Im z\vert \le N^{-\varpi}\}} \vert f(\xi)\vert \vert Nm_{fc}(\xi)-\tr G(\xi)\vert d\xi\\
\le \frac{1}{\sqrt N 2\pi}4N^{-\frac{1}{2}-\frac{\varpi'}{2}}\sup_{z\in\Gamma}\vert f(z)\vert \cdot4N/d\quad\text{(since $\vert m_{fc}(\xi)\vert \le 2/d$ and $\vert G_{ii}(z)\vert \le2/d$)}\\
+\frac{1}{\sqrt N 2\pi}4N^{-\varpi}\sup_{z\in\Gamma}\vert f(z)\vert \cdot N\cdot 2N^{2\varpi'-\frac{1}{2}}\quad\text{(by \eqref{eqn44}, Lemma \ref{lemma:hatm_fc_close_tom_fc} and Definition \ref{definition:D_epsilon_andD'_epsilon})}\\
=(\frac{8}{\pi d}+\frac{4}{\pi})\sup_{z\in\Gamma}\vert f(z)\vert (N^{-\varpi'/2}+N^{2\varpi'-\varpi})
=o(1)\quad\text{(since $\varpi'<\varpi/2$)}
\end{multline}
 
{\bf $\bullet$ Asymptotic behavior of $P_{33}$.}
Let
$$\varpi''\in(\frac{1}{1+b},\frac{11b-9}{2b+2})\quad\text{such that}\quad4\varpi''+\varpi<\frac{1}{2}.$$
By the condition on $\varpi$, such $\varpi''$ exists. When $N$ is large enough, we have that $\Gamma_+\subset\mathcal D_{\varpi''}'$ and that if $\Omega_V(\varpi'')$ holds then by Lemma \ref{lemma:hatm_fc_close_tom_fc},
\begin{align}\label{eq90}
\vert \hat m_{fc}(\xi)-m_{fc}(\xi)\vert \le N^{-\frac{1}{2}+2\varpi''}\quad\forall \xi\in\Gamma_+
\end{align} $$\vert P_{33}\mathds1_{\Omega_V(\varpi'')}\vert \le \frac{1}{2\pi\sqrt N}\vert \Gamma\vert \sup_{z\in\Gamma}\vert f(z)\vert (1+\frac{1}{d^2})N^{-1+4\varpi''}N\cdot N^\varpi \frac{1}{C_d^2}$$
where $C_d$ is defined in Lemma \ref{lemma:lambdav_i-z-m_fc_large_on_Gamma}. Here we used the fact that $\vert m_{fc}'(\xi)\vert $ is bounded by $\frac{1}{d^2}$ for $\xi\in\Gamma$.
The last inequality together with the facts that $4\varpi''+\varpi<\frac{1}{2}$ and that ${\mathbb P}(\Omega_V(\varpi''))\to1$ (by Proposition \ref{proposition:extreme_eigenvalue}) yield:
\begin{align}\label{eqn52} 
	P_{33}\to0\quad\text{in distribution}.
\end{align}

{\bf $\bullet$ Asymptotic behavior of $P_{32}$.} Let
$$W_N=\{\frac{1}{N^{\frac{1}{2}+\varpi''}}\vert \sum_{i=1}^N(g_i^2(\xi)-\E[g_i^2(\xi)])\vert \le1,\quad\forall\xi\in\Gamma\}.$$
\begin{lemma}\label{coro:convergence_as_process}
	Suppose $a_1>0$, $a_2>0$ are constants. Then
	$${\mathbb P}\Big(\Big\vert \frac{1}{N^{\frac{1}{2}+a_1}}\sum_{i=1}^N(g_i^2(\xi)-\E[g_i^2(\xi)])\Big\vert \le a_2,\quad\forall\xi\in\Gamma\Big)\to1\quad \text{as } N\to\infty$$
	$${\mathbb P}\Big(\Big\vert \frac{1}{N^{\frac{1}{2}+a_1}}\sum_{i=1}^N(g_i(\xi)-\E[g_i(\xi)])\Big\vert \le a_2,\quad\forall\xi\in\Gamma\Big)\to1\quad \text{as } N\to\infty$$
\end{lemma}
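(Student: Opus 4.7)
The plan is to first get a polynomial-in-$N$ moment bound for the sum at a \emph{fixed} $\xi \in \Gamma$ via Lemma \ref{lemma:Marcinkiewicz-Zygmund}, and then upgrade to a uniform-in-$\xi$ bound by a standard lattice/Lipschitz argument. Both claims will be handled in exactly the same way, so I describe only the $g_i$ version; the $g_i^2$ version is identical because $x \mapsto x^2$ is smooth on the disk of radius $1/C_d$.

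\textbf{Step 1: Pointwise bound.} Fix $\xi \in \Gamma$. Because $g_i(\xi) = (\lambda v_i - \xi - m_{fc}(\xi))^{-1}$ depends only on $v_i$, the random variables
\[
X_i := g_i(\xi) - \E[g_i(\xi)], \qquad i=1,\ldots,N,
\]
are independent and centered. By Lemma \ref{lemma:lambdav_i-z-m_fc_large_on_Gamma}, $|g_i(\xi)| \le 1/C_d$ uniformly on $\Gamma$, so $|X_i| \le 2/C_d$ and in particular $\mu_p := \sup_i \E[|X_i|^p]^{1/p} \le 2/C_d$ for every $p$. Applying the first inequality of Lemma \ref{lemma:Marcinkiewicz-Zygmund} with $b_i = 1$ gives
\[
\E\Big[\Big|\sum_{i=1}^N X_i\Big|^p\Big]^{1/p} \le C_p\,\mu_p\, N^{1/2}.
\]
Combining with Markov's inequality,
\[
{\mathbb P}\Big(\Big|\frac{1}{N^{1/2+a_1}}\sum_{i=1}^N X_i\Big| > a_2\Big) \le \Big(\frac{C_p \mu_p}{a_2}\Big)^{p} N^{-p a_1}.
\]
Choosing $p$ large enough that $p a_1 > D$ for any desired $D$, this probability is $O(N^{-D})$.

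\textbf{Step 2: Lattice argument for uniformity in $\xi$.} Differentiating $g_i$ gives $g_i'(\xi) = (1+m_{fc}'(\xi))(\lambda v_i - \xi - m_{fc}(\xi))^{-2}$, which is bounded on $\Gamma$ by a constant $K_d$ depending only on $d$ (using Lemma \ref{lemma:lambdav_i-z-m_fc_large_on_Gamma} and the fact that $|m_{fc}'|$ is bounded on $\Gamma$). Hence $\xi \mapsto X_i(\xi)$ is $2K_d$-Lipschitz on $\Gamma$, so for any $\xi,\xi' \in \Gamma$,
\[
\Big|\frac{1}{N^{1/2+a_1}}\sum_{i=1}^N\big(X_i(\xi)-X_i(\xi')\big)\Big| \le 2K_d\, N^{1/2 - a_1}\, |\xi - \xi'|.
\]
Pick a net $\mathcal{N} \subset \Gamma$ of spacing $N^{-1}$; then $|\mathcal{N}| = O(N)$ and the Lipschitz error above is at most $2K_d N^{-1/2-a_1}$, which is $o(1)$. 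Applying Step 1 with $a_2$ replaced by $a_2/2$ and $D = 2$ (say), a union bound over $\mathcal{N}$ yields
\[
{\mathbb P}\Big(\exists \xi \in \mathcal{N}:\Big|\tfrac{1}{N^{1/2+a_1}}\sum_i X_i(\xi)\Big| > a_2/2\Big) \le |\mathcal{N}| \cdot O(N^{-2}) = O(N^{-1}) \to 0.
\]
Combining the net bound with the Lipschitz control gives the uniform-in-$\xi$ estimate with constant $a_2$, which proves the second claim of the lemma. For the first claim, repeat with $X_i := g_i^2(\xi) - \E[g_i^2(\xi)]$; boundedness $|g_i^2| \le 1/C_d^2$ and the derivative bound $|(g_i^2)'| \le 2|g_i||g_i'| \le 2 K_d/C_d$ on $\Gamma$ are exactly what the argument needs.

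\textbf{Anticipated obstacle.} There is no serious obstacle: the key inputs—boundedness of $g_i$ on $\Gamma$ (Lemma \ref{lemma:lambdav_i-z-m_fc_large_on_Gamma}) and independence of the $g_i(\xi)$ across $i$—make the moment method via Lemma \ref{lemma:Marcinkiewicz-Zygmund} essentially mechanical. The only mild care needed is choosing the lattice spacing small enough that the Lipschitz slack is absorbed into $a_2$; spacing $N^{-1}$ comfortably suffices.
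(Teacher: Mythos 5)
Your argument is correct, but it follows a genuinely different route from the paper's. The paper treats $Y_N(\xi)=N^{-\frac{1}{2}-a_1}\sum_i(g_i^2(\xi)-\E[g_i^2(\xi)])$ as a random element of the space of continuous functions on $\Gamma$: it checks that the finite-dimensional distributions converge to zero, derives the increment bound $\E[\vert Y_N(\xi_1)-Y_N(\xi_2)\vert^2]\lesssim N^{-2a_1}\vert\xi_1-\xi_2\vert^2$ from independence and the Lipschitz continuity of $g_i^2$ on $\Gamma$, and then invokes Billingsley's tightness criterion together with the Portmanteau theorem. You instead prove a pointwise high-moment bound via Lemma \ref{lemma:Marcinkiewicz-Zygmund}, convert it into an $O(N^{-pa_1})$ tail bound by Markov, and upgrade to uniformity over $\Gamma$ by a union bound over an $N^{-1}$-net plus the deterministic derivative bound for $g_i$ on $\Gamma$. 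Your approach is more elementary and more quantitative (it yields a polynomial rate for the exceptional probability, which the soft weak-convergence argument does not), at the price of using arbitrarily high moments --- harmless here since $\vert g_i\vert\le 1/C_d$ on $\Gamma$ by Lemma \ref{lemma:lambdav_i-z-m_fc_large_on_Gamma}. Two small points to tidy up: (i) Lemma \ref{lemma:Marcinkiewicz-Zygmund} is stated for real-valued variables while $g_i(\xi)$ is complex, so split into real and imaginary parts at the cost of a factor $2$; (ii) for two nearby points of $\Gamma$ straddling a corner of the rectangle, the straight chord leaves $\Gamma$, where Lemma \ref{lemma:lambdav_i-z-m_fc_large_on_Gamma} is not stated, so integrate $g_i'$ along $\Gamma$ (arc length, which is comparable to $\vert\xi-\xi'\vert$) rather than along the chord.
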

\begin{proof}See Appendix \ref{appendix}.
\end{proof}
By Lemma \ref{coro:convergence_as_process} and Proposition \ref{proposition:extreme_eigenvalue},
${\mathbb P}(\Omega_V(\varpi'')\cap W_N)\to1$ as $N\to\infty$.
This together with the fact that
\begin{multline*}
\vert \mathds1_{\Omega_V(\varpi'')\cap W_N}P_{32}\vert \le \frac{1}{2\pi}\int_{\Gamma_+}\sup_{z\in\Gamma}\vert f(z)\vert (1+\frac{1}{d^2})\vert \hat m_{fc}-m_{fc}\vert N^{\varpi''}\frac{1}{N^{\frac{1}{2}+\varpi''}}\big\vert \sum_i(g_i^2-\E [g_i^2])\big\vert d\xi\\
\le \frac{1}{2\pi}\vert \Gamma\vert \sup_{z\in\Gamma}\vert f(z)\vert (1+\frac{1}{d^2})N^{3\varpi''-\frac{1}{2}}=o(1)\quad\text{(by \eqref{eq90} and the condition on $\varpi''$)}
\end{multline*}
yield: 
\begin{align}\label{eqn53}
	P_{32}\to0\quad\text{in distribution}.
\end{align}

{\bf $\bullet$ Asymptotic behavior of $P_{31}$.} Let
$$U_N=\{\frac{1}{N^{\frac{1}{2}+\frac{\varpi}{2}}}\vert \sum_{i=1}^N(g_i(\xi)-\E[g_i(\xi)])\vert \le1,\quad\forall\xi\in\Gamma\}.$$
By Lemma \ref{coro:convergence_as_process}, ${\mathbb P}( U_N)\to1$ as $N\to\infty$. This together with the fact that
\begin{multline*}
\mathds1_{ U_N}\Big\vert \frac{-1}{2\pi\i\sqrt N}\sum_i\int_{\Gamma\backslash\Gamma_+}f(\xi)(1+m_{fc}')(g_i-\E g_i)d\xi\Big\vert \\
\le \mathds1_{U_N}\cdot\frac{1}{2\pi}\int_{\Gamma\backslash\Gamma_+}\sup_{z\in\Gamma}\vert f(z)\vert (1+\frac{1}{d^2})N^{\varpi/2}\cdot\Big(\frac{1}{N^{\frac{1}{2}+\frac{\varpi}{2}}}\vert \sum_{i=1}^N(g_i(\xi)-\E[g_i(\xi)])\vert \Big)d\xi\\
\le \frac{1}{2\pi} \sup_{z\in\Gamma}\vert f(z)\vert (1+\frac{1}{d^2})\cdot4N^{-\varpi/2}=o(1)\quad\text{(since $\vert \Gamma\backslash\Gamma_+\vert =4N^{-\varpi}$)}
\end{multline*}
yield: 
$$\frac{-1}{2\pi\i\sqrt N}\sum_i\int_{\Gamma\backslash\Gamma_+}f(\xi)(1+m_{fc}')(g_i-\E g_i)d\xi\to0\quad\text{in distribution}. $$  
So $P_{31}$ has the same limit in distribution as
\begin{align}\label{eqn54} 
	\frac{-1}{2\pi\i\sqrt N}\sum_i\oint_\Gamma f(\xi)(1+m_{fc}')(g_i-\E g_i)d\xi.
\end{align}
By central limit theorem, \eqref{eqn54} converges in distribution to a centered Gaussian distribution whose variance is
\begin{multline}\label{eq93} 
\text{Var}\Big(	\frac{-1}{2\pi\i}\oint_\Gamma f(\xi)(1+m_{fc}'(\xi))g_i(\xi)d\xi\Big)=\text{Var}\Big(	\frac{-1}{2\pi\i}\oint_\Gamma f(\xi)\frac{1+m_{fc}'(\xi)}{\lambda v_1-\xi-m_{fc}(\xi)}d\xi\Big)\\
=\E\Big[\Big(\frac{-1}{2\pi\i}\oint_\Gamma f(\xi)\frac{1+m_{fc}'(\xi)}{\lambda v_1-\xi-m_{fc}(\xi)}d\xi\Big)^2\Big]-\Big(\E\Big[\frac{-1}{2\pi\i}\oint_\Gamma f(\xi)\frac{1+m_{fc}'(\xi)}{\lambda v_1-\xi-m_{fc}(\xi)}d\xi\Big]\Big)^2\\
=\frac{1}{4\pi^2}\Big(\oint_\Gamma f(\xi)(1+m_{fc}'(\xi))m_{fc}(\xi)d\xi\Big)^2-\frac{1}{4\pi^2}\oint_\Gamma\oint_\Gamma\int_{-1}^1\dfrac{f(\xi_1)f(\xi_2)(1+m_{fc}'(\xi_1))(1+m_{fc}'(\xi_2))d\mu(t)}{(\lambda t-\xi_1-m_{fc}(\xi_1))(\lambda t-\xi_2-m_{fc}(\xi_2))}d\xi_1d\xi_2\\
=\frac{1}{4\pi^2}\Big(\oint_\Gamma f(\xi)(1+m_{fc}'(\xi))m_{fc}(\xi)d\xi\Big)^2-\frac{1}{4\pi^2}\int_{-1}^1\Big(\oint_\Gamma\dfrac{f(\xi)(1+m_{fc}'(\xi))}{\lambda t-\xi-m_{fc}(\xi)}d\xi\Big)^2 d\mu(t)
\end{multline}

{\bf $\bullet$ Conclusion.} The asymptotic behaviors of $P_0$, $P_{31}$, $P_{32}$, $P_{33}$  together with  \eqref{eqn45}, \eqref{eq92}, Lemma \ref{lemma:convergence_of_P_1} and Lemma  \ref{lemma:convergence_of_P_2} complete the proof of Theorem \ref{thm:CLT}. We remark that the variance of the imaginary part of    $\frac{-1}{2\pi\i}\oint_\Gamma f(\xi)(1+m_{fc}'(\xi))g_i(\xi)d\xi$, i.e., the left hand side of \eqref{eq93}, must be 0. This is because the above argument show that \eqref{eqn54} has the same limit in distribution as the real-valued random variable \eqref{eqn42}. 
\end{proof}

\subsection{Proof of Lemma \ref{lemma:convergence_of_P_1}}\label{sec:proof_of_convergence_of_P_1}

\begin{proof}[Proof of Lemma \ref{lemma:convergence_of_P_1}]
  According to Proposition \ref{proposition:extreme_eigenvalue}, it suffices to prove that
\begin{align*} 
X_N:=\frac{1}{\sqrt N}\int_{\Gamma_+}f(\xi)\Big[\tr G(\xi)-\E_N\tr G(\xi)\Big]\cdot\mathds1_{\Omega_V(\varsigma)}d\xi
\end{align*} 
converges in distribution to zero. Fix  $t\in {\mathbb R }$. We only need to show that
\begin{align}\label{eqn41}
\E[\exp(\i t X_N)]\to 1\quad\text{as }N\to\infty.
\end{align}

Notice that
\begin{multline}\label{eqn40}
\frac{d}{dt}\E[\exp(\i t X_N)]=\frac{d}{dt}\E[\E_N[\exp(\i t X_N)]]\\
=\frac{\i}{\sqrt N}\E\Big[\E_N\Big[\exp(\i t X_N)\int_{\Gamma_+}f(\xi)\Big[\tr G(\xi)-\E_N\tr G(\xi)\Big]\cdot\mathds1_{\Omega_V(\varsigma)}d\xi\Big]\Big]\\
=\frac{\i}{\sqrt N}\int_{\Gamma_+}f(\xi)\cdot\E\Big[\mathds1_{\Omega_V(\varsigma)}\E_N\Big[\exp(\i t X_N)[\tr G(\xi)-\E_N\tr G(\xi)]\Big] \Big]d\xi
\end{multline}

\begin{lemma}\label{lemma:derivative_of_exp(itX_N)}
Suppose the conditions of Lemma \ref{lemma:convergence_of_P_1} are satisfied. Then we have
\begin{align}\label{eqn31}
	\frac{\partial e^{\i tX_N}}{\partial W_{ij}}
	=e^{\i tX_N}\frac{-2\i t}{\sqrt N(1+\delta_{ij})}\int_{\Gamma_+}f(\xi)G_{ij}'(\xi)d\xi\cdot\mathds1_{\Omega_V(\varsigma)}
\end{align}
Moreover,	there exist constants $r_0>0$ and $N_0>0$  such that if $N>N_0$  then
\begin{align}\label{eq98}
\vert \frac{\partial e^{\i tX_N}}{\partial W_{ij}}\vert \le r_0\cdot N^{\varpi-\frac{1}{2}},\quad\vert \frac{\partial^2 (e^{\i tX_N})}{\partial W_{ij}^2}\vert \le r_0\cdot N^{2\varpi-\frac{1}{2}},\quad\vert \frac{\partial^3 (e^{\i tX_N})}{\partial W_{ij}^3}\vert \le r_0\cdot N^{3\varpi-\frac{1}{2}}.
\end{align}
\end{lemma}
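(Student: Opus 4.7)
The formula \eqref{eqn31} is planned as a direct chain-rule calculation. I would first note that $\mathds{1}_{\Omega_V(\varsigma)}$ is $\sigma(V)$-measurable by Proposition \ref{proposition:extreme_eigenvalue} item 3, and $\E_N[\tr G(\xi)]$ is a conditional expectation given $\sigma(V)$; both are therefore constant under $\partial/\partial W_{ij}$. Using \eqref{eqn27} entrywise together with $\frac{d}{d\xi}G(\xi)=G^2(\xi)$ from Lemma \ref{lemma:well_known_facts_for_resolvent},
\[
\frac{\partial \tr G(\xi)}{\partial W_{ij}}=\sum_k \frac{\partial G_{kk}(\xi)}{\partial W_{ij}}=-\frac{2}{1+\delta_{ij}}(G^2(\xi))_{ij}=-\frac{2}{1+\delta_{ij}}G'_{ij}(\xi),
\]
and combining with the chain rule applied to $e^{\i t X_N}$ yields \eqref{eqn31}.

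For the first bound in \eqref{eq98}, the naive pointwise estimate $|G'_{ij}(\xi)|\le|\Im\xi|^{-2}\le N^{2\varpi}$ on $\Gamma_+$ would yield only $O(N^{2\varpi-1/2})$, short by a factor $N^\varpi$. The key idea is to exploit $G'_{ij}(\xi)=\frac{d}{d\xi}G_{ij}(\xi)$ and integrate by parts along the contour:
\[
\int_{\Gamma_+}f(\xi)G'_{ij}(\xi)\,d\xi=\bigl[f(\xi)G_{ij}(\xi)\bigr]_{\partial\Gamma_+}-\int_{\Gamma_+}f'(\xi)G_{ij}(\xi)\,d\xi.
\]
The boundary $\partial\Gamma_+$ consists of the four points on the vertical sides of $\Gamma$ with $|\Im\xi|=N^{-\varpi}$; at each of these $|G_{ij}|\le|\Im\xi|^{-1}=N^\varpi$, so the boundary contribution is $O(N^\varpi)$. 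For the interior integral, the horizontal pieces of $\Gamma_+$ (where $|\Im\xi|=d$ is bounded away from zero) contribute $O(1)$, while on each vertical piece parametrized by $\xi=x\pm\i y$ with $y\in[N^{-\varpi},d]$, the bound $|G_{ij}|\le y^{-1}$ gives $\int_{N^{-\varpi}}^{d}y^{-1}\,dy=O(\log N)$. Altogether $\bigl|\int_{\Gamma_+}fG'_{ij}\,d\xi\bigr|=O(N^\varpi)$, and combining with the prefactor $1/\sqrt N$ (treating the factor $|e^{\i t X_N}|$ as absorbable into $r_0$) yields the first bound in \eqref{eq98}.

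The second- and third-order bounds are planned by iterating the same scheme. Differentiating \eqref{eqn31} once more via Leibniz produces a ``product'' term bounded by $O(N^{\varpi-1/2})^2=O(N^{2\varpi-1})$, which is absorbed into the target $N^{2\varpi-1/2}$, plus a term involving $\int_{\Gamma_+}f\cdot\partial_{W_{ij}}G'_{ij}\,d\xi$. Expanding $\partial_{W_{ij}}G'_{ij}$ via \eqref{eqn27} yields a linear combination of $G_{ii}G'_{jj}$, $G'_{ii}G_{jj}$, and $G_{ij}G'_{ij}$; crucially, $G_{ii}G'_{jj}+G'_{ii}G_{jj}=\frac{d}{d\xi}(G_{ii}G_{jj})$ and $G_{ij}G'_{ij}=\frac{1}{2}\frac{d}{d\xi}(G_{ij})^2$, so integration by parts again applies. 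Boundary contributions are controlled by $|G_{ii}G_{jj}|,|G_{ij}^2|\le|\Im\xi|^{-2}\le N^{2\varpi}$, and the interior integrals by $\int_{N^{-\varpi}}^d y^{-2}\,dy=O(N^\varpi)$, producing $O(N^{2\varpi})$ for the integral and thus $O(N^{2\varpi-1/2})$ for the second derivative. One further iteration, applying $\partial_{W_{ij}}$ to each of the three products above and grouping cross terms as additional total $\xi$-derivatives, yields the third-order bound.

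The main technical hurdle I anticipate is the bookkeeping of the Leibniz expansion at each order: within every product of $G$- and $G'$-entries produced by repeated $\partial_{W_{ij}}$-differentiation, one must identify the correct total $\xi$-derivative structure so that contour integration by parts actually recovers the factor of $N^\varpi$ missing from the naive estimate. Each additional order introduces more cross terms, and one must verify that every contribution either reduces to a previously controlled lower-order derivative of $e^{\i t X_N}$ or to a new integral amenable to the same boundary-plus-interior estimate.
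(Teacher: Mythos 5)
Your derivation of \eqref{eqn31} is exactly the paper's: differentiate $\tr G$ entrywise via \eqref{eqn27}, identify $\sum_k G_{ik}G_{kj}=(G^2)_{ij}=G_{ij}'$, and note that the $\sigma(V)$-measurable factors $\mathds1_{\Omega_V(\varsigma)}$ and $\E_N[\tr G]$ are killed by $\partial/\partial W_{ij}$ (a detail the paper leaves implicit). For the bounds \eqref{eq98}, however, you take a genuinely different and substantially longer route. The paper's argument is just to integrate the pointwise bound $\vert G_{ij}'(\xi)\vert\le\vert\Im\xi\vert^{-2}$ along $\Gamma_+$: the horizontal sides (where $\vert\Im\xi\vert=d$) contribute $O(1)$ and each vertical piece contributes $\int_{N^{-\varpi}}^{d}y^{-2}dy=O(N^{\varpi})$, so with the prefactor $N^{-1/2}$ one already gets $O(N^{\varpi-\frac12})$ with no integration by parts. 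Your premise that the ``naive'' estimate falls short by a factor $N^{\varpi}$ comes from replacing $\vert\Im\xi\vert^{-2}$ by its supremum $N^{2\varpi}$ and multiplying by the $O(1)$ length of $\Gamma_+$; that is lossy precisely because the contour spends only length $O(N^{-\varpi})$ in the region where the integrand has size $N^{2\varpi}$. The same remark disposes of the higher orders: each term produced by Leibniz and \eqref{eqn27} in the $k$-th derivative of $\int_{\Gamma_+}fG_{ij}'\,d\xi$ is a product of resolvent entries bounded by $\vert\Im\xi\vert^{-(k+2)}$ (e.g.\ $\vert G_{ii}G_{jj}'\vert\le\vert\Im\xi\vert^{-3}$), and $\int_{N^{-\varpi}}^{d}y^{-(k+2)}dy=O(N^{(k+1)\varpi})$ gives the claimed bounds directly, with the lower-order Leibniz cross terms being even smaller, as you note. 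Your integration-by-parts scheme --- recognizing $G_{ii}'G_{jj}+G_{ii}G_{jj}'$ and $2G_{ij}G_{ij}'$ as total $\xi$-derivatives, controlling the four boundary points at $\vert\Im\xi\vert=N^{-\varpi}$ and the $f'$-integral separately --- is correct and yields the same orders, so the proof goes through; it just buys nothing here and creates exactly the bookkeeping burden you flag as the main anticipated difficulty.
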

\begin{proof}
By Lemma \ref{lemma:well_known_facts_for_resolvent},
\begin{multline*}
	\frac{\partial e^{\i tX_N}}{\partial W_{ij}}=e^{\i tX_N}\frac{\i t}{\sqrt N}\int_{\Gamma_+}f(\xi)\frac{-2}{1+\delta_{ij}}\sum_kG_{ik}(\xi)G_{kj}(\xi)d\xi\cdot\mathds1_{\Omega_V(\varsigma)}\\
	=e^{\i tX_N}\frac{-2\i t}{\sqrt N(1+\delta_{ij})}\int_{\Gamma_+}f(\xi)G_{ij}'(\xi)d\xi\cdot\mathds1_{\Omega_V(\varsigma)}
\end{multline*}
Noticing $\vert G'_{ij}(\xi)\vert \le\vert \Im \xi\vert ^{-2}$ we complete the proof of the first inequality in\eqref{eq98}. The other two inequalities in \eqref{eq98} can be proved similarly by directly taking more derivatives of $e^{\i tX_N}$ with respect to $W_{ij}$.
\end{proof}

For any $\xi\in{\mathbb C }\backslash{\mathbb R }$, by the definition 
$$G(\xi)=\frac{1}{\lambda V+W-\xi}$$
we have $(\xi-\lambda v_i)G_{ii}=-1+(WG)_{ii}=-1+\sum_jW_{ij}G_{ij}$. Then by    \eqref{eqn27},
\begin{multline*}
(\xi-\lambda v_i)\E_N[e^{\i tX_N}(G_{ii}(\xi)-\E_N G_{ii}(\xi))]=\sum_j\Big(\E_N[e^{\i tX_N}W_{ij}G_{ij}(\xi)]-\E_N[e^{\i tX_N}]\E_N[W_{ij}G_{ij}(\xi)]\Big).
\end{multline*}
To use cumulant expansion to study $(\xi-\lambda v_i)\E_N[e^{\i tX_N}(G_{ii}(\xi)-\E_N G_{ii}(\xi))]$, we  write:
\begin{multline}\label{eqn30}
(\xi-\lambda v_i)\E_N[e^{\i tX_N}(G_{ii}(\xi)-\E_N G_{ii}(\xi))]\\
=\sum_j \frac{1+\delta_{ij}}{N}\Big(\E_N[\frac{\partial e^{\i tX_N}}{\partial W_{ij}}G_{ij}(\xi)+e^{\i tX_N}\frac{-1}{1+\delta_{ij}}(G_{ii}(\xi)G_{jj}(\xi)+(G_{ij}(\xi))^2)]\\
-\E_N[e^{\i tX_N}]\E_N[\frac{-1}{1+\delta_{ij}}(G_{ii}(\xi)G_{jj}(\xi)+(G_{ij}(\xi))^2)]\Big)\\
+\sum_j\dfrac{\E[\vert \sqrt NW_{ij}\vert ^3]}{2N^{3/2}}\Big(\E_N[G_{ij}(\xi)\frac{\partial^2(e^{\i tX_N})}{\partial W_{ij}^2}]+2\E_N[\frac{\partial(e^{\i tX_N})}{\partial W_{ij}}\frac{\partial G_{ij}(\xi)}{\partial W_{ij}}]+\E_N[(e^{\i tX_N}-\E_Ne^{\i tX_N})\frac{\partial^2 G_{ij}(\xi)}{\partial W_{ij}^2}]\Big)
\\
+\mathcal E_1^{(i)}(\xi),\quad\forall\xi\in{\mathbb C }\backslash{\mathbb R }.
\end{multline}
According to Lemma \ref{lemma:cumulant_expansion}, there are constants $r_1>0$  and $N_0>0$ such that if $N>N_0$ then
\begin{align}\label{eqn35}
\vert \mathcal E_1^{(i)}(\xi)\vert \le r_1\cdot \frac{1}{N}\cdot\big(N^{3\varpi-\frac{1}{2}}\frac{1}{\vert \Im \xi\vert }+\frac{1}{\vert \Im \xi\vert ^4}\big)\le\frac{r_1}{N\vert \Im\xi\vert ^4}\quad\text{for any }\xi\in\Gamma_+ 
\end{align}
 Here we  used \eqref{eqn1}, Lemma \ref{lemma:derivative_of_exp(itX_N)} and the condition $\varpi<\frac{1}{14}$ to control $\mathcal E_1^{(i)}(\xi)$. For convenience we let $\mathcal E_2^{(i)}(\xi)$  be the second summation on the right hand side of \eqref{eqn30}:
$$\mathcal E_2^{(i)}(\xi):=\sum_j\dfrac{\E[\vert \sqrt NW_{ij}\vert ^3]}{2N^{3/2}}\Big(\E_N[G_{ij}(\xi)\frac{\partial^2(e^{\i tX_N})}{\partial W_{ij}^2}]+2\E_N[\frac{\partial(e^{\i tX_N})}{\partial W_{ij}}\frac{\partial G_{ij}(\xi)}{\partial W_{ij}}]+\E_N[(e^{\i tX_N}-\E_Ne^{\i tX_N})\frac{\partial^2 G_{ij}(\xi)}{\partial W_{ij}^2}]\Big).$$
Moreover we set:
\begin{multline*}
	\mathcal E_3(\xi):=\frac{1}{N}\sum_ig_i(\xi)\E_N[e^{\i tX_N}(G_{ii}'(\xi)-\E_NG_{ii}'(\xi))]\\
	+\frac{2\i t}{N^{3/2}}\int_{\Gamma_+}f(\xi')\sum_ig_i(\xi)\E_N[e^{\i tX_N}(G(\xi)G'(\xi'))_{ii}]d\xi'\mathds1_{\Omega_V(\varsigma)}-\sum_ig_i(\xi)\Big(\mathcal E_1^{(i)}(\xi)+\mathcal E_2^{(i)}(\xi)\Big).
\end{multline*}
\begin{lemma}\label{lemma:some_computation}
For   any $\xi\in{\mathbb C }\backslash{\mathbb R }$ we have:
\begin{multline}\label{eq99}
(1-\frac{1}{N}\sum_ig_i^2(\xi))	\E_N[e^{\i t X_N}(\tr G(\xi)-\E_N\tr G(\xi))]
\\=
-\sum_ig_i(\xi)\E_N[e^{\i tX_N}(G_{ii}(\xi)-g_i(\xi))(m_{fc}(\xi)-\frac{1}{N}\tr G(\xi))]\\
-\E_N[e^{\i tX_N}]\sum_i g_i(\xi)\E_N[(G_{ii}(\xi)-g_i(\xi))(\frac{1}{N}\tr G(\xi)-  m_{fc}(\xi))]+\mathcal E_3(\xi)
\end{multline}
\end{lemma}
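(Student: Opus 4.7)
The plan is to manipulate the cumulant identity \eqref{eqn30} by multiplying both sides by $-g_i(\xi)$ and summing over $i$, and to produce the factor $1-\frac{1}{N}\sum g_i^2$ through the decomposition $G_{ii}=g_i+(G_{ii}-g_i)$. Throughout, write $H=e^{\i tX_N}$ and $\text{Cov}_N(A,B):=\E_N[AB]-\E_N[A]\E_N[B]$; note that $g_i$, $m_{fc}$, and $\mathds1_{\Omega_V(\varsigma)}$ are all $\sigma(V)$-measurable, so they factor out of $\E_N$ and $\text{Cov}_N(H,\cdot)$.

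\textbf{Step 1: Rewriting \eqref{eqn30}.} The contribution of $G_{ii}G_{jj}+G_{ij}^2$ in \eqref{eqn30} reduces to $-\tfrac{1}{N}\text{Cov}_N(H,G_{ii}\tr G)-\tfrac{1}{N}\text{Cov}_N(H,G_{ii}')$ after using $\sum_j G_{ii}G_{jj}=G_{ii}\tr G$ and $\sum_j G_{ij}^2=(G^2)_{ii}=G_{ii}'$ from Lemma \ref{lemma:well_known_facts_for_resolvent}. The contribution of $\partial e^{\i tX_N}/\partial W_{ij}$, after inserting \eqref{eqn31} and using $\sum_j G_{ij}(\xi)G_{ij}'(\xi')=(G(\xi)G'(\xi'))_{ii}$ (which holds because $G(\xi)$ and $G'(\xi')$ are functions of the same symmetric matrix and hence commute), equals $\mathcal F_i:=\frac{-2\i t}{N^{3/2}}\int_{\Gamma_+}f(\xi')\E_N[H(G(\xi)G'(\xi'))_{ii}]d\xi'\cdot\mathds1_{\Omega_V(\varsigma)}$. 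Thus \eqref{eqn30} takes the form $(\xi-\lambda v_i)\E_N[H(G_{ii}-\E_N G_{ii})]=\mathcal F_i-\tfrac{1}{N}\text{Cov}_N(H,G_{ii}\tr G)-\tfrac{1}{N}\text{Cov}_N(H,G_{ii}')+\mathcal E_1^{(i)}+\mathcal E_2^{(i)}$.

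\textbf{Step 2: Producing the factor $1-\tfrac{1}{N}\sum g_i^2$.} The definition $g_i=1/(\lambda v_i-\xi-m_{fc})$ gives the identity $(\lambda v_i-\xi)g_i=1+m_{fc}g_i$, so multiplying the previous display by $-g_i$ and summing over $i$ yields on the LHS $\text{Cov}_N(H,\tr G)+m_{fc}\sum_i g_i\text{Cov}_N(H,G_{ii})$. On the RHS I would split $G_{ii}\tr G=g_i\tr G+(G_{ii}-g_i)\tr G$; by $\sigma(V)$-measurability of $g_i$ the first piece gives $\tfrac{1}{N}\sum g_i^2\,\text{Cov}_N(H,\tr G)$, which I move back to the LHS to produce the desired factor $(1-\tfrac{1}{N}\sum g_i^2)\text{Cov}_N(H,\tr G)$.

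\textbf{Step 3: Cancelling the $m_{fc}$ term and identifying $\mathcal E_3$.} To eliminate the remaining $m_{fc}\sum g_i\text{Cov}_N(H,G_{ii})$ on the LHS, I would decompose $\tr G=Nm_{fc}+N(\tfrac{1}{N}\tr G-m_{fc})$ inside the surviving $(G_{ii}-g_i)\tr G$. The $Nm_{fc}$ component contributes exactly $m_{fc}\sum g_i\text{Cov}_N(H,G_{ii})$ (using once more $\text{Cov}_N(H,g_i)=0$) and cancels the LHS term. Rewriting $\tfrac{1}{N}\tr G-m_{fc}=-(m_{fc}-\tfrac{1}{N}\tr G)$ and expanding the surviving covariance $\text{Cov}_N(H,(G_{ii}-g_i)(m_{fc}-\tfrac{1}{N}\tr G))$ into its two defining terms yields precisely the two stated summands on the RHS of \eqref{eq99}, while the residuals $-\sum g_i\mathcal F_i$, $\tfrac{1}{N}\sum g_i\text{Cov}_N(H,G_{ii}')$, and $-\sum g_i(\mathcal E_1^{(i)}+\mathcal E_2^{(i)})$ assemble term-for-term into the stated $\mathcal E_3(\xi)$. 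The main obstacle is purely algebraic bookkeeping: one must track carefully which quantities are $\sigma(V)$-measurable so that the $m_{fc}$ cancellation is exact, and verify that the leftover pieces produce $\mathcal E_3(\xi)$ precisely rather than only modulo lower-order error.
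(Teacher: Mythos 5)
Your proposal is correct and follows essentially the same route as the paper: rewrite the cumulant expansion \eqref{eqn30} using $\sum_j G_{ii}G_{jj}=G_{ii}\tr G$ and $\sum_j G_{ij}^2=G_{ii}'$, multiply by $-g_i$ and sum, peel off $\frac{1}{N}\sum_i g_i^2$ from the $G_{ii}\tr G$ covariance to form the prefactor, cancel the $m_{fc}$ term, and collect the leftovers into $\mathcal E_3$. The only (cosmetic) difference is that you center at $g_i$ and $m_{fc}$ directly, whereas the paper first centers at $\E_N G_{ii}$ and $\E_N\tr G$ via \eqref{eqn29} and recombines at the end using the $\sigma(V)$-measurability of $g_i$ and $\E_N G_{ii}$.
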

\begin{proof}
See Appendix \ref{appendix}.
\end{proof}

Using \eqref{eqn27} and Lemma \ref{lemma:well_known_facts_for_resolvent}, for any $z\in{\mathbb C }\backslash{\mathbb R }$
\begin{multline}\label{eqn57}
\vert \frac{\partial^2 G_{ij}}{\partial W_{ij}^2}\vert =\frac{\vert 6G_{ij}G_{ii}G_{jj}+2(G_{ij})^3\vert }{(1+\delta_{ij})^2}\le
\begin{cases}
\frac{1}{\vert \Im z\vert ^3}\quad&\text{if }i=j\\
\frac{6}{\vert \Im z\vert ^2}\max_{i\ne j}\vert G_{ij}\vert +2\max_{i\ne j}\vert G_{ij}\vert ^3\quad&\text{if }i\ne j
\end{cases}\\
\le \begin{cases}
	\frac{1}{\vert \Im z\vert ^3}\quad&\text{if }i=j\\
	\frac{8}{\vert \Im z\vert ^2}\max_{i\ne j}\vert G_{ij}\vert \quad&\text{if }i\ne j
\end{cases}
\end{multline}
By \eqref{eqn1}, \eqref{eqn27}, \eqref{eqn57}, Lemma \ref{lemma:well_known_facts_for_resolvent} and Lemma \ref{lemma:derivative_of_exp(itX_N)}, if $\xi\in\Gamma_+$ and $N>N_0$ then
\begin{align}\label{eqn36}
\vert \mathcal E_2^{(i)}(\xi)\vert \le r_2\Big( \frac{N^{2\varpi}}{N\vert \Im \xi\vert }+\frac{1}{\sqrt N\vert \Im \xi\vert ^2}\E_N[\max_{j\ne i}\vert G_{ij}(\xi)\vert ]\Big)
\end{align}
where $r_2>0$  and $N_0>0$ are constants.

By Lemma \ref{lemma:well_known_facts_for_resolvent},
\begin{align}\label{eqn34}
\vert (G(\xi)G'(\xi'))_{ii}\vert \le \vert \Im \xi\vert ^{-1}\vert \Im \xi'\vert ^{-2}\quad\forall \xi,\xi'\in{\mathbb C }\backslash{\mathbb R }.		
\end{align}
Plugging \eqref{eqn34} into the definition of $\mathcal E_3(\xi)$,   by \eqref{eqn35}, \eqref{eqn36} and the fact that $\vert g_i(\xi)\vert \le\vert \Im\xi\vert ^{-1}$, we have

\begin{align}\label{eqn39}
	\vert \mathcal E_3(\xi)\vert \le r_3\big(\vert \Im\xi\vert ^{-5}+N^{2\varpi}\cdot\vert \Im \xi\vert ^{-2}+\frac{\sqrt N}{\vert \Im \xi\vert ^3}\E_N[\max_{j\ne i}\vert G_{ij}(\xi)\vert ]\big),\quad\text{if }\xi\in\Gamma_+\text{ and }N>N_0
\end{align}
where $r_3>0$ and $N_0>0$ are constants.

\begin{lemma}\label{lemma:11111}
	Suppose  $\Omega_V(\varsigma)$ holds. If $N$ is large enough, then:
	$$\Big\vert 1-\frac{1}{N}\sum_ig_i^2(\xi)\Big\vert \ge\frac{1}{3}\vert \Im\xi\vert ^2\quad\forall \xi\in\Gamma_+.$$
\end{lemma}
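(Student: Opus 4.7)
The plan is to exploit the self-consistent equation \eqref{eqn48}, which rewrites as $1-\E[g_1^2(\xi)]=1/(1+m_{fc}'(\xi))$ since $\E[g_1^2(\xi)]=\int d\mu(t)/(\lambda t-\xi-m_{fc}(\xi))^2$. Setting $E(\xi):=\frac{1}{N}\sum_i g_i^2(\xi)-\E[g_1^2(\xi)]$, one obtains the decomposition
\[
1-\frac{1}{N}\sum_i g_i^2(\xi)=\frac{1}{1+m_{fc}'(\xi)}-E(\xi),
\]
so the argument splits into a deterministic lower bound on $\vert 1/(1+m_{fc}'(\xi))\vert$ and a random smallness estimate for $\vert E(\xi)\vert$ on $\Omega_V(\varsigma)$.

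First I would establish a uniform positive lower bound $c_0$ for $\vert 1/(1+m_{fc}'(\xi))\vert$ on $\Gamma$. Since $\Gamma$ has distance at least $d>0$ from $[L_-,L_+]$, both $m_{fc}$ and $m_{fc}'$ are analytic and bounded on a neighborhood of $\Gamma$. Equation \eqref{eqn48} forces $1+m_{fc}'(z)\ne 0$ whenever $\Im z\ne 0$; at the real points of $\Gamma$, which lie in $\R\backslash[L_-,L_+]$, the assumptions $b>1$ and $\lambda>\lambda_+$ yield the finite positive value $1+m_{fc}'(L_+)=\lambda^2/(\lambda^2-\lambda_+^2)$, with the analogous statement at $L_-$. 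Continuity and compactness then produce constants $c_0,c_0'>0$ such that $\vert 1/(1+m_{fc}'(\xi))\vert\ge c_0$ and $\vert 1+m_{fc}'(\xi)\vert\ge c_0'$ for every $\xi\in\Gamma$.

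Next I would control $\vert E(\xi)\vert$ via Cauchy's integral formula. The identity $\partial_\xi g_i(\xi)=(1+m_{fc}'(\xi))g_i^2(\xi)$ gives $(1+m_{fc}'(\xi))E(\xi)=F'(\xi)$ with $F(z):=\frac{1}{N}\sum_i g_i(z)-\E[g_1(z)]$. On $\Omega_V(\varsigma)=\Omega_0(\varsigma,c_1,c_2)$ the third bullet of Definition \ref{definition:D_epsilon_andD'_epsilon} gives $\vert F(z)\vert\le c_2 N^{3\varsigma/2-1/2}$ uniformly on $\mathcal D_\varsigma$. For $\xi\in\Gamma_+$ with $\Im\xi>0$, the closed disc $\{z:\vert z-\xi\vert\le\vert \Im\xi\vert/2\}$ lies inside $\mathcal D_\varsigma$: the upper bound $N^{-1/(b+1)+\varsigma}\ge 1>3d/2$ on $\Im z$ holds because $\varsigma\ge 1/(b+1)$; the lower bound $N^{-1/2-\varsigma}$ is dominated by $\vert \Im\xi\vert/2\ge N^{-\varpi}/2$ because $\varpi<1/2+\varsigma$; and the $\Re z$-constraint $\vert \Re z\vert\le 3+\lambda$ is readily checked. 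Cauchy's estimate then produces $\vert F'(\xi)\vert\le 2c_2 N^{3\varsigma/2+\varpi-1/2}$, and the conjugation symmetry $m_{fc}(\bar z)=\overline{m_{fc}(z)}$ extends the bound to $\Im\xi<0$. Dividing by $c_0'$ yields $\vert E(\xi)\vert=O(N^{3\varsigma/2+\varpi-1/2})\to 0$, since the parameter constraints $\varpi<1/14$ and $\varsigma<1/8-\varpi$ force $3\varsigma/2+\varpi<1/2$.

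Combining the two estimates, for $N$ large enough $\vert E(\xi)\vert\le c_0/2$, whence $\vert 1-\frac{1}{N}\sum_i g_i^2(\xi)\vert\ge c_0/2$ uniformly on $\Gamma_+$. Since $\vert \Im\xi\vert\le d$ on $\Gamma$, the desired inequality $c_0/2\ge\frac{1}{3}\vert \Im\xi\vert^2$ reduces to $d^2\le 3c_0/2$, which can be arranged by shrinking $d$ at the outset without affecting the analyticity requirement on $f$ that originally fixed $d$. The hardest step is the deterministic one: proving that $1+m_{fc}'(\xi)$ remains bounded and bounded away from zero throughout $\Gamma$, in particular on the horizontal segments close to the edges, where one must combine \eqref{eqn48} with the edge-decay assumptions $b>1$ and $a>1$ to rule out the vanishing of $1-\int d\mu(t)/(\lambda t-\xi-m_{fc}(\xi))^2$.
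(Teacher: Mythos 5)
Your proof is correct, and it takes a genuinely different route from the paper's. You perturb the deterministic self-consistent equation \eqref{eqn48}, writing $1-\frac{1}{N}\sum_i g_i^2=(1+m_{fc}')^{-1}-E$ with $E=\frac{1}{N}\sum_i g_i^2-\E[g_1^2]$, and you control $E$ by combining the third bullet of Definition \ref{definition:D_epsilon_andD'_epsilon} (which under $\Omega_V(\varsigma)$ bounds $F=\frac{1}{N}\sum_i g_i-\E[g_1]$ on $\mathcal D_\varsigma$) with a Cauchy estimate on the disc of radius $\vert\Im\xi\vert/2$ and the identity $F'=(1+m_{fc}')E$. The paper instead perturbs the $V$-dependent identity \eqref{eqn21}, $(1+\hat m_{fc}')(1-\frac{1}{N}\sum\hat g_i^2)=1$, replaces $\hat g_i$ by $g_i$ using Lemma \ref{lemma:hatm_fc_close_tom_fc} ($\vert\hat m_{fc}-m_{fc}\vert\le N^{2\varsigma-1/2}$ on $\mathcal D_\varsigma'$), and then divides by the crude bound $\vert 1+\hat m_{fc}'\vert\le 2\vert\Im\xi\vert^{-2}$ — which is precisely why the stated lower bound degenerates like $\vert\Im\xi\vert^2$. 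Your route yields a strictly stronger conclusion, namely a lower bound by a constant uniform over $\Gamma_+$, because $m_{fc}'$ is controlled by $\text{dist}(\Gamma,[L_-,L_+])\ge d$ rather than by $\vert\Im\xi\vert$.

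Two small corrections. First, the step you flag as hardest is in fact the easiest: the lower bound $\vert(1+m_{fc}')^{-1}\vert\ge c_0$ needs only the trivial estimate $\vert 1+m_{fc}'\vert\le 1+d^{-2}$ (so $c_0\ge d^2/(1+d^2)$), and the lower bound $\vert 1+m_{fc}'\vert\ge c_0'$ needed to convert $F'$ into $E$ follows directly from \eqref{eqn48} together with Lemma \ref{lemma:lambdav_i-z-m_fc_large_on_Gamma}, since $\vert\int(\lambda t-\xi-m_{fc}(\xi))^{-2}d\mu(t)\vert\le C_d^{-2}$ on all of $\Gamma$; no further edge-decay input is required. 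Second, the closing move ``shrink $d$ so that $d^2\le 3c_0/2$'' is circular as written, because $c_0$ depends on $d$ and shrinks with it; but it is also unnecessary: with $c_0\ge d^2/(1+d^2)$ one has $c_0/2\ge d^2/3\ge\frac{1}{3}\vert\Im\xi\vert^2$ already, since $d<1/2$ gives $2(1+d^2)\le 3$. With these repairs the argument is complete.
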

\begin{proof}
	See Appendix \ref{appendix}.
\end{proof}

By \eqref{eq99}, \eqref{eqn39}, Lemma \ref{lemma:properties_of_good_event} and Lemma \ref{lemma:11111}, if $N$ is large enough and $\xi\in\Gamma_+$  then  
\begin{multline}\label{eqn58}
\vert \E_N[e^{\i t X_N}(\tr G(\xi)-\E_N\tr G(\xi))]\mathds1_{\Omega_V(\varsigma)}\vert \\
\le6N^{1+3\varpi}\Big(\E_N[\vert G_{ii}-g_i\vert \vert m_{fc}-\frac{1}{N}\tr G\vert \mathds1_{\Omega_V(\varsigma)\backslash B_N}]+\E_N[\max_{i\ne j}\vert G_{ij}(\xi)\vert \mathds1_{\Omega_V(\varsigma)\backslash B_N}]\Big)+p_N(\xi)\\
\le 30N^{1+5\varpi}\E_N[\mathds1_{\Omega_V(\varsigma)\backslash B_N}]+p_N(\xi)
\end{multline}
where $p_N(\xi)=\frac{8N^{2\varsigma+\varsigma'}}{ \vert \Im \xi\vert ^6}+\frac{16N^{4\varsigma}}{ \vert \Im \xi\vert ^5}+\frac{2r_3}{\vert \Im\xi\vert ^7}+\frac{2r_3N^{2\varpi}}{\vert \Im\xi\vert ^4}+\frac{2r_3N^{\varsigma'}}{\vert \Im\xi\vert ^8}$.

According to \eqref{eqn58}, \eqref{eqn40} and \eqref{eqn43}, there exist constants $r_4>0$ and $N_0>0$ such that if $N>N_0$ then
\begin{multline*}
\Big\vert \frac{d}{dt}\E[\exp(\i t X_N)]\Big\vert \le\frac{1}{\sqrt N}\int_{\Gamma_+}\vert f(\xi)\vert \Big(30N^{1+5\varpi}{\mathbb P}(\Omega_V(\varsigma)\backslash B_N)+p_N(\xi)\Big)d\xi\\
\le r_4(N^{5\varpi+2\varsigma+\varsigma'-\frac{1}{2}}+N^{4\varpi+4\varsigma-\frac{1}{2}}+N^{7\varpi+\varsigma'-\frac{1}{2}})
\end{multline*}
and  therefore
\begin{align}\label{eqn61}
\Big\vert \E[\exp(\i t X_N)]-1\Big\vert \le t\cdot r_4(N^{5\varpi+2\varsigma+\varsigma'-\frac{1}{2}}+N^{4\varpi+4\varsigma-\frac{1}{2}}+N^{7\varpi+\varsigma'-\frac{1}{2}}).
\end{align}
By the conditions on $\varpi$, $\varsigma$ and $\varsigma'$ in Definition \ref{definition:constants}, the exponent for each term on the right hand side of \eqref{eqn61} must be negative. So  \eqref{eqn41} is true and we complete the proof of Lemma \ref{lemma:convergence_of_P_1}.
\end{proof}

\subsection{Proof of Lemma \ref{lemma:convergence_of_P_2}}\label{sec:proof_of_convergence_of_P_2}
\begin{proof}[Proof of Lemma \ref{lemma:convergence_of_P_2}] 

According to Proposition \ref{proposition:extreme_eigenvalue}, it suffices to prove that
\begin{align}\label{eqn14}
\frac{1}{\sqrt N}\int_{\Gamma_+}f(\xi)\Big[N \hat m_{fc}(\xi)-\E_N\tr G(\xi)\Big]\cdot\mathds1_{\Omega_V(\varsigma)}d\xi\to0\quad\text{in distribution}.
\end{align} 
Let
$$Q_i=-\lambda v_i-W_{ii}+\sum_{p,q}^{(i)}W_{ip}G_{pq}^{(i)}W_{qi}.$$
\begin{lemma}\label{lemma:computation}
For any $\xi\in{\mathbb C }\backslash{\mathbb R }$, $\E_N[\tr G(\xi)-N\hat m_{fc}(\xi)]$ equals:
\begin{multline}\label{eqn23}
(1+\hat m_{fc}'(\xi))\Big(-\frac{1}{N}\sum_{i=1}^N\hat g_i^2(\xi)\E_N[G_{ii}(\xi)+\frac{1}{G_{ii}(\xi)}\sum_p^{(i)}(G_{ip}(\xi))^2]+\E_N[\sum_{i=1}^N\frac{(G_{ii}(\xi)-\hat g_i(\xi))^3}{(G_{ii}(\xi))^2}]\Big)
\\+(1+\hat m_{fc}'(\xi))\sum_{i=1}^N\hat g_i^3(\xi)\Big(\frac{2}{N}+\frac{2}{N^2}\E_N[(\tr G^{(i)}(\xi))']+\sum_p^{(i)}\E_N[(G_{pp}^{(i)}(\xi))^2](\E[W_{ip}^4]-\frac{3}{N^2})+\E_N[(\hat m_{fc}(\xi)-\frac{1}{N}\tr G^{(i)}(\xi))^2]\Big)
\end{multline}
\end{lemma}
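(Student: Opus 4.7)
The approach is to expand $G_{ii} - \hat g_i$ to third order in the small quantity $\mathcal{E}_i := \hat g_i^{-1} - G_{ii}^{-1}$, take the conditional expectation $\E_N$ term by term, and solve the resulting linear equation for $\E_N[\tr G - N\hat m_{fc}]$ using the deterministic identity \eqref{eqn21}, which is what introduces the prefactor $1+\hat m_{fc}'(\xi)$.

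First I would apply the Schur complement formula \eqref{eqn15} to $\mathtt M = W + \lambda V$ to get $G_{ii}^{-1} = \lambda v_i + W_{ii} - \xi - S_i$ with $S_i := \sum_{k,l}^{(i)} W_{ik} G_{kl}^{(i)} W_{li}$, so that $\mathcal E_i = S_i - W_{ii} - \hat m_{fc}$. The identity $G_{ii}(1 - \hat g_i\mathcal E_i) = \hat g_i$ rearranges as $G_{ii} - \hat g_i = G_{ii}\hat g_i\mathcal E_i$; iterating this substitution twice yields
$$G_{ii} - \hat g_i = \hat g_i^2\mathcal E_i + \hat g_i^3\mathcal E_i^2 + G_{ii}\hat g_i^3\mathcal E_i^3 = \hat g_i^2\mathcal E_i + \hat g_i^3\mathcal E_i^2 + \frac{(G_{ii}-\hat g_i)^3}{G_{ii}^2},$$
where the last identity uses $G_{ii}\hat g_i\mathcal E_i = G_{ii}-\hat g_i$ three times. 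Summing over $i$ and taking $\E_N$ (the $\hat g_i,\hat m_{fc}$ are $\sigma(V)$-measurable) reduces the proof to evaluating $\E_N[\mathcal E_i]$ and $\E_N[\mathcal E_i^2]$.

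For these moments, $W_{ii}$ has mean $0$ and is independent of both $W^{(i)}$ and $\{W_{ij}\}_{j\ne i}$, while $\E[W_{ik}W_{il}]=\delta_{kl}/N$ for $k,l\ne i$. Conditioning first on $\sigma(V,W^{(i)})$ (and then applying the tower property) gives $\E_N[\mathcal E_i] = \E_N[\tfrac{1}{N}\tr G^{(i)}]-\hat m_{fc}$ and $\E_N[\mathcal E_i^2] = \tfrac{2}{N} + \E_N[(\hat m_{fc}-\tfrac{1}{N}\tr G^{(i)})^2] + \E_N[\mathrm{Var}^i(S_i)]$, where $\mathrm{Var}^i$ denotes the conditional variance given $\sigma(V,W^{(i)})$ and the cross term in $W_{ii}$ vanishes by independence. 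To compute $\mathrm{Var}^i(S_i) = \E[S_i^2\mid\sigma(V),W^{(i)}] - (\tfrac{1}{N}\tr G^{(i)})^2$, I would expand
$$\E[S_i^2\mid \sigma(V),W^{(i)}]=\sum_{k,l,p,q}^{(i)} G^{(i)}_{kl}G^{(i)}_{pq}\,\E[W_{ik}W_{il}W_{ip}W_{iq}]$$
and enumerate the contributing configurations of the independent family $\{W_{ij}\}_{j\neq i}$: the three non-degenerate pairings $\{k{=}l,p{=}q\}$, $\{k{=}p,l{=}q\}$, $\{k{=}q,l{=}p\}$ each contribute $1/N^2$, while the fully coincident case $k{=}l{=}p{=}q$ contributes the fourth moment $\E[W_{ik}^4]$. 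Invoking the complex symmetry $G^{(i)}_{kl}=G^{(i)}_{lk}$ together with $\sum_{k,l}^{(i)}(G^{(i)}_{kl})^2 = \tr((G^{(i)})^2) = (\tr G^{(i)})'$, one obtains $\mathrm{Var}^i(S_i) = \tfrac{2}{N^2}(\tr G^{(i)})' + \sum_p^{(i)}(G^{(i)}_{pp})^2(\E[W_{ip}^4]-\tfrac{3}{N^2})$, reproducing the three parenthesized terms of the $\hat g_i^3$ sum in \eqref{eqn23}.

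Finally, to reach the exact form of \eqref{eqn23}, I would split $\E_N[\tfrac{1}{N}\tr G^{(i)}]-\hat m_{fc} = \E_N[\tfrac{1}{N}\tr G - \hat m_{fc}] - \tfrac{1}{N}\E_N[\tr G - \tr G^{(i)}]$ and use the resolvent identity \eqref{eqn3} (with $T=\emptyset$, $k=i$) to write $\tr G - \tr G^{(i)} = G_{ii} + \tfrac{1}{G_{ii}}\sum_p^{(i)} G_{ip}^2$. Collecting the term proportional to $\E_N[\tr G - N\hat m_{fc}]$ on the left side, dividing through by $(1-\tfrac{1}{N}\sum_i\hat g_i^2)$, and invoking the deterministic self-consistent identity $(1+\hat m_{fc}')(1-\tfrac{1}{N}\sum_i\hat g_i^2)=1$ from \eqref{eqn21}, yields exactly \eqref{eqn23}. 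The main technical obstacle is the combinatorial bookkeeping in the expansion of $\E[S_i^2\mid\sigma(V),W^{(i)}]$: one must carefully track the over-counting of diagonal contributions across the three Gaussian-type pairings so that their excess combines with the coincident term to produce the clean non-Gaussian correction $\E[W_{ip}^4]-\tfrac{3}{N^2}$, which is precisely the coefficient that would vanish in the GOE setting.
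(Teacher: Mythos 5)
Your proposal is correct and follows essentially the same route as the paper's proof: the third-order expansion of $G_{ii}$ around $\hat g_i$ in powers of $Q_i-\hat m_{fc}+\lambda v_i$ (your $\mathcal E_i$), the computation of the first and second conditional moments of this quantity via the pairing analysis of $\E[S_i^2\mid\sigma(V),W^{(i)}]$, the resolvent identity \eqref{eqn3} to convert $\tr G^{(i)}$ into $\tr G$, and the self-consistent identity \eqref{eqn21} to produce the prefactor $1+\hat m_{fc}'$. The only cosmetic difference is that you obtain the expansion by iterating $G_{ii}-\hat g_i=G_{ii}\hat g_i\mathcal E_i$ while the paper uses the explicit algebraic identity $\frac{-1}{a}=\frac{1}{b}+\frac{a+b}{b^2}+\frac{(a+b)^2}{b^3}-\frac{1}{a}\frac{(a+b)^3}{b^3}$; these are equivalent.
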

\begin{proof}See Appendix \ref{appendix}.
\end{proof}

There exists a constant $u_1\in(0,1)$ such that
if $\xi\in \Gamma\backslash{\mathbb R }$, then 
$$\vert 1/\hat g_i\vert =\vert \lambda v_i-\xi-\hat m_{fc}\vert \le \lambda+\vert \xi\vert +\frac{1}{\vert \Im\xi\vert }\le\frac{1}{u_1\vert \Im\xi\vert }$$
and thus by \eqref{eq102}
\begin{align}\label{eqn59}
u_1\cdot\vert \Im\xi\vert \le\vert \hat g_i(\xi)\vert \le \frac{1}{\vert \Im\xi\vert }.
\end{align}
By the definition of Stieltjes transform, $\vert \hat m_{fc}'\vert \le\frac{1}{\vert \Im\xi\vert ^2}$. This together with \eqref{eqn1}, Lemma \ref{lemma:well_known_facts_for_resolvent} and \eqref{eqn59} yield:   
\begin{align}\label{eqn64}
	\vert (1+\hat m_{fc}')\frac{1}{N}\sum_{i=1}^N\hat g_i^2(\xi)\E_N[G_{ii}(\xi)]\vert \le2\vert \Im\xi\vert ^{-5},\quad\forall \xi\in \Gamma\backslash{\mathbb R }
\end{align}
\begin{multline}\label{eqn67}
	\vert (1+\hat m_{fc}')\sum_{i=1}^N\hat g_i^3(\xi)\Big(\frac{2}{N}+\frac{2}{N^2}\E_N[(\tr G^{(i)})']+\sum_p^{(i)}\E_N[(G_{pp}^{(i)})^2](\E[W_{ip}^4]-\frac{3}{N^2})\Big)\vert \\
	\le 2\vert \Im\xi\vert ^{-5}\Big(2+2\vert \Im\xi\vert ^{-2}+\vert \Im\xi\vert ^{-2}(\max_{a,b}\E[(\sqrt NW_{ab})^4]+3)\Big)\le u_2\vert \Im\xi\vert ^{-7},\quad\forall \xi\in \Gamma\backslash{\mathbb R }
\end{multline}
where $u_2>0$ is a constant. According to Lemma \ref{lemma:properties_of_good_event}, there are constants $u_3>0$ and $N_0>0$ such that if   $N>N_0$ and $\xi\in\Gamma_+$, then 
\begin{multline}\label{eqn65}
\vert (1+\hat m_{fc}')\frac{1}{N}\sum_{i=1}^N\hat g_i^2(\xi)\E_N[\frac{1}{G_{ii}(\xi)}\sum_p^{(i)}(G_{ip}(\xi))^2\mathds1_{\Omega_V(\varsigma)\cap B_N}]\vert 
\le\frac{u_3}{\vert \Im\xi\vert ^5}\E_N[\vert \sum_p^{(i)}(G_{ip}(\xi))^2\vert \mathds1_{\Omega_V(\varsigma)\cap B_N}]\\
=\frac{u_3}{\vert \Im\xi\vert ^5}\E_N[\vert (G^2(\xi))_{ii}-(G_{ii}(\xi))^2\vert \mathds1_{\Omega_V(\varsigma)\cap B_N}]\le u_3\vert \Im\xi\vert ^{-7}\quad\text{(by Lemma \ref{lemma:well_known_facts_for_resolvent})}
\end{multline}
\begin{align}\label{eqn66}
\vert (1+\hat m_{fc}')\E_N[\sum_{i=1}^N\frac{(G_{ii}(\xi)-\hat g_i(\xi))^3}{(G_{ii}(\xi))^2}\mathds1_{\Omega_V(\varsigma)\cap B_N}]\vert \le u_3(\frac{N^{3\varsigma'}}{\sqrt N\vert \Im \xi\vert ^{13}}+\frac{N^{6\varsigma}}{\sqrt N\vert \Im\xi\vert ^{10}})
\end{align}
\begin{align}\label{eqn68}
	\vert (1+\hat m_{fc}')\sum_{i=1}^N\hat g_i^3(\xi)\E_N[(\hat m_{fc}-\frac{1}{N}\tr G^{(i)})^2\mathds1_{\Omega_V(\varsigma)\cap B_N}]\vert \le u_3N^{4\varsigma}\vert \Im\xi\vert ^{-5}.
\end{align}
 By Lemma \ref{lemma:computation} and the conditions in Definition \ref{definition:constants}, the terms on the left hand side of \eqref{eqn64}, \eqref{eqn67}, \eqref{eqn65}, \eqref{eqn66}, \eqref{eqn68} all make $o(1)$ contribution to the quantity in \eqref{eqn14}. So to prove \eqref{eqn14} it suffices to show that
\begin{align}\label{eqn24}
\frac{1}{N^{3/2}}\int_{\Gamma_+}f(\xi)(1+\hat m_{fc}')\sum_{i=1}^N\hat g_i^2(\xi)\E_N[\frac{1}{G_{ii}(\xi)}\sum_p^{(i)}(G_{ip}(\xi))^2\mathds1_{\Omega_V(\varsigma)\backslash B_N}]d\xi\to0\quad\text{in distribution}
\end{align}
\begin{align}\label{eqn25} 
\frac{1}{\sqrt N}\int_{\Gamma_+}f(\xi)(1+\hat m_{fc}')\E_N[\sum_{i=1}^N\frac{(G_{ii}(\xi)-\hat g_i(\xi))^3}{(G_{ii}(\xi))^2}\mathds1_{\Omega_V(\varsigma)\backslash B_N}]d\xi\to0\quad\text{in distribution}
\end{align}
\begin{align}\label{eqn26}
\frac{1}{\sqrt N}\int_{\Gamma_+}f(\xi)(1+\hat m_{fc}')\sum_{i=1}^N\hat g_i^3(\xi)\E_N[(\hat m_{fc}-\frac{1}{N}\tr G^{(i)})^2\mathds1_{\Omega_V(\varsigma)\backslash B_N}]d\xi\to0\quad\text{in distribution}
\end{align}
\begin{itemize}
	\item  Notice that $\vert \hat m_{fc}\vert $ and $\vert \frac{1}{N}\tr G^{(i)}\vert $ are bounded by $\frac{1}{\vert \Im \xi\vert }$. By \eqref{eqn43},
if  $N$ is large enough and $\xi\in\Gamma_+$ then
\begin{multline*}
{\mathbb P}\Big(\Big\vert \E_N[(\hat m_{fc}(\xi)-\frac{1}{N}\tr G^{(i)}(\xi))^2\mathds1_{\Omega_V(\varsigma)\backslash B_N}]\Big\vert >N^{-5}\Big)\le N^5\E\Big[\Big\vert \E_N[(\hat m_{fc}(\xi)-\frac{1}{N}\tr G^{(i)}(\xi))^2\mathds1_{\Omega_V(\varsigma)\backslash B_N}]\Big\vert \Big]\\
\le N^5\E\Big[\Big\vert  (\hat m_{fc}(\xi)-\frac{1}{N}\tr G^{(i)}(\xi))^2\mathds1_{\Omega_V(\varsigma)\backslash B_N}\Big\vert \Big]\le4N^{5+2\varpi}{\mathbb P}(\Omega_V(\varsigma)\backslash B_N)<N^{-100}
\end{multline*}
which together with a classic ``lattice" argument yields
$${\mathbb P}\Big(\Big\vert \E_N[(\hat m_{fc}(\xi)-\frac{1}{N}\tr G^{(i)}(\xi))^2\mathds1_{\Omega_V(\varsigma)\backslash B_N}]\Big\vert \le2N^{-5},\quad\forall \xi\in\Gamma_+\Big)\ge1-N^{-20}.$$
This and the facts that $\vert \hat m_{fc}'(\xi)\vert \le N^{2\varpi}$ and $\vert \hat g_i(\xi)\vert \le N^\varpi$ on $\Gamma_+$ complete the proof of \eqref{eqn26}.

\item According to  \eqref{eqn43} and  \eqref{eqn15}, if $N$ is large enough and $\xi\in\Gamma_+$, then
\begin{multline*}
	{\mathbb P}\Big(\Big\vert \E_N[\frac{1}{G_{ii}(\xi)}\sum_p^{(i)}(G_{ip}(\xi))^2\mathds1_{\Omega_V(\varsigma)\backslash B_N}]\Big\vert >N^{-5}\Big)\le N^5\E\Big[\Big\vert \E_N[\frac{1}{G_{ii}(\xi)}\sum_p^{(i)}(G_{ip}(\xi))^2\mathds1_{\Omega_V(\varsigma)\backslash B_N}]\Big\vert \Big]\\
	\le N^5\E\Big[\Big\vert  \frac{1}{G_{ii}(\xi)}\sum_p^{(i)}(G_{ip}(\xi))^2\Big\vert \mathds1_{\Omega_V(\varsigma)\backslash B_N}\Big]\le N^{6+2\varpi}\E\Big[\Big\vert  \frac{1}{G_{ii}(\xi)}\Big\vert \mathds1_{\Omega_V(\varsigma)\backslash B_N}\Big]\\
	\le N^{6+2\varpi}\sqrt{\E\Big[ \frac{1}{\vert G_{ii}(\xi)\vert ^2}\Big]}\sqrt{{\mathbb P}(\Omega_V(\varsigma)\backslash B_N)}\\
	=N^{6+2\varpi}\sqrt{\E\Big[ \vert \lambda v_i+W_{ii}-\xi+\sum_{k,l}^{(i)}W_{ik}G_{kl}^{(i)}(
		\xi)W_{li}\vert ^2
		\Big]}\sqrt{{\mathbb P}(\Omega_V(\varsigma)\backslash B_N)}
	\le N^{8+3\varpi}\sqrt{{\mathbb P}(\Omega_V(\varsigma)\backslash B_N)}\le N^{-100}
\end{multline*}
which together with \eqref{eqn15} and a classic  ``lattice" argument yields
$${\mathbb P}\Big(\Big\vert \E_N[\frac{1}{G_{ii}(\xi)}\sum_p^{(i)}(G_{ip}(\xi))^2\mathds1_{\Omega_V(\varsigma)\backslash B_N}]\Big\vert \le2N^{-5},\quad\forall \xi\in\Gamma_+\Big)\ge1-N^{-20}.$$
This and the facts that $\vert \hat m_{fc}'(\xi)\vert \le N^{2\varpi}$ and $\vert \hat g_i(\xi)\vert \le N^\varpi$ on $\Gamma_+$ complete the proof of \eqref{eqn24}.

\item Similarly, according to \eqref{eqn43} and  \eqref{eqn15}, if $N$ is large enough and  $\xi\in\Gamma_+$, then
\begin{multline*}
	{\mathbb P}\Big(\Big\vert \E_N[\sum_{i=1}^N\frac{(G_{ii}(\xi)-\hat g_i(\xi))^3}{(G_{ii}(\xi))^2}\mathds1_{\Omega_V(\varsigma)\backslash B_N}]\Big\vert >N^{-5}\Big)\le N^5\E\Big[\Big\vert \E_N[\sum_{i=1}^N\frac{(G_{ii}(\xi)-\hat g_i(\xi))^3}{(G_{ii}(\xi))^2}\mathds1_{\Omega_V(\varsigma)\backslash B_N}]\Big\vert \Big]\\
	\le 8N^{5+3\varpi}\sum_i\sqrt{\E[\frac{1}{\vert G_{ii}(\xi)\vert ^4}]{\mathbb P}(\Omega_V(\varsigma)\backslash B_N)}<N^{-100}
\end{multline*}
which together with \eqref{eqn15} and a classic ``lattice" argument yields
$${\mathbb P}\Big(\Big\vert \E_N[\sum_{i=1}^N\frac{(G_{ii}(\xi)-\hat g_i(\xi))^3}{(G_{ii}(\xi))^2}\mathds1_{\Omega_V(\varsigma)\backslash B_N}]\Big\vert \le2N^{-5},\quad\forall \xi\in\Gamma_+\Big)\ge1-N^{-20}.$$
This and the facts that $\vert \hat m_{fc}'(\xi)\vert \le N^{2\varpi}$ and $\vert \hat g_i(\xi)\vert \le N^\varpi$ on $\Gamma_+$ complete the proof of \eqref{eqn25}.
\end{itemize}  
\end{proof}

\section{Rigidity of eigenvalues: proof of Theorem \ref{thm:rigidity_concise_version}} \label{sec:rigidity}

In this section we prove the rigidity of eigenvalues in the bulk of the spectrum. For an eigenvalue $\lambda_i$ in the bulk, we show that it is very close to the deterministic number $\gamma_i$ with high probability. Roughly speaking, the distance between $\lambda_i$ and $\gamma_i$ is no more than $N^{-\frac{1}{4}+\frac{1}{1+b}}$  with high probability.

By the definition of $\gamma_i$ and $\hat\gamma_i$ (see Definition \ref{definition:classic_position}) we have that
\begin{align}\label{eq18}
	\int_{\hat\gamma_{i+1}}^{\hat\gamma_i}\frac{1}{L_+-t}d\mu_{fc}(t)\le \frac{1}{N}\frac{1}{L_+-\gamma_i}\le \int_{\hat\gamma_{i-1}}^{\hat\gamma_{i-2}}\frac{1}{L_+-t}d\mu_{fc}(t),\quad\forall i\in[2,N-1].
\end{align}

\begin{lemma}\label{lemma:distance_between_gamma_i_and_edge}
	There exists a constant $C_*\ge1$ such that 
	\begin{itemize}
		\item if $b>1$ and $ \lambda>\lambda_+$ then $C_*^{-1}\Big(\frac{i}{N}\Big)^{\frac{1}{1+b}}\le\vert L_+-\gamma_i\vert \le C_*\Big(\frac{i}{N}\Big)^{\frac{1}{1+b}}$;
		\item if $a>1$ and $ \lambda>\lambda_-$  then $C_*^{-1}\Big(\frac{N-i}{N}\Big)^{\frac{1}{1+a}}\le\vert L_--\gamma_i\vert \le C_*\Big(\frac{N-i}{N}\Big)^{\frac{1}{1+a}}$.
	\end{itemize}
\end{lemma}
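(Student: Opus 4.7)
The plan is to express $(i-\tfrac12)/N$ directly as an integral of $\rho_{fc}$ from $\gamma_i$ to $L_+$, change variables to measure distance from the edge, and then invoke the two-sided density bound from Lemma \ref{lemma:properties_of_mu_fc}. I will carry out the upper-edge claim; the lower-edge claim is entirely symmetric. From Definition \ref{definition:classic_position}, the substitution $u = L_+ - t$ gives
$$\frac{i-\tfrac{1}{2}}{N} \;=\; \int_{\gamma_i}^{L_+}\rho_{fc}(t)\,dt \;=\; \int_0^{L_+-\gamma_i}\rho_{fc}(L_+-u)\,du.$$

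First I would treat the regime $\gamma_i\ge 0$, so that $[0,L_+-\gamma_i]\subseteq[0,L_+]$ and the bound \eqref{eq12} applies uniformly throughout the integration range. Direct integration of $u^b/C_0\le\rho_{fc}(L_+-u)\le C_0 u^b$ yields
$$\frac{(L_+-\gamma_i)^{b+1}}{C_0(b+1)} \;\le\; \frac{i-\tfrac{1}{2}}{N} \;\le\; \frac{C_0(L_+-\gamma_i)^{b+1}}{b+1},$$
and combined with $i/2\le i-1/2\le i$ (valid for $i\ge 1$) this gives $|L_+-\gamma_i|\asymp (i/N)^{1/(b+1)}$ with constants depending only on $b$ and $C_0$.

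Next I would dispose of the complementary regime $\gamma_i<0$, which can occur only when $L_-<0$. In this regime both sides of the claimed estimate are pinched between positive constants: on the one hand $L_+\le L_+-\gamma_i\le L_+-L_-$, and on the other $(i-1/2)/N\ge\mu_{fc}([0,L_+])=:c_0>0$, so $i/N\in[c_0,1+1/(2N)]$ and thus $(i/N)^{1/(b+1)}$ sits in a fixed compact subinterval of $(0,\infty)$. A uniform two-sided bound therefore holds trivially with a sufficiently large constant. Taking the maximum of the constants from the two regimes produces a single $C_*$ valid for every $i\in[1,N]$. The lower-edge claim is handled by the analogous substitution $u=t-L_-$, using $\mu_{fc}([L_-,\gamma_i])=(N-i+1/2)/N$ and \eqref{eq13} in place of \eqref{eq12}, with the corresponding case split at $\gamma_i=0$.

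The main obstacle is no more than the bookkeeping required to stitch the near-edge regime (where the edge density asymptotics determine the behavior) and the far-from-edge regime (where everything in sight is bounded) into one clean constant. No analytic input beyond the edge estimates already provided by Lemma \ref{lemma:properties_of_mu_fc} is needed.
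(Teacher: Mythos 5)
Your proposal is correct and follows essentially the same route as the paper's proof: both write $(i-\tfrac12)/N=\int_{\gamma_i}^{L_+}\rho_{fc}$, integrate the two-sided bound \eqref{eq12} in the near-edge regime, and observe that in the complementary regime both $i/N$ and $\vert L_+-\gamma_i\vert$ are of order one (the paper splits at $i=0.99N$ rather than at $\gamma_i=0$, a purely cosmetic difference). No gaps.
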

\begin{proof}
	Suppose $b>1$, $ \lambda>\lambda_+$. According to Lemma \ref{lemma:properties_of_mu_fc}, there exists $C_*'>1$ such that
	$$\frac{(L_+-x)^b}{C_*'}\le \rho_{fc}(x)\le C_*'(L_+-x)^b$$
	for $x\in[\gamma_{0.99N},L_+]$. Therefore if $i\le 0.99N$, then
	$$\frac{(L_+-\gamma_i)^{b+1}}{C_*'(b+1)}=\int_{\gamma_i}^{L_+}\frac{(L_+-x)^b}{C_*'}dx\le\int_{\gamma_i}^{L_+}\rho_{fc}(x)dx=\frac{i-\frac{1}{2}}{N}\le \int_{\gamma_i}^{L_+}C_*'(L_+-x)^bdx=\frac{C_*'(L_+-\gamma_i)^{b+1}}{(b+1)}.$$
	If $i>0.99N$, then both $i/N$ and $\vert L_+-\gamma_i\vert $ are of order 1, so the inequality also holds.
	This proves the first conclusion. The second conclusion can be proved in the same way.
\end{proof}

\begin{definition}\label{definition:A_N}
For $\epsilon>0$, set
$$A_N(\epsilon):=\Omega_V(\epsilon)\cap\tilde\Omega(\epsilon)\cap\Omega_*(\epsilon)$$
where $\Omega_V(\epsilon)$, $\tilde\Omega(\epsilon)$ and $\Omega_*(\epsilon)$ are defined in Definition \ref{definition:D_epsilon_andD'_epsilon} and Definition \ref{definition:Omega_V}.
\end{definition}

\begin{lemma}\label{lemma:basic}
	Suppose $b>1$ and $\epsilon\in(\frac{1}{1+b},\frac{11b-9}{2b+2})$. Suppose $A_N(\epsilon)$ holds and $z_0$ is in
	\begin{align}\label{eq2}
		\Big\{x+\i y\Big\vert \vert x\vert \le3+\lambda,y\in[\frac{1}{2}N^{-\frac{1}{1+b}-\epsilon},N^{-\frac{1}{1+b}+\epsilon}]\Big\}.
	\end{align}
If $N$ is large enough, then we have $z_0\in\mathcal D_\epsilon'$ and 
	\begin{align}\label{eq1}
		\vert m_N(z_0)-m_{fc}(z_0)\vert \le 2N^{2\epsilon-\frac{1}{2}}.
	\end{align}
\end{lemma}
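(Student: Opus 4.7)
The plan is to reduce everything to the triangle inequality
$|m_N(z_0)-m_{fc}(z_0)|\le |m_N(z_0)-\hat m_{fc}(z_0)|+|\hat m_{fc}(z_0)-m_{fc}(z_0)|$,
each of whose two summands is bounded by $N^{2\epsilon-1/2}$ once we know $z_0\in\mathcal D_\epsilon'$: the first bound comes from the event $\tilde\Omega(\epsilon)\subset A_N(\epsilon)$, and the second from Lemma \ref{lemma:hatm_fc_close_tom_fc} applied on $\Omega_V(\epsilon)\subset A_N(\epsilon)$. Thus the entire proof reduces to checking the geometric containment $z_0\in\mathcal D_\epsilon'$.

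First I would verify $z_0\in\mathcal D_\epsilon$. The real part condition $|\Re z_0|\le 3+\lambda$ is assumed. For the imaginary part, the upper bound $\Im z_0\le N^{-1/(1+b)+\epsilon}$ is direct, and the lower bound requires $\tfrac12 N^{-1/(1+b)-\epsilon}\ge N^{-1/2-\epsilon}$, i.e.\ $N^{1/2-1/(1+b)}\ge 2$, which holds for large $N$ since $b>1$.

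Next I would verify the separation condition defining $\mathcal D_\epsilon'$. This is the clean part: for any $i\in[20,N]$,
\[
|\lambda\tilde v_i-z_0-m_{fc}(z_0)|\ge |\Im(\lambda\tilde v_i-z_0-m_{fc}(z_0))|=\Im z_0+\Im m_{fc}(z_0)>\Im z_0\ge\tfrac12 N^{-1/(1+b)-\epsilon},
\]
where we used that $\tilde v_i\in\mathbb R$, that $m_{fc}$ is a Stieltjes transform so $\Im m_{fc}(z_0)>0$ whenever $\Im z_0>0$, and the hypothesis $\Im z_0\ge \tfrac12 N^{-1/(1+b)-\epsilon}>0$. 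This gives $z_0\in\mathcal D_\epsilon'$ without any appeal to the spacing conditions on $\tilde v_j$ built into $\Omega_V(\epsilon)$ — those are only needed elsewhere for the indices $i\in\{1,\dots,19\}$, which are excluded from the defining condition of $\mathcal D_\epsilon'$.

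With $z_0\in\mathcal D_\epsilon'$ established, the triangle inequality and the two a priori bounds yield
$|m_N(z_0)-m_{fc}(z_0)|\le 2N^{2\epsilon-1/2}$, completing the proof. There is essentially no obstacle here — the one place to be careful is the strict versus non-strict inequality in the definition of $\mathcal D_\epsilon'$, which is why I explicitly record the strict contribution $\Im m_{fc}(z_0)>0$ rather than estimating $|\lambda\tilde v_i-z_0-m_{fc}(z_0)|$ only by $\Im z_0$. The event $\Omega_*(\epsilon)$ in $A_N(\epsilon)$ is not invoked in this lemma but will matter in subsequent steps where $\Im m_N$ needs to be controlled.
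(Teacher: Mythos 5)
Your proposal is correct and follows essentially the same route as the paper's proof: both check $z_0\in\mathcal D_\epsilon$ using $b>1$, establish the separation condition via $\vert\lambda\tilde v_i-z_0-m_{fc}(z_0)\vert>\vert\Im z_0\vert\ge\tfrac12 N^{-1/(1+b)-\epsilon}$ from the sign agreement of $\Im z_0$ and $\Im m_{fc}(z_0)$, and then conclude by combining the definition of $\tilde\Omega(\epsilon)$ with Lemma \ref{lemma:hatm_fc_close_tom_fc} via the triangle inequality. Your version merely spells out the elementary computations the paper leaves implicit.
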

\begin{remark}
	In the condition $\epsilon\in(\frac{1}{1+b},\frac{11b-9}{2b+2})$, the interval is not empty since $b>1$.
\end{remark}
\begin{proof}
	By the condition $b>1$ it's easy to see that \eqref{eq2} is contained in $\mathcal D_\epsilon$. We notice that $\Im z_0$ and $\Im m_{fc}(z_0)$ have the same sign, so $\vert \lambda v_i-z_0-m_{fc}(z_0)\vert >\vert \Im z_0\vert \ge \frac{1}{2}N^{-\frac{1}{1+b}-\epsilon}$. Thus $z_0\in\mathcal D_\epsilon'$. Finally \eqref{eq1} is from Lemma \ref{lemma:hatm_fc_close_tom_fc} and the definition of $\tilde\Omega(\epsilon)$.
\end{proof}

Suppose $\epsilon>0$ and
\begin{itemize}
	\item $I$ is an interval contained in $(-2.99-\lambda,2.99+\lambda)$ and it may depend on $N$,
	\item  $\eta_0=N^{-\frac{1}{4}+\epsilon}$ and $\eta_1=N^{-\frac{1}{1+b}-\epsilon}$;
	\item $\chi:{\mathbb R }\to[0,1]$ is a $C^\infty$ function supported on $[-2,2]$ such that $\chi(x)=1$ when $x\in[-1,1]$;
	\item $f:{\mathbb R }\to[0,1]$ is a smooth $N$-depending function such that
	\begin{align} \label{eq3}
		f(x)=
		\begin{cases}
			1\quad&\text{if }x\in I\\
			0\quad&\text{if }\text{dist}(x,I)\ge \eta_0
		\end{cases}
	\end{align}
and $\|f'\|_\infty\le C_f\cdot\eta_0^{-1}$, $\|f''\|_\infty\le C_f\cdot\eta_0^{-2}$ for some  absolute constant $C_f>0$.
\end{itemize}
\begin{remark}\label{remark:properties_of_f}
It is easy to see that	$f$ satisfies the following properties.
	\begin{enumerate}
		\item If $\epsilon<\frac{1}{4}$ then $\text{supp}f\subset(-2.995-\lambda,2.995+\lambda)$ when $N>N_0=N_0(\epsilon)$.
		\item $\vert \text{supp}f'\vert \le2\eta_0$, $\vert \text{supp}f''\vert \le2\eta_0$.
	\end{enumerate}
\end{remark}
Recall that $\mu_N$ is defined in Definition \ref{definition:Stieltjes_transforms}.
\begin{lemma}\label{lemma:difference_of_integral_for_f}
 Suppose $b>3$ and $\epsilon\in(\frac{1}{1+b},\frac{1}{4})$. 	Suppose $A_N(\epsilon)$ hold. Then
	$$\vert \int f(t)d\mu_N(t)-\int f(t)d\mu_{fc}(t)\vert \le \alpha_1(\eta_0^2+\eta_1^2\eta_0)$$
for large enough $N$.	Here $\alpha_1>0$ is a constant depending only on $C_f$. 
\end{lemma}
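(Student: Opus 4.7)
The plan is to combine the Helffer--Sj\"ostrand formula (Lemma \ref{lemma:Helffer-Sjostrand formula}) with the local-law bound $|m_N-m_{fc}|\le 2N^{2\epsilon-1/2}=2\eta_0^2$ that is available on $A_N(\epsilon)$ via Lemma \ref{lemma:basic}. Applying the formula to $f$ and taking real parts, the leading contribution to $\int f\,d(\mu_N-\mu_{fc})$ is
\begin{equation*}
-\frac{1}{\pi}\int_0^2\!\!\int y\,f''(x)\chi(y)\,\Im\bigl[m_N-m_{fc}\bigr](x+\i y)\,dx\,dy,
\end{equation*}
plus a small $\chi'$-supported remainder on $|y|\in[1,2]$. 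I would split the $y$-integration at $\eta_1$.

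For $|y|\in[\eta_1,2]$, the pointwise estimate $|m_N-m_{fc}|\le 2\eta_0^2$ applies; Lemma \ref{lemma:basic} directly handles $|y|\le N^{-1/(1+b)+\epsilon}$, and the extension up to $|y|=2$ is obtained by combining Theorem \ref{thm:local_law_for_resolvent_entries} with averaging over $i$ via the self-consistent equation for $\hat m_{fc}$. One integration by parts in $x$ (which trades $\|f''\|_{L^1}=O(1/\eta_0)$ for $\|f'\|_{L^1}=O(1)$) together with an integration by parts in $y$ using the Cauchy--Riemann identity $\partial_x\Im m=\partial_y\Re m$ then gives this piece as $O(\eta_0^2)$; the $\chi'$-supported contribution is bounded analogously.

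For $|y|<\eta_1$ I would exploit the monotonicity that $y\mapsto y\,\Im m_N(x+\i y)$ and $y\mapsto y\,\Im m_{fc}(x+\i y)$ are non-decreasing on $y>0$, which yields
\begin{equation*}
y\bigl|\Im[m_N-m_{fc}](x+\i y)\bigr|\le 2\eta_1\,\Im m_{fc}(x+\i\eta_1)+4\eta_1\eta_0^2
\end{equation*}
on $A_N(\epsilon)$. Integrating against $|f''(x)|\chi(y)$ over $x\in\mathbb{R}$ and $|y|<\eta_1$ picks up an additional factor $\eta_1$ from the $y$-integration. The second summand on the right-hand side immediately gives $O(\eta_0\eta_1^2)$ upon using $\|f''\|_{L^1}=O(1/\eta_0)$. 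The first summand is controlled by splitting the $x$-integration according to whether $x$ lies inside $\operatorname{supp}\rho_{fc}$ (where $\Im m_{fc}(\cdot+\i\eta_1)=O(1)$) or outside (where it is $O(\eta_1)$), together with $|\operatorname{supp}f''|=O(\eta_0)$ concentrated near the boundary of $I$; one further integration by parts transferring a derivative from $f''$ onto $\Im m_{fc}$ produces the compensating factor of $\eta_0$, and the piece is also $O(\eta_0\eta_1^2)$.

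The main technical obstacle is the bookkeeping in the small-$y$ region: a naive estimate using $\|f''\|_{L^1}=O(1/\eta_0)$ would produce a spurious $\eta_1^2/\eta_0$ rather than the claimed $\eta_1^2\eta_0$, so one must carefully exploit either the geometry of $\operatorname{supp}f''$ relative to the boundary of $\operatorname{supp}\rho_{fc}$ or an additional integration by parts to harvest the needed factor of $\eta_0$. A secondary difficulty is the uniform extension of the local-law bound to the full range $|y|\in[\eta_1,2]$, which requires supplementing Lemma \ref{lemma:basic} with Theorem \ref{thm:local_law_for_resolvent_entries}.
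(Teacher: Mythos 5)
Your overall architecture matches the paper's: Helffer--Sj\"ostrand, split of the $yf''\chi$ term at $|y|=\eta_1$, a derivative bound for $m_N-m_{fc}$ on $|y|\ge\eta_1$ (your Cauchy--Riemann/integration-by-parts route is equivalent to the paper's Cauchy-integral-formula bound $\vert\partial_z(m_N-m_{fc})\vert\le 4N^{2\epsilon-1/2}\max(1/y,100)$ and yields the same $O(\eta_0^2)$), and the monotonicity of $y\mapsto y\,\Im m(x+\i y)$ for $|y|<\eta_1$. The gap is in the small-$y$ region, precisely at the point you flag as the main obstacle. After the monotonicity step you are left with $2\eta_1^2\int\vert f''(x)\vert\,\Im m_{fc}(x+\i\eta_1)\,dx$, and your proposed treatment --- $\Im m_{fc}(\cdot+\i\eta_1)=O(1)$ inside $\mathrm{supp}\,\rho_{fc}$, $\vert\mathrm{supp}\,f''\vert=O(\eta_0)$, plus one integration by parts onto $\Im m_{fc}$ --- cannot produce the factor $\eta_0$. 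Integration by parts trades $\Vert f''\Vert_{L^1}=O(\eta_0^{-1})$ for $\Vert f'\Vert_{L^1}=O(1)$ and puts a derivative on $\Im m_{fc}(\cdot+\i\eta_1)$, which is not small (it is $\asymp\pi\rho_{fc}'$ wherever the density is regular), so the best this yields is $O(\eta_1^2)$, short of the claimed $O(\eta_1^2\eta_0)$ by a factor $\eta_0^{-1}=N^{1/4-\epsilon}$. (Also note you cannot integrate by parts after replacing $f''$ by $\vert f''\vert$, and integrating by parts in $x$ \emph{before} the monotonicity step is blocked because $\partial_x\Im m_N(x+\i y)$ is uncontrolled for $y<\eta_1$.) This loss is not cosmetic: the lemma feeds into Lemma \ref{lemma:mu_N_and_mu_fc_are_close} and Theorem \ref{thm:rigidity_concise_version}, which need the error to be $O(\eta_0)$, and $\eta_1^2=N^{-2/(1+b)-2\epsilon}$ need not be $O(\eta_0)$ for large $b$ and $\epsilon$ near $1/(1+b)$.

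The ingredient you are missing is the event $\Omega_*(\epsilon)$, which is part of the hypothesis $A_N(\epsilon)=\Omega_V(\epsilon)\cap\tilde\Omega(\epsilon)\cap\Omega_*(\epsilon)$ (Definition \ref{definition:A_N}) and which you never invoke. For $x\in\mathrm{supp}\,f$ the point $x+\i\eta_1$ lies in \eqref{eq2}, hence in $\mathcal D_\epsilon'$ by Lemma \ref{lemma:basic}, so on $\Omega_*(\epsilon)$ one has $\Im m_N(x+\i\eta_1)\le N^{2\epsilon-1/2}=\eta_0^2$, and then \eqref{eq1} gives $\Im m_{fc}(x+\i\eta_1)\le 3\eta_0^2$ as well. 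Feeding this into $\eta_1^2\int\vert f''\vert(\Im m_N+\Im m_{fc})(x+\i\eta_1)\,dx\le 4\eta_0^2\cdot(2C_f/\eta_0)\cdot\eta_1^2$ produces the factor $\eta_1^2\eta_0$ directly, with no integration by parts and no case analysis on $\mathrm{supp}\,\rho_{fc}$; this model-specific smallness of $\Im m_N$ at scale $\eta_1$ is exactly what replaces your deterministic $O(1)$ bound. Your secondary concern about extending the pointwise local law to $\eta_1\le\vert y\vert\le 2$ is legitimate but is handled in the paper within the $K_2$ estimate using only \eqref{eq1} and Cauchy's formula on circles of radius $\min(y/2,0.005)$, rather than via Theorem \ref{thm:local_law_for_resolvent_entries}.
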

\begin{remark}
	The condition $\epsilon\in(\frac{1}{1+b},\frac{1}{4})$ is stronger than the condition $\epsilon\in(\frac{1}{1+b},\frac{11b-9}{2b+2})$ in Lemma \ref{lemma:basic} because the condition $b>3$ ensures that $\frac{1}{4}<\frac{11b-9}{2b+2}$.
\end{remark}
\begin{proof}
	According to the Helffer-Sj\"ostrand formula (see Lemma \ref{lemma:Helffer-Sjostrand formula}),
	\begin{align*}
		&\int f(t)d\mu_N(t)-\int f(t)d\mu_{fc}(t)\\
		=&\frac{1}{2\pi}\int\Big(\int_{{\mathbb R }^2}\frac{\i yf''(x)\chi(y)+\i(f(x)+\i yf'(x))\chi'(y)}{t-x-\i y}dxdy\Big)(d\mu_N(t)-d\mu_{fc}(t))\\
		=&\frac{1}{2\pi}\int_{{\mathbb R }^2}\Big[\i yf''(x)\chi(y)+\i(f(x)+\i yf'(x))\chi'(y)\Big](m_N(x+\i y)-m_{fc}(x+\i y))dxdy\\
		=&\frac{1}{2\pi}\Re\Bigg(\int_{{\mathbb R }^2}\Big[\i yf''(x)\chi(y)+\i(f(x)+\i yf'(x))\chi'(y)\Big](m_N(x+\i y)-m_{fc}(x+\i y))dxdy\Bigg)\\
		=&K_1-K_2-K_3
	\end{align*}
	where
	$$K_1=\Re\Big(\frac{\i}{2\pi}\int_{{\mathbb R }^2}\chi'(y)(f(x)+\i yf'(x))(m_N(x+\i y)-m_{fc}(x+\i y))dxdy\Big)$$
	$$K_2=\frac{1}{2\pi}\int_{{\mathbb R }}\int_{\vert y\vert \ge\eta_1}yf''(x)\chi(y)\Im(m_N(x+\i y)-m_{fc}(x+\i y))dydx$$ 
	$$K_3=\frac{1}{2\pi}\int_{{\mathbb R }}\int_{\vert y\vert <\eta_1}yf''(x)\Im(m_N(x+\i y)-m_{fc}(x+\i y))dydx$$
	
	Recall $\epsilon>\frac{1}{1+b}$ and that  $A_N(\epsilon)$ holds. For simplicity let $\tilde m(z)=m_N(z)-m_{fc}(z)$. 
	
	\begin{enumerate}
		\item[{\bf 1.}] We first estimate $K_1$. By  \eqref{eq1}, Remark \ref{remark:properties_of_f} and the definition of $\chi$, if $N$ is large enough then:
		\begin{align*}
			\vert K_1\vert \le2N^{2\epsilon-\frac{1}{2}}\cdot\frac{1}{2\pi} \cdot2\|\chi'\|_\infty \int \vert f(x)\vert +2\vert f'(x)\vert dx\le CN^{2\epsilon-\frac{1}{2}}=C\eta_0^2
		\end{align*}
		where $C>0$ is a constant depending only on $C_f$.
		\item[{\bf 2.}] Then we estimate $K_2$. Suppose $x+\i y$ is in the support of $yf''(x)\chi(y)\mathds1_{\vert y\vert \ge\eta_1}(x+\i y)$. Let $L$ be the counterclockwise circle centered at $x+\i y$ with radius $\min(\frac{y}{2},0.005)$. If $N>N_0=N_0(\epsilon)$, then $L$ is contained in \eqref{eq2} and we have by Cauchy's Theorem and \eqref{eq1} that
		$$\vert \partial_z\tilde m(x+\i y)\vert =\Big\vert \frac{1}{2\pi\i}\oint_L\frac{\tilde m(r)}{(r-x-\i y)^2}dr\Big\vert \le4N^{2\epsilon-\frac{1}{2}}\max(\frac{1}{y},100)$$
		and therefore
		\begin{multline*}
			\vert K_2\vert \le \frac{1}{2\pi}\Big\vert \int_{\vert y\vert \in[\eta_1,2]}\Im\Big[-\int_{{\mathbb R }} f'(x)\partial_z\tilde m(x+\i y)dx\Big]y\chi(y)dy\Big\vert  \quad\text{(integral by parts for $x$)}\\
			\le\frac{1}{2\pi}\cdot 4N^{2\epsilon-\frac{1}{2}}\cdot 2C_f\cdot2\int_{\eta_1}^2y\chi(y)\max(\frac{1}{y},100)dy\le \frac{3200C_f}{\pi}N^{2\epsilon-\frac{1}{2}}=\frac{3200C_f\cdot\eta_0^2}{\pi}
		\end{multline*}
		\item[{\bf 3.}] Finally we estimate $K_3$. Notice that both $y\Im m_N(x+\i y)$ and $y\Im m_{fc}(x+\i y)$ are nonnegative and increase when $y\in[0,\eta_1]$. So,
		\begin{align*}
			\vert K_3\vert =&\frac{1}{\pi}\Big\vert \int_{{\mathbb R }}f''(x)\int_0^{\eta_1} (y\Im m_N(x+\i y)-y\Im m_{fc}(x+\i y))dydx\Big\vert \\
			\le& \frac{1}{\pi}\int_{{\mathbb R }}\vert f''(x)\vert \int_0^{\eta_1} (y\Im m_N(x+\i y)+y\Im m_{fc}(x+\i y))dydx\\
			\le&\frac{1}{\pi}\int_{{\mathbb R }}\vert f''(x)\vert \int_0^{\eta_1} (\eta_1\Im m_N(x+\i \eta_1)+\eta_1\Im m_{fc}(x+\i \eta_1))dydx\\
			=&\frac{\eta_1^2}{\pi} \int_{{\mathbb R }}\vert f''(x)\vert (\Im m_N(x+\i \eta_1)+\Im m_{fc}(x+\i \eta_1))dx
		\end{align*}

		If $x\in\text{supp}f$, then $x+\i \eta_1$ is in \eqref{eq2} and thus also in $\mathcal D_\epsilon'$, provided $N$ is large enough. So we know by \eqref{eq1} and the definition of $\Omega_*(\epsilon)$ that if $N$ is large enough then
		$$\Im m_N(x+\i \eta_1)+\Im m_{fc}(x+\i \eta_1)\le 4N^{2\epsilon-\frac{1}{2}}$$
		and thus
		$$\vert K_3\vert \le \frac{4\eta_1^2}{\pi}N^{2\epsilon-\frac{1}{2}}\int_{{\mathbb R }}\vert f''(x)\vert dx\le \frac{8C_f\cdot\eta_1^2}{\pi\eta_0}N^{2\epsilon-\frac{1}{2}}=\frac{8C_f\cdot\eta_1^2\cdot\eta_0}{\pi}$$	
	\end{enumerate}
	The estimates for $K_1$, $K_2$ and $K_3$ together complete the proof of the lemma.
\end{proof}

\begin{lemma}\label{lemma:mu_N_and_mu_fc_are_close}
 Suppose $b>3$ and $\epsilon\in(\frac{1}{1+b},\frac{1}{4})$.	Suppose  $A_N(\epsilon)$ holds. If $N$ is large enough, then for any (possibly $N$-depending) interval $J\subset{\mathbb R }$,
	$$\vert \mu_N(J)-\mu_{fc}(J)\vert \le 4\eta_0(\|\rho_{fc}(x)\|_\infty+\alpha_2)$$
	where $\alpha_2>0$ is a constant depending only on $C_f$.
\end{lemma}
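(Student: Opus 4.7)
The plan is to bracket $\mathds{1}_J$ by two smooth test functions in the family considered by Lemma \ref{lemma:difference_of_integral_for_f}, then combine the two-sided estimate it yields with the $L^\infty$-density bound for $\mu_{fc}$.

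Write $J=[a,b]$ and construct smooth $f^\pm:\mathbb{R}\to[0,1]$ with $f^-\le\mathds{1}_J\le f^+$, where $f^+=1$ on $J$ and $f^+=0$ outside $[a-\eta_0,b+\eta_0]$, while $f^-=1$ on $[a+\eta_0,b-\eta_0]$ and $f^-=0$ outside $J$ (taking $f^-\equiv 0$ if $b-a<2\eta_0$). Both can be chosen with the derivative bounds $\|(f^\pm)'\|_\infty\le C_f\eta_0^{-1}$, $\|(f^\pm)''\|_\infty\le C_f\eta_0^{-2}$ required in Lemma \ref{lemma:difference_of_integral_for_f}. First assume $J\subset(-2.9-\lambda,\,2.9+\lambda)$ so that $f^\pm$ are admissible there. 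The lemma gives
$$\Big|\int f^\pm\,d\mu_N-\int f^\pm\,d\mu_{fc}\Big|\le \alpha_1(\eta_0^2+\eta_1^2\eta_0).$$
Because $\epsilon<1/4$ forces $\eta_0=N^{-1/4+\epsilon}\to 0$, and $\eta_1=N^{-1/(b+1)-\epsilon}\to 0$, both $\eta_0^2\le \eta_0$ and $\eta_1^2\eta_0\le\eta_0$ for $N$ large, so the error is at most $2\alpha_1\eta_0$. Combined with the trivial estimate
$$|\mu_{fc}([a-\eta_0,b+\eta_0])-\mu_{fc}(J)|\le 2\eta_0\|\rho_{fc}\|_\infty,\qquad |\mu_{fc}(J)-\mu_{fc}([a+\eta_0,b-\eta_0])|\le 2\eta_0\|\rho_{fc}\|_\infty,$$
the chain
$$\mu_N(J)\le\int f^+\,d\mu_N\le\int f^+\,d\mu_{fc}+2\alpha_1\eta_0\le\mu_{fc}(J)+2\eta_0(\|\rho_{fc}\|_\infty+\alpha_1)$$
and its lower-side analogue from $f^-$ give the desired bound with $\alpha_2:=\alpha_1$.

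For a general interval, decompose $J=J_b\cup J_e$ where $J_b:=J\cap[-2.9-\lambda,\,2.9+\lambda]$ and $J_e:=J\setminus[-2.9-\lambda,\,2.9+\lambda]$. Since $[L_-,L_+]\subset[-2-\lambda,\,2+\lambda]$, we have $\mu_{fc}(J_e)=0$, and $J_b$ is handled exactly as above. To estimate $\mu_N(J_e)$, apply Lemma \ref{lemma:difference_of_integral_for_f} to a fixed smooth cutoff $f_0$ with $f_0=1$ on $[-2.5-\lambda,\,2.5+\lambda]$ and $f_0=0$ outside $(-2.8-\lambda,\,2.8+\lambda)$: since $\int f_0\,d\mu_{fc}=1$, the lemma yields $\int f_0\,d\mu_N\ge 1-2\alpha_1\eta_0$, hence $\mu_N(J_e)\le \mu_N(\mathbb{R}\setminus[-2.5-\lambda,\,2.5+\lambda])\le 2\alpha_1\eta_0$. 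Adding this to the bound for $J_b$ produces the stated inequality with a total constant that fits under $4\eta_0(\|\rho_{fc}\|_\infty+\alpha_2)$.

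The argument is essentially immediate once Lemma \ref{lemma:difference_of_integral_for_f} is in hand; the only mild subtlety is the truncation at the edges of the window $(-2.99-\lambda,\,2.99+\lambda)$, which is exactly what the global cutoff $f_0$ handles.
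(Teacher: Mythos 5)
Your proof is correct and follows essentially the same route as the paper: bracket $\mathds{1}_J$ between smooth test functions, apply Lemma \ref{lemma:difference_of_integral_for_f} to each, absorb the $\alpha_1(\eta_0^2+\eta_1^2\eta_0)$ error into a multiple of $\eta_0$, and control the mass outside a fixed window separately. The only cosmetic point is that your fixed cutoff $f_0$ (with transition width of order one) does not literally satisfy the support condition \eqref{eq3} of Lemma \ref{lemma:difference_of_integral_for_f}, which requires vanishing at distance $\eta_0$ from $I$; either shrink its transition to width $\eta_0$ or, as the paper does, apply the already-established interval estimate to the window $(-2.99-\lambda,2.99+\lambda)$ itself to bound $\mu_N$ of its complement.
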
 

\begin{proof}
	First, suppose $J\subset(-2.99-\lambda,2.99+\lambda)$. Define $h:{\mathbb R }\to[0,1]$ to be a smooth function such that
	\begin{align}  \label{eq14}
		h(x)=
		\begin{cases}
			1\quad&\text{if }x\in J\\
			0\quad&\text{if }\text{dist}(x,J)\ge \eta_0
		\end{cases}
	\end{align} From Lemma \ref{lemma:difference_of_integral_for_f}, if $N$ is large enough then
	\begin{align}\label{eq5}
		\mu_N(J)\le \int h(x)d\mu_N(x)\le \int h(x)d\mu_{fc}(x)+\alpha_1(\eta_0^2+\eta_1^2\eta_0)\le\mu_{fc}(J)+2\eta_0\|\rho_{fc}(x)\|_\infty+\alpha_1(\eta_0^2+\eta_1^2\eta_0)
	\end{align} 
	where $\alpha_1>0$ is defined in Lemma \ref{lemma:difference_of_integral_for_f}. On the other hand, let 
	$$J'=\{x\in J\vert \text{the distance between $x$ and the edges of $J$ is no less than }\eta_0\}$$
	and define $\tilde h$ in the same way as $h$, except that $J$ in\eqref{eq14} is replaced by $J'$. So by Lemma \ref{lemma:difference_of_integral_for_f}, if $N$ is large enough then
	\begin{align}\label{eq4}
		\mu_N(J)\ge\int \tilde h(x)d\mu_N(x)\ge\int\tilde h(x)d\mu_{fc}(x)-\alpha_1(\eta_0^2+\eta_1^2\eta_0)\ge\mu_{fc}(J)-2\eta_0\|\rho_{fc}(x)\|_\infty-\alpha_1(\eta_0^2+\eta_1^2\eta_0).
	\end{align}
	We remark that if $\vert J\vert <2\eta_0$, then $J'=\emptyset$, but  \eqref{eq4} is trivial in this case. By \eqref{eq4} and \eqref{eq5},
	\begin{align}\label{eq6}
		\vert \mu_N(J)-\mu_{fc}(J)\vert \le \eta_0(2\|\rho_{fc}(x)\|_\infty+\alpha_1(\eta_0+\eta_1^2))
	\end{align}
	so we complete the proof in the case that $J\in(-2.99-\lambda,2.99+\lambda)$.
	
	Then, suppose $J$  is not necessarily contained in $(-2.99-\lambda,2.99+\lambda)$. By Lemma \ref{lemma:all_eigenvalues_in_2+lambda+r}, 
	$$\mu_{fc}([-2-\lambda,2+\lambda])=1.$$
	So by \eqref{eq6} we have
	\begin{multline*}
		\mu_N((-2.99-\lambda,2.99+\lambda)^c)=1-\mu_N((-2.99-\lambda,2.99+\lambda))\\\le 1-\Big(\mu_{fc}\Big((-2.99-\lambda,2.99+\lambda)\Big)-\eta_0(2\|\rho_{fc}(x)\|_\infty+\alpha_1(\eta_0+\eta_1^2))\Big)=\eta_0(2\|\rho_{fc}(x)\|_\infty+\alpha_1(\eta_0+\eta_1^2))
	\end{multline*}
	Let $J_1=J\cap (-2.99-\lambda,2.99+\lambda)$ and $J_2=J\backslash J_1$. So by \eqref{eq6} and the above inequality,
	\begin{multline*}
		\vert \mu_N(J)-\mu_{fc}(J)\vert \le \vert \mu_N(J_1)-\mu_{fc}(J_1)\vert +\vert \mu_N(J_2)-\mu_{fc}(J_2)\vert \\
		\le \eta_0(2\|\rho_{fc}(x)\|_\infty+\alpha_1(\eta_0+\eta_1^2))+\mu_N(J_2)\le2\eta_0(2\|\rho_{fc}(x)\|_\infty+\alpha_1(\eta_0+\eta_1^2)).
	\end{multline*}
\end{proof}

\begin{proof}[Proof of Theorem \ref{thm:rigidity_concise_version}]
	By Proposition \ref{proposition:extreme_eigenvalue} and Lemma \ref{lemma:Omega_*}, there exist constants $N_0>0$ and $\nu_0>0$ such that if $N>N_0$ then $${\mathbb P}(A_N(\epsilon))\ge 1-2\nu_0(\log N)^{1+2b}N^{-\epsilon}.$$
Suppose $A_N(\epsilon)$ holds. Now it suffices	to prove   \eqref{eq103} and \eqref{eq104}. 
	
	Suppose $L_0$ is the unique point in $[L_-,L_+]$ such that $\mu_{fc}([L_0,L_+])=2/3$. If $N$ is large enough, then by Lemma \ref{lemma:mu_N_and_mu_fc_are_close},
	$$\mu_N([L_0,L_+])\ge\frac{1}{2}$$
	thus 
\begin{align}\label{eq105}
\lambda_i\ge L_0,\quad\forall i\in[1,\frac{N}{2}].
\end{align}

	Define $g(x)$ by
	$$g(x)=\mu_{fc}([x,+\infty)).$$
	According to Lemma \ref{lemma:mu_N_and_mu_fc_are_close}, if $N$ is large enough, then
	\begin{multline}\label{eq7} 
		\vert g(\lambda_i)-g(\gamma_i)\vert \le\vert g(\lambda_i)-\frac{i}{N}\vert +\vert \frac{i}{N}-g(\gamma_i)\vert \le\vert \mu_{fc}([\lambda_i,+\infty))-\mu_N([\lambda_i,+\infty))\vert +\frac{1}{2N}\\
		\le  5\eta_0(\|\rho_{fc}(x)\|_\infty+\alpha_2),\quad\forall i\in[1,N]
	\end{multline}
	where $\alpha_2>0$ is defined in Lemma \ref{lemma:mu_N_and_mu_fc_are_close}.
	
By \eqref{eq12} there is a constant $C>1$ such that 
	\begin{align}\label{eq9}
	\frac{(L_+-x)^b}{C}\le\rho_{fc}(x)\le C(L_+-x)^b,\quad\forall x\in[L_0,L_+]
\end{align}
and therefore
$$\frac{i}{2N}\le\frac{i-\frac{1}{2}}{N}=\int_{\gamma_i}^{	L_+}\rho_{fc}(x)dx\le C\int_{\gamma_i}^{L_+}(L_+-x)^bdx=\frac{C}{b+1}\vert L_+-\gamma_i\vert ^{b+1}\quad\forall 1\le i\le \frac{N}{2}.$$
Then we have
\begin{align}\label{eq106}
\vert \gamma_i-L_+\vert \ge \big(\frac{i}{2N}\frac{b+1}{C}\big)^{\frac{1}{1+b}}\ge N^{-\zeta},\quad \forall i\in[\frac{2C}{1+b}N^{1-\zeta(b+1)},\frac{N}{2}].
\end{align}

	We  control $\vert \gamma_i-\lambda_i\vert $ in two cases.
	\begin{enumerate}
		\item[{\bf Case 1.}] Suppose $i\in[\frac{2C}{1+b}N^{1-\zeta(b+1)},\frac{N}{2}]$ and $\lambda_i\le\gamma_i$. By \eqref{eq105}, \eqref{eq7}, \eqref{eq9} and \eqref{eq106}, when $N$ is large enough, there exists  $ s\in(\lambda_i,\gamma_i)$ such that
	\begin{align}\label{eq107}
\vert \gamma_i-\lambda_i\vert =\frac{\vert g(\gamma_i)-g(\lambda_i)\vert }{\vert g'(s)\vert }\le\frac{5\eta_0(\|\rho_{fc}(x)\|_\infty+\alpha_2)}{\rho_{fc}(s)}\le \frac{5\eta_0(\|\rho_{fc}(x)\|_\infty+\alpha_2)}{(L_+-s)^b/C}\le U_1 N^{-\frac{1}{4}+\epsilon+\zeta b}
	\end{align}
		where  $U_1>0$ is a constant.
		\item[{\bf Case 2.}] Suppose $i\in[\frac{2C}{1+b}N^{1-\zeta(b+1)},\frac{N}{2}]$ and $\lambda_i>\gamma_i$. By \eqref{eq7},  \eqref{eq9}, \eqref{eq106} and the definition of $\zeta$, if $N$ is large enough then
		\begin{multline}\label{eq11}
			g(\lambda_i)\ge g(\gamma_i)-5\eta_0(\|\rho_{fc}(x)\|_\infty+\alpha_2)\ge \int_{\gamma_i}^{L_+}C^{-1}(L_+-x)^bdx-5\eta_0(\|\rho_{fc}(x)\|_\infty+\alpha_2)\\
			=\frac{C^{-1} (L_+-\gamma_i)^{b+1}}{b+1}-5\eta_0(\|\rho_{fc}(x)\|_\infty+\alpha_2)\ge \frac{C^{-1}}{2(b+1)}N^{-\zeta(b+1)}
		\end{multline}
		which implies $\lambda_i<L_+$ (otherwise $g(\lambda_i)=0$) and thus
		\begin{align}\label{eq10}
			g(\lambda_i)\le\int_{\lambda_i}^{L_+}C(L_+-x)^bdx\le \frac{C}{1+b}(L_+-\lambda_i)^{b+1}.
		\end{align}
		By \eqref{eq11} and \eqref{eq10}, if $N$ is large enough then
		$$L_+-\lambda_i\ge(2C^2)^{\frac{-1}{1+b}}N^{-\zeta},$$
		so
\begin{align}\label{eq108}
\vert \gamma_i-\lambda_i\vert =\frac{\vert g(\gamma_i)-g(\lambda_i)\vert }{\vert g'(t)\vert }\le\frac{5\eta_0(\|\rho_{fc}(x)\|_\infty+\alpha_2)}{\rho_{fc}(t)}\le \frac{5\eta_0(\|\rho_{fc}(x)\|_\infty+\alpha_2)}{(L_+-\lambda_i)^b/C}\le U_2 N^{-\frac{1}{4}+\epsilon+\zeta b}.
\end{align}
		 Here $t\in(\gamma_i,\lambda_i)$ and $U_2>0$ is a constant.
	\end{enumerate}
\eqref{eq107}, \eqref{eq108} and the fact that $\zeta$ can be arbitrarily small complete the proof of  \eqref{eq103}. \eqref{eq104} can be proved in the same way.
\end{proof}
 
\section{SSK model in low temperature: proof of Theorem \ref{thm:low_temperature}}\label{sec:low_temperature}
In this section we follow the idea introduced in \cite{Baik+Lee} to prove Theorem \ref{thm:low_temperature}. Because of the results in Lemma \ref{lemma:steepest_descent_curve}, we know that if a particle is moving along the curve of steepest-descent  defined in Definition \ref{definition:steepest_descent_curve}, then its $y$-coordinate is monotone, therefore we do not need a lemma like Lemma 6.4 in \cite{Baik+Lee}.

Throughout this section we suppose the conditions of Theorem \ref{thm:low_temperature} hold.
\begin{definition}
 Suppose $\epsilon_0>0$ is a constant. Let $s_0=s_0(\epsilon_0)>0$ be a constant  such that 
\begin{align}\label{eq110}
	{\mathbb P}\big(\vert \lambda_1-L_+\vert < s_0 N^{-\frac{1}{1+b}}\text{ and }\vert \lambda_N-L_-\vert  <s_0 N^{-\frac{1}{1+a}}\big)>1-\epsilon_0.
\end{align}
for large enough $N$. Set	$$\Omega_N^0(\epsilon_0)=\big\{\vert \lambda_1-L_+\vert < s_0N^{-\frac{1}{1+b}}\text{ and }\vert \lambda_N-L_-\vert  <s_0 N^{-\frac{1}{1+a}}\big\}.$$
\end{definition}
\begin{remark}
By Theorem \ref{thm:fluctuation_of_lambda_1} the constant $s_0$ exists. 	
\end{remark}

\begin{definition}\label{definition:R(z)}
	 Let $R(z)$ be an analytic function defined on ${\mathbb C }\backslash(-\infty,\lambda_1]$   by
	$$R(z)=2\beta z-\frac{1}{N}\sum_{i=1}^N\log(z-\lambda_i).$$
	Here we take the analytic branch of the $\log$ function such that $\Im \log(z-\lambda_i)\in(-\pi,\pi)$ for all $z\in{\mathbb C }\backslash(-\infty,\lambda_1]$.
	 Let $\gamma$  denote the unique number in $(\lambda_1,+\infty)$ such that $R'(\gamma)=0$. Equivalently, $\gamma$ is the unique number on $(\lambda_1,+\infty)$ satisfying $2\beta=\frac{1}{N}\sum_{i=1}^N\frac{1}{\gamma-\lambda_i}$.
\end{definition}

\begin{lemma}\label{lemma:position_of_gamma}
 Suppose	\begin{itemize}
	\item $\epsilon$ is a constant in $\Big(\frac{1}{1+b},\min\Big(\frac{1}{4}-\frac{2}{b+1},\frac{1}{4}-\frac{1}{b+1}-\frac{a}{(b+1)^2}\Big)\Big)$;
	\item $\tau$ is a constant in $\Big(\frac{1}{(b+1)^2},\min\Big(\frac{\frac{1}{4}-\epsilon}{a+1},\frac{\frac{1}{4}-\epsilon-\frac{1}{b+1}}{a},\frac{\frac{1}{4}-\epsilon-\frac{b+2}{(b+1)^2}}{b}\Big)\Big)$;
	\item $\tau_1<\tau_0$ are two constants both in $(1-\tau(b+1),1-\frac{1}{b+1})$.
\end{itemize} 
Suppose $E_N(\epsilon)\cap\Omega_N^0(\epsilon_0)$ holds. There exists a constant $N_0>0$ such that if $N>N_0$, then 
	$$\lambda_1+\frac{1}{3\beta N}<\gamma<\lambda_1+N^{-1+\tau_0}.$$
\end{lemma}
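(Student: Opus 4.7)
The lower bound is immediate: the defining equation $2\beta = \frac{1}{N}\sum_{i=1}^N (\gamma-\lambda_i)^{-1}$ consists of $N$ strictly positive terms (since $\gamma > \lambda_1 \ge \lambda_i$ for all $i$), so the first term alone yields $\frac{1}{N(\gamma-\lambda_1)} < 2\beta$, whence $\gamma-\lambda_1 > \frac{1}{2\beta N} > \frac{1}{3\beta N}$.

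For the upper bound, set $z_* := \lambda_1 + N^{-1+\tau_0}$. Since $h(z) := \frac{1}{N}\sum_i (z-\lambda_i)^{-1}$ is strictly decreasing on $(\lambda_1, \infty)$ with $h(\gamma) = 2\beta$, it suffices to show $h(z_*) < 2\beta$. I will decompose
\[
h(z_*) = N^{-\tau_0} + S_{\mathrm{top}} + S_{\mathrm{bulk}} + S_{\mathrm{bot}},
\]
where the three partial sums run over $I_{\mathrm{top}} = \{2,\ldots,K_+\}$, $I_{\mathrm{bulk}} = \{K_++1,\ldots,N-K_-\}$, $I_{\mathrm{bot}} = \{N-K_-+1,\ldots,N\}$, with $K_+ = \lceil \kappa' N^{1-\tau(b+1)}\rceil$ and $K_- = \lceil \kappa' N^{1-\tau(a+1)}\rceil$. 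These boundaries match exactly the range in which rigidity is guaranteed by Theorem \ref{thm:rigidity_concise_version} applied with $\zeta = \zeta' = \tau$ on $E_N(\epsilon)$.

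The extreme pieces will be handled crudely. For $S_{\mathrm{top}}$ the trivial bound $(z_*-\lambda_i)^{-1} \le (z_*-\lambda_1)^{-1} = N^{1-\tau_0}$ gives $S_{\mathrm{top}} \le (K_+/N)\, N^{1-\tau_0} = O(N^{1-\tau(b+1)-\tau_0}) = o(1)$, using precisely $\tau_0 > 1-\tau(b+1)$. For $S_{\mathrm{bot}}$, $\Omega_N^0(\epsilon_0)$ gives $\lambda_N \ge L_- - s_0 N^{-1/(a+1)}$ while $z_* - L_- \asymp 1$, so each term is $O(1)$ and $S_{\mathrm{bot}} = O(N^{-\tau(a+1)}) = o(1)$. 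For $S_{\mathrm{bulk}}$, the crucial geometric fact is that $L_+-\gamma_{K_+} \asymp N^{-\tau}$ by Lemma \ref{lemma:distance_between_gamma_i_and_edge}, whereas $|L_+-z_*| \le |L_+-\lambda_1| + N^{-1+\tau_0} = O(N^{-1/(b+1)}) = o(N^{-\tau})$, since the constraints on $\tau$ imply $\tau<1/(b+1)$. Consequently $z_*-\gamma_i \gtrsim N^{-\tau}$ throughout $I_{\mathrm{bulk}}$, and the rigidity bound $|\lambda_i-\gamma_i| \le N^{-1/4+\epsilon+\tau\max(a,b)}$ lets me replace $\lambda_i$ by $\gamma_i$ with aggregate error $o(1)$; this is where the technical bounds $\tau<(1/4-\epsilon-(b+2)/(b+1)^2)/b$ and $\tau<(1/4-\epsilon-1/(b+1))/a$ are consumed. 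A second replacement $z_* \to L_+$ in the denominator costs $|L_+-z_*|\cdot \int (L_+-t)^{-2} d\mu_{fc}(t) = O(N^{-1/(b+1)})$, finite because $b>1$. A standard Riemann-sum step then yields $S_{\mathrm{bulk}} = \int d\mu_{fc}(t)/(L_+-t) + o(1) = 2\beta_c + o(1)$.

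Combining everything, $h(z_*) = 2\beta_c + o(1) < 2\beta$ for $N$ large enough, since $\beta-\beta_c>0$ is a fixed positive gap. The principal obstacle is the simultaneous control of the Riemann-sum error, the $z_* \leftrightarrow L_+$ replacement error, and the rigidity replacement error in the bulk, where the integrand is near-singular at the top edge and the three errors propagate through mutually constraining scales in $\tau$, $\tau_0$, $\epsilon$, $a$, and $b$, which is what forces the somewhat baroque numerical hypotheses on $\tau$ and $\tau_0$ in the statement.
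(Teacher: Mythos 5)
Your proposal is correct and follows essentially the same route as the paper: the lower bound from the single term $\frac{1}{N(\gamma-\lambda_1)}<2\beta$, and the upper bound by showing $\frac1N\sum_i(z_*-\lambda_i)^{-1}=2\beta_c+o(1)<2\beta$ at $z_*=\lambda_1+N^{-1+\tau_0}$ via a top/bulk/bottom split, rigidity plus the classical locations in the bulk, and crude counting at the edges (the paper cuts the top at $N^{\tau_1}$ rather than at $\kappa'N^{1-\tau(b+1)}$, a cosmetic difference). The only imprecision is in $S_{\mathrm{bot}}$: to get each term $O(1)$ you need the \emph{upper} bound $\lambda_i\le\lambda_{N-K_-}\le\gamma_{N-K_-}+o(1)$ (monotonicity plus rigidity at an interior index, as the paper does with $i=\lfloor N/2\rfloor$), not the lower bound on $\lambda_N$ from $\Omega_N^0(\epsilon_0)$ — a one-line fix.
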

\begin{remark}
According to the conditions $b>11$ and $1<a<\frac{b^2-6b-7}{4}$, it is easy to check that the constants $\epsilon$, $\tau$, $\tau_0$ and $\tau_1$ exist. The event $E_N(\epsilon)$ is defined in Theorem \ref{thm:rigidity_concise_version}.
\end{remark}
\begin{proof}
	Notice that $R'(x)=2\beta-\frac{1}{N}\sum_{i=1}^N\frac{1}{x-\lambda_i}$ is increasing on $(\lambda_1,+\infty)$. Since $$R'(\lambda_1+\frac{1}{3\beta N})<2\beta-\frac{1}{N}\frac{1}{(\lambda_1+\frac{1}{3\beta N})-\lambda_1}<0=R'(\gamma),$$ 
	we have that $\lambda_1+\frac{1}{3\beta N}<\gamma$.

	Suppose that $E_N(\epsilon)\cap\Omega_N^0(\epsilon_0)$ holds.
	Since $\frac{\frac{1}{4}-\epsilon-\frac{b+2}{(1+b)^2}}{b}<\frac{\frac{1}{4}-\epsilon}{b+1}$, the $\tau$ satisfies the conditions for $\zeta$ and $\zeta'$ in Theorem \ref{thm:rigidity_concise_version}.
	According to Theorem \ref{thm:rigidity_concise_version}, when $N$ is large enough, we have
	\begin{align}\label{eq15}
		\begin{cases}
			\vert \lambda_i-\gamma_i\vert \le N^{-\frac{1}{4}+\epsilon+\tau b}\quad&\text{if }i\in[\kappa'N^{1-\tau(b+1)},\frac{N}{2}]\\
			\vert \lambda_i-\gamma_i\vert \le N^{-\frac{1}{4}+\epsilon+\tau a}\quad&\text{if }i\in[\frac{N}{2},N-\kappa'N^{1-\tau(a+1)}]
		\end{cases}
	\end{align}
	where $\kappa'>0$ is defined in Theorem \ref{thm:rigidity_concise_version}.

	To prove $\gamma<\lambda_1+N^{-1+\tau_0}$ we need
	$$\vert \frac{1}{N}\sum_{i=1}^N\frac{1}{\lambda_1+N^{-1+\tau_0}-\lambda_i}-\int \frac{d\mu_{fc}(t)}{L_+-t}\vert \le I+II+III$$
	where
	$$I=\Big\vert \frac{1}{N}\sum\limits_{i=N^{\tau_1}}^{N-\kappa'N^{1-\tau(a+1)}}
	\frac{1}{\lambda_1+N^{-1+\tau_0}-\lambda_i}-\frac{1}{N}\sum\limits_{i=N^{\tau_1}}^{N-\kappa'N^{1-\tau(a+1)}}
	\frac{1}{L_+-\gamma_i}\Big\vert $$
	$$II=\Big\vert \frac{1}{N}\sum\limits_{i=N^{\tau_1}}^{N-\kappa'N^{1-\tau(a+1)}}
	\frac{1}{L_+-\gamma_i}-\int \frac{d\mu_{fc}(t)}{L_+-t}\Big\vert $$ 
	$$III=\Big\vert \frac{1}{N}\sum_{i<N^{\tau_1}}\frac{1}{\lambda_1+N^{-1+\tau_0}-\lambda_i}+\frac{1}{N}\sum_{i>N-\kappa'N^{1-\tau(a+1)}}\frac{1}{\lambda_1+N^{-1+\tau_0}-\lambda_i}\Big\vert $$
	
 {\bf Estimation of I. }If $N$ is large enough, then for any $i\in[N^{\tau_1},N/2]\subset[\kappa'N^{1-\tau(b+1)},\frac{N}{2}]$,
		\begin{multline}\label{eq17}
			\Big\vert \frac{1}{\lambda_1+N^{-1+\tau_0}-\lambda_i}-\frac{1}{L_+-\gamma_i}\Big\vert \vert L_+-\gamma_i\vert \le\dfrac{\vert \lambda_1-L_+\vert +\vert \gamma_i-\lambda_i\vert +N^{-1+\tau_0}}{\vert L_+-\gamma_i\vert -\vert L_+-\lambda_1\vert -\vert \lambda_i-\gamma_i\vert -N^{-1+\tau_0}}\\
			\le\dfrac{s_0N^{-\frac{1}{1+b}}+N^{-\frac{1}{4}+\epsilon+\tau b}+N^{-1+\tau_0}}{C_*^{-1}(i/N)^{\frac{1}{1+b}}-s_0N^{-\frac{1}{1+b}}-N^{-\frac{1}{4}+\epsilon+\tau b}-N^{-1+\tau_0}}=\dfrac{s_0+N^{\frac{1}{1+b}-\frac{1}{4}+\epsilon+\tau b}+N^{\frac{1}{1+b}-1+\tau_0}}{C_*^{-1}\cdot i^{\frac{1}{1+b}}-s_0-N^{\frac{1}{1+b}-\frac{1}{4}+\epsilon+\tau b}-N^{\frac{1}{1+b}-1+\tau_0}}\\
			\le\dfrac{s_0+N^{\frac{1}{1+b}-\frac{1}{4}+\epsilon+\tau b}+N^{\frac{1}{1+b}-1+\tau_0}}{C_*^{-1}\cdot N^{\frac{\tau_1}{1+b}}-s_0-N^{\frac{1}{1+b}-\frac{1}{4}+\epsilon+\tau b}-N^{\frac{1}{1+b}-1+\tau_0}}
			\le2s_0C_*N^{-\frac{\tau_1}{1+b}}.
		\end{multline} 
		Here we used Lemma \ref{lemma:distance_between_gamma_i_and_edge},  \eqref{eq15} and the definition of $\Omega_N^0(\epsilon_0)$ in the second inequality and we used the conditions on $\tau$ and $\tau_0$   in the last inequality. (In particular, the condition $\tau<\frac{\frac{1}{4}-\epsilon-\frac{b+2}{(b+1)^2}}{b}$ implies $\frac{1}{1+b}-\frac{1}{4}+\epsilon+\tau b<0$.) The constant $C_*>0$ is defined in Lemma \ref{lemma:distance_between_gamma_i_and_edge}.
		
		By a similar argument we can prove \eqref{eq17} for $i\in[N/2,N-\kappa'N^{1-\tau(a+1)}]$. So we have that if $N$ is large enough, then
		
		\begin{multline*}
			I
			\le2s_0C_*N^{-\frac{\tau_1}{1+b}}\cdot\frac{1}{N}\sum\limits_{i=N^{\tau_1}}^{N-\kappa'N^{1-\tau(a+1)}}
			\frac{1}{L_+-\gamma_i}\\
			\le 2s_0C_*N^{-\frac{\tau_1}{1+b}}\cdot\sum\limits_{i=N^{\tau_1}}^{N-\kappa'N^{1-\tau(a+1)}}\int_{\hat\gamma_{i-1}}^{\hat\gamma_{i-2}}\frac{1}{L_+-t}d\mu_{fc}(t)\le 4\beta_c\cdot s_0C_*N^{-\frac{\tau_1}{1+b}}
		\end{multline*}
		where we used \eqref{eq18} in the second inequality and used the definition of $\beta_c$ in the last inequality. 
		
{\bf  Estimation of II. } If $N$ is large enough, then by\eqref{eq18}, \eqref{eq12} and Lemma \ref{lemma:distance_between_gamma_i_and_edge},
		\begin{align*}
			II\le& \int_{L_-}^{\hat\gamma_{N-\kappa'N^{1-\tau(a+1)}-2}}\frac{d\mu_{fc}(t)}{L_+-t}+\int_{\hat\gamma_{N^{\tau_1}}}^{L_+}\frac{d\mu_{fc}(t)}{L_+-t}
			\le C\vert \hat\gamma_{N-\kappa'N^{1-\tau(a+1)}-2}-L_-\vert +\int_{\hat\gamma_{N^{\tau_1}}}^{L_+}C_0(L_+-t)^{b-1}dt\\
			=&C\vert \hat\gamma_{N-\kappa'N^{1-\tau(a+1)}-2}-L_-\vert +\frac{C_0}{b}\vert L_+-\hat\gamma_{N^{\tau_1}}\vert ^b
			\le C\vert \gamma_{N-\kappa'N^{1-\tau(a+1)}-2}-L_-\vert +\frac{C_0}{b}\vert L_+-\gamma_{N^{\tau_1}+1}\vert ^b\\
			\le&C\Big(\frac{\kappa'N^{1-\tau(a+1)}+2}{N}\Big)^{\frac{1}{1+a}}+C\Big(\frac{N^{\tau_1}+1}{N}\Big)^{\frac{b}{1+b}}
		\end{align*}
		where $C_0$  is defined in \eqref{eq12} and  $C>0$ is a constant.

{\bf  Estimation of III. } Since $\lambda_1\ge\cdots\ge\lambda_N$, we have for large enough $N$:
		$$0<\frac{1}{\lambda_1+N^{-1+\tau_0}-\lambda_i}\le N^{1-\tau_0}\quad(1\le i\le N/2)$$
\begin{multline*}
0<\frac{1}{\lambda_1+N^{-1+\tau_0}-\lambda_i}\le\frac{1}{\lambda_1+N^{-1+\tau_0}-\lambda_{\lfloor N/2\rfloor}}\\
\le \frac{1}{\vert L_+-\gamma_{\lfloor N/2\rfloor}\vert -\vert \lambda_{\lfloor N/2\rfloor}-\gamma_{\lfloor N/2\rfloor}\vert -\vert L_+-\lambda_1\vert -N^{-1+\tau_0}}\le\frac{2}{\vert L_+-\gamma_{\lfloor N/2\rfloor}\vert } \quad(N/2\le i\le N)
\end{multline*}	
		so
		$$III\le N^{\tau_1-\tau_0}+\kappa'N^{-\tau(a+1)}\cdot \frac{2}{\vert L_+-\gamma_{\lfloor N/2\rfloor}\vert } .$$
	
	Since $R'(\lambda_1+N^{-1+\tau_0})=2\beta-\frac{1}{N}\sum_{i=1}^N\frac{1}{\lambda_1+N^{-1+\tau_0}-\lambda_i}$, we know from the estimations of $I$, $II$ and $III$ that if $N$ is large enough, then
	\begin{multline}
		R'(\lambda_1+N^{-1+\tau_0})\ge2\beta-\int\frac{d\mu_{fc}(t)}{L_+-t}-I-II-III\\
		\ge 2\beta-2\beta_c-4\beta_cs_0C_*N^{-\frac{\tau_1}{1+b}}-\Big(C\Big(\frac{\kappa'N^{1-\tau(a+1)}+2}{N}\Big)^{\frac{1}{1+a}}+C\Big(\frac{N^{\tau_1}+1}{N}\Big)^{\frac{b}{1+b}}\Big)\\
		-\Big(N^{\tau_1-\tau_0}+\kappa'N^{-\tau(a+1)}\cdot \frac{2}{\vert L_+-\gamma_{\lfloor N/2\rfloor}\vert }\Big)>0=R'(\gamma)
	\end{multline}
	therefore $\gamma<\lambda_1+N^{-1+\tau_0}$ because $R'$ is increasing on $(\lambda_1,+\infty)$.  
\end{proof}
\begin{lemma}\label{lemma:analogue_of_lemma_6.2}
	Suppose the assumptions in Lemma \ref{lemma:position_of_gamma} hold. Suppose   $\tau_2$, $\tau_3$ and $\tau'$ are constants satisfying:
	\begin{itemize}
		
		\item $\max(1-\tau(1+b),1-\tau(1+a))<\tau_2<1$
		\item $
		\max\Big(2+(1+b)(-\frac{1}{4}+\epsilon+\tau b),1-\tau(b+1),1-\frac{a+1}{a}(\frac{1}{4}-\epsilon-\frac{1}{b+1})\Big)
		<\tau_3<\frac{b}{b+1}$
		\item $\frac{1-\tau_3}{a+1}<\tau'<\min(\frac{\frac{1}{4}-\epsilon-\frac{1}{b+1}}{a},\frac{\frac{1}{4}-\epsilon}{1+a}) $
	\end{itemize}
Then we have the following conclusions.
\begin{itemize}
	\item 
	If $N$ is large enough, then
	\begin{align*}
		\Big\vert \frac{1}{N}\sum_{i=1}^N\log(\gamma-\lambda_i)-\int\log(L_+-t)d\mu_{fc}(t)-2\beta_c(\gamma-L_+)\Big\vert \le W_1\Phi_N
	\end{align*}
	where $W_1>0$ is a constant and
	$$\Phi_N=N^{-2+2\tau_0-2\frac{\tau_3-1}{1+b}}+N^{\frac{-2\tau_3}{b+1}}+N^{-\frac{1}{4}+\epsilon+\tau b+\frac{1-\tau_3}{b+1}}+N^{-\frac{1}{4}+\epsilon+\tau'a}+N^{\frac{b(\tau_3-1)}{b+1}}\Big(N^{-\frac{1}{1+b}}+N^{-1+\tau_0}\Big)+N^{\tau_3-1}\log N.$$
\item	If
	$N$ is large enough, then
	$$ N^{l-l\tau_0-1} \le \frac{R^{(l)}(\gamma)(-1)^l}{(l-1)!}\le W_2^lN^{-1+\tau_2+l},\quad l=2,3,\ldots $$
	where $W_2>0$ is a constant.
\end{itemize}
\end{lemma}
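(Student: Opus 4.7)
The plan is to split the index set into three ranges determined by $\tau_3$ and $\tau'$: a top-edge range $i \in [1, N^{\tau_3}]$, a bulk range $i \in (N^{\tau_3}, N-\kappa' N^{1-\tau'(a+1)})$, and a bottom-edge range $i \in [N-\kappa' N^{1-\tau'(a+1)}, N]$, where $\kappa'$ is the constant from Theorem \ref{thm:rigidity_concise_version}. The conditions on $\tau_3$ and $\tau'$ are exactly what is needed so that (i) the top-edge cut lies above the threshold $\kappa' N^{1-\tau(b+1)}$ where the rigidity estimate \eqref{eq103} becomes usable, and (ii) the corresponding bottom-edge cut lies above $\kappa' N^{1-\tau'(a+1)}$ so that \eqref{eq104} applies. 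In the bulk I will combine the rigidity bound with the two-sided estimate on $L_+-\gamma_i$ (respectively $\gamma_i-L_-$) from Lemma \ref{lemma:distance_between_gamma_i_and_edge}, and at the edges I will use only the crude inputs $\gamma-\lambda_i \ge \gamma-\lambda_1 > 1/(3\beta N)$ from Lemma \ref{lemma:position_of_gamma} and $|\lambda_i|\le 2+\lambda+o(1)$ from Lemma \ref{lemma:all_eigenvalues_in_2+lambda+r}.

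For the first conclusion I will write the difference as a sum of four errors,
\[
\frac{1}{N}\sum_{i=1}^N\log(\gamma-\lambda_i)-\int\log(L_+-t)d\mu_{fc}(t)-2\beta_c(\gamma-L_+) \;=\; E_1+E_2+E_3+E_4,
\]
where $E_1$ is the edge truncation (replacing the full sum by its bulk part), $E_2$ is the rigidity substitution $\log(\gamma-\lambda_i)\to\log(\gamma-\gamma_i)$ controlled by $|\lambda_i-\gamma_i|/(\gamma-\gamma_i)$, $E_3$ is the Riemann-sum comparison between $\frac{1}{N}\sum\log(\gamma-\gamma_i)$ and $\int\log(\gamma-t)d\mu_{fc}(t)$ (using the definition of $\gamma_i$), and $E_4$ is the linearization
\[
\log(\gamma-t)=\log(L_+-t)+\tfrac{\gamma-L_+}{L_+-t}+O\!\bigl((\gamma-L_+)^2/(L_+-t)^2\bigr),
\]
where the first-order term integrates to $2\beta_c(\gamma-L_+)$ by the definition \eqref{critical temperature} of $\beta_c$. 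Each $\Phi_N$-term will be matched to a single source: $E_1$ produces $N^{\tau_3-1}\log N$ from $|\log(\gamma-\lambda_i)|\le \log(3\beta N)+\log(2+\lambda)$ at the top edge and the $\tau'$-term at the bottom; $E_2$ produces $N^{-\tfrac{1}{4}+\epsilon+\tau b+(1-\tau_3)/(b+1)}+N^{-\tfrac{1}{4}+\epsilon+\tau' a}$; $E_3$ gives $N^{-\tau_3 b/(b+1)}$ from the top endpoint of the bulk; and $E_4$ yields $N^{2\tau_0-2-2(\tau_3-1)/(b+1)}$ and $N^{b(\tau_3-1)/(b+1)}(N^{-1/(b+1)}+N^{-1+\tau_0})$ after invoking Lemma \ref{lemma:position_of_gamma} to bound $\gamma-L_+\le |\gamma-\lambda_1|+|\lambda_1-L_+|\le N^{-1+\tau_0}+s_0 N^{-1/(b+1)}$.

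For the second conclusion I rewrite $\frac{R^{(l)}(\gamma)(-1)^l}{(l-1)!}=\frac{1}{N}\sum_{i=1}^N(\gamma-\lambda_i)^{-l}$. The lower bound is immediate from Lemma \ref{lemma:position_of_gamma}, since retaining only $i=1$ gives $\frac{1}{N(\gamma-\lambda_1)^l}\ge N^{l(1-\tau_0)-1}=N^{l-l\tau_0-1}$. For the upper bound I split in the same way: on $i\le \kappa' N^{1-\tau(b+1)}$ the bound $\gamma-\lambda_i\ge \gamma-\lambda_1>1/(3\beta N)$ yields a contribution at most $(3\beta)^l\kappa' N^{l-\tau(b+1)}$, and the analogous bottom-edge bound is at most $C^l N^{l-\tau'(a+1)}$; in the bulk, rigidity plus $L_+-\gamma_i\gtrsim (i/N)^{1/(b+1)}$ gives $\gamma-\lambda_i\ge \tfrac{1}{2}(L_+-\gamma_i)$, so the bulk contribution is comparable to $C^l N^{-1}\sum_{i\ge 1}(i/N)^{-l/(b+1)}$, which is bounded by $W_2^l\cdot N^{l-\tau(b+1)}$ for a universal $W_2$. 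Taking $\tau_2>\max(1-\tau(b+1),1-\tau'(a+1))$ then yields the claimed bound $W_2^l N^{-1+\tau_2+l}$.

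The main obstacle is bookkeeping: each term in $\Phi_N$ must be traced back to exactly one of the four errors, and the logarithmic singularity at the top edge (where $\gamma-\lambda_1$ can be as small as $1/(3\beta N)$) must be absorbed without losing more than a $\log N$ factor. The most delicate step is the linearization $E_4$, where one integrates $(\gamma-L_+)^2 (L_+-t)^{-2}$ against $\rho_{fc}$; the integral $\int (L_+-t)^{-2}d\mu_{fc}(t)$ is finite precisely because $b>1$, but the bulk cutoff at $\gamma_{N^{\tau_3}}$ still produces the factor $N^{b(\tau_3-1)/(b+1)}$ visible in $\Phi_N$, and one must verify that each of the intricate conditions defining the admissible ranges of $\epsilon$, $\tau$, $\tau_0$, $\tau_3$ and $\tau'$ is used exactly to ensure that this or a companion exponent is negative.
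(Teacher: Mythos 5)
Your overall strategy — truncate at the edges, use rigidity together with the spacing estimates of Lemma \ref{lemma:distance_between_gamma_i_and_edge} in the bulk, linearize around $L_+$ so that the first-order term produces $2\beta_c(\gamma-L_+)$, and use only $\gamma-\lambda_i\ge\gamma-\lambda_1>1/(3\beta N)$ at the edges — is the same as the paper's; the paper merely compresses your $E_2$, $E_3$, $E_4$ into a single pointwise expansion $\log(\gamma-\lambda_i)=\log(L_+-\gamma_i)+\frac{\gamma-L_+}{L_+-\gamma_i}+B_1+B_2$ for bulk indices, followed by Riemann-sum comparisons of $\frac1N\sum(L_+-\gamma_i)^{-1}$ with $2\beta_c$ and of $\frac1N\sum\log(L_+-\gamma_i)$ with $\int\log(L_+-t)d\mu_{fc}(t)$. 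Your lower bound and edge/bulk splitting for the second conclusion also match the paper's \eqref{eq29}, \eqref{eq27}, \eqref{eq28}.

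There is one step in your decomposition that fails as literally stated. On the event $\Omega_N^0(\epsilon_0)$ one only knows $|\lambda_1-L_+|<s_0N^{-1/(1+b)}$, and with probability tending to $1$ one has $\lambda_1<L_+$ and $\gamma<\lambda_1+N^{-1+\tau_0}<L_+$; hence the intermediate quantity $\int\log(\gamma-t)\,d\mu_{fc}(t)$ in your $E_3$, taken over the full support $[L_-,L_+]$, involves the logarithm of negative numbers and is not defined. You must truncate the integral to $[\hat\gamma_{N-N^{\tau_3}},\hat\gamma_{N^{\tau_3}}]$ (matching the bulk sum), where $L_+-t\ge C_*^{-1}N^{(\tau_3-1)/(b+1)}\gg|\gamma-L_+|$ makes both the logarithm and the expansion $\log(\gamma-t)=\log(L_+-t)+\frac{\gamma-L_+}{L_+-t}+O((\gamma-L_+)^2(L_+-t)^{-2})$ legitimate. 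After truncation the first-order term no longer integrates exactly to $2\beta_c(\gamma-L_+)$: the defect is $|\gamma-L_+|\cdot\int_{\hat\gamma_{N^{\tau_3}}}^{L_+}\frac{d\mu_{fc}}{L_+-t}+\ldots=O\big(N^{b(\tau_3-1)/(b+1)}(N^{-1/(1+b)}+N^{-1+\tau_0})\big)$, which is precisely the fifth term of $\Phi_N$ — so the repair is already budgeted for, but it is a necessary repair, not bookkeeping. Two smaller points: (i) in the second conclusion your cut $1-\tau'(a+1)$ at the lower edge is not covered by the hypothesis $\tau_2>\max(1-\tau(1+b),1-\tau(1+a))$ unless $\tau'\ge\tau$; use the rigidity with parameter $\tau$ at both edges (as in \eqref{eq15}) and the stated condition on $\tau_2$ suffices. (ii) Your "universal $W_2$" in the bulk bound hides the near-resonance $l\approx b+1$, where $\sum_{i}i^{-l/(b+1)}$ produces a factor $\frac{b+1}{|b+1-l|}$ (or $\log N$ when $b\in\mathbb Z$ and $l=b+1$); since the second conclusion is later summed over all $l\ge2$ in Lemma \ref{lemma:analogue_of_Lemma_6.3}, you must note, as the paper does, that this factor is bounded uniformly in $l$ by $\frac{b+1}{\mathrm{dist}(b,\mathbb Z)}$ and that the logarithmic case is absorbed into $W_2^lN^{-1+\tau_2+l}$.
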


\begin{remark}
\begin{itemize}
	\item From the definition of $\epsilon$ and  $\tau$ (see Lemma \ref{lemma:position_of_gamma}), we see that the $\tau_2$ and $\tau_3$ satisfying the conditions exist. Since $\tau<\frac{\frac{1}{4}-\epsilon-\frac{b+2}{(b+1)^2}}{b}$, we have from the definition of $\tau_3$: 
	\begin{align}\label{eq111}
	\tau_3>1-\tau(b+1)>\frac{3}{4}+\epsilon
	\end{align}
\eqref{eq111} and the definition of $\tau_3$ yields  $\frac{1-\tau_3}{a+1}<\min(\frac{\frac{1}{4}-\epsilon-\frac{1}{b+1}}{a},\frac{\frac{1}{4}-\epsilon}{1+a})$, thus $\tau'$ is well defined.
	\item By \eqref{eq111} and the definitions of $\tau_0$ and $\tau_3$, we have
\begin{align}\label{eq114}
\tau_3>1-\tau(1+b)>\frac{3}{4}>(\tau_0-1)(b+1)+\frac{3}{2}
\end{align}
 so
\begin{align}\label{eq112}
\frac{1}{b+1}-2+2\tau_0-2\frac{\tau_3-1}{1+b}<0.
\end{align}
By $\tau_0<\frac{b}{b+1}$ and the definition of $\tau_3$ we have $\tau_3<\frac{b}{b+1}<2-\frac{b+1}{b}\tau_0$, so
\begin{align}\label{eq113} 
\frac{1}{b+1}+\frac{b(\tau_3-1)}{b+1}-1+\tau_0<0
\end{align}	
By \eqref{eq112}, \eqref{eq113} and the definition of $\tau_3$ we see that
	\begin{align}\label{eq44}
		\lim_{N\to\infty}N^{\frac{1}{1+b}}\cdot\Phi_N=0.
	\end{align}
\end{itemize}	
\end{remark} 
\begin{proof}

	By Lemma \ref{lemma:distance_between_gamma_i_and_edge}, Theorem \ref{thm:rigidity_concise_version},  Lemma \ref{lemma:position_of_gamma} and the definition of $\Omega_N^0(\epsilon_0)$, if $N$ is large enough and $i\in[1+N^{\tau_3},N-N^{\tau_3}]$, then
	\begin{align}\label{eq20}
		\Big\vert \frac{\gamma-L_+}{L_+-\gamma_i}\Big\vert \le\Big\vert \frac{\gamma-\lambda_1}{L_+-\gamma_i}\Big\vert +\Big\vert \frac{\lambda_1-L_+}{L_+-\gamma_i}\Big\vert \le \frac{N^{-1+\tau_0}+s_0N^{-\frac{1}{1+b}}}{C_*^{-1}N^{\frac{\tau_3-1}{b+1}}}\stackrel{N\to\infty}{\longrightarrow}0\quad\text{ (by \eqref{eq114})}
	\end{align}
	\begin{align}\label{eq21} 
		\Big\vert \frac{\gamma_i-\lambda_i}{L_+-\gamma_i}\Big\vert \le\frac{N^{-\frac{1}{4}+\epsilon+\tau b}}{C_*^{-1}N^{\frac{\tau_3-1}{b+1}}}\mathds1_{i\le N/2}+\frac{N^{-\frac{1}{4}+\epsilon+\tau'a}}{\vert L_+-\gamma_{N/2}\vert }\mathds1_{i\ge N/2}\stackrel{N\to\infty}{\longrightarrow}0\quad\text{ (by definitions of $\tau_3$ and $\tau'$)}
	\end{align}
	and thus
	\begin{align}\label{eq22} 
		\log(\gamma-\lambda_i)-\log (L_+-\gamma_i)=\log(1+\frac{\gamma-L_+}{L_+-\gamma_i}+\frac{\gamma_i-\lambda_i}{L_+-\gamma_i})=\frac{\gamma-L_+}{L_+-\gamma_i}+B_1+B_2
	\end{align}
	where $\vert B_1\vert \le2\Big\vert \frac{\gamma_i-\lambda_i}{L_+-\gamma_i}\Big\vert $ and $\vert B_2\vert \le \Big\vert \frac{\gamma-L_+}{L_+-\gamma_i}\Big\vert ^2$. 
	
	By \eqref{eq18}, Lemma \ref{lemma:properties_of_mu_fc} and Lemma \ref{lemma:distance_between_gamma_i_and_edge}, if $N$ is large enough, then
	\begin{multline}\label{eq23} 
		\Big\vert \frac{1}{N}\sum\limits_{i=N^{\tau_3}+1}^{N-N^{\tau_3}}\frac{1}{L_+-\gamma_i}-2\beta_c\Big\vert \le \Big\vert \int_{\hat\gamma_{N-N^{\tau_3}+1}}^{\hat\gamma_{N^{\tau_3}+2}}\frac{d\mu_{fc}(t)}{L_+-t}-2\beta_c\Big\vert =\int_{L_-}^{\hat\gamma_{N-N^{\tau_3}+1}}\frac{d\mu_{fc}(t)}{L_+-t}+\int_{\hat\gamma_{N^{\tau_3}+2}}^{L_+}\frac{d\mu_{fc}(t)}{L_+-t}\\
		\le W_3\int_{L_-}^{\hat\gamma_{N-N^{\tau_3}+1}}(t-L_-)^adt+C_0\int_{\hat\gamma_{N^{\tau_3}+2}}^{L_+}(L_+-t)^{b-1}dt\le W_3\vert \gamma_{N-N^{\tau_3}+1}-L_-\vert ^{a+1}+\frac{C_0}{b}\vert L_+-\hat\gamma_{N^{\tau_3}+2}\vert ^b\\
		\le W_3\cdot\frac{N^{\tau_3}}{N}+ \frac{C_0}{b}\vert L_+-\gamma_{N^{\tau_3}+2}\vert ^b\le W_3\frac{N^{\tau_3}}{N}+ \frac{C_0}{b}\cdot C_*^b\Big(\frac{N^{\tau_3}+3}{N}\Big)^{\frac{b}{1+b}}\\
		\le W_3N^{\frac{b(\tau_3-1)}{b+1}}
	\end{multline}
	where $W_3>0$ is a constant, $C_0>0$ is defined in Lemma       \ref{lemma:properties_of_mu_fc} and $C_*$ is defined in Lemma         \ref{lemma:distance_between_gamma_i_and_edge}.
	
	According to Lemma \ref{lemma:properties_of_mu_fc}, Lemma \ref{lemma:distance_between_gamma_i_and_edge}, the definitions of $\gamma_i$ and  $\hat\gamma_i$ and the fact that
$$\int_{\hat\gamma_{i-1}}^{\hat\gamma_{i-2}}\log(L_+-t)d\mu_{fc}(t)\le \frac{1}{N}\log(L_+-\gamma_i)\le \int_{\hat\gamma_{i+1}}^{\hat\gamma_i}\log(L_+-t)d\mu_{fc}(t)\quad(2\le i\le N-1)$$
	we know that if $N$ is large enough, then
	\begin{multline}\label{eq24}
		\Big\vert \frac{1}{N}\sum\limits_{i=N^{\tau_3}+1}^{N-N^{\tau_3}}\log(L_+-\gamma_i)-\int\log(L_+-t)d\mu_{fc}(t)\Big\vert \\
		\le\int_{L_-}^{\hat\gamma_{N-N^{\tau_3}-2}}\vert \log(L_+-t)\vert d\mu_{fc}(t)+\int_{\hat\gamma_{N^{\tau_3}+2}}^{L_+}\vert \log(L_+-t)\vert d\mu_{fc}(t)\\
		\le W_4\int_{L_-}^{\gamma_{N-N^{\tau_3}-2}}(t-L_-)^adt+C_0\int_{ \gamma_{N^{\tau_3}+3}}^{L_+}(L_+-t)^b\vert \log(L_+-t)\vert dt\\
		\le W_4N^{\tau_3-1}+C_0\frac{\vert L_+-\gamma_{N^{\tau_3}+3}\vert ^{b+1}}{b+1}\Big\vert \log\big(L_+-\gamma_{N^{\tau_3}+3}\big)-\frac{1}{b+1}\Big\vert \\
		\le W_4N^{\tau_3-1}\log N
	\end{multline}
	where $C_0>0$ is defined in Lemma \ref{lemma:properties_of_mu_fc} and $W_4>0$ is a constant.	
	
	By Lemma \ref{lemma:position_of_gamma} and the definition of $\Omega_N^0(\epsilon_0)$, if $N$ is large enough, then
	\begin{align}\label{eq25} 
		\Big\vert \frac{1}{N}\sum_{i\le 1+N^{\tau_3}}\log(\gamma-\lambda_i)+\frac{1}{N}\sum_{i\ge N-N^{\tau_3}}\log(\gamma-\lambda_i)\Big\vert \le 4N^{\tau_3-1}\log N.
	\end{align}
	According to \eqref{eq20}, \eqref{eq21}, \eqref{eq22}, \eqref{eq23}, \eqref{eq24}, \eqref{eq25}, if $N$ is large enough, then 
	\begin{multline*}
		\Big\vert \frac{1}{N}\sum_{i=1}^N\log(\gamma-\lambda_i)-\int\log(L_+-t)d\mu_{fc}(t)-2\beta_c(\gamma-L_+)\Big\vert \\
		\le\Big\vert \frac{1}{N}\sum\limits_{i=1+N^{\tau_3}}^{N-N^{\tau_3}}\log(\gamma-\lambda_i)-\frac{1}{N}\sum\limits_{i=1+N^{\tau_3}}^{N-N^{\tau_3}}\log(L_+-\gamma_i)-\frac{\gamma-L_+}{N}\sum\limits_{i=1+N^{\tau_3}}^{N-N^{\tau_3}}\frac{1}{L_+-\gamma_i}\Big\vert \\
		+\Big\vert \frac{\gamma-L_+}{N}\sum\limits_{i=1+N^{\tau_3}}^{N-N^{\tau_3}}\frac{1}{L_+-\gamma_i}-2\beta_c(\gamma-L_+)\Big\vert +\Big\vert \frac{1}{N}\sum\limits_{i=1+N^{\tau_3}}^{N-N^{\tau_3}}\log(L_+-\gamma_i)-\int\log(L_+-t)d\mu_{fc}(t)\Big\vert \\
		+\Big\vert \frac{1}{N}\sum_{i\le 1+N^{\tau_3}}\log(\gamma-\lambda_i)+\frac{1}{N}\sum_{i\ge N-N^{\tau_3}}\log(\gamma-\lambda_i)\Big\vert \\
		\le\Big(\frac{N^{-1+\tau_0}+s_0N^{-\frac{1}{1+b}}}{C_*^{-1}N^{\frac{\tau_3-1}{b+1}}}\Big)^2+2\frac{N^{-\frac{1}{4}+\epsilon+\tau b}}{C_*^{-1}N^{\frac{\tau_3-1}{b+1}}}+2\frac{N^{-\frac{1}{4}+\epsilon+\tau'a}}{\vert L_+-\gamma_{N/2}\vert }+W_3N^{\frac{b(\tau_3-1)}{b+1}}\vert \gamma-L_+\vert +W_4N^{\tau_3-1}\log N+4N^{\tau_3-1}\log N.
	\end{multline*}
	The above inequality together with $$\vert \gamma-L_+\vert \le\vert \gamma-\lambda_1\vert +\vert \lambda_1-L_+\vert <N^{-1+\tau_0}+s_0N^{-\frac{1}{b+1}}$$ and the fact that $\vert L_+-\gamma_{N/2}\vert $ is bounded below by a constant completes the proof of the first conclusion of the lemma.

	For the second conclusion, notice that
	$$R^{(l)}(z)=\frac{1}{N}\sum_{i=1}^N\frac{(-1)^l(l-1)!}{(z-\lambda_i)^l},\quad l=2,3,\ldots,$$
	so for large enough $N$ we have by Lemma \ref{lemma:position_of_gamma} that
	\begin{align}\label{eq29} 
		\frac{R^{(l)}(\gamma)(-1)^l}{(l-1)!}\ge\frac{1}{N}\frac{1}{(\gamma-\lambda_1)^l}\ge N^{l-l\tau_0-1},\quad l=2,3,\ldots
	\end{align}

	From the conditions on $\tau_2$ and $\tau$ we see
	\begin{align}\label{eq26}
		1>\tau_2>1-\tau(1+b)>1+(b+1)(-\frac{1}{4}+\epsilon+\tau b).
	\end{align}
	By Lemma \ref{lemma:position_of_gamma}, if $N$ is large enough, then 
	\begin{align}\label{eq27}
		\frac{1}{N}\Big(\sum_{i=1}^{N^{\tau_2}}+\sum_{i=N-N^{\tau_2}}^N\Big)\frac{1}{(\gamma-\lambda_i)^l}\le 3N^{-1+\tau_2}\cdot (3\beta N)^l=3(3\beta)^l N^{-1+\tau_2+l},\quad l=2,3,\ldots
	\end{align}
	
	By \eqref{eq26} and Lemma \ref{lemma:distance_between_gamma_i_and_edge} if $N$ is large enough and $i\in[N^{\tau_2},N-N^{\tau_2}]$, then $\vert L_+-\gamma_i\vert \ge C_*^{-1}N^{\frac{\tau_2-1}{1+b}}$, $$\vert L_+-\gamma\vert \le\vert L_+-\lambda_1\vert +\vert \gamma-\lambda_1\vert \le s_0N^{-\frac{1}{b+1}}+N^{-1+\tau_0}\le (1+s_0)N^{-\frac{1}{b+1}}\quad\text{(since $\tau_0<\frac{b}{b+1}$)}$$
	\begin{align}
		\text{and}\quad\vert \lambda_i-\gamma_i\vert \le
		\begin{cases}
			N^{-\frac{1}{4}+\epsilon+\tau b}\quad\text{if }N^{\tau_2}\le i\le N/2\\
			N^{-\frac{1}{4}+\epsilon+\tau a}\quad\text{if }  N/2\le i\le N-N^{\tau_2}
		\end{cases}
	\end{align}
The above estimations and the fact that $\vert L_+-\gamma_j\vert $ is of order 1 for $j\ge N/2$ imply that if $N$ is large enough then
	\begin{align*}
		\vert \gamma-\lambda_i\vert \ge \vert L_+-\gamma_i\vert -\vert \gamma_i-\lambda_i\vert -\vert L_+-\gamma\vert 
		\ge\frac{1}{2}\vert L_+-\gamma_i\vert \ge\frac{1}{2C_*}\Big(\frac{i}{N}\Big)^{\frac{1}{1+b}},\quad\text{if }N^{\tau_2}\le i\le N-N^{\tau_2}
	\end{align*}
	and thus
	\begin{multline}\label{eq28}
		\frac{1}{N}\sum_{N^{\tau_2}<i<N-N^{\tau_2}}\frac{1}{(\gamma-\lambda_i)^l}\le\frac{1}{N}\sum_{N^{\tau_2}<i<N-N^{\tau_2}}\frac{(2C_*)^lN^{\frac{l}{1+b}}}{i^{l/(b+1)}}
		\le (2C_*)^lN^{\frac{l}{1+b}-1}\int_{\frac{1}{2}N^{\tau_2}}^Nx^{-\frac{l}{1+b}}dx\\
		\le \begin{cases}
			(2C_*)^l\log N\quad&\text{if $b$ is an integer and }l=b+1\\\frac{b+1}{b+1-l}(2C_*)^l\quad&\text{if }l<b+1\\
			\frac{b+1}{l-(b+1)}(4C_*)^lN^{(1-\tau_2)(\frac{l}{b+1}-1)}\quad&\text{if }l>b+1
		\end{cases}
	\end{multline}
	where $C_*$ is defined in Lemma \ref{lemma:distance_between_gamma_i_and_edge}.  In \eqref{eq28}, the coefficient $\pm\frac{b+1}{b+1-l}$ for the case $l\ne b+1$ is bounded by:
	\begin{align*} 
		\vert \frac{b+1}{b+1-l}\vert \le 
		\begin{cases}
			b+1\quad&\text{if }b\in\mathbb Z\\
			\frac{b+1}{\text{dist}(b,\mathbb Z)}\quad&\text{if }b\not\in\mathbb Z
		\end{cases}
	\end{align*}
	and this bound is independent of $l$.
	So by the fact $-1+\tau_2+l>(1-\tau_2)(\frac{l}{b+1}-1)$ and \eqref{eq29}, \eqref{eq27} and \eqref{eq28} we complete the proof.
\end{proof}

\begin{definition}\label{definition:steepest_descent_curve}
	\begin{itemize}
\item Set $S=\{x+\i y\in{\mathbb C }\backslash(-\infty,\lambda_1]\vert \Im R(x+\i y)=0\}$.
\item Set $S^+=\{x+\i y\in S\vert y>0\}$, $S^-=\{x+\i y\in S\vert y<0\}$.
\item For $y$ satisfying $0<\vert y\vert <\frac{\pi}{2\beta}$, let $h(y)$ be the unique real number such that $h(y)+\i y\in S$. Set $h(0)=\gamma$.
	\end{itemize}
\end{definition} 
\begin{lemma}\label{lemma:steepest_descent_curve}
\begin{itemize}
	\item $h(y)$ is well defined. In other words, for any $y$ satisfying $0<\vert y\vert <\frac{\pi}{2\beta}$, there is a  unique real number $x$ such that $x+\i y\in S$.
	\item $S=\{h(y)+\i y\vert -\frac{\pi}{2\beta}<y<\frac{\pi}{2\beta}\}$ and $h(y)\in C^1((-\frac{\pi}{2\beta},\frac{\pi}{2\beta}))$.
	\item $h(y)\le\gamma$ and the identity holds only when $y=0$.
	\item If $\frac{1}{4}<c_0<\frac{\pi}{2}$, then $h(y)$ is strictly decreasing on $[\frac{c_0}{\beta},\frac{\pi}{2\beta})$.
\end{itemize}
\end{lemma}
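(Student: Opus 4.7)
The plan is to treat $\Im R(x+\i y)$ as a real-valued function of the real variables $(x,y)$ on the strip $|y|<\pi/(2\beta)$ and locate its zero set via monotonicity in $x$ combined with the implicit function theorem. A direct differentiation yields
\begin{align*}
\partial_x \Im R(x+\i y)=\frac{y}{N}\sum_{i=1}^N\frac{1}{(x-\lambda_i)^2+y^2},
\end{align*}
which is strictly positive for $y>0$ and strictly negative for $y<0$. Using the branch choice $\Im\log\in(-\pi,\pi)$, one has the limits $\Im R(x+\i y)\to 2\beta y$ as $x\to+\infty$ and $\Im R(x+\i y)\to 2\beta y\mp\pi$ as $x\to-\infty$ (upper sign when $y>0$); for $|y|<\pi/(2\beta)$ these limits have opposite signs, so strict monotonicity in $x$ together with the intermediate value theorem produces, for each such $y\ne 0$, a unique $h(y)\in\R$ solving $\Im R(h(y)+\i y)=0$. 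This settles the first bullet and the parametrization in the second bullet away from $y=0$, and the standard implicit function theorem then gives $h\in C^1$ there since $\partial_x\Im R\ne 0$.

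Extending $h$ smoothly across $y=0$ is the delicate step, because $\partial_x\Im R(x+\i 0)\equiv 0$ and the naive IFT fails. The remedy is to exploit analyticity of $R$ near $\gamma$ together with $R'(\gamma)=0$ and $R''(\gamma)=\frac{1}{N}\sum_i(\gamma-\lambda_i)^{-2}>0$: expanding $R(z)-R(\gamma)$ in powers of $z-\gamma$ and taking imaginary parts, every surviving contribution contains an odd power of $y$, so one obtains a factorization
\begin{align*}
\Im R(x+\i y)=y\cdot F(x,y)
\end{align*}
with $F$ real-analytic near $(\gamma,0)$, $F(\gamma,0)=0$, and $\partial_x F(\gamma,0)=R''(\gamma)>0$. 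The IFT applied to $F=0$ yields a smooth curve through $(\gamma,0)$ matching the previously constructed $h(y)$ for $y\ne 0$, thereby justifying the convention $h(0)=\gamma$ and completing the second bullet.

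For the third bullet, I will compare $\Im R$ along the vertical line through $\gamma$ via $\phi(y):=\Im R(\gamma+\i y)$. The Cauchy--Riemann relation $\partial_y\Im R=\Re R'$ gives $\phi(0)=0$ and $\phi'(0)=\Re R'(\gamma)=R'(\gamma)=0$ by the definition of $\gamma$. A further differentiation yields
\begin{align*}
\phi''(y)=\frac{2y}{N}\sum_{i=1}^N\frac{\gamma-\lambda_i}{\bigl((\gamma-\lambda_i)^2+y^2\bigr)^{2}},
\end{align*}
which is strictly positive for $y>0$ and strictly negative for $y<0$ because $\gamma>\lambda_1\ge\lambda_i$. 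Thus $\phi'(y)>0$ for every $y\ne 0$, so $\phi(y)>0$ when $y>0$ and $\phi(y)<0$ when $y<0$. Matching these signs against the signs of $\partial_x\Im R$ from the first paragraph, the equation $\Im R(h(y)+\i y)=0$ forces $h(y)<\gamma$ in both regimes.

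Finally, for the fourth bullet I will differentiate the defining identity to obtain $h'(y)=-(\partial_y\Im R)/(\partial_x\Im R)$ at $(h(y),y)$. The denominator is positive for $y>0$, so the sign of $h'(y)$ is controlled by
\begin{align*}
\partial_y\Im R(x+\i y)=\Re R'(x+\i y)=2\beta-\frac{1}{N}\sum_{i=1}^N\frac{x-\lambda_i}{(x-\lambda_i)^2+y^2}.
\end{align*}
The elementary bound $t/(t^2+y^2)\le 1/(2y)$ for $t\in\R$, $y>0$, obtained from AM--GM on $t+y^2/t$, gives $\partial_y\Im R\ge 2\beta-1/(2y)$, which is strictly positive whenever $y\ge c_0/\beta$ with $c_0>1/4$. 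Hence $h'(y)<0$ on $[c_0/\beta,\pi/(2\beta))$, as required. The principal obstacle is the $C^1$-extension of $h$ through $y=0$; once the factorization $\Im R=y\cdot F$ is in place, every remaining step reduces to direct calculus and an easy AM--GM estimate.
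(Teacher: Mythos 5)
Your proposal is correct and proves all four bullets, but it takes a genuinely different route at the one delicate point, namely the $C^1$ regularity of $h$ across $y=0$. The paper handles this (Lemma \ref{lemma:question}) by a hands-on computation: Taylor-expanding the $\arccos$ terms in $\Im R(h(y)+\i y)=2\beta y-\frac{1}{N}\sum_i\arccos\frac{h(y)-\lambda_i}{\sqrt{(h(y)-\lambda_i)^2+y^2}}$, first showing $h(y)-\lambda_1$ stays bounded below, then extracting $\lim_{y\to0+}h(y)=\gamma$, $\lim_{y\to0+}(h(y)-\gamma)/y=0$ and $\lim_{y\to0+}h'(y)=0$ one after another. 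You instead use that $R$ is analytic at $\gamma$ with real Taylor coefficients, so $\Im R(x+\i y)=yF(x,y)$ with $F$ real-analytic, $F(x,0)=R'(x)$, and $\partial_xF(\gamma,0)=R''(\gamma)>0$; the implicit function theorem applied to $F=0$ then produces an analytic branch through $(\gamma,0)$ which must coincide with $h$ for $y\ne0$ by the global-in-$x$ uniqueness from your first bullet. This is cleaner, yields analyticity rather than just $C^1$ near $y=0$, and avoids the $\arccos$ bookkeeping entirely. For the third bullet you also diverge: the paper argues by contradiction via Rolle applied to $y\mapsto\Im R(\gamma+\i y)$, using the one-line bound $\frac{\gamma-\lambda_i}{(\gamma-\lambda_i)^2+y^2}<\frac{1}{\gamma-\lambda_i}$ to get $\partial_y\Im R(\gamma+\i y)>0$ directly, whereas you derive the same sign of $\phi'$ through $\phi''$; your computation is correct but slightly longer than necessary, and your subsequent direct sign-matching against the $x$-monotonicity is a nice alternative to the paper's contradiction-plus-continuity argument. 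The first and fourth bullets are essentially the paper's own arguments. One caveat you share with the paper: as literally defined, $S$ contains the whole real ray $(\lambda_1,\infty)$ (where $R$ is real), so the identity $S=\{h(y)+\i y\}$ requires tacitly excluding the real points other than $\gamma$; neither your write-up nor the paper addresses this, so it is not a defect of your proposal relative to the original.
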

\begin{proof}
See Appendix \ref{appendix:steepest_descent_curve}.
\end{proof}
\begin{remark}
Since $h(y)$ is $C^1$, we can define the integral of continuous functions along $S$.
\end{remark}

\begin{lemma}\label{lemma:analogue_of_Lemma_6.3}
	Suppose the conditions in Lemma \ref{lemma:analogue_of_lemma_6.2} are satisfied. Set
	\begin{align}\label{eq42}
		K=-\i e^{-\frac{N}{2}R(\gamma)}\int_{\gamma-\i \infty}^{\gamma+\i \infty}e^{\frac{N}{2}R(z)}dz.
	\end{align}
	There exist  constants  $N_0>0$ and $W_0>0$ such that if $N>N_0$ then
	$$N^{-10}\le K\le W_0.$$
\end{lemma}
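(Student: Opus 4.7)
\textbf{Proof proposal for Lemma \ref{lemma:analogue_of_Lemma_6.3}.}

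My plan is to deform the vertical contour in \eqref{eq42} onto the steepest descent curve $S$ of Definition \ref{definition:steepest_descent_curve}, where the integrand becomes real, and then to estimate the resulting real integral. Since $R(z)$ is analytic on $\C \setminus (-\infty, \lambda_1]$ and $|e^{NR(z)/2}|$ decays like $|\Re z|^{-N/2} e^{N\beta \Re z}$ as $\Re z \to -\infty$, I would justify the deformation by closing the two contours with horizontal caps at heights $\pm Y$ and sending $Y \to \pi/(2\beta)^-$; the endpoint behaviour $h(y) \to -\infty$ would follow from the monotonicity of $h$ near $y = \pm\pi/(2\beta)$ stated in Lemma \ref{lemma:steepest_descent_curve}. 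After parametrizing $S$ as $z(y) = h(y) + \i y$ and using the symmetries $R(\bar z) = \overline{R(z)}$ and $h(-y) = h(y)$, the $h'(y)$-part of $dz = (h'(y)+\i)\,dy$ integrates to zero by oddness, leaving
\[
K = e^{-\frac{N}{2} R(\gamma)} \int_{-\pi/(2\beta)}^{\pi/(2\beta)} e^{\frac{N}{2} R(h(y)+\i y)}\,dy.
\]

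For the upper bound I would just observe, via Lemma \ref{lemma:steepest_descent_curve}, that $R$ is real on $S$ and attains its unique maximum at $y=0$, since $\gamma$ is the unique critical point of $R$ on $(\lambda_1,\infty)$ with $R''(\gamma) > 0$ (so it is a strict maximum along the purely imaginary direction). The integrand of the above display is therefore between $0$ and $1$, and $K \le \pi/\beta =: W_0$ follows immediately.

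For the lower bound I would restrict the integral to $|y| \le \delta_N$ with $\delta_N := N^{-1-\tau_2/3}$ and Taylor-expand $R$ at $\gamma$. Using $R'(\gamma)=0$ and $h(y) - \gamma = O(y^2)$ (since $h$ has its maximum at $0$), I would obtain
\[
\tfrac{N}{2}\bigl(R(z(y)) - R(\gamma)\bigr) = -\tfrac{N R''(\gamma)}{4}\, y^2 + \mathcal{R}_N(y),
\]
whose $l$-th order term is bounded by $N|R^{(l)}(\gamma)|\delta_N^l/l!$. The second conclusion of Lemma \ref{lemma:analogue_of_lemma_6.2} gives $|R^{(l)}(\gamma)|/(l-1)! \le W_2^l N^{-1+\tau_2+l}$, so with my choice of $\delta_N$ each such term is bounded by $W_2^l N^{\tau_2-l\tau_2/3}/l$, which is summable in $l \ge 3$ and gives $|\mathcal{R}_N(y)| \le C$ uniformly on $[-\delta_N, \delta_N]$. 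Combining with the same lemma's bound $R''(\gamma) \le 2W_2^2 N^{1+\tau_2}$, the Gaussian width $(NR''(\gamma)/4)^{-1/2} \gtrsim N^{-1-\tau_2/2}$ is still smaller than $\delta_N$ (because $\tau_2/2 > \tau_2/3$), so the Gaussian is essentially captured and
\[
K \ge e^{-C}\int_{-\delta_N}^{\delta_N} e^{-NR''(\gamma)y^2/4}\,dy \ge c' (NR''(\gamma))^{-1/2} \ge c'' N^{-(2+\tau_2)/2} \ge N^{-10}
\]
for $N$ large, using $\tau_2 < 1$. The step I expect to be most delicate is controlling the full Taylor tail of $R$ about $\gamma$ uniformly in the order $l$ and in $N$; the geometric-in-$l$ estimate from Lemma \ref{lemma:analogue_of_lemma_6.2} is precisely what enables the choice $\delta_N \sim N^{-1-\tau_2/3}$ to work, and this is also what makes the lemma \emph{refined} enough to bypass the role played by Lemma 6.4 of \cite{Baik+Lee}.
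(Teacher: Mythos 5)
Your contour deformation onto $S$, the symmetrization $K=2\int_{S^+}e^{\frac{N}{2}(R(z)-R(\gamma))}\,dy$, and the idea of Taylor-expanding near $\gamma$ are all in line with the paper's proof (the paper's upper bound is actually more elementary: it never leaves the vertical line and simply bounds $\vert e^{\frac{N}{2}R(\gamma+\i t)}\vert$ by $(1+t^2/(\gamma-\lambda_i)^2)^{-N/4}$, which avoids having to argue that $R\le R(\gamma)$ globally on $S$ — a true fact, but one your local remark ``$R''(\gamma)>0$'' does not by itself establish).

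The genuine gap is in the lower bound, at the step ``$h(y)-\gamma=O(y^2)$ since $h$ has its maximum at $0$.'' Having a maximum at $0$ gives no quantitative control whatsoever, and Lemma \ref{lemma:steepest_descent_curve} only guarantees $h\in C^1$, so you cannot even invoke a second derivative. Quantitatively, the curvature of $S$ at $\gamma$ is governed by $\alpha=-R'''(\gamma)/R''(\gamma)$, which by the bounds of Lemma \ref{lemma:analogue_of_lemma_6.2} can be as large as $2W_2^3N^{1+2\tau_0+\tau_2}$; so even granting the quadratic behaviour, on your window $\vert y\vert\le\delta_N=N^{-1-\tau_2/3}$ the available bound on $\vert h(y)-\gamma\vert$ is of order $N^{-1+2\tau_0+\tau_2/3}$, which is not small (it can be polynomially large in $N$, since $\tau_0$ may be close to $1$). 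Without a bound on $\vert h(y)-\gamma\vert$ you cannot control $\vert z(y)-\gamma\vert$, and your estimate of the Taylor tail (which implicitly uses $\vert z(y)-\gamma\vert\lesssim\delta_N$) collapses. This is exactly the difficulty the paper isolates into Lemma \ref{lemma:regularity_of_S}: starting from the implicit equation $\Im R=0$ and the derivative bounds, it proves $\vert h(y)-\gamma\vert\le y^2\cdot W_2^3N^{1+2\tau_0+\tau_2}\le y$, but only inside the disk $\vert z-\gamma\vert<N^{-9}$, i.e.\ only for $y<N^{-10}$. The paper therefore integrates only over $y\in(0,N^{-10})$, where $\frac{N}{2}\vert R(z)-R(\gamma)\vert\le W_2^2N^{-15}$, and settles for the crude but sufficient bound $K\ge N^{-10}$ rather than the sharp $K\asymp(NR''(\gamma))^{-1/2}$ your argument aims for. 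To salvage your version you would need to extend the curve-regularity estimate to the window $\vert y\vert\le N^{-1-\tau_2/3}$ (a bootstrap on the implicit equation), which is additional work the statement of the lemma does not require. (A second, more minor slip: since $R''(\gamma)$ can be as small as $N^{1-2\tau_0}$, the Gaussian width can exceed $\delta_N$, so $\int_{-\delta_N}^{\delta_N}e^{-NR''(\gamma)y^2/4}dy$ is bounded below by $\min(c(NR''(\gamma))^{-1/2},\delta_N)$ rather than by $c(NR''(\gamma))^{-1/2}$; both quantities still exceed $N^{-10}$, so this does not affect the conclusion.)
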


\begin{proof}
	
	From Lemma \ref{lemma:integral_formula} we know $K>0$. By the same argument as (6.31) of \cite{Baik+Lee},  
	\begin{align*}
		K=&-\i \int_{\gamma-\i \infty}^{\gamma+\i \infty}\exp\Big(\frac{N}{2}\Big[R(z)-R(\gamma)\Big]\Big)dz=\int_{-\infty}^{\infty}\exp\Big(\frac{N}{2}\Big[R(\gamma+\i t)-R(\gamma)\Big]\Big)dt\\
		=&\int_{-\infty}^{\infty}\exp\Big(\i\beta Nt-\frac{1}{2}\sum_{i=1}^N\log\Big(1+\frac{\i t}{\gamma-\lambda_i}\Big)\Big)dt\\
		\le&\int_{-\infty}^{\infty}\exp\Big(-\frac{1}{4}\sum_{i=1}^N\log\Big(1+\frac{t^2}{(\gamma-\lambda_i)^2}\Big)\Big)dt
	\end{align*}
	where we take the absolute value of the integrand to obtain the last inequality. Since 
	$$0<\gamma-\lambda_i\le\gamma-\lambda_N\le\vert \gamma-\lambda_1\vert +\vert \lambda_1-L_+\vert +\vert L_+-L_-\vert +\vert L_--\lambda_N\vert \le1+2s_0+L_+-L_-, $$
	we have for $N>4$:
	$$K\le \int_{-\infty}^{\infty}\exp\Big(-\frac{N}{4}\log\Big(1+\frac{t^2}{(1+2s_0+L_+-L_-)^2}\Big)\Big)dt\le\int_{{\mathbb R }}\Big(1+\frac{t^2}{(1+2s_0+L_+-L_-)^2}\Big)^{-1}dt<\infty$$
	and therefore the right hand side of the conclusion is proved.
	
	To prove the left hand side of the conclusion we need Lemma \ref{lemma:steepest_descent_curve}. We first claim that 
	\begin{align}\label{eq33}
		\int_{\gamma-\i \infty}^{\gamma+\i \infty}e^{\frac{N}{2}R(z)}dz=\int_Se^{\frac{N}{2}R(z)}dz
	\end{align}
	where the direction on $S$ is from $-\infty-\frac{\pi}{2\beta}\i$ to $-\infty+\frac{\pi}{2\beta}\i$. In fact, suppose $r>0$ such that $\vert z-\lambda_i\vert >r/2$ for all $\vert z\vert =r$, then for $C_r:=\{z\in{\mathbb C }\vert \vert z\vert =r, \Re z\le\gamma\}$ we have
	$$\Re R(z)\le 2\beta\gamma-\log(r/2),\quad\forall z\in C_r$$
	and thus
	$$\Big\vert \int_{C_r}\big\vert e^{\frac{N}{2}R(z)}\big\vert dz\Big\vert \le2\pi(\sqrt2 e^{\beta\gamma})^Nr^{1-\frac{N}{2}}\to0\quad\text{as }r\to\infty.$$
	Moreover, by the last conclusion of Lemma \ref{lemma:steepest_descent_curve} we have $\int_0^{\pi/(2\beta)}\vert e^{\frac{N}{2}R(h(y)+\i y)}\vert \sqrt{1+(h'(y))^2}dy<\infty$. So \eqref{eq33} is true.
	
	Notice that $R(z)=R(\bar z)$ for $z\in S$, so by \eqref{eq33}
	\begin{multline}\label{eq40}
		K=-\i\int_{S}e^{\frac{N}{2}(R(z)-R(\gamma))}dz=-\i\Big(\int_{S^+}e^{\frac{N}{2}(R(z)-R(\gamma))}(dx+\i dy)+\int_{S^-}e^{\frac{N}{2}(R(z)-R(\gamma))}(dx+\i dy)\Big)\\
		=2\int_{S^+}\exp\Big(\frac{N}{2}(R(z)-R(\gamma))\Big)dy
	\end{multline}
	
	Now we define
	$$Q_N=\big\{z\in{\mathbb C }\big\vert \vert z-\gamma\vert <N^{-9}\big\}.$$
	By Lemma \ref{lemma:position_of_gamma}, $R(z)$ is analytic on a neighborhood of $\bar Q_N$, so by $R'(\gamma)=0$ we have for large enough $N$:
	\begin{align}\label{eq38}
		R(z)-R(\gamma)=\sum_{j=2}^\infty\frac{R^{(j)}(\gamma)}{j!}(z-\gamma)^j,\quad\text{if } z\in Q_N .
	\end{align}
The next lemma shows that $S$ does not leave $\gamma$ too fast when the $y$ coordinate is small enough.
\begin{lemma}\label{lemma:regularity_of_S}
	When $N$ is large enough, we have
	\begin{align}\label{eq39}
	\{z\in S^+\vert \Im z\in(0,N^{-10})\}\subset Q_N.
\end{align} 
\end{lemma}
\begin{proof}
See Appendix \ref{appendix}.
\end{proof}
	By Lemma \ref{lemma:analogue_of_lemma_6.2}, \eqref{eq38} and the definition of $Q_N$, if $N$ is large enough, then:
	\begin{align} 
		\vert R(z)-R(\gamma)\vert \le \sum_{j=2}^\infty\frac{\vert R^{(j)}(\gamma)\vert }{j!}\vert z-\gamma\vert ^j\le 2W_2^2N^{-16},\quad\forall z\in Q_N
	\end{align}
	which together with \eqref{eq40} and \eqref{eq39} imply:
	\begin{align*}
		K=2\int_{S^+}\exp\Big(\frac{N}{2}(R(z)-R(\gamma))\Big)dy\ge2\int_0^{N^{-10}}\exp\Big(\frac{N}{2}\cdot(-2W_2^2N^{-16})\Big)dy>N^{-10}.
	\end{align*}
	So the proof is complete.
\end{proof}
\begin{proof}[Proof of Theorem \ref{thm:low_temperature}]
	Recall the definition of $K$ in \eqref{eq42}. According to Lemma \ref{lemma:integral_formula},
	$$\int_{S_{N-1}}e^{\beta\langle\sigma,(W+\lambda V)\sigma\rangle}d\omega_N(\sigma)=\frac{\sqrt N \beta}{\sqrt\pi(2\beta e)^{N/2}}\cdot e^{\frac{N}{2}R(\gamma)}\cdot K\cdot(1+O(N^{-1})).$$ 
	Now we choose the constants $s_0,\epsilon,\tau,\tau_0,,\tau_1,\tau_2,\tau_3,\tau'$ in the same way as in \eqref{eq110}, Lemma \ref{lemma:position_of_gamma} and Lemma \ref{lemma:analogue_of_lemma_6.2}. Suppose $E_N(\epsilon)\cap\Omega_N^0(\epsilon_0)$ holds. By Lemma \ref{lemma:analogue_of_Lemma_6.3}, if $N$ is large enough, then
	\begin{align*}
		F_N=\frac{1}{N}\log\Big(\int_{S_{N-1}}e^{\beta\langle\sigma,(W+\lambda V)\sigma\rangle}d\omega_N(\sigma)\Big)=\frac{R(\gamma)}{2}-\frac{1}{2}\log(2\beta e)+e_N
	\end{align*}
	where 
	\begin{align}\label{eq45} 
		\vert e_N\vert \le C\log N/N
	\end{align}
	for some constant $C>0$. By Lemma \ref{lemma:position_of_gamma} and Lemma \ref{lemma:analogue_of_lemma_6.2}, if $N$ is large enough and $E_N(\epsilon)\cap\Omega_N^0(\epsilon_0)$ holds, then
	\begin{align*}
		&\Big\vert F_N+\frac{1}{2}\log(2e\beta)-\beta\lambda_1+\frac{1}{2}\int\log(L_+-t)d\mu_{fc}(t)+\beta_c(\lambda_1-L_+)\Big\vert \le\vert e_N\vert +(\beta+\beta_c)\vert \gamma-\lambda_1\vert +W_1\Phi_N\\\le&\vert e_N\vert +(\beta+\beta_c)N^{-1+\tau_0}+W_1\Phi_N
	\end{align*}
	and thus
	\begin{align}\label{eq43}
		\Big\vert I_N-N^{\frac{1}{b+1}}(\lambda_1-L_+)\Big\vert 
		\le\frac{N^{\frac{1}{b+1}}}{\vert \beta-\beta_c\vert }\Big[\vert e_N\vert +(\beta+\beta_c)N^{-1+\tau_0}+W_1\Phi_N\Big]
	\end{align}
	where $$I_N=N^{\frac{1}{b+1}}\Big[\frac{F_N+\frac{1}{2}\log(2e\beta)+\frac{1}{2}\int\log(L_+-t)d\mu_{fc}(t)-\beta L_+}{\beta-\beta_c}\Big].$$
	Fix $s<0$. Choose $\epsilon_0\in(0,\vert s\vert /10)$. There exists $\delta_s>0$ such that if $\vert s'-s\vert \le\delta_s$ then 
	\begin{align}\label{eq46}
		\vert \exp(-\frac{C_\mu(-s')^{b+1}}{b+1})-\exp(-\frac{C_\mu(-s)^{b+1}}{b+1})\vert <\epsilon_0.
	\end{align}
Write $\mathcal E_N=N^{\frac{1}{b+1}}(\lambda_1-L_+)-I_N$.	Notice that
	\begin{multline}\label{eq47}
		{\mathbb P}(I_N\le s)={\mathbb P}(\{I_N\le s\}\cap(E_N(\epsilon)\cap\Omega_N^0(\epsilon_0))^c)+{\mathbb P}(\{I_N\le s\}\cap(E_N(\epsilon)\cap\Omega_N^0(\epsilon_0)))\\
		={\mathbb P}(\{I_N\le s\}\cap(E_N(\epsilon)\cap\Omega_N^0(\epsilon_0))^c)+{\mathbb P}(\{N^{\frac{1}{b+1}}(\lambda_1-L_+)\le s+\mathcal E_N\}\cap(E_N(\epsilon)\cap\Omega_N^0(\epsilon_0)))\\
		={\mathbb P}(N^{\frac{1}{b+1}}(\lambda_1-L_+)\le s+\mathcal E_N)-{\mathbb P}(\{N^{\frac{1}{b+1}}(\lambda_1-L_+)\le s+\mathcal E_N\}\cap(E_N(\epsilon)\cap\Omega_N^0(\epsilon_0))^c)\\
		+{\mathbb P}(\{I_N\le s\}\cap(E_N(\epsilon)\cap\Omega_N^0(\epsilon_0))^c).
	\end{multline}
If $N$ is large enough, then by \eqref{eq44}, \eqref{eq45}, \eqref{eq43} and the definition of $\tau_0$, we have $\mathcal E_N\in(-\delta_s,\delta_s)$ and then
	\begin{multline}\label{eq48}
		{\mathbb P}(N^{\frac{1}{b+1}}(\lambda_1-L_+)\le s+\mathcal E_N)\in\Big({\mathbb P}(N^{\frac{1}{b+1}}(\lambda_1-L_+)\le s-\delta_s),{\mathbb P}(N^{\frac{1}{b+1}}(\lambda_1-L_+)\le s+\delta_s)\Big)
	\end{multline}
	By \eqref{eq46}, \eqref{eq47}, \eqref{eq48}, \eqref{eq115} and \eqref{eq110} we have:
	$$\vert {\mathbb P}(I_N\le s)-{\mathbb P}(N^{\frac{1}{b+1}}(\lambda_1-L_+)\le s)\vert \le5\epsilon_0\quad\text{when $N$ is large enough}.$$
	Since $\epsilon_0$ can be arbitrarily small, we have by Theorem \ref{thm:fluctuation_of_lambda_1},
	$$\lim_{N\to\infty}{\mathbb P}(I_N\le s)=\lim_{N\to\infty}{\mathbb P}(N^{\frac{1}{b+1}}(\lambda_1-L_+)\le s)=1-{\mathbb P}(N^{\frac{1}{b+1}}(L_+-\lambda_1)\le -s)=\exp\big(-\frac{C_\mu(-s)^{b+1}}{b+1}\big).$$
	It is easy to check that the above identity is also true when $s=0$.
\end{proof} 
 
\section{SSK model in high temperature: proof of Theorem \ref{thm:high_temperature}}\label{sec:high_temperature}
In this section we use the method introduced in \cite{Baik+Lee} and Theorem \ref{thm:CLT} to prove Theorem \ref{thm:high_temperature}, but we follow a different way to control $\vert \gamma-\hat\gamma\vert $. In \cite{Baik+Lee}, the tool used to control $\vert \gamma-\hat\gamma\vert $ is the rigidity of eigenvalues, but we will use the local law to control $\vert \gamma-\hat\gamma\vert $ because the rigidity we have here is not strong enough to provide a proper estimation of $\vert \gamma-\hat\gamma\vert $.

Throughout this section we assume that the conditions of Theorem \ref{thm:high_temperature} are satisfied.
\begin{definition}
\begin{itemize}
	\item 
	Set
	$\hat R(z)=2\beta z-\int\log(z-t)d\mu_{fc}(t)$ analytically defined on ${\mathbb C }\backslash(-\infty,L_+]$ such that $\Im\log(z-t)\in(-\pi,\pi)$ for all $t\in\text{supp}(\mu_{fc})$.
\item Suppose $\epsilon\in(\frac{1}{b+1},\frac{1}{12})$ and
$$\Omega_1(\epsilon)=\Big\{\vert \lambda_1-L_+\vert <\frac{\min(\hat\gamma-L_+,1)}{20}\quad\text{and}\quad\vert \lambda_N-L_-\vert <\frac{\min(\hat\gamma-L_+,1)}{20} \Big\}\cap \Omega_V(\epsilon)\cap\tilde \Omega(\epsilon).$$
\end{itemize}
\end{definition}
\begin{remark}
	$\hat R(z)$ is an analogue of the $R(z)$ defined in Definition \ref{definition:R(z)}. Obviously the $\hat\gamma$ defined in Theorem \ref{thm:high_temperature} is the unique point on $(L_+,+\infty)$ such that $\hat R'(\hat \gamma)=0$. According to Theorem \ref{thm:fluctuation_of_lambda_1} and Proposition \ref{proposition:extreme_eigenvalue} we have
	\begin{align}\label{eq72}
		{\mathbb P}(\Omega_1(\epsilon))\to1\quad\text{as }N\to\infty.
	\end{align}
\end{remark}  
\begin{lemma}\label{lemma:distance_between_gamma_and_hat_gamma} 
There exists a constant $N_0>0$ such that if $N>N_0$ and $\Omega_1(\epsilon)$ holds, then
$$\vert \gamma-\hat\gamma\vert \le N^{3\epsilon-\frac{1}{2}},\quad\vert  R(\hat\gamma)-R(\gamma)\vert \le \frac{8}{(\hat\gamma-L_+)^2}N^{6\epsilon-1}$$
where $\gamma$ was defined in Definition \ref{definition:R(z)} and $\hat\gamma$ was  defined in Theorem \ref{thm:high_temperature}.
\end{lemma}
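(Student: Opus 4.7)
The plan is to use that both $\gamma$ and $\hat\gamma$ are zeros of the derivatives of $R$ and $\hat R$, and to reduce the whole estimate to a single local law bound at the fixed point $\hat\gamma$. Since $R'(\gamma)=0$ reads $-m_N(\gamma)=2\beta$ and $\hat R'(\hat\gamma)=0$ reads $-m_{fc}(\hat\gamma)=2\beta$, one has $R'(\hat\gamma)=2\beta+m_N(\hat\gamma)=m_N(\hat\gamma)-m_{fc}(\hat\gamma)$, and the mean value theorem gives
\[
\hat\gamma-\gamma \;=\; \frac{R'(\hat\gamma)}{R''(\xi)} \;=\; \frac{m_N(\hat\gamma)-m_{fc}(\hat\gamma)}{R''(\xi)}
\]
for some $\xi$ between $\gamma$ and $\hat\gamma$. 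A short bootstrap (using that $\hat R'$ is strictly negative at $L_++(\hat\gamma-L_+)/2$ and strictly positive a little beyond $\hat\gamma$, together with $R'\approx \hat R'$ at such points on the good event) places both $\gamma$ and $\xi$ in $[L_++(\hat\gamma-L_+)/4,\,2\hat\gamma-L_+]$, so on $\Omega_1(\epsilon)$ each $\xi-\lambda_i\ge(\hat\gamma-L_+)/4$. Hence $R''(\xi)=N^{-1}\sum(\xi-\lambda_i)^{-2}$ admits both a lower bound depending only on $\hat\gamma$ and an upper bound of order $(\hat\gamma-L_+)^{-2}$.

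To bound $\vert m_N(\hat\gamma)-m_{fc}(\hat\gamma)\vert$ I would transfer to the complex point $z_\ast=\hat\gamma+\i N^{-1/2-\epsilon}$. Using Lemma~\ref{lemma:properties_of_mu_fc}, $L_++m_{fc}(L_+)=\lambda$, and since $s\mapsto s+m_{fc}(s)$ is increasing on $(L_+,\infty)$, one has $\hat\gamma+m_{fc}(\hat\gamma)-\lambda\ge\hat\gamma-L_+$; combined with $\lambda\tilde v_i\le\lambda$ and $\Im z_\ast$ small, this gives
\[
\vert \lambda\tilde v_i-z_\ast-m_{fc}(z_\ast)\vert \;\ge\;\tfrac{1}{2}(\hat\gamma-L_+)
\]
for every $i$ and all large $N$, hence $z_\ast\in\mathcal D_\epsilon'$. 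On $\Omega_1(\epsilon)\subset\tilde\Omega(\epsilon)\cap\Omega_V(\epsilon)$, the defining bound of $\tilde\Omega(\epsilon)$ together with Lemma~\ref{lemma:hatm_fc_close_tom_fc} gives $\vert m_N(z_\ast)-m_{fc}(z_\ast)\vert \le 2N^{2\epsilon-1/2}$. Since $\vert m_N'(z)\vert $ and $\vert m_{fc}'(z)\vert $ are bounded by a constant multiple of $(\hat\gamma-L_+)^{-2}$ on the vertical segment from $\hat\gamma$ to $z_\ast$ (each eigenvalue and each $t\in\mathrm{supp}\,\mu_{fc}$ lies at distance at least $(\hat\gamma-L_+)/2$ from that segment), integrating in the imaginary direction yields
\[
\vert m_N(\hat\gamma)-m_{fc}(\hat\gamma)\vert \;\le\; 2N^{2\epsilon-1/2}+\frac{C\,N^{-1/2-\epsilon}}{(\hat\gamma-L_+)^2}\;\le\;N^{3\epsilon-1/2}
\]
for $N$ large. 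Dividing by the lower bound on $R''(\xi)$ gives the first inequality.

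For the $R$-estimate, the Taylor expansion $R(\hat\gamma)=R(\gamma)+R'(\gamma)(\hat\gamma-\gamma)+\tfrac{1}{2}R''(\xi')(\hat\gamma-\gamma)^2$ combined with $R'(\gamma)=0$ and the upper bound on $R''(\xi')$ immediately yields $\vert R(\hat\gamma)-R(\gamma)\vert \le 8 N^{6\epsilon-1}/(\hat\gamma-L_+)^2$ as claimed. The main technical point is the verification that $z_\ast\in\mathcal D_\epsilon'$, which implicitly uses $\hat\gamma\le 3+\lambda$ (the real-part range of $\mathcal D_\epsilon$). In the complementary very-high-temperature regime $\hat\gamma>3+\lambda$, where $\hat\gamma$ sits at distance at least $1$ from the spectrum, I would instead bound $m_N(\hat\gamma)-m_{fc}(\hat\gamma)$ by a Cauchy integral on a rectangular contour whose vertical sides sit inside $[-3-\lambda,3+\lambda]$ (so $\hat\gamma$ is exterior to it) and whose horizontal sides at $\Im\xi=\pm N^{-1/2-\epsilon}$ still lie in $\mathcal D_\epsilon'$ by the same gap argument; the uniform gap $\hat\gamma-(3+\lambda)\ge 1$ then controls the vertical contributions.
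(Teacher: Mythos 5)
Your argument in the main regime is correct and takes a genuinely different route from the paper. The paper never evaluates the local law near the single point $\hat\gamma$: it bounds $R'-\hat R'$ on $(\tfrac{\hat\gamma+L_+}{2},\infty)$ by writing the difference as a Cauchy integral of $m_N-m_{fc}$ over a macroscopic rectangle $\mathcal L$ surrounding $[L_-,L_+]$ (splitting the contour at $\vert\Im\xi\vert=N^{-1/2-\epsilon}$), and then locates $\gamma$ by comparing the signs of $R'$ at the two test points $\hat\gamma\pm N^{3\epsilon-1/2}$, using the Taylor expansion of $\hat R'$ at $\hat\gamma$ and the monotonicity of $R'$ on $(\lambda_1,\infty)$. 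You instead use $R'(\hat\gamma)=m_N(\hat\gamma)-m_{fc}(\hat\gamma)$ together with the mean value theorem, and control this difference by the local law at the single point $z_*=\hat\gamma+\i N^{-1/2-\epsilon}$ followed by a derivative bound down to the real axis. Your verification that $z_*\in\mathcal D_\epsilon'$ via $\hat\gamma+m_{fc}(\hat\gamma)-\lambda\ge\hat\gamma-L_+$ (monotonicity of $s\mapsto s+m_{fc}(s)$) is sound, the derivative bounds are legitimate because the vertical segment stays at distance $\gtrsim\hat\gamma-L_+$ from the spectrum on $\Omega_1(\epsilon)$, and the extra factor $N^{\epsilon}$ of room absorbs all $\beta$-dependent constants. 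The Taylor step for $R(\hat\gamma)-R(\gamma)$ is identical to the paper's. (Minor point: your upper test point $2\hat\gamma-L_+$ may exceed $3+\lambda$; the deterministic bound $\gamma\le\lambda_1+\tfrac{1}{2\beta}$, which needs no local law, is the cleaner way to confine $\xi$ from above.)

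The genuine gap is in the regime $\hat\gamma>3+\lambda$, which is nonempty (for small $\beta$ one has $\hat\gamma\approx\tfrac{1}{2\beta}$). Your proposed thin rectangle with horizontal sides at $\Im\xi=\pm N^{-1/2-\epsilon}$ does \emph{not} lie in $\mathcal D_\epsilon'$ ``by the same gap argument'': that argument relies on $\Re z\ge\hat\gamma>L_+$, where $x+m_{fc}(x)-\lambda$ has a definite sign and size. Over the bulk of the spectrum the only generic lower bound on $\vert\lambda\tilde v_i-z-m_{fc}(z)\vert$ is $\Im z=N^{-1/2-\epsilon}$, which is strictly below the threshold $\tfrac12 N^{-\frac{1}{1+b}-\epsilon}$ in the definition of $\mathcal D_\epsilon'$ (since $b>1$ gives $\tfrac{1}{1+b}<\tfrac12$); indeed $\mathcal D_\epsilon'$ exists precisely because neighborhoods of the points $\lambda\tilde v_i$ must be excised at that height. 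So the local law is not available on those horizontal sides on the events you are given, and this piece of the Cauchy integral cannot be controlled as claimed. The fix is to use a contour of the paper's type: put the parts of the contour away from the real axis at constant distance from $\lambda[-1,1]$, so that the lower bound on $\vert\lambda\tilde v_i-z-m_{fc}(z)\vert$ comes from Lemma \ref{lemma:lambdav_i-z-m_fc_large_on_Gamma} rather than from $\Im z$ alone, and handle the arcs with $\vert\Im\xi\vert\le N^{-1/2-\epsilon}$ trivially by their length. With that replacement your argument closes in all regimes.
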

\begin{proof}
Let $\mathcal L$ be the boundary of the rectangle with vertices $$L_++\frac{\min(\hat\gamma-L_+,1)}{3}\pm\frac{\min(\hat\gamma-L_+,1)}{3}\cdot\i\quad\text{and}\quad L_--\frac{\min(\hat\gamma-L_+,1)}{3}\pm\frac{\min(\hat\gamma-L_+,1)}{3}\cdot\i$$
with counterclockwise orientation.  By Lemma \ref{lemma:all_eigenvalues_in_2+lambda+r} and Lemma \ref{lemma:lambdav_i-z-m_fc_large_on_Gamma}, if $N$ is large enough, then
$$\mathcal L\cap\{z:\Im z\ge N^{-\frac{1}{2}-\epsilon}\}\subset \mathcal D_\epsilon'.$$
 
Notice that if $x>(\hat\gamma+L_+)/2$, $N$ is large enough and $\Omega_1(\epsilon)$ holds then 
\begin{itemize}
	\item 
\begin{align} \label{eq65}
	\vert R^{(l)}(x)\vert =\frac{1}{N}\vert \sum_{i=1}^N\frac{(l-1)!}{(x-\lambda_i)^l}\vert \le \frac{(l-1)!}{((\hat\gamma-L_+)/4)^l}\quad (l=2,3,\ldots)
\end{align}
\item
\begin{align}\label{eq64}
	\frac{1}{x-t}=\frac{1}{2\pi\i}\oint_{\mathcal L}\frac{1}{(x-\xi)(\xi-t)}d\xi\quad\text{for any $ t$  enclosed by $\mathcal L$}
\end{align}
\item
\begin{multline}\label{eq66}
\vert R'(x)-\hat R'(x)\vert =\vert \int\frac{1}{x-t}d\mu_{fc}(t)-\frac{1}{N}\sum\frac{1}{x-\lambda_i}\vert =\frac{1}{2\pi}\Big\vert \oint_{\mathcal L}\frac{m_N(\xi)-m_{fc}(\xi)}{x-\xi}d\xi\Big\vert \quad\text{(by \eqref{eq64})}\\
\le\frac{1}{2\pi}\int_{\mathcal L\cap{\vert \Im \xi\vert \le N^{-\frac{1}{2}-\epsilon}}}\frac{\vert m_N(\xi)-m_{fc}(\xi)\vert }{\vert x-\xi\vert }d\xi+\frac{1}{2\pi}\int_{\mathcal L\cap{\vert \Im \xi\vert > N^{-\frac{1}{2}-\epsilon}}}\frac{\vert m_N(\xi)-m_{fc}(\xi)\vert }{\vert x-\xi\vert }d\xi\\
\le\frac{100}{\pi(\min(\hat\gamma-L_+,1))^2}N^{-\frac{1}{2}-\epsilon}+\frac{6\vert \mathcal L\vert }{\pi\cdot \min(\hat\gamma-L_+,1)}N^{2\epsilon-\frac{1}{2}}\quad\text{(by  Lemma \ref{lemma:hatm_fc_close_tom_fc} and Definition \ref{definition:D_epsilon_andD'_epsilon})}\\
\le C N^{2\epsilon-\frac{1}{2}}
\end{multline}
for some constant $C>0$.

\end{itemize}

By mean value theorem and the fact that $\hat R'(\hat\gamma)=0$,
\begin{multline}\label{eq67}
	\hat R'(\hat\gamma+ N^{3\epsilon-\frac{1}{2}})=\hat R''(\hat\gamma+t_1N^{3\epsilon-\frac{1}{2}})\cdot N^{3\epsilon-\frac{1}{2}}=\hat R''(\hat\gamma)\cdot N^{3\epsilon-\frac{1}{2}}+\hat R'''(\hat\gamma+t_1t_2 N^{3\epsilon-\frac{1}{2}})\cdot t_1 N^{6\epsilon-1}
\end{multline}
\begin{multline}\label{eq68}
	\hat R'(\hat\gamma- N^{3\epsilon-\frac{1}{2}})=-\hat R''(\hat\gamma-t_1'N^{3\epsilon-\frac{1}{2}})\cdot N^{3\epsilon-\frac{1}{2}}=-\hat R''(\hat\gamma)\cdot N^{3\epsilon-\frac{1}{2}}+\hat R'''(\hat\gamma-t_1't_2' N^{3\epsilon-\frac{1}{2}})\cdot t_1' N^{6\epsilon-1}
\end{multline}
where $t_1$, $t_2$, $t_1'$, $t_2'$  are all in $[0,1]$. According to \eqref{eq65}, \eqref{eq66}, \eqref{eq67}, \eqref{eq68} and the fact that $\hat R''(\hat\gamma)>0$, we have that if $N$ is large enough and $\Omega_1(\epsilon)$ holds, then
$$R'(\hat\gamma+N^{3\epsilon-\frac{1}{2}})\ge\hat R'(\hat\gamma+N^{3\epsilon-\frac{1}{2}})-CN^{2\epsilon-\frac{1}{2}}\ge \hat R''(\hat\gamma)\cdot N^{3\epsilon-\frac{1}{2}}-\frac{128}{(\hat\gamma-L_+)^3}\cdot t_1 N^{6\epsilon-1}-CN^{2\epsilon-\frac{1}{2}}>0$$
$$R'(\hat\gamma-N^{3\epsilon-\frac{1}{2}})\le\hat R'(\hat\gamma-N^{3\epsilon-\frac{1}{2}})+CN^{2\epsilon-\frac{1}{2}}\le -\hat R''(\hat\gamma)\cdot N^{3\epsilon-\frac{1}{2}}+\frac{128}{(\hat\gamma-L_+)^3}\cdot t_1' N^{6\epsilon-1}+CN^{2\epsilon-\frac{1}{2}}<0$$
thus
$$\vert \gamma-\hat\gamma\vert \le N^{3\epsilon-\frac{1}{2}}$$
because $R'(\gamma)=0$ and $R'$ is increasing on $(\lambda_1,+\infty)$. 

For the second conclusion, according to  Taylor's formula and the fact that $R'(\gamma)=0$, we have:
$$R(\hat\gamma)-R(\gamma)=\frac{1}{2}R''(\gamma+s(\hat\gamma-\gamma))(\hat\gamma-\gamma)^2$$
for some $s\in[0,1]$. This together with \eqref{eq65} and the first conclusion yields the second conclusion. 
\end{proof}

\begin{lemma}\label{lemma:analogue_of_lemma_5.4}
Suppose $c_3\in(0,1/10)$. There exists  constants $c_4>0$ and $N_0>0$ such that if $N>N_0$ and $\Omega_1(\epsilon)$ holds, then
	$$\int_{\gamma-\i\infty}^{\gamma+\i\infty}e^{\frac{N}{2}R(z)}dz=\i e^{\frac{N}{2}R(\gamma)}\sqrt{\frac{4\pi}{NR''(\gamma)}}(1+w_N)$$
	where $\vert w_N\vert \le c_4N^{4c_3-\frac{1}{2}}$.
\end{lemma}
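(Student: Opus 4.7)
The plan is to carry out a steepest-descent (Laplace) analysis at the critical point $\gamma$ of $R$. After substituting $z = \gamma + \i t$, the integral becomes
$$\i e^{\frac{N}{2}R(\gamma)}\int_{\mathbb R}\exp\Big(\tfrac{N}{2}[R(\gamma+\i t)-R(\gamma)]\Big)\,dt,$$
so it suffices to show that the real integral on the right equals $\sqrt{4\pi/(NR''(\gamma))}(1+w_N)$ with $|w_N|\le c_4 N^{4c_3-1/2}$. I would split $\mathbb R$ at $|t|=N^{-1/2+c_3}$ into a central (Gaussian) region and two tails, and estimate each separately.

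First I would establish uniform control of $R$ and its derivatives at $\gamma$. On $\Omega_1(\epsilon)$, the definition of $\Omega_1(\epsilon)$ together with Lemma \ref{lemma:distance_between_gamma_and_hat_gamma} yields $\gamma-\lambda_1\ge\tfrac{9}{10}(\hat\gamma-L_+)$ for large $N$ (since $|\gamma-\hat\gamma|\le N^{3\epsilon-1/2}\to 0$ when $\epsilon<1/12$), while Lemma \ref{lemma:all_eigenvalues_in_2+lambda+r} provides a uniform upper bound on $\gamma-\lambda_N$. Hence there are constants $0<c_*<C_*$ with $c_*\le\gamma-\lambda_i\le C_*$ for every $i$, so $R''(\gamma)=\tfrac{1}{N}\sum(\gamma-\lambda_i)^{-2}$ is bounded both above and below by positive constants, and $|R^{(l)}(\gamma)|\le(l-1)!/c_*^{\,l}$.

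For the central region I would Taylor expand around $\gamma$. Since $R'(\gamma)=0$,
$$R(\gamma+\i t)-R(\gamma) = -\tfrac{1}{2}R''(\gamma)t^2+E(t),\qquad |E(t)|\le C|t|^3\text{ for }|t|\le c_*/2.$$
For $|t|\le N^{-1/2+c_3}$ (recall $c_3<1/10$), $|\tfrac{N}{2}E(t)|\le CN^{-1/2+3c_3}\to 0$, so $\exp(\tfrac{N}{2}E(t))=1+O(N|t|^3)$. The central integral therefore equals the Gaussian $\int_{|t|\le N^{-1/2+c_3}}e^{-NR''(\gamma)t^2/4}\,dt$ up to two corrections: a super-polynomially small tail $\lesssim e^{-cN^{2c_3}}$ from extending to $\mathbb R$, and a remainder of order $N\int_{\mathbb R}|t|^3e^{-cNt^2}\,dt=O(N^{-1})$ in absolute terms, which is $O(N^{-1/2})$ relative to the Gaussian mass $\sqrt{4\pi/(NR''(\gamma))}\sim N^{-1/2}$.

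For the tail $|t|>N^{-1/2+c_3}$ I would use the explicit identity
$$\big|\exp\big(\tfrac{N}{2}[R(\gamma+\i t)-R(\gamma)]\big)\big|=\prod_{i=1}^N\Big(1+\tfrac{t^2}{(\gamma-\lambda_i)^2}\Big)^{-1/4}\le\Big(1+\tfrac{t^2}{C_*^{\,2}}\Big)^{-N/4}.$$
Splitting at $|t|=1$: on $N^{-1/2+c_3}<|t|\le 1$ the bound $(1+x)^{-N/4}\le e^{-Nx/8}$ gives a super-polynomially small contribution $\lesssim e^{-cN^{2c_3}}$, while on $|t|>1$ the bound $(1+t^2/C_*^{\,2})^{-N/4}\le(t/C_*)^{-N/2}$ yields a contribution $O(N^{-1})$. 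Combining all pieces, the absolute error is $O(N^{-1})$, which divided by the leading factor $\sim N^{-1/2}$ gives the desired multiplicative bound $|w_N|\le c_4 N^{4c_3-1/2}$ (with considerable room to spare). The only delicate ingredient is the uniform lower bound on $\gamma-\lambda_1$, which follows from Lemma \ref{lemma:distance_between_gamma_and_hat_gamma}; everything else is a routine saddle-point computation.
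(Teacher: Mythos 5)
Your proposal follows essentially the same steepest-descent route as the paper: reduce to $\int_{\mathbb R}\exp(\frac N2[R(\gamma+\i t)-R(\gamma)])\,dt$, get uniform two-sided bounds on $\gamma-\lambda_i$ from the definition of $\Omega_1(\epsilon)$ together with Lemma \ref{lemma:distance_between_gamma_and_hat_gamma}, Taylor-expand on a central window of width $N^{-1/2+c_3}$, and kill the tails via $\prod_i(1+t^2/(\gamma-\lambda_i)^2)^{-1/4}$. The only substantive difference is cosmetic: the paper rescales $t\mapsto t/\sqrt N$ and bounds the cubic term uniformly at the edge of the window (yielding the stated $N^{4c_3-1/2}$), whereas you integrate the cubic remainder against the Gaussian and get the sharper relative error $O(N^{-1/2})$; both are fine.

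One small slip: in the far tail you claim $\int_{|t|>1}(t/C_*)^{-N/2}\,dt=O(N^{-1})$, but this integral equals $C_*^{N/2}/(N/2-1)$, which blows up when $C_*>1$ (and $C_*$ can indeed exceed $1$, since $\gamma-\lambda_N\gtrsim L_+-L_-$). The fix is immediate — e.g. start that bound at $|t|\ge 2C_*$, where $(1+t^2/C_*^2)^{-N/4}\le 5^{-(N-8)/4}(1+t^2/C_*^2)^{-2}$, and handle $1\le|t|\le 2C_*$ by the exponential bound $(1+1/C_*^2)^{-N/4}$ on an interval of bounded length — so the tail remains super-polynomially small and the conclusion stands.
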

\begin{proof}
	Suppose $\Omega_1(\epsilon)$ holds. Notice that
	\begin{multline*}
		\int_{\gamma-\i\infty}^{\gamma+\i\infty}e^{\frac{N}{2}R(z)}dz=\frac{\i}{\sqrt N}\int_{-\infty}^{+\infty}\exp\Big(\frac{N}{2}R(\gamma+\frac{\i t}{\sqrt N})\Big)dt\\
		=\frac{\i e^{\frac{N}{2}R(\gamma)}}{\sqrt N}\int_{-\infty}^{+\infty}\exp\Big(\frac{N}{2}\Big(R(\gamma+\frac{\i t}{\sqrt N})-R(\gamma)\Big)\Big)dt.
	\end{multline*}
	Using the Taylor's formula (for complex analytic functions), if $N$ is large enough and $\vert t\vert \le N^{c_3}$ then
	\begin{align*}
		R(\gamma+\frac{\i t}{\sqrt N})-R(\gamma)=\frac{R''(\gamma)}{2}(\frac{\i t}{\sqrt N})^2+\frac{R'''(\gamma)}{6}(\frac{\i t}{\sqrt N})^3+r_N(t) 
	\end{align*}
	and the remaining term $r_N(t)$ satisfies:
	\begin{align}\label{eq74}
		\vert r_N(t)\vert =\Big\vert (\frac{\i t}{\sqrt N})^4\frac{1}{2\pi\i}\oint_ {\vert w-\hat\gamma\vert =(\hat\gamma-L_+)/2}\frac{R(w)}{(w-\gamma)^4(w-\gamma-\frac{\i t}{\sqrt N})}dw\Big\vert \le C_1t^4/N^2\le C_1N^{4c_3-2}
	\end{align}
	for some $t$-independent constant $C_1>0$. By \eqref{eq65}, if $N$ is large enough and $\vert t\vert \le N^{c_3}$ then
	\begin{align} \label{eq75}
		\vert \frac{N}{2}\frac{R'''(\gamma)}{6}(\frac{\i t}{\sqrt N})^3\vert \le C_2N^{3c_3-\frac{1}{2}}
	\end{align}
	for some $t$-independent constant $C_2>0$, therefore we have 
	\begin{multline} \label{eq70}
		\Big\vert 	\int_{-N^{c_3}}^{N^{c_3}}\exp\Big(\frac{N}{2}\Big(R(\gamma+\frac{\i t}{\sqrt N})-R(\gamma)\Big)\Big)dt-\int_{-N^{c_3}}^{N^{c_3}}\exp\Big(-\frac{t^2}{4}R''(\gamma)\Big)dt\Big\vert \\
		=\Big\vert \int_{-N^{c_3}}^{N^{c_3}}\Big[\exp\Big(\frac{N}{2}\Big(\frac{R''(\gamma)}{2}(\frac{\i t}{\sqrt N})^2+\frac{R'''(\gamma)}{6}(\frac{\i t}{\sqrt N})^3+r_N(t)\Big)\Big)-\exp\Big(-\frac{t^2}{4}R''(\gamma)\Big)\Big]dt\Big\vert \\
		\le\int_{-N^{c_3}}^{N^{c_3}}\exp\Big(-\frac{t^2}{4}R''(\gamma)\Big)\cdot2\cdot\Big\vert \frac{N}{2}\Big(\frac{R'''( \gamma)}{6}(\frac{\i t}{\sqrt N})^3+r_N(t)\Big)\Big\vert  dt\\
		\le 5C_2N^{4c_3-\frac{1}{2}}
	\end{multline}
	where we used \eqref{eq74}, \eqref{eq75} and the fact that $R''(\gamma)>0$ in the last inequality.
	
	Since $\Omega_1(\epsilon)$ holds, Lemma \ref{lemma:distance_between_gamma_and_hat_gamma} yields $ (\hat\gamma-L_+)/2\le\vert \gamma-\lambda_i\vert \le 2(\hat\gamma-L_+)+(L_+-L_-)$, so 
	\begin{multline} \label{eq69}
		\int_{N^{c_3}}^\infty \Big\vert \exp\Big(\frac{N}{2}\Big(R(\gamma+\frac{\i t}{\sqrt N})-R(\gamma)\Big)\Big)\Big\vert dt=\int_{N^{c_3}}^\infty\exp\Big(\frac{-1}{2}\sum_{i=1}^N\log\sqrt{1+\frac{t^2}{N(\gamma-\lambda_i)^2}}\Big)dt\\
		\le \int_{N^{c_3}}^\infty\exp\Big(\frac{-N}{2}\log\sqrt{1+\frac{C_3t^2}{N}}\Big)dt= \int_{N^{c_3}}^\infty\exp\Big(\frac{-N}{4}\log(1+\frac{C_3t^2}{N})\Big)dt
	\end{multline}
	for some constant $C_3>0$. Plugging
	\begin{align*}
		\log(1+\frac{C_3t^2}{N})\ge\begin{cases}
			\frac{C_3t^2}{2N}\ge \frac{C_3N^{2c_3}}{2N}\quad\text{if }t\in[N^{c_3},\sqrt{N/C_3}]\quad\text{(since $\log(1+x)\ge\frac{x}{2}$ on $[0,1]$)}\\
			\log2\ge\frac{C_3N^{2c_3}}{2N}\quad\text{if }t\in[\sqrt{N/C_3},N] \text{ and }N>N_0
		\end{cases}
	\end{align*}
	into \eqref{eq69}, we know if $N$ is large enough, then
	\begin{multline} \label{eq71}
		\int_{N^{c_3}}^\infty \Big\vert \exp\Big(\frac{N}{2}\Big(R(\gamma+\frac{\i t}{\sqrt N})-R(\gamma)\Big)\Big)\Big\vert dt\le \int_{N^{c_3}}^N\exp\Big(-\frac{N}{4}\frac{C_3N^{2c_3}}{2N}\Big)dt+\int_N^\infty \exp(-\frac{N}{4}\log(\frac{C_3t^2}{N}))dt\le \frac{1}{N}.
	\end{multline}
	Similarly we have $	\int_{-\infty}^{-N^{c_3}} \Big\vert \exp\Big(\frac{N}{2}\Big(R(\gamma+\frac{\i t}{\sqrt N})-R(\gamma)\Big)\Big)\Big\vert dt\le1/N$ when $N$ is large enough. This together with  \eqref{eq70} and \eqref{eq71} imply that if $N$ is large enough, then:
	\begin{multline} 
		\Big\vert 	\int_{\mathbb R }\exp\Big(\frac{N}{2}\Big(R(\gamma+\frac{\i t}{\sqrt N})-R(\gamma)\Big)\Big)dt-\int_{\mathbb R }\exp\Big(-\frac{t^2}{4}R''(\gamma)\Big)dt\Big\vert \\
		\le 5C_2N^{4c_3-\frac{1}{2}}+\frac{2}{N}+\int_{[-N^{c_3},N^{c_3}]^c}\exp\Big(-\frac{t^2}{4}R''(\gamma)\Big)dt\le 5C_2N^{4c_3-\frac{1}{2}}+\frac{2}{N}+\frac{1}{N}\le6C_2N^{4c_3-\frac{1}{2}} 
	\end{multline}
Since $\int_{\mathbb R }\exp\Big(-\frac{t^2}{4}R''(\gamma)\Big)dt=\sqrt{4\pi/R''(\gamma)}$, we complete the proof by \eqref{eq65}.
\end{proof}

\begin{proof}[Proof of Theorem \ref{thm:high_temperature}]
	According to Lemma \ref{lemma:integral_formula} and Lemma \ref{lemma:analogue_of_lemma_5.4}, if $N$ is large enough and $\Omega_1(\epsilon)$ holds, then
	$$\int_{S_{N-1}}e^{\beta\langle\sigma,(W+\lambda V)\sigma\rangle}d\omega_N(\sigma)=\frac{\sqrt N \beta}{\sqrt\pi(2\beta e)^{N/2}}\cdot e^{\frac{N}{2}R(\gamma)}\sqrt{\frac{4\pi}{NR''(\gamma)}}(1+u_N)$$ 
	where $\vert u_N\vert \le N^{-1/3}$ and thus by Lemma \ref{lemma:distance_between_gamma_and_hat_gamma}
	\begin{align*}
		F_N=\frac{1}{N}\log\int_{S_{N-1}}e^{\beta\langle\sigma,(W+\lambda V)\sigma\rangle}d\omega_N(\sigma)=-\frac{1}{2}\log(2\beta e)+\beta\hat\gamma-\frac{1}{2N}\sum_{i=1}^N\log(\hat\gamma-\lambda_i)+t_N
	\end{align*}
	where 
	\begin{align}\label{eq73}
		\vert t_N\vert \le CN^{6\epsilon-1}
	\end{align}
	for some constant $C>0$. Therefore, if $N$ is large enough and $\Omega_1(\epsilon)$ holds, then
	\begin{multline*}
		-2\sqrt N\Big(F_N+\frac{1}{2}\log(2\beta e)-\beta\hat\gamma\Big)-\sqrt N\int\log(\hat\gamma-t)d\mu_{fc}(t)\\
		=\frac{1}{\sqrt N}\sum_{i=1}^N\log(\hat\gamma-\lambda_i)-\sqrt N\int\log(\hat\gamma-t)d\mu_{fc}(t)-2\sqrt N\cdot t_N.
	\end{multline*}
According to Theorem \ref{thm:CLT}, \eqref{eq73}, \eqref{eq72} and the assumption that $\epsilon<1/12$, we complete the proof.
	
\end{proof}

\appendix
\section{Analysis on the curve of steepest-descent: proof of Lemma \ref{lemma:steepest_descent_curve}}\label{appendix:steepest_descent_curve}
Now we study the curve of steepest-descent.  In this section,
\begin{itemize}
	\item 	$N$ is fixed;
	\item we do not consider  randomness. In other words, we can imagine that the sample point in the probability space is fixed.
\end{itemize}

\begin{lemma}\label{lemma:existence+uniqueness_of_h(y)}
	If $\Im R(x+\i y)=0$ and $y>0$, then $y\in(0,\frac{\pi}{2\beta})$. On the other hand, for any $y\in(0,\frac{\pi}{2\beta})$ there is a unique $x\in{\mathbb R }$ such that $\Im R(x+\i y)=0$.
\end{lemma}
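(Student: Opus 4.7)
The plan is to work directly from the explicit formula for $\Im R(x+\i y)$ arising from the branch convention $\Im\log(\cdot)\in(-\pi,\pi)$. Writing
\[
\Im R(x+\i y) \;=\; 2\beta y \;-\; \frac{1}{N}\sum_{i=1}^N \arg(x-\lambda_i+\i y),
\]
where $\arg$ denotes the principal argument in $(-\pi,\pi)$, the statement becomes an elementary property of this scalar function, and the proof splits naturally along the two assertions.

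For the first assertion, I would note that when $y>0$ each complex number $x-\lambda_i+\i y$ has strictly positive imaginary part, so $\arg(x-\lambda_i+\i y)\in(0,\pi)$. Hence
\[
\frac{1}{N}\sum_{i=1}^N \arg(x-\lambda_i+\i y) \;\in\; (0,\pi),
\]
and the condition $\Im R(x+\i y)=0$ forces $2\beta y<\pi$, i.e.\ $y\in(0,\tfrac{\pi}{2\beta})$.

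For the second assertion, fix $y\in(0,\tfrac{\pi}{2\beta})$ and set $\phi(x):=\Im R(x+\i y)$. The plan is a standard IVT + monotonicity argument:
\begin{enumerate}
\item As $x\to+\infty$, every term $\arg(x-\lambda_i+\i y)\to 0^+$, so $\phi(x)\to 2\beta y>0$.
\item As $x\to-\infty$, every term $\arg(x-\lambda_i+\i y)\to \pi^-$, so $\phi(x)\to 2\beta y-\pi<0$ by the chosen range of $y$.
\item A direct differentiation (using $\arg(x-\lambda_i+\i y)=\operatorname{atan2}(y,x-\lambda_i)$) yields
\[
\phi'(x)\;=\;\frac{y}{N}\sum_{i=1}^N \frac{1}{(x-\lambda_i)^2+y^2}\;>\;0.
\]
\end{enumerate}
Continuity and strict monotonicity of $\phi$ together with the limits above produce exactly one zero.

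There is no real obstacle here; the only point requiring a little care is the asymptotic behavior of each $\arg(x-\lambda_i+\i y)$ as $x\to-\infty$, which relies on $y>0$ fixed so that the principal argument approaches $\pi$ from below (uniformly in $i$, since there are finitely many $\lambda_i$). Once that is noted, the chain IVT $\Rightarrow$ existence and $\phi'>0 \Rightarrow$ uniqueness closes the argument.
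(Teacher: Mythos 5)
Your proof is correct and follows essentially the same route as the paper: the identity $\arccos\bigl((x-\lambda_i)/\sqrt{(x-\lambda_i)^2+y^2}\bigr)=\arg(x-\lambda_i+\i y)$ for $y>0$ means your formula for $\Im R$ is exactly the paper's, and both arguments then use the range $(0,\pi)$ of each term for the first assertion and the IVT limits $2\beta y$, $2\beta y-\pi$ together with $\phi'(x)=\frac{y}{N}\sum_i\bigl((x-\lambda_i)^2+y^2\bigr)^{-1}>0$ for the second.
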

\begin{proof}
	By definition we have that if $x\in{\mathbb R }$ and $y>0$ then
	\begin{align}\label{eq30}
		\Im R(x+\i y)=2\beta y-\frac{1}{N}\sum_{i=1}^N\arccos\frac{x-\lambda_i}{\sqrt{(x-\lambda_i)^2+y^2}}.
	\end{align}
	So if $\Im R(x+\i y)=0$ then $2\beta y=\frac{1}{N}\sum_{i=1}^N\arccos\frac{x-\lambda_i}{\sqrt{(x-\lambda_i)^2+y^2}}<\pi$, thus $y<\pi/(2\beta)$.
	
	On the other hand, suppose $y\in(0,\pi/(2\beta))$. Let
	$$f_y(x)=\Im R(x+\i y)=2\beta y-\frac{1}{N}\sum_{i=1}^N\arccos\frac{x-\lambda_i}{\sqrt{(x-\lambda_i)^2+y^2}}.$$
	Then $\lim\limits_{x\to-\infty}f_y(x)=2\beta y-\pi <0$ and $\lim\limits_{x\to+\infty}f_y(x)=2\beta y>0$. By continuity there exists $x\in{\mathbb R }$ such that $f_y(x)=0$ so $\Im R(x+\i y)=0$. Moreover, if $x_1<x_2$ such that $f_y(x_1)=f_y(x_2)=0$ then there is $x_3\in(x_1,x_2)$ with $f_y'(x_3)=0$. But $f_y'(x)=\frac{y}{N}\sum\limits_{i=1}^N\frac{1}{(x-\lambda_i)^2+y^2}>0$, so such $x_3$ cannot exist. In summary, there is a unique $x\in{\mathbb R }$ such that $\Im R(x+\i y)=0$.  
\end{proof}

\begin{lemma}\label{lemma:curve_to_the_left_of_gamma_a}
	$h(y)<\gamma$ for all $y\in(0,\frac{\pi}{2\beta})$
\end{lemma}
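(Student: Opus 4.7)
The plan is to compare $\Im R(\gamma + \i y)$ to $\Im R(h(y) + \i y) = 0$ using the explicit formula \eqref{eq30} and the strict monotonicity in $x$ established in the proof of Lemma \ref{lemma:existence+uniqueness_of_h(y)}.

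First I would fix $y \in (0, \frac{\pi}{2\beta})$ and plug $x = \gamma$ into \eqref{eq30}. Since $\gamma > \lambda_1 \geq \lambda_i$ for every $i$, each quantity $\gamma - \lambda_i$ is strictly positive, so
\begin{equation*}
\arccos \frac{\gamma - \lambda_i}{\sqrt{(\gamma-\lambda_i)^2 + y^2}} = \arctan \frac{y}{\gamma - \lambda_i} \in (0, \pi/2).
\end{equation*}
The elementary inequality $\arctan(t) < t$ for $t > 0$ then gives $\arctan \frac{y}{\gamma - \lambda_i} < \frac{y}{\gamma - \lambda_i}$ for every $i$. Summing over $i$ and using the defining equation $2\beta = \frac{1}{N}\sum_{i=1}^N \frac{1}{\gamma - \lambda_i}$ of $\gamma$ (Definition \ref{definition:R(z)}) yields
\begin{equation*}
\frac{1}{N}\sum_{i=1}^N \arccos \frac{\gamma - \lambda_i}{\sqrt{(\gamma - \lambda_i)^2 + y^2}} < \frac{y}{N}\sum_{i=1}^N \frac{1}{\gamma - \lambda_i} = 2\beta y,
\end{equation*}
which, by \eqref{eq30}, is exactly the statement $\Im R(\gamma + \i y) > 0$.

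Next, recall from the proof of Lemma \ref{lemma:existence+uniqueness_of_h(y)} that the function $f_y(x) := \Im R(x + \i y)$ satisfies $f_y'(x) = \frac{y}{N}\sum_{i=1}^N \frac{1}{(x-\lambda_i)^2 + y^2} > 0$, so $f_y$ is strictly increasing on $\mathbb{R}$. By definition $f_y(h(y)) = 0$, while the previous paragraph gives $f_y(\gamma) > 0$. Strict monotonicity therefore forces $h(y) < \gamma$, which is the claim. I do not anticipate any technical obstacle: the entire argument rests on the sign computation $\arctan(t) < t$ combined with the critical-point equation for $\gamma$, and the monotonicity of $f_y$ in $x$ has already been verified in Lemma \ref{lemma:existence+uniqueness_of_h(y)}.
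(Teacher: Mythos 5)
Your proof is correct, and it takes a route that is genuinely different in its logical structure from the paper's. The paper argues by contradiction: assuming $h(y_0)=\gamma$ for some $y_0$, it applies Rolle's theorem to $y\mapsto \Im R(\gamma+\i y)$ on $[0,y_0]$ to produce a critical point $y_1$, and then shows $\frac{\partial}{\partial y}\Im R(\gamma+\i y)=2\beta-\frac{1}{N}\sum_i\frac{\gamma-\lambda_i}{(\gamma-\lambda_i)^2+y^2}>2\beta-\frac{1}{N}\sum_i\frac{1}{\gamma-\lambda_i}=0$, a contradiction; it then still needs the limit $\lim_{y\to\frac{\pi}{2\beta}-}h(y)=-\infty$ and a continuity/intermediate-value argument to upgrade $h(y)\neq\gamma$ to $h(y)<\gamma$. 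You instead prove the strict inequality $\Im R(\gamma+\i y)>0$ directly, via $\arccos\frac{\gamma-\lambda_i}{\sqrt{(\gamma-\lambda_i)^2+y^2}}=\arctan\frac{y}{\gamma-\lambda_i}<\frac{y}{\gamma-\lambda_i}$ together with the critical-point equation for $\gamma$, and then conclude from the strict monotonicity of $x\mapsto\Im R(x+\i y)$ already recorded in the proof of Lemma \ref{lemma:existence+uniqueness_of_h(y)}. Your inequality $\arctan t<t$ is essentially the integrated form of the paper's pointwise derivative bound, so the analytic content is the same, but your finish is cleaner: it avoids both the contradiction framework and the appeal to the asymptotics of $h$ near $y=\frac{\pi}{2\beta}$, and it yields the quantitative statement $\Im R(\gamma+\i y)>0$ as a by-product. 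All steps check out (in particular $\gamma>\lambda_1$ guarantees the $\arccos$--$\arctan$ identity applies with a positive argument), so there is no gap.
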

\begin{proof}
	Suppose $y_0\in(0,\frac{\pi}{2\beta})$. If $h(y_0)=\gamma$, then $\Im R(\gamma+\i y_0)=0$. Since $\Im R(\gamma)=0$, there must be $y_1\in(0,y_0)$ such that
	$$\frac{\partial}{\partial y}\Big\vert _{y=y_1}\Im R(\gamma+\i y)=0.$$
	But
	$$\frac{\partial}{\partial y}\Big\vert _{y=y_1}\Im R(\gamma+\i y)=2\beta-\frac{1}{N}\sum_{i=1}^N\frac{\gamma-\lambda_i}{(\gamma-\lambda_i)^2+y_1^2}>2\beta-\frac{1}{N}\sum_{i=1}^N\frac{1}{\gamma-\lambda_i}=0.$$
	The definition of $\gamma$ is used in the last identity. This means $h(y)\ne\gamma$ for all $y\in(0,\frac{\pi}{2\beta})$. Since 
	$$\lim_{y\to \frac{\pi}{2\beta}-}h(y)=-\infty,$$
	we know by continuity that  $h(y)<\gamma$ for all $y\in(0,\frac{\pi}{2\beta})$.
\end{proof}
\begin{lemma}\label{lemma:monotone}
	If   $y\in[\frac{1}{4\beta},\frac{\pi}{2\beta})$ then
	\begin{align}\label{eq32}
		h'(y)\le\frac{1}{2}-2\beta y.
	\end{align}
\end{lemma}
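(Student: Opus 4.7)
My plan is to obtain an explicit formula for $h'(y)$ by implicit differentiation of the defining relation, and then reduce the desired bound to an elementary trigonometric inequality. Writing $z = h(y) + \i y$ and $\theta_i := \arg(z-\lambda_i) \in (0,\pi)$, which is well-defined because $y > 0$, the identity from \eqref{eq30} reads
$$2\beta y \;=\; \frac{1}{N}\sum_{i=1}^N \theta_i.$$
Using $\theta_i = \Im\log(z-\lambda_i)$ and $\tfrac{dz}{dy} = h'(y)+\i$, I compute
$$\frac{d\theta_i}{dy} \;=\; \Im\,\frac{h'(y)+\i}{z-\lambda_i} \;=\; \frac{(h(y)-\lambda_i) - y\,h'(y)}{(h(y)-\lambda_i)^2+y^2}.$$
Substituting into the differentiated constraint and solving for $h'(y)$ is immediate since $\partial_x\Im R(x+\i y) = y\sum_i[(x-\lambda_i)^2+y^2]^{-1} > 0$ (this positivity was already used in the proof of Lemma \ref{lemma:existence+uniqueness_of_h(y)}).

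Next I will simplify the resulting expression via the angular identities $h(y)-\lambda_i = y\cot\theta_i$ and $(h(y)-\lambda_i)^2 + y^2 = y^2/\sin^2\theta_i$, combined with the constraint $\sum_i\theta_i = 2\beta Ny$, to obtain
$$h'(y) \;=\; \frac{\sum_i(\sin\theta_i\cos\theta_i - \theta_i)}{\sum_i\sin^2\theta_i}.$$
Setting $\phi_i := 2\theta_i \in (0,2\pi)$ and applying the double-angle identities $\sin\theta\cos\theta = \tfrac{1}{2}\sin 2\theta$ and $\sin^2\theta = \tfrac{1}{2}(1-\cos 2\theta)$ yields the compact form
$$h'(y) \;=\; \frac{\sum_i(\sin\phi_i - \phi_i)}{\sum_i(1-\cos\phi_i)},$$
with the averaged constraint becoming $\bar\phi := \tfrac{1}{N}\sum_i\phi_i = 4\beta y$.

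The desired bound $h'(y) \le \tfrac{1}{2} - 2\beta y$ then reads $h'(y) \le -c/2$ where $c := \bar\phi - 1 = 4\beta y - 1 \ge 0$; this nonnegativity is precisely the hypothesis $y \ge 1/(4\beta)$. Since every $\phi_i \in (0,2\pi)$ gives $1-\cos\phi_i > 0$, cross-multiplying and rearranging reduces the claim to
$$2\sum_{i=1}^N \sin\phi_i \;-\; c\sum_{i=1}^N\cos\phi_i \;\le\; N(c+2),$$
which I will verify termwise using the amplitude bound $2\sin\phi - c\cos\phi \le \sqrt{4+c^2}$ together with $\sqrt{4+c^2} \le c+2$ (the latter being equivalent to $4c \ge 0$).

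I do not foresee any serious obstacle; the main care needed is in sign-tracking during the implicit differentiation and the double-angle manipulations. The content of the lemma essentially boils down to the elementary inequality $\sqrt{4+c^2} \le c+2$ for $c \ge 0$, translated through the $\phi_i$-parametrization into a statement about $h'(y)$, with the hypothesis $y \ge 1/(4\beta)$ entering only to guarantee $c \ge 0$.
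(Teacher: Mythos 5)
Your proof is correct. You start from the same point as the paper, namely the implicit-function-theorem formula
\[
-h'(y)=\frac{2\beta-\frac{1}{N}\sum_{i}\frac{h(y)-\lambda_i}{(h(y)-\lambda_i)^2+y^2}}{y\cdot\frac{1}{N}\sum_{i}\frac{1}{(h(y)-\lambda_i)^2+y^2}},
\]
which is exactly \eqref{eq31}, but you finish differently. The paper simply bounds the numerator from below by $2\beta-\tfrac{1}{2y}$ (via $|t|/(t^2+y^2)\le 1/(2y)$) and the denominator from above by $1/y$ (via $1/(t^2+y^2)\le 1/y^2$), and divides; the hypothesis $y\ge 1/(4\beta)$ enters only to make the numerator bound nonnegative so that the division goes the right way. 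You instead pass to the angles $\theta_i=\arg(z-\lambda_i)$, derive the exact identity $h'=\sum_i(\sin\phi_i-\phi_i)/\sum_i(1-\cos\phi_i)$ with $\phi_i=2\theta_i$ and $\bar\phi=4\beta y$, and close with the coupled termwise amplitude bound $2\sin\phi-c\cos\phi\le\sqrt{4+c^2}\le c+2$ for $c=4\beta y-1\ge 0$. In your framework the paper's estimate amounts to using $\sin\phi_i\le 1$ and $1-\cos\phi_i\le 2$ separately, so your coupled bound is in principle slightly sharper, though both yield the identical final inequality; the trade-off is that your route is longer and requires the exact trigonometric reparametrization, while the paper's is a two-line estimate. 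All the individual steps you outline (the computation of $d\theta_i/dy$, the substitutions $h(y)-\lambda_i=y\cot\theta_i$ and $(h(y)-\lambda_i)^2+y^2=y^2/\sin^2\theta_i$, the strict positivity of $\sum_i(1-\cos\phi_i)$ for $\phi_i\in(0,2\pi)$, and the reduction to $\sqrt{4+c^2}\le c+2$) check out.
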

\begin{proof}Suppose $y\in(0,\frac{\pi}{2\beta})$.
	According to \eqref{eq30} and the implicit function theorem,
	\begin{align}\label{eq31}
		-h'(y)=\frac{\big(\frac{\partial\Im R(x+\i y)}{\partial y}\big)}{\big(\frac{\partial\Im R(x+\i y)}{\partial x}\big)}\Bigg\vert _{x=h(y)}=\frac{2\beta-\frac{1}{N}\sum_{i=1}^N\frac{h(y)-\lambda_i}{(h(y)-\lambda_i)^2+y^2}}{y\cdot\frac{1}{N}\sum_{i=1}^N\frac{1}{(h(y)-\lambda_i)^2+y^2}}.
	\end{align}
	Since $$\Big\vert \frac{h(y)-\lambda_i}{(h(y)-\lambda_i)^2+y^2}\Big\vert \le\frac{1}{2y},\quad \frac{1}{(h(y)-\lambda_i)^2+y^2}\le\frac{1}{y^2},$$
	we see that on the right hand side of \eqref{eq31}, the numerator is larger than or equal to $2\beta-\frac{1}{2y}$ and the denominator is less than or equal to $1/y$. So the lemma is proved.
\end{proof}
\begin{corollary}\label{coro:derivative_of_inverse_function}
	If $\frac{1}{4}<c_0<\frac{\pi}{2}$, then $h$ is a bijection from $[\frac{c_0}{\beta},\frac{\pi}{2\beta})$ to $(-\infty,h(\frac{c_0}{\beta})]$. The inverse function satisfies:
	$$\frac{2}{1-4c_0}\le(h^{-1})'(x)<0,\quad\forall x\in (-\infty,h(\frac{c_0}{\beta})].$$
\end{corollary}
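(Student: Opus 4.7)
The plan is to derive the corollary as an essentially direct consequence of Lemma \ref{lemma:monotone}. Since $c_0>\tfrac14$, for every $y\in[c_0/\beta,\pi/(2\beta))$ we have $y\ge 1/(4\beta)$, so Lemma \ref{lemma:monotone} applies and yields
$$
h'(y)\le \tfrac12-2\beta y\le \tfrac12-2c_0<0.
$$
Thus $h$ is strictly decreasing and $C^1$ on $[c_0/\beta,\pi/(2\beta))$ (the $C^1$ regularity is already contained in the implicit-function argument \eqref{eq31}). From the formula \eqref{eq30} for $\Im R(x+\i y)$, the equation $\tfrac{1}{N}\sum_i\arccos\tfrac{h(y)-\lambda_i}{\sqrt{(h(y)-\lambda_i)^2+y^2}}=2\beta y$ forces each $\arccos$-term to approach $\pi$ as $y\to\pi/(2\beta)^-$, i.e.\ $h(y)-\lambda_i\to-\infty$, giving $\lim_{y\to\pi/(2\beta)^-}h(y)=-\infty$. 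Combining strict monotonicity with continuity and this boundary behaviour, $h$ is a bijection from $[c_0/\beta,\pi/(2\beta))$ onto $(-\infty,h(c_0/\beta)]$.

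For the derivative estimate, I would apply the inverse-function theorem: on $(-\infty,h(c_0/\beta)]$,
$$
(h^{-1})'(x)=\frac{1}{h'(h^{-1}(x))}.
$$
The bound $h'(y)\le \tfrac12-2c_0<0$ established above says that $h'(h^{-1}(x))$ is negative with absolute value at least $2c_0-\tfrac12$, so $(h^{-1})'(x)$ is negative and bounded below by $1/(\tfrac12-2c_0)=2/(1-4c_0)$, which is the claimed inequality. There is no real obstacle here; the only point that requires attention is the verification that the hypothesis $y\ge 1/(4\beta)$ of Lemma \ref{lemma:monotone} is met throughout the interval, which is exactly the role of the assumption $c_0>\tfrac14$, and the identification of the left endpoint of the image, which is forced by continuity since $h(c_0/\beta)$ is in fact attained.
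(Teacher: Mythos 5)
Your proof is correct and follows essentially the same route as the paper: the paper also deduces strict monotonicity (hence bijectivity) from the bound $h'(y)\le\tfrac12-2\beta y$ of Lemma \ref{lemma:monotone} and then applies $(h^{-1})'=(h')^{-1}$. Your additional verification that $h(y)\to-\infty$ as $y\to\pi/(2\beta)^-$ (needed for surjectivity onto $(-\infty,h(c_0/\beta)]$) is a detail the paper leaves implicit, and your argument for it is sound.
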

\begin{proof}
	By \eqref{eq32} we know $h$ is strictly decreasing on $[\frac{c_0}{\beta},\frac{\pi}{2\beta})$, so is bijective on this interval. Then using $(h^{-1})'=(h')^{-1}$ we complete the proof.
\end{proof}

\begin{lemma}\label{lemma:question}
	We have
	$$\lim\limits_{y\to0+}h(y)=\gamma,\quad \lim\limits_{y\to0+}\frac{h(y)-\gamma}{y}=0\quad\text{and}\quad\lim\limits_{y\to0+}h'(y)=0.$$
\end{lemma}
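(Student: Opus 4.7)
The plan is to apply the real-analytic implicit function theorem to an auxiliary function $g$ at $(\gamma, 0)$, producing a smooth extension of $h$ through $y = 0$; all three limits then follow at once. For $x > \lambda_1$ and $y > 0$, the identity $\Im \log((x-\lambda_i) + \i y) = \arctan(y/(x-\lambda_i))$ gives
$$\Im R(x+\i y) = 2\beta y - \frac{1}{N}\sum_{i=1}^N \arctan\frac{y}{x-\lambda_i}.$$
Dividing by $y$ and introducing the even real-analytic function $\phi(t) := \arctan(t)/t$ (extended by $\phi(0) = 1$, with Taylor series $\phi(t) = 1 - t^2/3 + t^4/5 - \cdots$), the equation $\Im R = 0$ for $x > \lambda_1$ becomes $g(x,y) = 0$, where
$$g(x, y) := 2\beta - \frac{1}{N}\sum_{i=1}^N \frac{1}{x-\lambda_i}\,\phi\!\left(\frac{y}{x-\lambda_i}\right),$$
and I extend $g$ to $y = 0$ by the same formula. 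Since $\gamma > \lambda_1 \ge \lambda_i$ for every $i$, the denominators $x - \lambda_i$ are bounded away from $0$ near $x = \gamma$, so $g$ is real-analytic on a full two-sided neighborhood of $(\gamma, 0)$ (in particular, with no restriction to $y > 0$).

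By the definition of $\gamma$ we have $g(\gamma, 0) = 0$. A direct computation, using that $\phi$ is even, gives
$$\partial_x g(\gamma, 0) = \frac{1}{N}\sum_{i=1}^N \frac{1}{(\gamma-\lambda_i)^2} > 0, \qquad \partial_y g(\gamma, 0) = -\frac{\phi'(0)}{N}\sum_{i=1}^N \frac{1}{(\gamma-\lambda_i)^2} = 0,$$
because $\phi'(0) = 0$. The implicit function theorem therefore provides a real-analytic function $\tilde h$ on some neighborhood $(-\delta, \delta)$ of $0$ with $\tilde h(0) = \gamma$, $g(\tilde h(y), y) \equiv 0$, and $\tilde h'(0) = -\partial_y g(\gamma, 0)/\partial_x g(\gamma, 0) = 0$.

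It remains to identify $\tilde h$ with $h$ on $(0, \delta)$. For sufficiently small $y > 0$, continuity gives $\tilde h(y) > \lambda_1$, so $g(\tilde h(y), y) = 0$ is equivalent to $\Im R(\tilde h(y) + \i y) = 0$; the uniqueness statement in Lemma \ref{lemma:existence+uniqueness_of_h(y)} then forces $h(y) = \tilde h(y)$. Consequently $h(y) \to \tilde h(0) = \gamma$, $(h(y)-\gamma)/y \to \tilde h'(0) = 0$, and $h'(y) \to \tilde h'(0) = 0$ as $y \to 0+$. The conceptual point at the heart of the argument is the vanishing $\partial_y g(\gamma, 0) = 0$, which is forced by $\arctan$ being odd and is what makes the steepest-descent curve leave $\gamma$ tangent to the real axis; without it one would only obtain $h(y) - \gamma = O(y)$ rather than $o(y)$.
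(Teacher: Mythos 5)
Your proof is correct, and it takes a genuinely different route from the paper's. You desingularize $\Im R(x+\i y)$ by dividing by $y$, writing the curve equation as $g(x,y)=2\beta-\frac{1}{N}\sum_i\frac{1}{x-\lambda_i}\phi\big(\frac{y}{x-\lambda_i}\big)=0$ with $\phi(t)=\arctan(t)/t$, and then apply the implicit function theorem at $(\gamma,0)$; the whole lemma reduces to the two observations $\partial_x g(\gamma,0)=\frac{1}{N}\sum_i(\gamma-\lambda_i)^{-2}>0$ and $\partial_y g(\gamma,0)=0$ (the latter because $\phi$ is even), after which the identification of $\tilde h$ with $h$ via the uniqueness in Lemma \ref{lemma:existence+uniqueness_of_h(y)} delivers all three limits, and in fact the stronger conclusion that $h$ extends smoothly (real-analytically) through $y=0$ with $h'(0)=0$. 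The paper instead stays on the side $y>0$: it isolates the top group of possibly coinciding eigenvalues, Taylor-expands $\arccos$, first shows $h(y)$ remains bounded away from $\lambda_1$, then extracts the bound $\vert h'(y)\vert\le Cy$ from the implicit-differentiation identity, and deduces the three limits one after another by hand. Your argument is shorter and more conceptual, and it makes transparent why the steepest-descent curve leaves $\gamma$ tangentially to the vertical direction; the paper's computation, while longer, produces the explicit quantitative bound $\vert h'(y)\vert\le C y$ with an identified constant, which is of the same nature as (though not needed in place of) the estimates used later in Lemma \ref{lemma:regularity_of_S}. One small point worth making explicit if you write this up: the $\arctan$ formula for $\Im R$ is only valid for $x>\lambda_1$, but since $\tilde h(y)\to\gamma>\lambda_1$ you only ever invoke it in that region, so this is harmless.
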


\begin{proof}
We put this proof at the end of this section.
\end{proof}
Now we are ready to prove Lemma \ref{lemma:steepest_descent_curve}.
\begin{proof}[Proof of Lemma \ref{lemma:steepest_descent_curve}] 
	The first, third and last conclusions of Lemma \ref{lemma:steepest_descent_curve} come from Lemma \ref{lemma:existence+uniqueness_of_h(y)}, Lemma \ref{lemma:curve_to_the_left_of_gamma_a} and Lemma \ref{lemma:monotone} respectively. Notice that $R(\bar z)=\overline{R(z)}$ and that $\gamma$ is the only number in $(\lambda_1,\infty)$ where $R'=0$. Moreover, according to Lemma \ref{lemma:steepest_descent_curve}, the $y$-coordinate of any point on $S$ is in $(-\frac{\pi}{2\beta},\frac{\pi}{2\beta})$. So we have $S=\{h(y)+\i y\vert -\frac{\pi}{2\beta}<y<\frac{\pi}{2\beta}\}$. Finally, by Lemma \ref{lemma:question} we have   $h(y)\in C^1((-\frac{\pi}{2\beta},\frac{\pi}{2\beta}))$.
\end{proof}
\begin{proof}[Proof of Lemma \ref{lemma:question}]
	We notice that $N$ is a fixed number in this lemma. If $\lambda_1=\cdots=\lambda_N$ then the lemma is trivial. Now we assume that $$\lambda_1=\cdots=\lambda_M>\lambda_{M+1}\ge\cdots\ge \lambda_N$$ 
	for some $M\in[1,N-1]$.
	\begin{lemma}\label{lemma:arccos}
		\begin{itemize}
			\item If $0<t<1$ then there exists $t_1\in[0,t]$ such that
			$$\arccos\sqrt{1-t^2}=t+\frac{t^3}{6}\cdot\frac{1+2t_1^2}{(1-t_1^2)^{5/2}}.$$
			\item There exists $w_0>0$ such that $\frac{\arccos\sqrt{1-t^2}}{t}\ge w_0$ for all $t\in[0,1]$.
		\end{itemize}  
	\end{lemma}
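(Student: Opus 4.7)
The plan is to observe that $\arccos\sqrt{1-t^2}$ coincides with $\arcsin t$ on $[0,1]$, since if $\theta=\arcsin t$ then $\cos\theta=\sqrt{1-t^2}$ and $\theta\in[0,\pi/2]$. With $f(t):=\arcsin t$, both conclusions become standard facts about $\arcsin$, and the proposed approach is a direct Taylor expansion for the first part and a monotonicity/continuity argument for the second.

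For the first conclusion, I would compute the first three derivatives of $f$ on $(-1,1)$:
\[
f'(t)=\frac{1}{\sqrt{1-t^2}},\qquad f''(t)=\frac{t}{(1-t^2)^{3/2}},\qquad f'''(t)=\frac{1+2t^2}{(1-t^2)^{5/2}}.
\]
In particular $f(0)=0$, $f'(0)=1$, $f''(0)=0$. Taylor's theorem with Lagrange remainder at order two then yields, for any $t\in(0,1)$, some $t_1\in[0,t]$ with
\[
f(t)=t+\frac{f'''(t_1)}{6}t^3=t+\frac{t^3}{6}\cdot\frac{1+2t_1^2}{(1-t_1^2)^{5/2}},
\]
which is exactly the claimed identity.

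For the second conclusion, since $f'(t)=1/\sqrt{1-t^2}\ge 1$ on $[0,1)$ and $f(0)=0$, integration gives $\arcsin t\ge t$ for all $t\in[0,1]$, so $\arccos\sqrt{1-t^2}/t\ge 1$ on $(0,1]$. At $t=0$ one has $\lim_{t\to0^+}\arcsin(t)/t=1$, so the ratio extends continuously to $[0,1]$ with value at least $1$ everywhere; one may simply take $w_0=1$. (Alternatively, continuity on the compact interval $[0,1]$ together with positivity yields a positive lower bound without computing it explicitly.)

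There is no real obstacle here; the only minor point to double-check is the identification $\arccos\sqrt{1-t^2}=\arcsin t$, which uses $t\in[0,1]$ so that the principal branches agree. Once that is in place, the rest is a line each.
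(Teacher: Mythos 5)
Your proof is correct and follows essentially the same route as the paper, which simply invokes Taylor's formula for the first item and calls the second trivial; you have just written out the details (the identification with $\arcsin$, the third derivative, and the explicit bound $w_0=1$). No issues.
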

	\begin{proof}
		The first conclusion is from Taylor's formula. The other  conclusion is trivial.
	\end{proof}
	Now we use Lemma \ref{lemma:arccos} to prove Lemma \ref{lemma:question}.
	\begin{enumerate}
		\item According to \eqref{eq30},
		\begin{align}\label{eq76}
			2\beta y=\frac{1}{N}\sum_{i=1}^N\arccos\frac{h(y)-\lambda_i}{\sqrt{(h(y)-\lambda_i)^2+y^2}}\quad\forall y\in(0,\frac{\pi}{2\beta})
		\end{align}
		so we have 
		$$h(y)> \lambda_1\quad\forall y\in(0,\frac{\pi}{4N\beta}]$$
		otherwise the right hand side of \eqref{eq76} is larger than $\frac{1}{N}\arccos\frac{h(y)-\lambda_1}{\sqrt{(h(y)-\lambda_1)^2+y^2}}\ge\frac{\pi}{2N}\ge2\beta y$.
		 
		For $y\in(0,\frac{\pi}{4N\beta}]$, define
		$$f(y):=2\beta y-\frac{1}{N}\sum_{i=M+1}^N\arccos\frac{h(y)-\lambda_i}{\sqrt{(h(y)-\lambda_i)^2+y^2}}$$
		By Lemma  \ref{lemma:arccos} 
		there exists $\delta_0\in(0,\frac{\pi}{4N\beta})$ such that if $y\in(0,\delta_0)$ then
		\begin{multline}\label{eq78}
			\vert f(y)\vert =\Big\vert 2\beta y-\frac{1}{N}\sum_{i=M+1}^N\arccos\sqrt{1-\Big(\frac{y}{\sqrt{(h(y)-\lambda_i)^2+y^2}}\Big)^2}\Big\vert \\
			\le 2\beta y+\frac{1}{N}\sum_{i=M+1}^N\frac{2y}{\sqrt{(h(y)-\lambda_i)^2+y^2}}\le 2\beta y+\frac{2y}{\lambda_1-\lambda_{M+1}}=w_1y
		\end{multline}
		where $w_1:=2\beta+\frac{2}{\lambda_1-\lambda_{M+1}}$.   By Lemma  \ref{lemma:arccos}, \eqref{eq78}, \eqref{eq76} and the definition of $f(y)$, if $y\in(0,\delta_0)$ then
		\begin{multline*}
			w_1y\ge \vert f(y)\vert \ge\frac{M}{N}\arccos\frac{h(y)-\lambda_1}{\sqrt{(h(y)-\lambda_1)^2+y^2}}=\frac{M}{N}\arccos\sqrt{1-\Big(\frac{y}{\sqrt{(h(y)-\lambda_1)^2+y^2}}\Big)^2}\\
			\ge \frac{M}{N}\frac{w_0y}{\sqrt{(h(y)-\lambda_1)^2+y^2}}
		\end{multline*}
		which implies
		$$\sqrt{(h(y)-\lambda_1)^2+y^2}\ge \frac{Mw_0}{Nw_1}.$$
		So there exists $\delta_1\in(0,\delta_0)$ such that
		\begin{align}\label{eq109}
			h(y)-\lambda_1\ge \frac{Mw_0}{2Nw_1}\quad\forall y\in(0,\delta_1)
		\end{align}
		According to \eqref{eq30} and \eqref{eq31}, if $y\in(0,\delta_1)$ then
		\begin{multline}\label{eq79}
			-h'(y)\cdot \frac{1}{N}\sum_{i=1}^N\frac{1}{(h(y)-\lambda_i)^2+y^2} =\frac{1}{y}\Big(2\beta-\frac{1}{N}\sum_{i=1}^N\frac{h(y)-\lambda_i}{(h(y)-\lambda_i)^2+y^2}\Big) \\
			=\frac{1}{y}\Big(\frac{1}{yN}\sum_{i=1}^N\arccos\frac{h(y)-\lambda_i}{\sqrt{(h(y)-\lambda_i)^2+y^2}}-\frac{1}{N}\sum_{i=1}^N\frac{h(y)-\lambda_i}{(h(y)-\lambda_i)^2+y^2}\Big)=\frac{1}{y^2N}\sum_{i=1}^N A_i
		\end{multline}
		where
		$$A_i=\arccos\sqrt{1-\Big(\frac{y}{\sqrt{(h(y)-\lambda_i)^2+y^2}}\Big)^2}-\frac{y(h(y)-\lambda_i)}{(h(y)-\lambda_i)^2+y^2}. $$
		By Lemma  \ref{lemma:arccos} and  \eqref{eq109}, there exist constants $w_2>0$ and $\delta_2\in(0,\delta_1)$ such that if $y\in(0,\delta_2)$ then $\vert A_i\vert \le w_2 y^3$ and thus by  \eqref{eq79} and  Lemma \ref{lemma:curve_to_the_left_of_gamma_a}:
		\begin{align}\label{eq81}
			\vert h'(y)\vert \le \vert \frac{1}{N}\sum_{i=1}^N\frac{1}{(h(y)-\lambda_i)^2+y^2}\vert ^{-1} w_2 y\le y\cdot w_2\big((\gamma-\lambda_N)^2+\big(\frac{\pi}{2\beta}\big)^2\big).
		\end{align}
		This tells us the boundedness of $h'(y)$ on $(0,\delta_2)$. So by the completeness of ${\mathbb R }$ we know $\lim\limits_{y\to0+}h(y)$ exists. Now multiplying both sides of the first identity in \eqref{eq79} by $y$, letting $y\to0+$, using the 
		boundedness of $h'(y)$ on $(0,\delta_2)$, we have:
		$$2\beta-\frac{1}{N}\sum_{i=1}^N\frac{1}{\big(\lim\limits_{y\to0+}h(y)\big)-\lambda_i}=0.$$
		This together with the definition of $\gamma$ completes the proof of the first conclusion.
		\item 
		Plugging $2\beta=\frac{1}{N}\sum\frac{1}{\gamma-\lambda_i}$ into the first identity of \eqref{eq79}, we have for $y\in(0,\pi/(2\beta))$:
		\begin{multline}\label{eq82}
			-h'(y)\cdot \frac{1}{N}\sum_{i=1}^N\frac{1}{(h(y)-\lambda_i)^2+y^2} =\frac{1}{y}\Big(\frac{1}{N}\sum\limits_{i=1}^N\frac{1}{\gamma-\lambda_i}-\frac{1}{N}\sum_{i=1}^N\frac{h(y)-\lambda_i}{(h(y)-\lambda_i)^2+y^2}\Big)\\
			=\frac{y}{N}\sum_{i=1}^N\frac{1}{(\gamma-\lambda_i)((h(y)-\lambda_i)^2+y^2)}+\frac{h(y)-\gamma}{y}\cdot\frac{1}{N}\sum_{i=1}^N\frac{h(y)-\lambda_i}{(\gamma-\lambda_i)((h(y)-\lambda_i)^2+y^2)}
		\end{multline}
		Taking $y\to0+$ on both sides of \eqref{eq82}, using \eqref{eq81} and the first conclusion of this lemma, we have 
		$$\Big(\lim\limits_{y\to0+}\frac{h(y)-\gamma}{y}\Big)\cdot\frac{1}{N}\sum_{i=1}^N\frac{1}{(\gamma-\lambda_i)^2}=0$$
		which completes the proof of the second conclusion.
		\item
		Now we use the first two conclusions to prove the third conclusion. According to \eqref{eq31} and the fact that $\lim_{y\to0+}\frac{1}{N}\sum_{i=1}^N\frac{1}{(h(y)-\lambda_i)^2+y^2}=\frac{1}{N}\sum_{i=1}^N\frac{1}{(\gamma-\lambda_i)^2}>0$,  we know from the formula $\frac{\partial \Im R}{\partial y}=\Re R'$ that it suffices to show that
		\begin{align}\label{eq50}
			\lim_{y\to0+}\frac{1}{y}\cdot\Re\big (R'(h(y)+\i y)\big)=0
		\end{align}
		Notice that $R'(\gamma)=0$. So
		\begin{align}\label{eq49} 
			\frac{1}{y}\cdot  R'(h(y)+\i y)= \frac{ R'(h(y)+\i y)-R'(\gamma)}{h(y)+\i y-\gamma}\cdot\frac{h(y)+\i y-\gamma}{y}.
		\end{align}
		According to the first two conclusions of this lemma and the mean-value theorem, \eqref{eq49} must converges to $\i\cdot R''(\gamma)=\frac{\i}{N}\sum_{i=1}^N\frac{1}{(\gamma-\lambda_i)^2}$ as $y\to0+$, so \eqref{eq50} is true.
	\end{enumerate}
\end{proof}

\section {Proofs of auxiliary lemmas}\label{appendix}
\begin{proof}[Proof of Lemma \ref{lemma:lambdav_i-z-m_fc_large_on_Gamma}]
If $z$ is on the upper or lower edge of $\Gamma$ then 
$\vert \Im(\lambda v_i-z-m_{fc}(z))\vert \ge\vert \Im z\vert =d$. So we only need to prove the lemma for $z$ on the left and right edges. Now let $z$ be on the right edge of $\Gamma$. The case that $z$ is on the left edge can be proved by the same method. Notice that
$$\frac{d}{dx}(x+m_{fc}(x))>0\quad\text{on} (L_+,\infty),$$
so by Lemma \ref{lemma:properties_of_mu_fc} $$C:=(L_++d)+m_{fc}(L_++d)-\lambda>L_++m_{fc}(L_+)-\lambda=0.$$
Since $v_i\in[-1,1]$, we have that
$$\min_i\vert L_++d+m_{fc}(L_++d)-\lambda v_i\vert \ge C.$$
By continuity there is $y_0>0$ such that if $z=L_++d+\i y$ with $y\in[-y_0,y_0]$ then
$$\min_i\vert z+m_{fc}(z)-\lambda v_i\vert \ge C/2.$$
If $z=L_++d+\i y$ with $y\not\in[-y_0,y_0]$ then
$$\min_i\vert z+m_{fc}(z)-\lambda v_i\vert \ge \vert \Im (z+m_{fc}(z))\vert \ge \vert \Im z\vert \ge y_0.$$
Taking $C_d=\min(\frac{C}{2}, y_0)$ we complete the proof of the first conclusion. Since $d$ can be arbitrarily small, the second conclusion follows from the first conclusion.
\end{proof}

\begin{proof}[Proof of Lemma \ref{lemma:properties_of_good_event}]
	The first conclusion is from \eqref{eqn22},   Lemma \ref{lemma:lambdav_i-z-m_fc_large_on_Gamma} and the facts that $\varsigma\ge\frac{1}{1+b}$, $\varpi\le\frac{1}{2}+\varsigma$ and $d<1$. The second conclusion is from Proposition \ref{proposition:extreme_eigenvalue} and Theorem \ref{thm:local_law_for_resolvent_entries}. For the third conclusion, notice that if $N$ is large enough and $B_N\cap\Omega_V(\varsigma)$ holds, then the following statements hold for each $\xi\in\Gamma_+$
	\begin{multline*}
		\vert G_{ii}(\xi)-\hat g_i(\xi)\vert \le \vert G_{ii}(\xi)-\frac{1}{\lambda v_i-\xi-m_N(\xi)}\vert +\frac{\vert m_N-\hat m_{fc}\vert }{\vert \lambda v_i-\xi-m_N\vert \vert \lambda v_i-\xi-\hat m_{fc}\vert }\\
		\le  N^{\varsigma'-\frac{1}{2}}\cdot\vert \Im \xi \vert ^{-3}+N^{2\varsigma-\frac{1}{2}}\cdot\vert \Im \xi \vert ^{-2}\quad\text{(by definitions of $B_N$ and $\tilde\Omega(\varsigma)$)}
	\end{multline*}
	\begin{multline*}
		\vert G_{ii}(\xi)\vert \ge\vert \hat g_i(\xi)\vert -\vert G_{ii}(\xi)-\hat g_i(\xi)\vert \\
		\ge 2W'\vert \Im \xi\vert -N^{\varsigma'-\frac{1}{2}}\cdot\vert \Im \xi \vert ^{-3}-N^{2\varsigma-\frac{1}{2}}\cdot\frac{1}{\vert \Im \xi\vert ^2}\quad\text{(since $\frac{1}{\vert \hat g_i(\xi)\vert }\le\lambda+\vert \xi\vert +\vert \hat m_{fc}(\xi)\vert \le\lambda+\vert \xi\vert +\frac{1}{\vert \Im \xi\vert }$)}\\ 
		\ge W'\vert \Im \xi\vert \quad\text{(by definitions  of $\varpi$, $\varsigma$ and $\varsigma'$)}
	\end{multline*}
	\begin{multline*}
		\vert G_{ii}(\xi)-g_i(\xi)\vert \le\vert G_{ii}(\xi)-\frac{1}{\lambda v_i-\xi-m_N}\vert +\frac{\vert m_N-m_{fc}\vert }{\vert \lambda v_i-\xi-m_N\vert \vert \lambda v_i-\xi-m_{fc}\vert }\\
		\le N^{\varsigma'-\frac{1}{2}}\cdot\vert \Im \xi \vert ^{-3}+2N^{2\varsigma-\frac{1}{2}}\cdot\vert \Im \xi\vert ^{-2}
	\end{multline*}
	
		\begin{multline*}
			\vert \hat m_{fc}(\xi)-\frac{1}{N}\tr G^{(i)}(\xi)\vert \le \vert \hat m_{fc}-m_N\vert +\frac{1}{N}\vert G_{ii}\vert +\Big\vert \frac{1}{N}\sum_p^{(i)}\frac{(G_{ip})^2}{G_{ii}}\Big\vert \quad\text{(by \eqref{eqn3})}\\\le N^{2\varsigma-\frac{1}{2}}+\frac{1}{N\vert \Im \xi\vert }+\frac{1}{N\vert G_{ii}\vert }\vert (G^2)_{ii}-(G_{ii})^2\vert 
			\le N^{2\varsigma-\frac{1}{2}}+\frac{1}{N\vert \Im \xi\vert }+\frac{2}{W'N\vert \Im\xi\vert ^3}
		\end{multline*}
\end{proof}
\begin{proof}[Proof of Lemma \ref{lemma:hat_m_fc-m_fc}]
	By Lemma \ref{lemma:self_consistent_equation} and the definitions of $g_i$ and $\hat g_i$,
	\begin{multline}\label{eqn47}
		\sqrt N(\hat m_{fc}-m_{fc})=\sqrt N\Big(\frac{1}{N}\sum\hat g_i(\xi)-\int\frac{d\mu(t)}{\lambda t-\xi-m_{fc}(\xi)}\Big)
		=\frac{1}{\sqrt N}\sum\Big(\hat g_i(\xi)-\E[g_i(\xi)]\Big)\\
		=\frac{1}{\sqrt N}\sum\Big( g_i(\xi)-\E[g_i(\xi)]\Big)+\frac{1}{\sqrt N}\sum (\hat m_{fc}-m_{fc})g_i(\xi)\hat g_i(\xi)\\
		=\frac{1}{\sqrt N}\sum\Big( g_i(\xi)-\E[g_i(\xi)]\Big)+\frac{\hat m_{fc}-m_{fc}}{\sqrt N}(\sum g_i^2+\sum (\hat m_{fc}-m_{fc})\hat g_ig_i^2)\\
		=\frac{1}{\sqrt N}\sum\Big( g_i(\xi)-\E[g_i(\xi)]\Big)+\frac{(\hat m_{fc}-m_{fc})^2}{\sqrt N}\sum \hat g_ig_i^2
		+\sqrt N(\hat m_{fc}-m_{fc})(\frac{1}{N}\sum (g_i^2-\E [g_i^2]))\\+\sqrt N(\hat m_{fc}-m_{fc})\int\frac{d\mu(t)}{(\lambda t-\xi-m_{fc})^2}\quad\text{(since $\E [g_i^2]=\int\frac{d\mu(t)}{(\lambda t-\xi-m_{fc})^2}$)}
	\end{multline}
	Moving the last term on the right hand side of \eqref{eqn47} to the left hand side, multiplying both sides by $1+m_{fc}'(z)$, using
	\eqref{eqn48}, we complete the proof.
\end{proof}

\begin{proof}[Proof of Lemma \ref{coro:convergence_as_process}]

Let
$$Y_N(\xi)=N^{-\frac{1}{2}-a_1}\sum_i(g_i^2(\xi)-\E[g_i^2(\xi)]).$$
By the Cramer-Wold Theorem, for any $\xi_1,\ldots,\xi_k\in\Gamma$, if $N\to\infty$ then 
\begin{align}\label{eqn51}
	(Y_N(\xi_1),\ldots,Y_N(\xi_k))\to {\bf0}\quad\text{in distribution}
\end{align}
where ${\bf0}$ is the zero vector in ${\mathbb R }^k$. For any $\xi_1,\xi_2\in\Gamma$, by Lemma \ref{lemma:lambdav_i-z-m_fc_large_on_Gamma} and the definition of $\Gamma$, we have $\vert g_i(\xi_1)\vert \le\frac{1}{C_d}$, $\vert g_i(\xi_2)\vert \le\frac{1}{C_d}$ and
\begin{align}\label{eqn50}
	\vert g_i^2(\xi_1)-g_i^2(\xi_2)\vert \le \frac{2}{C_d^3}[\vert \xi_1-\xi_2\vert +\vert m_{fc}(\xi_1)-m_{fc}(\xi_2)\vert ] \le \frac{2}{C_d^3}(1+\frac{1}{d^2})\vert \xi_1-\xi_2\vert 
\end{align}
where $C_d$ is defined in Lemma \ref{lemma:lambdav_i-z-m_fc_large_on_Gamma}. 
So
\begin{multline*}
	\E[\vert Y_N(\xi_1)-Y_N(\xi_2)\vert ^2]\\
	=\frac{1}{N^{1+2a_1}}\E\Big[\sum_{i,j=1}^N\Big(g_i^2(\xi_1)-g_i^2(\xi_2)-\E[g_i^2(\xi_1)]+\E[g_i^2(\xi_2)]\Big)\Big(\overline{g_j^2(\xi_1)-g_j^2(\xi_2)-\E[g_j^2(\xi_1)]+\E[g_j^2(\xi_2)]}\Big)\Big]\\
	=\frac{1}{N^{1+2a_1}}\E\Big[\sum_{i=1}^N\Big\vert g_i^2(\xi_1)-g_i^2(\xi_2)-\E[g_i^2(\xi_1)]+\E[g_i^2(\xi_2)]\Big\vert ^2\Big]\quad\text{(by independence of $g_1,\ldots,g_N$)}\\
	\le \frac{1}{N^{2a_1}}\Big\vert \frac{4}{C_d^3}(1+\frac{1}{d^2})\vert \xi_1-\xi_2\vert \Big\vert ^2\quad\text{(by \eqref{eqn50})}
\end{multline*}

So  
$${\mathbb P}(\vert Y_N(\xi_1)-Y_N(\xi_2)\vert \ge s)\le\frac{1}{s^2}\E[\vert Y_N(\xi_1)-Y_N(\xi_2)\vert ^2]\le \frac{1}{s^2}\frac{1}{N^{2a_1}}\Big(\frac{4}{C_d^3}(1+\frac{1}{d^2})\Big)^2 \vert \xi_1-\xi_2\vert ^2,\quad\forall s>0$$
According to Theorem 12.3 of \cite{Billingsley},
$$\{Y_N(\xi)\vert \xi\in\Gamma\}\quad N=1,2,\ldots$$
is a tight sequence of random functions on $\Gamma$. This together with \eqref{eqn51} and Theorem 8.1 of \cite{Billingsley} imply that 
$$\{Y_N(\xi)\vert \xi\in\Gamma\} $$
(as random elements in the space of continuous functions on $\Gamma$)  converges in distribution to $0$ as $N\to\infty$. By Portmanteau's Theorem (see, for example, Theorem 2.1 of \cite{Billingsley})  the proof of the first conclusion is complete. The second conclusion can be proved in the same way.
\end{proof}

\begin{proof}[Proof of Lemma \ref{lemma:some_computation}]
	By \eqref{eqn31} we have
	\begin{multline}\label{eqn32}
		\E_N\Big[G_{ij}(\xi)\frac{\partial e^{\i tX_N}}{\partial W_{ij}}\Big]
		=\frac{\i t}{\sqrt N}\frac{-2}{1+\delta_{ij}}\E_N\Big[e^{\i tX_N}G_{ij}(\xi) \int_{\Gamma_+}f(\xi')G_{ij}'(\xi')d\xi'\Big]\cdot\mathds1_{\Omega_V(\varsigma)}\\
		=\frac{\i t}{\sqrt N}\frac{-2}{1+\delta_{ij}}\int_{\Gamma_+}f(\xi')\E_N\Big[e^{\i tX_N}G_{ij}(\xi)G_{ij}'(\xi')\Big]d\xi'\cdot\mathds1_{\Omega_V(\varsigma)}
	\end{multline}
	Putting \eqref{eqn32} into \eqref{eqn30}, we have :
	\begin{multline}\label{eqn28}
		(\xi-\lambda v_i)\E_N[e^{\i tX_N}(G_{ii}(\xi)-\E_N G_{ii}(\xi))]\\
		=\frac{-1}{N}\sum_j\Big(\E_N[e^{\i tX_N}(G_{ii}(\xi)G_{jj}(\xi)+(G_{ij}(\xi))^2)]-\E_N[e^{\i tX_N}]\E_N[(G_{ii}(\xi)G_{jj}(\xi)+(G_{ij}(\xi))^2)]\Big)\\
		-\frac{2\i t}{N^{3/2}}\sum_{j}\int_{\Gamma_+}f(\xi')\E_N\Big[e^{\i tX_N}G_{ij}(\xi) G_{ij}'(\xi')\Big]d\xi'\mathds1_{\Omega_V(\varsigma)}
		+\mathcal E_1^{(i)}(\xi)+\mathcal E_2^{(i)}(\xi)\\
		=-\frac{1}{N}\E_N[e^{\i tX_N}(G_{ii}(\xi)\tr G(\xi)-\E_N[G_{ii}(\xi)\tr G(\xi)])]+\mathcal E_1^{(i)}(\xi)+\mathcal E_2^{(i)}(\xi)\\
		-\frac{1}{N}\E_N[e^{\i tX_N}(G_{ii}'(\xi)-\E_NG_{ii}'(\xi))]-\frac{2\i t}{N^{3/2}} \int_{\Gamma_+}f(\xi')\E_N\Big[e^{\i tX_N}(G(\xi)G'(\xi'))_{ii}\Big]d\xi'\mathds1_{\Omega_V(\varsigma)}
	\end{multline}
	
	Notice 
	\begin{multline}\label{eqn29}
		-\frac{1}{N}\E_N[e^{\i tX_N}(G_{ii}\tr G-\E_N[G_{ii}\tr G])]=-m_{fc}\E_N[e^{\i tX_N}(G_{ii}-\E_NG_{ii})]-\frac{g_i}{N}\E_N[e^{\i tX_N}(\tr G-\E_N\tr G)]\\
		+\E_N[e^{\i tX_N}(G_{ii}-\E_NG_{ii})(m_{fc}-\frac{1}{N}\tr G)]
		+\frac{1}{N}\E_N[e^{\i tX_N}(\tr G-\E_N\tr G)](g_i-\E_NG_{ii})\\
		+\frac{1}{N}\E_N[e^{\i tX_N}]\E_N[G_{ii}(\tr G-\E_N\tr G)]
	\end{multline} 
	Plugging \eqref{eqn29} into \eqref{eqn28}, moving the term $-m_{fc}(\xi)\E_N[e^{\i tX_N}(G_{ii}(\xi)-\E_NG_{ii}(\xi))]$ to the left hand side, multiplying both sides by $-g_i(\xi)$ and taking $\sum_i$, using the definition of $\mathcal E_3(\xi)$, we have 
	\begin{multline}\label{eqn33}
		\E_N[e^{\i t X_N}(\tr G(\xi)-\E_N\tr G(\xi))]=
		-\sum_ig_i(\xi)\E_N[e^{\i tX_N}(G_{ii}(\xi)-\E_NG_{ii}(\xi))(m_{fc}(\xi)-\frac{1}{N}\tr G(\xi))]\\
		+\frac{1}{N}\sum_ig_i^2(\xi)\E_N[e^{\i t X_N}(\tr G(\xi)-\E_N\tr G(\xi))]-\frac{1}{N}\sum_ig_i(\xi)(g_i(\xi)-\E_NG_{ii}(\xi))\E_N[e^{\i tX_N}(\tr G(\xi)-\E_N\tr G(\xi))]\\
		-\frac{1}{N}\E_N[e^{\i tX_N}]\sum_i g_i(\xi)\E_N[G_{ii}(\xi)(\tr G(\xi)-\E_N\tr G(\xi))]+\mathcal E_3(\xi).
	\end{multline}
	Moving the term $\frac{1}{N}\sum_ig_i^2(\xi)\E_N[e^{\i t X_N}(\tr G(\xi)-\E_N\tr G(\xi))]$ to the left hand side:
	\begin{multline}
		(1-\frac{1}{N}\sum_ig_i^2(\xi))	\E_N[e^{\i t X_N}(\tr G(\xi)-\E_N\tr G(\xi))]\\
		=
		-\sum_ig_i(\xi)\E_N[e^{\i tX_N}(G_{ii}(\xi)-\E_NG_{ii}(\xi))(m_{fc}(\xi)-\frac{1}{N}\tr G(\xi))]\\
		-\frac{1}{N}\sum_ig_i(\xi)(g_i(\xi)-\E_NG_{ii}(\xi))\E_N[e^{\i tX_N}(\tr G(\xi)-\E_N\tr G(\xi))]\\
		-\frac{1}{N}\E_N[e^{\i tX_N}]\sum_i g_i(\xi)\E_N[G_{ii}(\xi)(\tr G(\xi)-\E_N\tr G(\xi))]+\mathcal E_3(\xi)\\
		=	-\sum_ig_i(\xi)\E_N[e^{\i tX_N}(G_{ii}(\xi)-g_i(\xi))(m_{fc}(\xi)-\frac{1}{N}\tr G(\xi))]\\
		-\E_N[e^{\i tX_N}]\sum_i g_i(\xi)\E_N[(G_{ii}(\xi)-g_i(\xi))(\frac{1}{N}\tr G(\xi)-  m_{fc}(\xi))]+\mathcal E_3(\xi)
	\end{multline}
	where we used the fact that $g_i$ and $\E_NG_{ii}$ are $\sigma(V)$-measurable.
\end{proof}
\begin{proof}[Proof of Lemma \ref{lemma:11111}]
	First we notice that the conditions in Definition \ref{definition:constants} imply: 
	\begin{align}\label{eq95}
		5\varpi+2\varsigma-\frac{1}{2}<0.
	\end{align}
	According to \eqref{eq94},
	for any $\xi\in{\mathbb C }\backslash{\mathbb R }$:
	\begin{multline}\label{eq97}
		\vert \hat m_{fc}'-(1+\hat m_{fc}')(\frac{1}{N}\sum g_i^2)\vert =\frac{1}{N}\Big\vert \sum\frac{1+\hat m_{fc}'}{(\lambda v_i-\xi-\hat m_{fc})^2}-\frac{1}{N}\sum\frac{1+\hat m_{fc}'}{(\lambda v_i-\xi- m_{fc})^2}\Big\vert \\
		\le \frac{\vert 1+\hat m_{fc}'\vert }{N}\sum\Big\vert \frac{(\lambda v_i-\xi-m_{fc}(\xi))+(\lambda v_i-\xi-\hat m_{fc}(\xi))}{(\lambda v_i-\xi-m_{fc}(\xi))^2(\lambda v_i-\xi-\hat m_{fc}(\xi))^2}\Big\vert \vert m_{fc}-\hat m_{fc}\vert \le \vert 1+\hat m_{fc}'\vert \cdot\frac{2}{\vert \Im \xi\vert ^3}\vert m_{fc}-\hat m_{fc}\vert .
	\end{multline}
	If $\xi\in\Gamma_+$, then $\vert \Im \xi\vert \le d<1$ and
	\begin{align}\label{eq96}
		\vert 1+\hat m_{fc}'\vert \le 1+\vert \Im\xi\vert ^{-2}\le2\vert \Im\xi\vert ^{-2}.
	\end{align}
	Now suppose $N$ is large enough and $\Omega_V(\varsigma)$ holds.  By Lemma \ref{lemma:properties_of_good_event} we have $\Gamma_+\subset\mathcal D_\varsigma'$. According to Lemma \ref{lemma:hatm_fc_close_tom_fc}, \eqref{eq97} and \eqref{eq96}, if $\xi\in\Gamma_+$, then 
	$$\vert \hat m_{fc}'-(1+\hat m_{fc}')(\frac{1}{N}\sum g_i^2)\vert \le  \frac{4}{\vert \Im \xi\vert ^5} N^{2\varsigma-\frac{1}{2}}\le 4N^{5\varpi+2\varsigma-\frac{1}{2}}=o(1)\quad\text{(by \eqref{eq95})}$$
	and therefore
	$$\vert 1+\hat m_{fc}'\vert \Big\vert 1-\frac{1}{N}\sum_ig_i^2(\xi)\Big\vert =\Big\vert 1+\Big(\hat m_{fc}'-(1+\hat m_{fc}')(\frac{1}{N}\sum g_i^2)\Big)\Big\vert \ge\frac{2}{3}.$$
	The last inequality together with \eqref{eq96} completes the proof.
\end{proof}
\begin{proof}[Proof of Lemma \ref{lemma:computation}]
According to \eqref{eqn15}, 
\begin{align}\label{eq100}
	G_{ii}(\xi)=\frac{-1}{\xi+Q_i(\xi)}\quad\text{and}\quad Q_i-\hat m_{fc}+\lambda v_i=\frac{1}{\hat g_i}-\frac{1}{G_{ii}}.
\end{align} 
Using $\frac{-1}{a}=\frac{1}{b}+\frac{a+b}{b^2}+\frac{(a+b)^2}{b^3}+\frac{-1}{a}\frac{(a+b)^3}{b^3}$ with $a=\xi+Q_i$ and $b=-\xi-\hat m_{fc}+\lambda v_i$ we have
\begin{multline*}
	\frac{-1}{\xi+Q_i}=\frac{1}{-\xi-\hat m_{fc}+\lambda v_i}+\frac{Q_i-\hat m_{fc}+\lambda v_i}{(-\xi-\hat m_{fc}+\lambda v_i)^2}+\frac{(Q_i-\hat m_{fc}+\lambda v_i)^2}{(-\xi-\hat m_{fc}+\lambda v_i)^3}+\frac{-1}{\xi+Q_i}\frac{(Q_i-\hat m_{fc}+\lambda v_i)^3}{(-\xi-\hat m_{fc}+\lambda v_i)^3}\\
	=\hat g_i(\xi)+\hat g_i^2(\xi)(Q_i-\hat m_{fc}+\lambda v_i)+\hat g_i^3(\xi)(Q_i-\hat m_{fc}+\lambda v_i)^2-\frac{\hat g_i^3(\xi)}{\xi+Q_i}(Q_i-\hat m_{fc}+\lambda v_i)^3
\end{multline*}
and therefore
\begin{multline}\label{eqn16}
	\tr G(\xi)-N\hat m_{fc}(\xi)=\big(\sum_{i=1}^NG_{ii}(\xi)\big)-N\hat m_{fc}(\xi)=\Big(\sum_{i=1}^N\frac{-1}{\xi+Q_i}\Big)-N\hat m_{fc}(\xi)\\
	=\sum_{i=1}^N\Big(\hat g_i(\xi)+\hat g_i^2(\xi)(Q_i-\hat m_{fc}+\lambda v_i)+\hat g_i^3(\xi)(Q_i-\hat m_{fc}+\lambda v_i)^2-\frac{\hat g_i^3(\xi)}{\xi+Q_i}(Q_i-\hat m_{fc}+\lambda v_i)^3\Big)-N\hat m_{fc}(\xi)\\
	=\sum_{i=1}^N\Big(\hat g_i^2(\xi)(Q_i-\hat m_{fc}+\lambda v_i)+\hat g_i^3(\xi)(Q_i-\hat m_{fc}+\lambda v_i)^2-\frac{\hat g_i^3(\xi)}{\xi+Q_i}(Q_i-\hat m_{fc}+\lambda v_i)^3\Big)\quad\text{(by Lemma \ref{lemma:self_consistent_equation})}\\
	=\sum_{i=1}^N\Big(\hat g_i^2(\xi)(Q_i-\hat m_{fc}+\lambda v_i)+\hat g_i^3(\xi)(Q_i-\hat m_{fc}+\lambda v_i)^2+\hat g_i^3(\xi)G_{ii}(\xi)\Big(\frac{1}{\hat g_i(\xi)}-\frac{1}{G_{ii}(\xi)}\Big)^3\Big)\quad\text{(by \eqref{eq100})}\\
	=\sum_{i=1}^N\hat g_i^2(\xi)(Q_i-\hat m_{fc}+\lambda v_i)+\sum_{i=1}^N\hat g_i^3(\xi)(Q_i-\hat m_{fc}+\lambda v_i)^2+\sum_{i=1}^N\frac{(G_{ii}(\xi)-\hat g_i(\xi))^3}{(G_{ii}(\xi))^2}
\end{multline} 
Notice that $W_{ij}$ is independent of the sigma algebra generated by $V$ and $\{G_{pq}^{(i)}\vert p,q\ne i\}$. So if $j_1,\ldots,j_k\in\{1,\ldots,N\}$ and $p,q,r,t\in\{1,\ldots,N\}\backslash\{i\}$, then
\begin{align}\label{eq101}
	\E_N[W_{ij_1}\cdots W_{ij_k}G_{pq}^{(i)}]=\E[W_{ij_1}\cdots W_{ij_k}]\E_N[G_{pq}^{(i)}],\quad \E_N[W_{ij_1}\cdots W_{ij_k}G_{pq}^{(i)}G_{rt}^{(i)}]=\E[W_{ij_1}\cdots W_{ij_k}]\E_N[G_{pq}^{(i)}G_{rt}^{(i)}]
\end{align}
Notice that $\hat g_i$ and $\hat m_{fc}$ are $\sigma(V)$-measurable. By \eqref{eq101} we have 
\begin{multline}\label{eqn19}
	\E_N[\sum_{i=1}^N\hat g_i^2(\xi)(Q_i-\hat m_{fc}+\lambda v_i)]=\sum_{i=1}^N\hat g_i^2(\xi)\Big[\E_N[\sum_{p,q}^{(i)}W_{ip}G_{pq}^{(i)}W_{qi}]-\hat m_{fc}(\xi)\Big]\\
	=\sum_{i=1}^N\hat g_i^2(\xi)\Big[\frac{1}{N}\E_N[\sum_{p}^{(i)}G_{pp}^{(i)}]-\hat m_{fc}(\xi)\Big]
	=\sum_{i=1}^N\hat g_i^2(\xi)\Big[\frac{1}{N}\E_N[\sum_{p}^{(i)}(G_{pp}-\frac{(G_{ip})^2}{G_{ii}})]-\hat m_{fc}(\xi)\Big]\quad\text{(by \eqref{eqn3})}\\
	=\sum_{i=1}^N\hat g_i^2(\xi)\E_N[\frac{1}{N}\tr G(\xi)-\hat m_{fc}(\xi)]-\frac{1}{N}\sum_{i=1}^N\hat g_i^2(\xi)\E_N[G_{ii}(\xi)+\frac{1}{G_{ii}(\xi)}\sum_p^{(i)}(G_{ip}(\xi))^2].
\end{multline}
Similarly, using \eqref{eq101},
\begin{multline}\label{eqn17}
	\E_N\Big[\sum_{i=1}^N\hat g_i^3(\xi)(Q_i-\hat m_{fc}+\lambda v_i)^2\Big]=\sum_{i=1}^N\hat g_i^3(\xi)\E_N\Big[\Big(-W_{ii}-\hat m_{fc}+\sum_{p,q}^{(i)}W_{ip}G_{pq}^{(i)}W_{qi}\Big)^2\Big]\\
	=\sum_{i=1}^N\hat g_i^3(\xi)\Big(\frac{2}{N}+\hat m_{fc}^2-\frac{2}{N}\hat m_{fc}\E_N[\tr G^{(i)}]+\E_N\Big[\Big(\sum_{p,q}^{(i)}W_{ip}G_{pq}^{(i)}W_{qi}\Big)^2\Big]\Big)
\end{multline}
Considering the cases $p=q=r=t$, $p=q\ne r=t$, $p=t\ne q=r$ and $p=r\ne q=t$  we have
\begin{multline}\label{eqn18}
	\E_N\Big[\Big(\sum_{p,q}^{(i)}W_{ip}G_{pq}^{(i)}W_{qi}\Big)^2\Big]=\sum_{p,q,r,t}^{(i)}\E_N\Big[W_{ip}W_{qi}W_{ir}W_{ti}G_{pq}^{(i)}G_{rt}^{(i)}\Big]\\
	=\sum_{p}^{(i)}\E[W_{ip}^4]\E_N\Big[(G_{pp}^{(i)})^2\Big]+\frac{1}{N^2}\sum_{p\ne r}^{(i)}\E_N[G_{pp}^{(i)}G_{rr}^{(i)}]+\frac{2}{N^2}\sum_{p\ne q}^{(i)}\E_N[(G_{pq}^{(i)})^2]\quad\text{(by \eqref{eq101})}\\
	=\sum_{p}^{(i)}\E[W_{ip}^4]\E_N\Big[(G_{pp}^{(i)})^2\Big]+\frac{1}{N^2}\Big(\E_N[(\tr G^{(i)})^2]-\sum_p^{(i)}\E_N[(G_{pp}^{(i)})^2]\Big)+\frac{2}{N^2}\Big(\E_N[\tr( G^{(i)}G^{(i)})]-\sum_p^{(i)}\E_N[(G_{pp}^{(i)})^2]\Big)\\
	=\sum_{p}^{(i)}\E[W_{ip}^4]\E_N\Big[(G_{pp}^{(i)})^2\Big]+\frac{1}{N^2} \E_N\Big[(\tr G^{(i)})^2+2(\tr G^{(i)})'-3\sum_p^{(i)}(G_{pp}^{(i)})^2\Big] 
\end{multline}
Plug \eqref{eqn19}, \eqref{eqn17} and \eqref{eqn18}  into \eqref{eqn16}, then we have:
\begin{multline*}
	\E_N[\tr G(\xi)-N\hat m_{fc}(\xi)]=\sum_{i=1}^N\hat g_i^2(\xi)\E_N[\frac{1}{N}\tr G(\xi)-\hat m_{fc}(\xi)]-\frac{1}{N}\sum_{i=1}^N\hat g_i^2(\xi)\E_N[G_{ii}(\xi)+\frac{1}{G_{ii}(\xi)}\sum_p^{(i)}(G_{ip}(\xi))^2]\\
	+\sum_{i=1}^N\hat g_i^3(\xi)\Big(\frac{2}{N}+\frac{2}{N^2}\E_N[(\tr G^{(i)})']+\sum_p^{(i)}\E_N[(G_{pp}^{(i)})^2](\E[W_{ip}^4]-\frac{3}{N^2})+\E_N[(\hat m_{fc}-\frac{1}{N}\tr G^{(i)})^2]\Big)
	+\E_N[\sum_{i=1}^N\frac{(G_{ii}(\xi)-\hat g_i(\xi))^3}{(G_{ii}(\xi))^2}]
\end{multline*}
Moving the first term on the RHS to the LHS and using \eqref{eqn21}, we complete the proof.
\end{proof}
\begin{proof}[Proof of Lemma \ref{lemma:regularity_of_S}]
	Comparing the imaginary part of both sides of \eqref{eq38}:
$$0=R''(\gamma)\Re(z-\gamma)\Im z+\frac{R'''(\gamma)}{2}(\Re(z-\gamma))^2\Im z-\frac{R'''(\gamma)}{6}(\Im z)^3+\sum_{j=4}^\infty\frac{R^{(j)}(\gamma)}{j!}\Im((z-\gamma)^j),\quad\forall z\in S^+\cap Q_N.$$
In the above equation $\Im z\cdot R''(\gamma)\ne0$, so we can divide both sides by $\Im z\cdot R''(\gamma)$ and have:
\begin{align}\label{eq34}
	X-\frac{\alpha}{2}X^2+\frac{\alpha}{6}Y^2+H(X,Y)=0,\quad\forall z\in S^+\cap Q_N
\end{align}
where
\begin{itemize}
	\item $X=\Re(z-\gamma)$, $Y=\Im z$;
	\item $\alpha=-\frac{R'''(\gamma)}{R''(\gamma)}>0;$
	\item $H(x,y)=\sum_{j=4}^\infty\frac{R^{(j)}(\gamma)}{j!R''(\gamma)}\frac{\Im((x+\i y)^j)}{y}$.
\end{itemize}
According to (6.46) of \cite{Baik+Lee}, we have
\begin{align}\label{lemma:(6.46)_in_Baik+Lee}
	\vert \Im ((x+\i y)^j)\vert \le j\cdot\vert x+\i y\vert ^{j-1}\cdot\vert y\vert \quad\quad\forall x,y\in{\mathbb R }, j\in\{1,2,\ldots\}.
\end{align} 
By Lemma \ref{lemma:analogue_of_lemma_6.2} and  \eqref{lemma:(6.46)_in_Baik+Lee}, if $N$ is large enough then
\begin{align}\label{eq35}
	\frac{2}{W_2^2}N^{1-3\tau_0-\tau_2}\le\alpha\le2W_2^3N^{1+2\tau_0+\tau_2}
\end{align}
and
\begin{multline*}
	\vert H(X,Y)\vert \le \sum_{j=4}^\infty W_2^jN^{-2+2\tau_0+\tau_2+j}\vert X+\i Y\vert ^{j-1}\le2W_2^4N^{2\tau_0+\tau_2+2}\vert X+\i Y\vert ^3\\
	\le2W_2^4N^{2\tau_0+\tau_2-7}(X^2+Y^2),\quad\forall z=X+\gamma+\i Y\in S^+\cap Q_N
\end{multline*}
where $W_2>0$ is defined in Lemma \ref{lemma:analogue_of_lemma_6.2}. We used the condition $\vert X+\i Y\vert <N^{-9}$ in the last inequality. So if  $z=X+\gamma+\i Y\in S^+\cap Q_N$ and $N$ is large enough, then we can write
\begin{align}\label{eq37}
	H(X,Y)=H^{(1)}(X,Y)+H^{(2)}(X,Y)
\end{align}
where 
\begin{align}\label{eq36}
	\vert H^{(1)}(X,Y)\vert \le 2W_2^4N^{2\tau_0+\tau_2-7}X^2\quad\text{and}\quad\vert H^{(2)}(X,Y)\vert \le 2W_2^4N^{2\tau_0+\tau_2-7}Y^2.
\end{align}
By \eqref{eq34}, \eqref{eq37} and \eqref{eq36}, if  $z=X+\gamma+\i Y\in S^+\cap Q_N$ and $N$ is large enough, then
$$X(1-\frac{\alpha}{2}X+\frac{H^{(1)}(X,Y)}{X})+\frac{\alpha}{6}Y^2(1+\frac{6H^{(2)}(X,Y)}{\alpha Y^2})=0$$
where (by \eqref{eq35}, \eqref{eq36} and the definition of $Q_N$)
$$\vert -\frac{\alpha}{2}X+\frac{H^{(1)}(X,Y)}{X}\vert \le2W_2^3N^{-5}<\frac{1}{2}\quad\text{and}\quad\vert \frac{6H^{(2)}(X,Y)}{\alpha Y^2}\vert \le6W_2^6N^{-1}<\frac{1}{2}$$
thus  we have
$$\frac{-X}{Y^2}\in[\frac{\alpha}{18},\frac{\alpha}{2}]\subset [\frac{1}{9W_2^2}N^{1-3\tau_0-\tau_2},W_2^3N^{1+2\tau_0+\tau_2}]\quad\text{(by \eqref{eq35})}$$
and 
$$\vert X\vert \le Y^2\cdot W_2^3N^{1+2\tau_0+\tau_2}\le Y$$
which implies \eqref{eq39}.
\end{proof}

\section*{Funding}
The authors were partially supported by National Research Foundation of Korea under grant number NRF-2019R1A5A1028324.

\end{document}